\pdfoutput=1
\documentclass{amsart} 
\newif\ifpersonal
\setcounter{tocdepth}{2} 
\usepackage{amsmath,amscd,amsthm,amssymb,mathrsfs,mathtools,bm} 
\usepackage{microtype,lmodern,textcomp} 
\usepackage{enumitem,comment,braket,xspace,tikz-cd} 
\usepackage[utf8]{inputenc} 
\usepackage[T1]{fontenc} 
\usepackage[centering,vscale=0.7,hscale=0.7]{geometry}
\usepackage[hidelinks]{hyperref}
\usepackage[capitalize]{cleveref}

\numberwithin{equation}{section}
\theoremstyle{plain}
\newtheorem{theorem}[equation]{Theorem}
\newtheorem*{theorem*}{Theorem}
\newtheorem{lemma}[equation]{Lemma}
\newtheorem*{lemma*}{Lemma}

\newtheorem*{claim*}{Claim}

\newtheorem{proposition}[equation]{Proposition}
\newtheorem*{proposition*}{Proposition}
\newtheorem{corollary}[equation]{Corollary}
\newtheorem*{corollary*}{Corollary}
\theoremstyle{definition}
\newtheorem{definition}[equation]{Definition}
\newtheorem*{definition*}{Definition}
\newtheorem{definition-theorem}[equation]{Definition-Theorem}
\newtheorem{definition-lemma}[equation]{Definition-Lemma}

\newtheorem{assumption}[equation]{Assumption}
\newtheorem{notation}[equation]{Notation}
\newtheorem{example}[equation]{Example}
\newtheorem{remark}[equation]{Remark}
\newtheorem*{remark*}{Remark}

\numberwithin{equation}{section}

\ifpersonal
\newcommand{\personal}[1]{\textcolor[rgb]{0,0,1}{(Personal: #1)}}

\newcommand{\todo}[1]{\textcolor{red}{(Todo: #1)}}
\else
\newcommand{\personal}[1]{\ignorespaces}
\newcommand{\discussion}[1]{\ignorespaces}
\newcommand{\todo}[1]{\ignorespaces}
\fi

\providecommand{\abs}[1]{\lvert#1\rvert}


\newcommand{\bbC}{\mathbb C}
\newcommand{\bbD}{\mathbb D}

\newcommand{\bbL}{\mathbb L}

\newcommand{\bbN}{\mathbb N}

\newcommand{\bbP}{\mathbb P}
\newcommand{\bbQ}{\mathbb Q}
\newcommand{\bbR}{\mathbb R}

\newcommand{\bbZ}{\mathbb Z}

\newcommand{\cD}{\mathcal D}
\newcommand{\cE}{\mathcal E}
\newcommand{\cF}{\mathcal F}
\newcommand{\cH}{\mathcal H}

\newcommand{\cM}{\mathcal M}

\newcommand{\cO}{\mathcal O}
\newcommand{\cP}{\mathcal P}

\newcommand{\bA}{\mathbf A}

\newcommand{\bD}{\mathbf D}

\newcommand{\bG}{\mathbf G}
\newcommand{\bH}{\mathbf H}
\newcommand{\bN}{\mathbf N}

\newcommand{\bc}{\mathbf c}
\newcommand{\bt}{\mathbf t}
\newcommand{\bPhi}{\mathbf{\Phi}}

\makeatletter
\let\save@mathaccent\mathaccent
\newcommand*\if@single[3]{%
	\setbox0\hbox{${\mathaccent"0362{#1}}^H$}%
	\setbox2\hbox{${\mathaccent"0362{\kern0pt#1}}^H$}%
	\ifdim\ht0=\ht2 #3\else #2\fi
}
\newcommand*\rel@kern[1]{\kern#1\dimexpr\macc@kerna}
\newcommand*\widebar[1]{\@ifnextchar^{{\wide@bar{#1}{0}}}{\wide@bar{#1}{1}}}
\newcommand*\wide@bar[2]{\if@single{#1}{\wide@bar@{#1}{#2}{1}}{\wide@bar@{#1}{#2}{2}}}
\newcommand*\wide@bar@[3]{%
	\begingroup
	\def\mathaccent##1##2{%
		\let\mathaccent\save@mathaccent
		\if#32 \let\macc@nucleus\first@char \fi
		\setbox\z@\hbox{$\macc@style{\macc@nucleus}_{}$}%
		\setbox\tw@\hbox{$\macc@style{\macc@nucleus}{}_{}$}%
		\dimen@\wd\tw@
		\advance\dimen@-\wd\z@
		\divide\dimen@ 3
		\@tempdima\wd\tw@
		\advance\@tempdima-\scriptspace
		\divide\@tempdima 10
		\advance\dimen@-\@tempdima
		\ifdim\dimen@>\z@ \dimen@0pt\fi
		\rel@kern{0.6}\kern-\dimen@
		\if#31
		\overline{\rel@kern{-0.6}\kern\dimen@\macc@nucleus\rel@kern{0.4}\kern\dimen@}%
		\advance\dimen@0.4\dimexpr\macc@kerna
		\let\final@kern#2%
		\ifdim\dimen@<\z@ \let\final@kern1\fi
		\if\final@kern1 \kern-\dimen@\fi
		\else
		\overline{\rel@kern{-0.6}\kern\dimen@#1}%
		\fi
	}%
	\macc@depth\@ne
	\let\math@bgroup\@empty \let\math@egroup\macc@set@skewchar
	\mathsurround\z@ \frozen@everymath{\mathgroup\macc@group\relax}%
	\macc@set@skewchar\relax
	\let\mathaccentV\macc@nested@a
	\if#31
	\macc@nested@a\relax111{#1}%
	\else
	\def\gobble@till@marker##1\endmarker{}%
	\futurelet\first@char\gobble@till@marker#1\endmarker
	\ifcat\noexpand\first@char A\else
	\def\first@char{}%
	\fi
	\macc@nested@a\relax111{\first@char}%
	\fi
	\endgroup
}
\makeatother

\newcommand{\tA}{\widetilde A}

\newcommand{\tH}{\widetilde H}

\newcommand{\tP}{\widetilde P}
\newcommand{\tQ}{\widetilde Q}
\newcommand{\tR}{\widetilde R}

\newcommand{\tT}{\widetilde T}
\newcommand{\tU}{\widetilde U}
\newcommand{\tV}{\widetilde V}

\newcommand{\tX}{\widetilde X}

\newcommand{\tPhi}{\widetilde\Phi}

%
%

\newcommand{\dbb}[1]{[\![#1]\!]}

\newcommand{\iunit}{\mathrm{i}}

\newcommand{\odd}{\mathrm{odd}}
\newcommand{\even}{\mathrm{even}}

\newcommand{\bbk}{\Bbbk}
\newcommand\fm{\mathfrak{m}}

\newcommand{\bK}{\mathbf K}
\newcommand{\fr}{\mathrm{fr}}

\newcommand{\id}{\mathrm{id}}
\newcommand{\Id}{\mathrm{Id}}

\newcommand{\iso}{\mathit{iso}}
\newcommand{\spl}{\mathrm{split}}

\newcommand{\bKlim}{\bK_\mathrm{lim}}
\newcommand{\vir}{\mathrm{vir}}
\newcommand{\ev}{\mathrm{ev}}
\newcommand{\pr}{\mathrm{pr}}
\newcommand{\longto}{\longrightarrow}

\newcommand{\blambda}{\boldsymbol{\lambda}}

\newcommand{\tphi}{\tilde{\phi}}
\newcommand{\bd}{\mathbf{d}}

\DeclareMathOperator{\im}{im}
\DeclareMathOperator{\rk}{rk}
\DeclareMathOperator{\codim}{codim}
\DeclareMathOperator{\End}{End}
\DeclareMathOperator{\Hom}{Hom}
\DeclareMathOperator{\Aut}{Aut}
\DeclareMathOperator{\Mat}{Mat}
\DeclareMathOperator{\NE}{NE}
\DeclareMathOperator{\GL}{GL}

\DeclareMathOperator{\Eu}{Eu}
\DeclareMathOperator{\Spf}{Spf}

\DeclareMathOperator{\Spec}{Spec}








\newcommand{\1}{\mathbf{1}}

\DeclareMathOperator{\Frac}{Frac} 
\DeclareMathOperator{\Ext}{Ext}
\DeclareMathOperator{\ad}{ad} 

\DeclareMathOperator{\Diag}{Diag} 

\DeclareMathOperator{\Tor}{Tor} 

\DeclareMathOperator{\Sp}{Sp} 

\DeclareMathOperator{\an}{an} 

\newcommand{\tvarphi}{\widetilde{\varphi}}

\renewenvironment{abstract}{%
  \quotation
  \small
  \textbf{\textit{\abstractname.}} 
}{\endquotation}

\begin{document}
\title{Decomposition and framing of F-bundles and applications to quantum cohomology}

\author{Thorgal Hinault}
\address{Thorgal Hinault, Department of Mathematics, M/C 253-37, Caltech, 1200 E.\ California Blvd., Pasadena, CA 91125, USA}
\email{thinault@caltech.edu}
\author{Tony Yue YU}
\address{Tony Yue YU, Department of Mathematics, M/C 253-37, Caltech, 1200 E.\ California Blvd., Pasadena, CA 91125, USA}
\email{yuyuetony@gmail.com}
\author{Chi Zhang}
\address{Chi Zhang, Department of Mathematics, M/C 253-37, Caltech, 1200 E.\ California Blvd., Pasadena, CA 91125, USA}
\email{czhang5@caltech.edu}
\author{Shaowu Zhang}
\address{Shaowu Zhang, Department of Mathematics, M/C 253-37, Caltech, 1200 E.\ California Blvd., Pasadena, CA 91125, USA}
\email{szhang7@caltech.edu}
\date{February 7, 2025}

\subjclass[2020]{Primary 14D15; Secondary 14G22, 14N35, 34M56}
\keywords{F-bundle, non-commutative Hodge structure, spectral decomposition, framing, quantum cohomology, quantum D-module, projective bundle, blowup}

\maketitle

\begin{abstract}
  F-bundle is a formal/non-archimedean version of variation of nc-Hodge structures which plays a crucial role in the theory of atoms as birational invariants from Gromov-Witten theory.
  In this paper, we establish the spectral decomposition theorem for F-bundles according to the generalized eigenspaces of the Euler vector field action.
  The proof relies on solving systems of partial differential equations recursively in terms of power series, and on estimating the size of the coefficients for non-archimedean convergence.
  The same technique allows us to establish the existence and uniqueness of the extension of framing for logarithmic F-bundles.
  As an application, we prove the uniqueness of the decomposition map for the A-model F-bundle (hence quantum D-module and quantum cohomology) associated to a projective bundle, as well as to a blowup of an algebraic variety.
  This complements the existence results by Iritani-Koto and Iritani.
\end{abstract}

\tableofcontents

\addtocontents{toc}{\protect\setcounter{tocdepth}{1}}
\section{Introduction}\label{sec:introduction}

\subsection{Motivations}
Let $X$ be a smooth projective complex variety.
The enumeration of curves in $X$ is a classical subject in algebraic geometry, enjoying a variety of approaches (see \cite{Pandharipande_13/2_ways}).
Gromov-Witten theory is one of the most widely known and the most general (e.g.\ no restriction on the dimension of $X$), (see \cite{Kontsevich_Gromov-Witten_classes,Li_Virtual_moduli,Behrend_Gromov-Witten_invariants}).
The Gromov-Witten invariants of $X$ are rational numbers depending on the genus $g$, number of marked points $n$ and cohomology classes $\phi_1, \dots, \phi_n$ of $X$.
They satisfy a notable relation called the WDVV equation, which allows them to be packaged into differential geometric data, such as Frobenius manifolds by Dubrovin (\cite{Dubrovin_Geometry_of_2D}) or semi-infinite Hodge structures by Barannikov (\cite{Baranikov_semi_infinite_hodge_structutres}).
The differential geometric framework facilitates intuitions from geometry and mirror symmetry and contributes tremendously to the development of the subject.
The framework was further extended to incorporate the integral/rational structure via the notion of nc-Hodge structure by Katzarkov-Kontsevich-Pantev \cite{Katzarkov_Hodge_theoretic_aspects}.
They established a profound gluing/decomposition theorem using the Fourier-Laplace transform of the associated D-modules (see \S 2.4.2 in loc.\ cit.).
This motivated the development of the theory of atoms for applications to birational geometry (see \cite{KKPY_Birational_invariants}).
The idea is to apply the decomposition to the A-model nc-Hodge structure (defined using Gromov-Witten invariants) associated to a smooth projective variety at a generic point of the base, and view the resulting pieces as elementary pieces of the variety called atoms.
The collection of atoms (modulo an equivalence relation induced by blowups) is expected to serve as a powerful birational invariant.

While the notions of nc-Hodge structure and atom are natural and beautiful, it is still conjectural that Gromov-Witten invariants actually give rise to an nc-Hodge structure satisfying all the axioms in \cite[\S 2.1.5]{Katzarkov_Hodge_theoretic_aspects}.
The difficulties include the convergence of the Gromov-Witten potential (\cite{Iritani_convergence_of_quantum_cohomology_by_quantum_Lefschetz}), the Gamma conjecture (\cite{Galkin_Golyshev_Iritani_gamma_classes_and_quantum_cohomology_of_Fano_manifolds}) and the opposedness axiom (\cite{Reichelt_non_affine_LG_model}).
This means that the theory of nc-Hodge structure cannot yet be unconditionally employed for the study of Gromov-Witten invariants and their applications in general.
In this paper, we consider a formal/non-archimedean distilled version of variation of nc-Hodge structures, which we call F-bundles (see \cref{sec:F-bundle}, and see \cref{subsec:related_works} for related notions).
We establish the spectral decomposition theorem for F-bundles, according to the generalized eigenspaces of the Euler vector field action, motivated by the gluing theorem for nc-Hodge structures via Fourier-Laplace transform, see \cref{sec:decomposition-max-F-bundles}.
The comparison of the F-bundle decomposition and the nc-Hodge structure decompositions is studied in \cite[\S 8]{Yu_Zhang_topological_Laplace_transform_and_decomposition_of_nc_hodge_structures}.

Furthermore, we study the notion of framing of F-bundles, analogous to the decoration on variations of nc-Hodge structures, and prove the existence and uniqueness of the extension of framing, see \cref{sec:framing-F-bundle}.
This allows us to identify F-bundles via maps on the base (analogous to a mirror map) together with a gauge transformation on the bundle.
As an application, we prove the uniqueness of the decomposition map for the A-model F-bundle (hence quantum D-module and quantum cohomology) associated to a projective bundle, as well as to a blowup of an algebraic variety.
This complements the existence result by Iritani-Koto \cite{Iritani_Quantum-cohomology-projective-bundle} and Iritani \cite{Iritani_Quantum-cohomology-of-blowups}.

\subsection{Main results}

Below we give a more detailed description of our results.

Throughout the paper, we fix a field $\bbk$ of characteristic $0$.
In the non-archimedean setting, we assume that $\bbk$ has a nontrivial valuation whose restriction to $\bbQ$ is trivial.
Let $B$ be a smooth $\bbk$-analytic space, and $\bbD_u$ the germ at $0$ in a $\bbk$-analytic closed unit disk with coordinate $u$.

An \emph{F-bundle} $(\cH, \nabla)$ over $B$ consists of a vector bundle $\cH$ over $B\times \bbD_u$ and a meromorphic flat connection $\nabla$ with poles at $u=0$, such that $\nabla_{u^2\partial_u}$ and $\nabla_{u\xi}$ are regular for any tangent vector field $\xi$ on $B$.
We refer to \cref{def:log-F-bundle} for the definition of logarithmic F-bundle in the formal case.

For any $b\in B$, we have a natural action
\begin{align*}
\mu_b \colon T_b B &\longto \End(\cH_{b,0}) \\
\xi &\longmapsto \nabla_{u\xi}|_{\cH_{b,0}} .
\end{align*}
The F-bundle is called \emph{maximal} at $b$ if the action induces an isomorphism between $T_b B$ and $\cH_{b,0}$ via a cyclic vector, see \cref{definition:maximal-F-bundle}.
This gives a commutative product on $T_b B$ by the flatness of $\nabla$.

\subsubsection{Spectral decomposition theorems}

Let $K_b \coloneqq \nabla_{u^2\partial_u}\vert_{b,0}$, it is the action of the Euler vector field on the fiber $\cH_{b,0}$.
We show that the generalized eigenspace decomposition of $K_b$ extends locally to a product decomposition of the F-bundle.
Here are the precise statements.

\begin{theorem}[Formal spectral decomposition, \cref{thm:eigenvalue_decomposition}] 
  Let $B$ be a formal neighborhood of a rational point $b$ in a smooth $\bbk$-variety, and $(\cH, \nabla)$ an F-bundle over $B$ maximal at $b$.
  Assume that we have a decomposition $\cH_{b,0} \simeq \bigoplus_{i \in I} H_i$ stable under $K_b$, and that for any $i \neq j \in I$, the spectra of $K_b|_{H_i}$ and $K_b|_{H_j}$ are disjoint.
  Then $(\cH ,\nabla )/B$ decomposes into a product of maximal F-bundles $(\cH_i ,\nabla_i )/B_i$ extending the decomposition of $\cH|_{b,0}$.
\end{theorem}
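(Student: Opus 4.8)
The plan is to realise the decomposition through a family of endomorphisms $(\pi_i)_{i\in I}$ of $\cH$ over $B\times\bbD_u$ that are flat for $\nabla$ along $u^2\partial_u$ and along $u\xi$ for every vector field $\xi$ on $B$ — i.e.\ $[\nabla_{u^2\partial_u},\pi_i]=0$ and $[\nabla_{u\xi},\pi_i]=0$ — that are mutually orthogonal idempotents with $\sum_i\pi_i=\id$, and that restrict at $(b,0)$ to the projections of $\cH_{b,0}=\bigoplus_iH_i$ onto the $H_i$. Granting such $\pi_i$: each $\cH_i:=\pi_i\cH$ is a sub-bundle, and from $[\nabla_{u^2\partial_u},\pi_i]=[\nabla_{u\xi},\pi_i]=0$ together with the $\cO$-linearity of $\nabla$ in the vector field one gets $[\nabla_{\partial_u},\pi_i]=u^{-2}[\nabla_{u^2\partial_u},\pi_i]=0$ and likewise in the base directions, so $\nabla$ preserves each $\cH_i$ and the pole and regularity conditions are inherited; hence $(\cH_i,\nabla|_{\cH_i})$ is again an F-bundle. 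It will also be convenient that $\Phi\mapsto[\nabla_X,\Phi]$ is a derivation of $\End(\cH)$ for composition, so that the idempotency and completeness relations among the $\pi_i$ are preserved by the flat equations and can be propagated consistently in the recursion below.

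A preliminary step is to read off the algebra at the central point. Maximality at $b$ means the operators $\mu_b(\xi)=\nabla_{u\xi}|_{\cH_{b,0}}$ span a commutative subalgebra $\cR_b\subseteq\End(\cH_{b,0})$ possessing a cyclic vector $v$; such a subalgebra equals its own commutant, hence is maximal commutative with $\cR_b\cong\cH_{b,0}$ via $R\mapsto Rv$. Restricting the flatness relation $[\nabla_{u^2\partial_u},\nabla_{u\xi}]=u\nabla_{u\xi}$ to $u=0$ gives $[K_b,\mu_b(\xi)]=0$, so $K_b\in\cR_b$; consequently each generalized eigenspace of $K_b$ is $\cR_b$-stable, and by the disjoint-spectrum hypothesis each $H_i$ is a sum of such eigenspaces, hence an ideal of $\cR_b$ — so the required boundary value $\pi^0_i$ already lies in $\cR_b$ and in particular commutes with every $\mu_b(\xi)$. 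The same picture spreads over the formal base: the commutant $\cR_t$ of the $\mu_t(\xi)$ stays maximal commutative ($\cong\cH_{t,0}$, by openness of maximality), giving a finite flat family of algebras over the connected formal base whose central fibre carries the orthogonal idempotents $\pi^0_i$; these lift uniquely to idempotents $e_i(t)\in\cR_t$, so $\cR_t\cong\prod_i\cR_{t,i}$. As in the decomposition of F-manifolds along idempotent vector fields, this splitting of the tangent algebra integrates to $B\cong\prod_iB_i$ with $\dim B_i=\dim\cR_{b,i}=\dim H_i$; since $\mu_t$ annihilates, on the $i$-th summand, all directions tangent to the other factors, the F-bundle $\cH_i$ descends to $B_i$, and the cyclic vector decomposes as $v=\sum_ie_i(b)v$ with $e_i(b)v$ cyclic for $\cH_i$, so each piece is again maximal.

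It remains to construct the $\pi_i$, and here I would solve the flatness equations recursively in power series. Trivialising $\cH$, write $\nabla_{u^2\partial_u}=u^2\partial_u+C(t,u)$ and $\nabla_{u\partial_{t_\alpha}}=u\partial_{t_\alpha}+A_\alpha(t,u)$ with $C,A_\alpha$ expanded in powers of $u$, and look for $\pi_i=\sum_{k\ge0}\pi_i^{[k]}(t)u^k$, the coefficients being power series in $t$ (automatically admissible since $B$ is formal; the coefficient-size estimates needed for the $\bbk$-analytic versions of the statement do not arise here). The $u^0$-equations together with $\pi_i^2=\pi_i$ and the value at $b$ force $\pi_i^{[0]}=e_i(t)$. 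At order $m\ge1$ the equations $[\nabla_{u^2\partial_u},\pi_i]=0$, $[\nabla_{u\partial_{t_\alpha}},\pi_i]=0$ and $\pi_i^2=\pi_i$ pin down $\pi_i^{[m]}$: the component transverse to the splitting $H_i\oplus\bigoplus_{j\ne i}H_j$ is recovered by inverting $\ad_{C(t,0)}=\ad_{K(t)}$ there — legitimate exactly because the blocks of $K_b$, hence of $K(t)$ over the formal base, have disjoint spectra — and the component along that splitting is recovered from the order-$m$ part of $\pi_i^2=\pi_i$, i.e.\ by inverting the unit $2e_i(t)-1$ of $\cR_t$. Alternatively, one may first split $\cH|_{\{b\}\times\bbD_u}$ by the classical formal decomposition of the irregular connection $u^2\partial_u+C(b,u)$ at $u=0$, available since its leading term $K_b$ has the stated block structure with disjoint spectra, and then propagate over $B$ in the base directions.

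The main obstacle is the compatibility of this overdetermined recursion: the single equation along $u^2\partial_u$, the $\dim B$ equations along the base, and the relation $\pi_i^2=\pi_i$ all constrain the same $\pi_i^{[m]}$, and one must verify that the value they force is simultaneously consistent with all of them at each order. This is where the \emph{full} flatness of $\nabla$ — the structure relations $[\nabla_{u\partial_{t_\alpha}},\nabla_{u\partial_{t_\beta}}]=0$ and $[\nabla_{u^2\partial_u},\nabla_{u\partial_{t_\alpha}}]=u\nabla_{u\partial_{t_\alpha}}$ — must be used in an essential way: it makes the ``errors'' of the various equations at order $m$ satisfy a homogeneous linear recursion fed only by lower-order errors (zero by induction) and by $[\nabla_\bullet,\pi_i^2-\pi_i]$ (zero once $\pi_i$ is idempotent to lower order), forcing them to vanish. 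Once this bookkeeping is carried through, the recursion delivers the $\pi_i$, and the assembly in the first two paragraphs yields the product decomposition of $(\cH,\nabla)$ over $B\cong\prod_iB_i$ extending $\cH_{b,0}=\bigoplus_iH_i$.
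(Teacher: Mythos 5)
Your overall architecture is the same as the paper's: read off from maximality a maximal commutative algebra $\cR_b\simeq\cH_{b,0}$ containing $K_b$ (your commutant/cyclic-vector argument is correct), observe that the $H_i$ correspond to ideals cut out by orthogonal idempotents, spread the idempotents over the formal base, decompose $B\simeq\prod_i B_i$ by the F-manifold decomposition, split the bundle by a recursion in powers of $u$ using invertibility of $\ad_{K(t)}$ on off-diagonal blocks, and descend each piece to $B_i$. Two of your local choices are fine and even pleasant: lifting idempotents over the complete local base is an elementary substitute for the paper's deformation-theoretic \cref{lemma:deformation-unital-associative-commutative-algebra}, and invoking the decomposition of (formal) F-manifolds along tangent-algebra idempotents is exactly \cref{theorem:decomposition-F-manifolds} (though note this step is not free in the formal setting: it needs the F-identity and a formal Frobenius theorem, which the paper proves). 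The gloss ``the F-bundle $\cH_i$ descends to $B_i$'' also needs the small argument of \cref{lemma:pullback-F-bundle}, but that is minor.

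The genuine gap is at the crux, the construction of the flat idempotents $\pi_i$. Your system is heavily overdetermined, and your own description of the order-$m$ step is already inconsistent with itself: the $u$-equation prescribes \emph{all} off-diagonal blocks of $\pi_i^{[m]}$ via $\ad_{K(t)}^{-1}$, while the order-$m$ part of $\pi_i^2=\pi_i$ independently prescribes the blocks $\Hom(H_k,H_j)$ with $j,k\neq i$ as well as the diagonal blocks; moreover the block-diagonal part of the $u$-equation and all $n$ base equations are additional constraints, not definitions. Your final paragraph acknowledges this but resolves it only by assertion: ``the errors satisfy a homogeneous linear recursion \dots forcing them to vanish'' is not an argument — homogeneity alone forces nothing; one needs, at minimum, a second use of the disjoint-spectrum hypothesis in the form of invertibility of $\ad$ on off-diagonal blocks (the content of \cref{lemma:commutator-splits-implies-splits}) together with a careful ordering of the recursion, and this is precisely where the paper does its real work. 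The paper avoids the consistency problem by decoupling: \cref{proposition:split-u-direction} block-diagonalizes only the $u$-direction connection matrix by an \emph{unconstrained} recursion (the block-diagonal part of the new matrix is left free, so no compatibility conditions arise), and then \cref{proposition:split-t-directions}, via \cref{lemma:commutator-splits-implies-splits}, shows the $t$-direction matrices are automatically block diagonal as a consequence of flatness. Your proof would become complete if you restructured the bundle-splitting step this way (or actually carried out the deferred compatibility bookkeeping, which in effect amounts to the same two ingredients); as written, the key step is claimed rather than proved.
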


\begin{theorem}[Non-archimedean spectral decomposition, \cref{thm:NA-K-decomposition}] 
   Let $B$ be an admissible open neighborhood of a rational point $b$ in a smooth $\bbk$-analytic space, and $(\cH ,\nabla )$ an F-bundle over $B$ maximal at $b$.
   Assume that we have a decomposition $\cH_{b,0} \simeq \bigoplus_{i\in I} H_i$ stable under $K_b$, and that for any $i\neq j\in I$, the spectra of $K_b\vert_{H_i}$ and $K_b\vert_{H_j}$ are disjoint.
   Then there exists an admissible open neighborhood $U$ of $b$ such that $(\cH\vert_U ,\nabla\vert_U )/U$ decomposes into a product of maximal F-bundles $(\cH_i ,\nabla_i ) /U_i$ extending the decomposition of $\cH_{b,0}$.
\end{theorem}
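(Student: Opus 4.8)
The plan is to run, over a suitable admissible open neighborhood $U\ni b$, the same recursive construction that produces the formal decomposition of \cref{thm:eigenvalue_decomposition}, and then to supply non-archimedean estimates showing that the resulting formal power series converge on $U$. Choose analytic coordinates $t=(t_1,\dots,t_m)$ on $B$ with $b=0$ and, after shrinking, a trivialization of $\cH$ over $B\times\bbD_u$ in which $\nabla_{u^2\partial_u}=u^2\partial_u+A(t,u)$ and $\nabla_{u\partial_{t_j}}=u\partial_{t_j}+B_j(t,u)$ with $A,B_j$ regular in $u$ and $A(0,0)=K_b$ block-diagonal for $\cH_{b,0}=\bigoplus_{i\in I}H_i$. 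Maximality at $b$ together with the hypothesis on $K_b$ makes the commutative product on $T_bB$ a product of rings indexed by $I$, hence splits $T_bB=\bigoplus_i T_i$; the goal is to produce (i) a product splitting $U\simeq\prod_i U_i$ with $T_bU_i=T_i$, and (ii) an invertible gauge transformation $g(t,u)$ with $g(0,0)=\id$ conjugating $\nabla$ into block-diagonal form whose $i$-th block depends only on the $i$-th base factor; the blocks $(\cH_i,\nabla_i)/U_i$ are then the asserted F-bundles.

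For the recursion itself we follow the proof of \cref{thm:eigenvalue_decomposition} verbatim. Writing the unknowns ($g$, and the coordinate change realizing the splitting, obtained from the idempotents of the product on $T_\bullet B$) as power series in $t$ and $u$, the equation for each new coefficient has the form $\ad(K_b)(\cdot)=(\text{known lower-order data})$ on the off-diagonal blocks $\Hom(H_i,H_j)$, $i\neq j$, supplemented by an algebraic normalization fixing the diagonal blocks. On $\Hom(H_i,H_j)$ the operator $\ad(K_b)$ has spectrum $\{\mu-\lambda:\mu\in\mathrm{Spec}(K_b|_{H_j}),\ \lambda\in\mathrm{Spec}(K_b|_{H_i})\}$, which avoids $0$ precisely because the spectra of $K_b|_{H_i}$ and $K_b|_{H_j}$ are disjoint, so it is invertible; the base-splitting recursion likewise inverts a fixed operator, the involution $2L_{e_i}-\id$ attached to the idempotents. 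Crucially, the inverted operator is the same at every step of the recursion — it never acquires a dependence on the order.

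The new point is convergence. The norm of the inverse operator is bounded by a constant depending only on the $K_b|_{H_i}$ (through $\min_{i\ne j}\abs{\mu-\lambda}^{-1}$ and the nilpotent parts). The only integers entering the recursion are the exponents produced when $\partial_u$ hits a power of $u$ or $\partial_{t_j}$ hits a monomial in $t$; since the valuation of $\bbk$ is trivial on $\bbQ$ these have norm $\le 1$, so — in contrast with the archimedean situation, where exactly these factors make Hukuhara--Turrittin-type decompositions merely formal — they contribute no growth. Everything else is a finite combination of products of power series and inversions of $\id+(\text{topologically nilpotent})$: for the Gauss norm on a closed polydisk the first is submultiplicative and the second converges after shrinking the radius. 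Propagating this through the recursion yields a geometric bound $\norm{c_\alpha}\le C\rho^{\abs\alpha}$ on all coefficients, hence convergence on the polydisk of radius $<\rho^{-1}$; taking the finitely many radii needed for $g$, for $g^{-1}$, and for the idempotent extension, one obtains the admissible open $U$ and its factors $U_i$. On $U$ the conjugated connection is flat and block-diagonal, each block retains the regularity of $\nabla_{u^2\partial_u}$ and $\nabla_{u\xi}$ so that $(\cH_i,\nabla_i)/U_i$ is an F-bundle, each is maximal at $b$ because a cyclic vector for $\cH_{b,0}$ over the product ring $T_bB=\bigoplus_i T_i$ has cyclic components, and $\cH|_U$ is their external product.

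The step I expect to be the main obstacle is the convergence bookkeeping: organizing the simultaneous $u$-adic and $t$-adic recursion so that one clean geometric estimate survives passage through all the nonlinear terms (the inverse gauge $g^{-1}$ and the products appearing both in the conjugation and in the flatness identities used to cut down the system), while keeping the radius shrinkage needed to invert $\id+(\text{small})$ and to extend the idempotents mutually compatible, so that a single admissible $U$ remains. A secondary difficulty is producing the base product decomposition $U\simeq\prod_i U_i$ — rather than a mere block decomposition of $\cH$ over a fixed base — and checking, convergently, that it is compatible with the bundle decomposition; this is the part that genuinely uses maximality.
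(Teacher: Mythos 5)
Your overall strategy — rerun the formal construction of \cref{thm:eigenvalue_decomposition} and control the coefficients by non-archimedean norms, exploiting that the only integers in the recursion have norm $\le 1$ and that the inverted operator $\ad(K_b)$ on the off-diagonal blocks is fixed throughout — is the same as the paper's, and your convergence philosophy is the right one: the heart of the paper's argument (\cref{proposition:split-u-direction-NA}) is precisely a bound of the form you predict, obtained by a double induction over an ordered product of elementary gauge transformations $\id+u^mt^vT_{m,v}$. Your suggestion to extend the idempotents of $T_bB$ by a Newton-type iteration inverting $2L_{e_i}-\id$ is a legitimate alternative to the paper's route (a cotangent-complex argument in the formal case, and a relative analytic spectrum plus a topological connected-components argument in the analytic case, \cref{lemma:NA-deformation-tangent-space-splits}); either way one gets a splitting of the tangent sheaf into subalgebras on a neighborhood of $b$.

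The genuine gap is the base decomposition $U\simeq\prod_i U_i$, which you defer as a ``secondary difficulty'' without supplying the idea that makes it work. A splitting of $TU$ into subalgebras $\cD_i$ is not yet a product decomposition of the base: you must show that each $\cD_i$ is closed under Lie bracket and that $[\cD_i,\cD_j]=0$ for $i\neq j$, and this is exactly where the F-identity of the induced F-manifold structure enters (\cref{proposition:A-decomposition-theorem}); nothing in your recursion scheme produces these bracket relations, and they are not formal consequences of the algebra splitting. One then needs commuting local bases of the $\cD_i$ and a non-archimedean Frobenius theorem to integrate them into coordinates (\cref{thm:NA-Frobenius-theorem}), which carries its own convergence estimate (\cref{lemma:generalized-flatness-NA}) separate from the gauge-transformation bound — so there are two independent convergence arguments, not one. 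Relatedly, your claim that after conjugation the $i$-th block ``depends only on the $i$-th base factor'' is not delivered by the $\ad(K_b)$-recursion, which only block-diagonalizes the $u$-direction: the block-diagonality of the $t$-direction matrices follows from flatness and the kernel description of $\ad(K)$ (\cref{proposition:split-t-directions}), the vanishing of the residue of the $i$-th block in the transverse directions uses the cyclic-vector identification $\eta$ and the fact that $\cE_i$ is a subalgebra, and only then does flatness give that the block is a pullback from $U_i$ (\cref{lemma:pullback-F-bundle}). Without these steps you obtain a block decomposition of $(\cH,\nabla)$ over a fixed base $U$, which is strictly weaker than the statement to be proved.
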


For proving the spectral decomposition, first we establish a formal and a non-archimedean version of the Frobenius theorem (see \cref{theorem:Frobenius-theorem,thm:NA-Frobenius-theorem}), by solving recursively a system of partial differential equations (see \cref{theorem:existence-solution-flat-PDE}).
By the maximality assumption, we obtain an F-manifold structure on the base $B$ of the F-bundle (see \cref{def:F-manifold} and \cref{lemma:max-F-bundle-F-manifold,lemma:NA-max-F-bundle-F-manifold}).
The eigenspaces of $K_b$ induce a decomposition of the tangent space $T_b B$ as a $\bbk$-algebra, and we show that this decomposition extends locally around $b$ (\cref{theorem:decomposition-F-manifolds,thm:NA-decomposition-F-manifold}).
To do so, we first prove that the algebra structure on the tangent spaces decomposes via deformations of $\bbk$-algebras (\cref{lemma:deformation-unital-associative-commutative-algebra,lemma:NA-deformation-tangent-space-splits}).
Then, using the F-identity \eqref{eq:F-identity} of the F-manifold, we prove that the induced decomposition of the tangent bundle is a decomposition into commuting integrable distributions (\cref{proposition:A-decomposition-theorem}).
Finally, using the formal and non-archimedean versions of the Frobenius theorem, we integrate those distributions and produce a decomposition of the F-manifold $B\simeq\prod_{i\in I} B_i$.

Having decomposed the base $B$, we use maximality again to obtain a splitting of $\cH\vert_{u=0}$.
The link between the connection $\nabla$ and the F-manifold structure implies that this decomposition is stable under the residue endomorphisms $\nabla_{u^2\partial_u}\vert_{u=0}$ and $\nabla_{u\xi}\vert_{u=0}$ for any $\xi\in TB$.
Using the disjoint spectra assumption, we extend this decomposition to a decomposition of $\cH$ stable under $\nabla_{u^2\partial_u}$ by a recursive procedure, and obtain the decomposition in the formal case, see \cref{proposition:split-u-direction}.
In the non-archimedean case, through a careful analysis of the recursion and the norms of the coefficients, we show that the decomposition is convergent over an admissible open neighborhood of $b$, see \cref{proposition:split-u-direction-NA}.
Finally, using flatness, we prove that the connection also decomposes according to the splitting of $\cH$ (\cref{proposition:split-t-directions}), and that each piece is the pullback of a maximal F-bundle on $B_i$ from the decomposition of the base $B$.

\subsubsection{Extension of framing}

A \emph{framing} for an F-bundle $(\cH ,\nabla ) / B$ is roughly a local trivialization of $\cH$ in which the connection involves no positive powers of $u$ (\cref{def:framing}).
It is analogous to the notion of decoration on variations of nc-Hodge structures in \cite[\S 4.1.3]{Katzarkov_Hodge_theoretic_aspects}.
Framings do not exist in general.
We prove in the following that if a framing exists at a point $b\in B$ and is strong in the logarithmic case, then it extends uniquely and explicitly to a formal or non-archimedean analytic neighborhood.

\begin{theorem}[\cref{theorem:extension-of-framing-connection-version}]
    Let $(\cH,\nabla )/ (B,D)$ be a logarithmic F-bundle, where $B$ is a formal neighborhood of a rational point $b$ in a smooth $\bbk$-variety.
    A framing at $b$ extends to a framing over $B$ if and only if it is strong with respect to $D$ (see \cref{definition:strong-framing}).
    In this case, the extension is unique and explicitly determined.
\end{theorem}

\begin{theorem}[\cref{theorem:extension-faming-NA-F-bundle}]
  Let $B$ be an admissible open neighborhood of a rational point $b$ in a smooth $\bbk$-analytic space. 
  Let $(\cH ,\nabla )$ be a non-archimedean F-bundle over $B$. 
  Then every framing at $b$ extends uniquely and explicitly to a framing over an admissible open neighborhood $U$ of $b$ in $B$.
\end{theorem}

The proofs are carried out by reformulating the problem into a system of partial differential equations (\eqref{eq:framing-u-direction}-\eqref{eq:extension-framing-log-fixing-residues}), which is then solved inductively on the number of variables.
If there are no logarithmic directions and $(t_1,\dots, t_n)$ are coordinates on $B$ centered at $b$, we first solve \eqref{eq:framing-t-direction} in the $t_1$-direction at $t_2=\cdots = t_n=0$ order by order in $t_1$, by observing that the equation provides a recursive relation.
We use this solution as an initial condition, and then solve \eqref{eq:framing-t-direction} in the $t_2$-direction at $t_3=\cdots = t_n=0$.
Using flatness of the connection, we prove that the solution obtained solves the equation in the $t_1$-direction as well, for all $t_2$.
In this way, we solve \eqref{eq:framing-t-direction} for all directions, and we show that the solution also solves \eqref{eq:framing-u-direction} using flatness again.
In the non-archimedean case, we prove that the solution converges by bounding its coefficients using \eqref{eq:framing-t-direction}.

The extension in the formal setting works also for logarithmic F-bundles, under the assumption that the framing at $b$ is strong with respect to $D$ (see \cref{definition:strong-framing}).
This condition implies that the residues $\mu_b (v)$ at $b$ along $u=0$ have nilpotent adjoint endomorphism for $v\in T_bD$, a property we call the nilpotency condition (see \cref{definition:nilpotency-condition}).
This nilpotency condition allows us to extract a recursive relation from \eqref{eq:framing-q-direction}, so we can reconstruct a solution to the equation order by order.
We proceed as in the non-logarithmic case and solve the system of PDEs inductively on the number of variables, this time starting from the logarithmic directions.

Based on the extension of framing theorem, we give a reconstruction result for isomorphisms of logarithmic F-bundles with framing in \cref{subsec:comparison_of_framed_F-bundles}.
We can always reconstruct the bundle isomorphism from its restriction to a point and the framing.
In the maximal case, we can also reconstruct the map on the base from its restriction to a point, up to some multiplicative constants in the logarithmic directions.

\begin{proposition}[\cref{lemma:comparison-framed-F-bundles}]
For $i=1,2$, let $(\cH_i,\nabla_i)/(B_i,D_i)$ be a logarithmic F-bundle where $B_i$ is the formal neighborhood of a rational point in a smooth $\bbk$-variety.
Let $(f,\Phi) \colon (\cH_1,\nabla_1) /\allowbreak (B_1,\allowbreak D_1) \rightarrow (\cH_2,\nabla_2 )/(B_2,D_2)$ be an isomorphism between logarithmic F-bundles with $f(b_1)=b_2$.
Assume $(\cH_1,\nabla_1 )/(B_1,D_1)$ has a framing $\nabla_1^{\fr}$.
\begin{enumerate}[wide]
    \item The bundle map $\Phi$ is uniquely and explicitly determined by its restriction to $\cH_1\vert_{b_1\times\Spf \bbk\dbb{u}}$.
        \item If $(\cH_1 ,\nabla_1 )$ and $(\cH_2,\nabla_2)$ are maximal,
        then the base map $f$ is also uniquely determined by its restriction to $b_1$, up to some multiplicative constants in the logarithmic directions.
		The reconstruction is explicit after fixing compatible cyclic vectors at $b_1$ and $b_2$.
\end{enumerate}
\end{proposition}

Motivated by the extension of framing theorem and our applications in \cref{sec:app-projective-bundle}, we prove the following classification result for framed F-bundles over a point.

\begin{proposition}[\cref{corollary:classification-framed-F-bundle-point}]
Let $\cH\simeq H\times\bbk\dbb{u}$ be a trivialized rank $m$ vector bundle over $\bbk \dbb{u}$.
    Let $(\cH ,\nabla )$ and $(\cH ,\nabla')$ be two F-bundles framed in the given trivialization, and write
    \begin{align*}
        \nabla_{u\partial_u} &= u\partial_u + u^{-1} \bK + \bG, \\
        \nabla_{u\partial_u} ' &= u\partial_u + u^{-1} \bK' + \bG'.
    \end{align*}
    Assume $\bK$ has simple eigenvalues.
    Then $(\cH ,\nabla )$ is isomorphic to $(\cH,\nabla' )$ if and only if there exists $\phi\in \GL (H)$ such that 
        \begin{enumerate}
            \item $\bK = \phi^{-1}\circ \bK'\circ\phi$, and
            \item in an eigenbasis of $\bK$, we have $(\bG)_{ii} = (\phi^{-1}\circ\bG'\circ\phi )_{ii}$ for $1\leq i\leq m$.
        \end{enumerate}  
    Furthermore, the gauge equivalence is uniquely and explicitly determined by the initial condition $\phi$ at $u=0$.
\end{proposition}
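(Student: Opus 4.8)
The plan is to translate the statement into a single gauge equation over $\bbk\dbb{u}$ and solve it recursively in powers of $u$. An isomorphism $(\cH,\nabla)\to(\cH,\nabla')$ of F-bundles over the point is a $\bbk\dbb{u}$-linear automorphism $\Phi(u)=\sum_{k\geq0}\phi_k u^k$ of $\cH=H\times\bbk\dbb{u}$ with $\Phi\circ\nabla_{u\partial_u}=\nabla'_{u\partial_u}\circ\Phi$, which unwinds to
\[
u\partial_u\Phi+(u^{-1}\bK'+\bG')\,\Phi=\Phi\,(u^{-1}\bK+\bG).
\]
Here $\phi:=\phi_0=\Phi|_{u=0}$ is automatically in $\GL(H)$. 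Reading off the coefficient of $u^{-1}$ gives $\bK'\phi=\phi\bK$, i.e.\ condition~(1). Assuming~(1), I then conjugate: setting $\psi_k:=\phi^{-1}\phi_k$ (so $\psi_0=\Id$) and $\tilde\bG:=\phi^{-1}\bG'\phi$, the coefficient of $u^j$ for $j\geq0$ becomes
\[
[\psi_{j+1},\bK]=j\,\psi_j+\tilde\bG\,\psi_j-\psi_j\,\bG .
\]

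Next I fix an eigenbasis of $\bK$ (extending scalars if necessary), so $\bK=\mathrm{diag}(\lambda_1,\dots,\lambda_m)$ with the $\lambda_a$ pairwise distinct, and read the displayed identity entrywise: $(\lambda_b-\lambda_a)(\psi_{j+1})_{ab}=(j\psi_j+\tilde\bG\psi_j-\psi_j\bG)_{ab}$. For $a\neq b$ this expresses the off-diagonal part of $\psi_{j+1}$ in terms of $\psi_j$, using the simple-eigenvalue hypothesis. For $a=b$ the left-hand side vanishes, yielding the scalar relation $0=j(\psi_j)_{aa}+(\tilde\bG\psi_j)_{aa}-(\psi_j\bG)_{aa}$. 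The crux is that the $j=0$ instance of this relation is exactly $\tilde\bG_{aa}=\bG_{aa}$, i.e.\ condition~(2); and for $j\geq1$, after using~(2) to cancel the terms $\tilde\bG_{aa}(\psi_j)_{aa}$ and $(\psi_j)_{aa}\bG_{aa}$, it reads $j(\psi_j)_{aa}=-\sum_{c\neq a}\bigl(\tilde\bG_{ac}(\psi_j)_{ca}-(\psi_j)_{ac}\bG_{ca}\bigr)$, which (as $j\neq0$) determines the diagonal of $\psi_j$ from its off-diagonal entries. This gives a forced recursion: $\psi_0=\Id$; the off-diagonal of $\psi_1$ from the $j=0$ identity; then for each $k\geq1$, first the diagonal of $\psi_k$ from the diagonal part of the $j=k$ identity, and then the off-diagonal of $\psi_{k+1}$ from its off-diagonal part.

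With this the proposition follows. For necessity, an isomorphism $\Phi$ yields $\phi=\Phi|_{u=0}\in\GL(H)$ satisfying~(1) (coefficient of $u^{-1}$) and~(2) (diagonal part of the coefficient of $u^0$). For sufficiency, uniqueness and explicitness: given $\phi\in\GL(H)$ with~(1) and~(2), the recursion above produces a unique sequence $(\psi_k)$, hence a unique $\Phi=\phi\cdot\sum_k\psi_k u^k$ with $\Phi|_{u=0}=\phi$ solving the gauge equation; since $\Phi|_{u=0}=\phi$ is invertible, $\Phi\in\GL_m(\bbk\dbb{u})$, so it is an isomorphism $(\cH,\nabla)\to(\cH,\nabla')$. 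I expect the only delicate point to be the bookkeeping of this recursion --- in particular the fact that the diagonal of $\psi_k$ is pinned down ``one step late'', by the diagonal part of the $j=k$ identity rather than the $j=k-1$ one, together with the consistency of all the diagonal constraints, which hinges precisely on condition~(2); the off-diagonal solvability and the invertibility of $\Phi$ are then routine. (If the eigenvalues of $\bK$ do not lie in $\bbk$, one works over a splitting field and observes that~(1),~(2), the construction, and the uniqueness are all insensitive to this base change.)
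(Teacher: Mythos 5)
Your proposal is correct and follows essentially the same route as the paper: the order-by-order gauge equation, the decomposition of $\End(H)$ into the kernel (diagonal part) and image (off-diagonal part) of $\ad_{\bK}$ in an eigenbasis, and the bookkeeping in which the diagonal of each coefficient is pinned down one step late by the solvability of the next equation are exactly the recursion of \cref{lemma:gauge-eq-F-bundle-point}, which the paper invokes via \cref{theorem:gauge-eq-F-bundle-point} and then specializes to the simple-eigenvalue case. You simply carry out that specialization directly, and your closing remark about working over a splitting field and descending by uniqueness is a harmless refinement of a point the paper leaves implicit.
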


Proceeding order by order in $u$, we reformulate the gauge equivalence of the two connections as a system of equations \eqref{eq:gauge-eq-F-bundle-point-initial-condition}-\eqref{eq:gauge-eq-F-bundle-point-recursion} involving the adjoint map $[\bK,\cdot ]$.
When the eigenvalues are not simple, the equations are hard to solve because the map $[\bK ,\cdot ]$ does not have an easy description.
We provide a partial classification in \cref{theorem:gauge-eq-F-bundle-point}, under the assumption that all the generalized eigenspaces of $\bK$ have the same dimension, and by restricting the type of coefficients we allow in the connections.
The assumption on the coefficients allows us to work relative to a universal algebra.
Relative to this algebra, the endomorphism $\bK$ has simple eigenvalues, and we are able to solve the system.

We illustrate an application of these results in the next paragraph.
The reconstruction of isomorphisms also has applications in the reconstruction of mirror maps in Hodge-theoretic mirror symmetry, which we plan to explore in a subsequent work.

\subsubsection{Application to the decomposition of quantum cohomology}

Let $V \to X$ be a rank $m$ vector bundle on a smooth complex projective variety $X$, $P\coloneqq \bbP(V)\xrightarrow{\pi} X$ the associated projective bundle, and $h\coloneqq c_1(\cO_P(1))$.
We have a natural splitting
\begin{equation}\label{eq: classical decomposition}
	    \iso\colon \bigoplus_{i=0}^{m-1} H^{\ast}(X,\bbQ)[-2i] \xrightarrow{\sum h^{i}\cup \pi^*} H^{\ast}(P,\bbQ).
\end{equation}

Fix an ample class $\omega_X\in H^2 (X,\bbZ)$, and a homogeneous basis $\lbrace T_j\rbrace_{0\leq j\leq N}$ of $H^{\ast} (X,\bbQ )$ extending $\lbrace \1 , \omega_X\rbrace$.
We obtain the A-model maximal F-bundle $(\cH,\nabla)$ for $P$ over a formal base $B$ with closed point $b$ given by $0\in H^{\ast} (X,\bbQ )$ (see \cref{example:modified-quantum-F-bundle}).
Let $X' \coloneqq \coprod_{0\leq i\leq m-1} X$, and $(\cH',\nabla')$ the A-model maximal F-bundle over a formal base $B'$ with closed point $b'$ given by $\Delta (a)\in H^{\ast} (X',\bbC )$.
We denote by $(a_{i,j} )$ the coordinates of $\Delta (a)$ in the basis of $H^{\ast}(X',\bbC )$ induced from $\lbrace T_{j}\rbrace$ using \eqref{eq: classical decomposition}.

Our first result shows the existence and uniqueness of a gauge equivalence over the base points.

\begin{theorem} [\cref{thm:uniqueness-projective-bundle-u=0}]
There exists an F-bundle isomorphism 
\[\Phi(u)\colon (\cH  ,\nabla )\vert_b \rightarrow (\cH' ,\nabla' )\vert_{b'},\]
whose components $\Phi_{ij}$ (as power series in $u$) are given by the cup-product with elements in $H^{\ast} (X,\bbC )$ if and only if the coordinates of the base point $\Delta (a)$ satisfy 
\begin{equation} \label{eq:coordinate-base point-intro}
\sum_{j\colon\deg T_j\neq 2} \frac{\deg T_j - 2}{2} a_{i,j} T_j = c_1V + m\lambda_i ,
\end{equation}
where $\lambda_i$ was defined in \cref{lemma:jordan-dec-Klim}.

Furthermore, in this case $\Phi$ is uniquely determined by the $H^0$-components of $\Phi_{ij}\vert_{u=0}$, and $\Delta (a)$ is uniquely determined by \eqref{eq:coordinate-base point-intro}, up to a shift in $\bigoplus_{i=1}^m H^2 (X,\bbC )$.
\end{theorem}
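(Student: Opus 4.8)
The plan is to realize both restricted F-bundles as framed F-bundles over a point and then apply the partial classification \cref{theorem:gauge-eq-F-bundle-point}. Restricting to the base points, $(\cH,\nabla)\vert_b$ and $(\cH',\nabla')\vert_{b'}$ are F-bundles over $\Spf \bbk\dbb{u}$; in the classical-cohomology trivializations — for $P$ the one furnished by \eqref{eq: classical decomposition}, for $X'$ the standard one — their quantum connections take the framed form
\[
\nabla_{u\partial_u} = u\partial_u + u^{-1}\bK + \bG, \qquad \nabla'_{u\partial_u} = u\partial_u + u^{-1}\bK' + \bG',
\]
where $\bK = \Klim$ and $\bG$ is the associated grading operator on $H^*(P,\bbC)$, and — since $X'$ is a disjoint union of $m$ copies of $X$ — $\bK'$ and $\bG'$ are block-diagonal, the $i$-th blocks being the Euler action $E_X\star_{a_i}$ at the base point $a_i$ and the grading operator of $X$. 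Via \eqref{eq: classical decomposition} both $\cH\vert_b$ and $\cH'\vert_{b'}$ become $\bigoplus_{i=0}^{m-1}H^*(X,\bbC)\dbb{u}$, and an isomorphism $\Phi(u)$ whose components $\Phi_{ij}$ are cup products with elements of $H^*(X,\bbC)$ is exactly a gauge equivalence of the "restricted-coefficient" type treated by \cref{theorem:gauge-eq-F-bundle-point}. So the first step is to record all of this and to check that $\bK,\bK',\bG,\bG'$ are of the allowed type, which is part of \cref{lemma:jordan-dec-Klim}.

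Next I would apply the classification and unwind its two conditions. By \cref{lemma:jordan-dec-Klim}, after a cup-product-type change of trivialization $\bK$ is block-diagonal with $i$-th block the multiplication by a class with leading term $\lambda_i$, the $\lambda_i$ being pairwise distinct modulo nilpotents (their degree-$0$ parts are the $m$ distinct roots governing the fiber $\bbP^{m-1}$); hence all generalized eigenspaces of $\bK$ have dimension $\dim H^*(X,\bbC)$, and relative to the universal algebra in which those roots live $\bK$ has simple eigenvalues. \cref{theorem:gauge-eq-F-bundle-point} then yields: a cup-product-type isomorphism $\Phi$ exists if and only if there is an invertible cup-product-type $\phi$ with (i) $\bK = \phi^{-1}\bK'\phi$ and (ii) the diagonal blocks of $\bG$ and $\phi^{-1}\bG'\phi$ agreeing in an eigenbasis of $\bK$, and in that case $\Phi$ is uniquely and explicitly determined by $\phi = \Phi\vert_{u=0}$. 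Because $\bK$ and $\bK'$ are block-diagonal with distinct sheet-eigenvalues, $\phi$ must be block-diagonal; condition (i) then says that on the $i$-th sheet the Euler action of $P$ is conjugate to that of $X$ at $a_i$, and comparing the parts in degrees $\ne 2$ — using the relative Euler sequence to write $c_1(P)$ in terms of $h$, $\pi^*c_1(V)$, $\pi^*c_1(X)$, the description of $\lambda_i$ in \cref{lemma:jordan-dec-Klim}, and the fact that in the Euler class $c_1(X)+\sum_j(1-\tfrac{\deg T_j}{2})a_{i,j}T_j$ of $X$ the coordinates $a_{i,j}$ with $\deg T_j = 2$ drop out (their coefficient vanishes) — this identity is precisely \eqref{eq:coordinate-base point-intro}. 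On each sheet $\bG$ and $\bG'$ restrict to the grading operator of $X$ up to a common scalar (by construction of the $X'$-F-bundle), so condition (ii) reads $\mu_X = \phi_{ii}^{-1}\mu_X\phi_{ii}$, forcing each $\phi_{ii}$ to be a nonzero scalar; these scalars are the $H^0$-components of $\Phi_{ij}\vert_{u=0}$ (the off-diagonal ones forced to vanish since $\phi$ is block-diagonal), and the uniqueness clause of \cref{theorem:gauge-eq-F-bundle-point} gives the asserted uniqueness of $\Phi$.

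Finally, for the residual ambiguity: the "only if" direction shows that any admissible $\Delta(a)$ satisfies \eqref{eq:coordinate-base point-intro}, which pins down every coordinate $a_{i,j}$ with $\deg T_j\ne 2$; the coordinates with $\deg T_j=2$ never appear in \eqref{eq:coordinate-base point-intro}, and by the divisor equation a shift of some $a_i$ inside $H^2(X,\bbC)$ alters the $X'$-F-bundle only by a cup-product-type gauge transformation, which can be absorbed into $\Phi$ — so $\Delta(a)$ is determined up to a shift in $\bigoplus_{i=1}^m H^2(X,\bbC)$. I expect the main obstacle to be precisely that $\bK$ fails to have simple eigenvalues in the naive sense (its generalized eigenspaces have dimension $\dim H^*(X,\bbC)$): this is what rules out a direct appeal to \cref{corollary:classification-framed-F-bundle-point} and forces the passage to the universal algebra behind \cref{theorem:gauge-eq-F-bundle-point}, which itself rests on the precise Jordan decomposition of $\Klim$ in \cref{lemma:jordan-dec-Klim}; a secondary subtlety is to keep track of exactly which part of $\Delta(a)$ is recorded by the conjugacy class of $\Klim$, so that condition (i) unwinds to the exact identity \eqref{eq:coordinate-base point-intro} rather than to a weaker one.
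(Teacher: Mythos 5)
Your overall strategy is the same as the paper's: restrict both F-bundles to the base points, identify the fibers with $H_{\spl}$ via the classical splittings, observe both connections lie in the restricted class $\cF (H_{\spl},\id ,\bG_X ,(T_j\cup ))$ of \cref{def:F-bundle-point-restricted-connection}, and apply \cref{theorem:gauge-eq-F-bundle-point} over the universal algebra, with \cref{lemma:jordan-dec-Klim} supplying the conjugating matrix $\phi$ and the eigenvalue classes $(c_1T_X+c_1V+m\lambda_i)\cup$; your unwinding of condition (1) into \eqref{eq:coordinate-base point-intro} and your account of the residual $H^2$-ambiguity are essentially the paper's. However, there is a genuine gap in your verification of the diagonal condition (condition (3) of \cref{theorem:gauge-eq-F-bundle-point}). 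The two grading operators do not agree ``up to a common scalar'': in the splitting, $\bG_{\lim}$ has blocks $\bG_X-\tfrac{m-1}{2},\,\bG_X-\tfrac{m-3}{2},\dots ,\bG_X+\tfrac{m-1}{2}$, while $\bG'$ has blocks $\bG_X$, so the difference $\bH'=\bG_{\lim}-\bG_{\spl}$ is a diagonal matrix of \emph{block-dependent} scalars. Moreover the conjugating matrix $\phi$ of \cref{lemma:jordan-dec-Klim} mixes the $m$ sheets (it diagonalizes the companion matrix of \cref{prop:Klim_proj}; modulo nilpotents it is the DFT/Vandermonde matrix), so it is not block-diagonal, and the required identity $(\phi^{-1}\bH'\phi )_{ii}\equiv 0 \bmod (\bc_1,\dots ,\bc_m)$ is a nontrivial computation rather than the tautology $\mu_X=\phi_{ii}^{-1}\mu_X\phi_{ii}$ you state. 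The paper isolates this as \cref{lemma:vanishing-diagonal}, which uses the eigenvector relations of the companion matrix together with the fact that the scalars $-\tfrac{m-1}{2},\dots ,\tfrac{m-1}{2}$ sum to zero; without this step the ``if'' direction is incomplete.

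A second, related error: your claim that $\phi$ must be block-diagonal, with scalar diagonal entries and vanishing off-diagonal entries, is false and distorts the uniqueness clause. The actual $\Phi\vert_{u=0}$ is (up to block-diagonal corrections) the non-block-diagonal diagonalizing matrix above, whose off-diagonal scalar entries are roots of unity; the statement that $\Phi$ is determined by the $H^0$-components of \emph{all} the $\Phi_{ij}\vert_{u=0}$ corresponds precisely to the initial-condition clause $\bPhi\vert_{u=0}=\phi \bmod (\bc_1,\dots ,\bc_r)$ of \cref{theorem:gauge-eq-F-bundle-point}, combined (in the paper) with nondegeneracy of the Poincaré pairing to pass from cup-product operators back to cohomology classes. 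So you should drop the block-diagonality argument, keep the reduction of condition (1) to \eqref{eq:coordinate-base point-intro} via the $R$-linear eigenvalues of $\bK_{\lim}$, and supply the vanishing-diagonal computation to close the ``if'' direction.
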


Next, we extend the uniqueness result over the bases $B$ and $B'$.
The existence is shown by Iritani-Koto \cite{Iritani_Quantum-cohomology-projective-bundle}.

\begin{theorem}[\cref{thm:uniqueness-projective-bundle}]
Let $(f, \Phi)\colon (\cH,\nabla)/B \to (\cH',\nabla')/B'$ be an isomorphism of F-bundles. Then 
\begin{enumerate}[wide]
    \item The bundle map $\Phi$ is uniquely and explicitly determined by its restriction to $b\in B$.
    \item The base map $f$ is uniquely and explicitly determined by its restriction to $b\in B$, up to a multiplicative constant in the $q$ direction.
\end{enumerate}
\end{theorem}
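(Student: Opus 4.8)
The plan is to deduce both statements as an instance of the general reconstruction result \cref{lemma:comparison-framed-F-bundles}, once we check that the A-model F-bundles of $P$ and of $X'$ satisfy its hypotheses. An isomorphism of F-bundles over formal bases carries the closed point to the closed point, so $f(b)=b'$, and after a translation we may assume both base points are $0$ in an affine space. It then remains to verify, for the pair $(\cH,\nabla)/(B,D)$ and $(\cH',\nabla')/(B',D')$: (a) both are logarithmic F-bundles, with $D$, $D'$ the respective $q$-loci; (b) both are maximal; (c) both satisfy the nilpotency condition of \cref{definition:nilpotency-condition} at $b$, resp.\ $b'$, along the logarithmic directions; and (d) $(\cH,\nabla)/B$ admits a framing in the sense of \cref{def:framing}. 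Items (a) and (b) are part of the construction recalled in \cref{example:modified-quantum-F-bundle}.

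For (c): unwinding \cref{definition:nilpotency-condition}, the nilpotency condition along the $q$-direction amounts (up to a compatibility with $K_b$) to the residue of $\nabla$ along $D$, acting on $\cH|_{u=0}$, being nilpotent; at the large-volume point $q=0$ this residue is cup product by the hyperplane class $h$ on $H^{\ast}(P,\bbC)$ (resp.\ by the relevant Kähler class on $H^{\ast}(X',\bbC)$), which is nilpotent because multiplication by a positive-degree element of a finite-dimensional graded-commutative ring is nilpotent; the compatibility with $K_b$ holds since all structures are the undeformed classical ones at $q=0$. For (d): in the standard cohomology trivialization $\cH\simeq H^{\ast}(P,\bbC)\otimes\cO$ used in \cref{example:modified-quantum-F-bundle}, the connection has the usual quantum shape, schematically $\nabla_{u\partial_u}=u\partial_u+\mu+u^{-1}(E\star)$, $\nabla_{\xi}=\partial_{\xi}+u^{-1}(\xi\star)$ and $\nabla_{q\partial_q}=q\partial_q+u^{-1}(h\star)$, none of which involves positive powers of $u$; hence this trivialization is a framing over all of $B$, and likewise for $(\cH',\nabla')/B'$ since $X'$ is a disjoint union of shifted copies of $X$.

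Granting (a)--(d), \cref{lemma:comparison-framed-F-bundles}(1) gives that $\Phi$ is uniquely and explicitly determined by its restriction to $\cH|_{b\times\Spf\bbk\dbb{u}}$, i.e.\ by $\Phi|_b$ (which by \cref{thm:uniqueness-projective-bundle-u=0} is in turn pinned down by the $H^0$-parts at $u=0$, but this refinement is not needed here); this is statement (1). For (2), using maximality of both F-bundles and fixing the compatible cyclic vectors given by the fundamental classes $\1_P\in H^{\ast}(P,\bbC)$ and $\1_{X'}\in H^{\ast}(X',\bbC)$, \cref{lemma:comparison-framed-F-bundles}(2) gives that $f$ is uniquely and explicitly determined by $f|_b$ up to multiplicative constants in the logarithmic directions; in the normalization of \cref{example:modified-quantum-F-bundle} there is exactly one such direction, namely $q$, which yields the asserted ambiguity.

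The final step is thus formal; the real work is the bookkeeping in the middle paragraph: matching the paper's definitions of logarithmic F-bundle and of the nilpotency condition to the explicit quantum-cohomology data, confirming that the "modified" A-model normalization retains precisely one logarithmic ($q$) direction so that the residual ambiguity for $f$ is a single multiplicative constant, checking that the standard-trivialization framing meets \cref{def:framing} in every direction including $q$, and verifying the cyclic-vector compatibility needed to make the reconstruction of $f$ explicit. I expect the identification of the nilpotency condition with the (evident) nilpotence of $h\cup$ on $H^{\ast}(P,\bbC)$ to be the one point requiring genuine care.
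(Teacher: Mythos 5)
Your overall route is the paper's: both statements are obtained by feeding the two A-model F-bundles (framed in their defining trivializations, maximal, with the unit giving compatible cyclic vectors) into \cref{lemma:comparison-framed-F-bundles}. The paper's own proof is exactly this, and its explicit checks are on the $X'$-side: since $\omega'$ is ample the product at $b'$ is classical, so $(\1,\dots,\1)$ is cyclic and the $q$-residue is the nilpotent operator $\omega'\cup$.

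However, your verification of the nilpotency condition on the $P$-side is wrong as written, and not just cosmetically. The logarithmic direction of $(\cH,\nabla)/B$ is the one attached to the nef class $\omega_P=\pi^*\omega_X$ chosen in \cref{example:modified-quantum-F-bundle}, not to the hyperplane class $h$, and at the limiting point the quantum product of $P$ is \emph{not} ``undeformed classical'': curve classes $\beta$ with $\beta\cdot\omega_P=0$ (multiples of the fiber line) survive at $q=0$, which is precisely why $\bK_{\lim}$ in \cref{prop:Klim_proj} contains the non-classical term $mK_4$. In particular, quantum multiplication by $h$ at $b$ equals $K_2+K_4$, i.e.\ the companion matrix \eqref{eq:companion-matrix}, whose eigenvalues are $m$-th roots of unity modulo nilpotents; it is not nilpotent, so your justification fails if taken literally. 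The hypothesis is nonetheless true for the correct residue $\mu_b(q\partial_q)=\omega_P\star_b$: by the divisor axiom the fiber-class corrections vanish because $\beta\cdot\omega_P=0$, so this residue is classical cup product with $\pi^*\omega_X$, hence nilpotent, and so is its adjoint, which is all that \cref{definition:nilpotency-condition} asks (the condition involves no compatibility with $K_b$). With that correction your argument closes and coincides with the paper's.
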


In \cref{thm:uniqueness-blowup-u=0,thm:uniqueness-blowup}, we state the analogous results in the case of blowups of smooth complex projective varieties.

\subsection{Related works} \label{subsec:related_works}

Various related but slightly different concepts of F-bundles have been studied in the literature.
We refer to \cite{Dubrovin_Geometry_of_2D, Manin_Frobenius_manifolds} for Frobenius manifolds,
\cite{Saito_Period-mapping-associated-to-a-primitive-form, Sabbah_Isomonodromic_deformations} for Saito structures, 
\cite{Hertling_tt-star-geometry-Frobenius-manifolds-connections-and-singularities,David_Hertling_Meromorphic_connecions_over_F_manifolds} for (TE)-structures and variations,
\cite{Baranikov_semi_infinite_hodge_structutres} for semi-infinite variations of Hodge structures,
\cite{Hertling_Manin_Weak_Frobenius_manifolds, Hertling_Frobenius-manifolds-moduli-space-singularities} for 
F-manifolds,
\cite{Katzarkov_Hodge_theoretic_aspects, KKP2} for nc-Hodge structures,
and \cite{Cecotti-Vafa-topological-anti-topological-fusion,Simpson_the_hodge_filtration_on_nonabelian, Takuro_kobayashi_hitchin_correspondence} for other related works. 
Logarithmic variants of Frobenius manifolds and (TE)-structures were introduced in \cite{Reichelt_Logarithmic-Frobenius-manifolds, Reichelt_Thomas_Sevenheck_Christian_logarithmic_frobenius_manifolds_hypergeometric_systems_and_quantum_d_modules}.

Works related to our decomposition theorems for F-bundles include \cite{Dubrovin_Geometry_of_2D} for the decomposition of semisimple Frobenius manifolds, \cite{Sabbah_Isomonodromic_deformations} for the decomposition of meromorphic connections, \cite{Hertling_Frobenius-manifolds-moduli-space-singularities} for the decomposition of F-manifolds, and \cite{Yu_Zhang_topological_Laplace_transform_and_decomposition_of_nc_hodge_structures} for the comparison of the spectral decomposition and the vanishing cycle decomposition of nc-Hodge structures.
Analogs of our extension of framing theorem were studied in \cite{Iritani_quantum_D_modules_and_equivariant_Floer_theory, Coates_Hodge-theoretic_mirror_symmetry_for_toric_stacks} for the $q$-direction, in \cite{David_Hertling_Meromorphic_connecions_over_F_manifolds} for the $t$-direction, and in \cite{Iritani_quantum_D_modules_and_generalied_mirror_transformations} under different assumptions.
We refer to \cite{Iritani_Quantum-cohomology-of-blowups, Iritani_Quantum-cohomology-projective-bundle} for the decomposition of quantum D-modules for projective bundles and blowups.

\subsection{Acknowledgements}

Non-archimedean F-bundles were considered in \cite{KKPY_Birational_invariants}, and play a pivotal role in the application from Gromov-Witten theory to birational geometry.
Discussions with Ludmil Katzarkov, Maxim Kontsevich and Tony Pantev provided invaluable motivations, perspectives and ideas for this paper.
Special thanks to Hiroshi Iritani who generously answered our questions and pointed out relevant literature.
We are grateful to Sheel Ganatra, Mark Gross, Yuki Koto, Todor Milanov, Yong-Geun Oh, Constantin Teleman, Yukinobu Toda, Song Yu and Xiaolei Zhao for inspiring discussions around the subject.
The authors were partially supported by NSF grants DMS-2302095 and DMS-2245099.

\section{Basic definitions and examples} \label{sec:F-bundle}

In this section, we give the basic definitions regarding F-bundles and give the example of the A-model F-bundle. 
\subsection{Notion of F-bundle}

Let $\bbD_u$ denote the germ at $0$ in a $\bbk$-analytic closed unit disk with coordinate $u$.

\begin{definition}[F-bundle] \label{def:F-bundle}
    Let $B$ be a smooth $\bbk$-analytic space (resp.\ a smooth formal scheme over $\bbk$).
    An \emph{F-bundle} $(\cH, \nabla)$ over $B$ consists of a vector bundle $\cH$ over $B\times \bbD_u$ (resp.\ over $B\times \Spf \bbk\dbb{u}$), and a meromorphic flat connection $\nabla$ on $\cH$ with poles along $u=0$, such that $\nabla_{u^2\partial_u}$ and $\nabla_{u\xi}$ are regular for any tangent vector field $\xi$ on $B$.
\end{definition}

For applications to Gromov-Witten theory (see \cref{subsection: non-archimean F-bundle example}), the base $B$, the vector bundle $\cH$ and the connection $\nabla$ should all be understood in the context of supergeometry (see \cite[\S 4]{Manin_gauge_field_theory_and_complex_geometry}).

Given a map $f \colon B' \to B$, the pullback $f^{\ast} (\cH ,\nabla ) \coloneqq ((f\times\id_u )^{\ast} \cH , (f\times \id_u)^{\ast} \nabla )$ is an F-bundle on $B'$.

In the formal case, we introduce the notion of logarithmic F-bundle. 

\begin{definition}[Logarithmic F-bundle] \label{def:log-F-bundle}
  Let $B$ be a smooth formal scheme over $\bbk$ together with a normal crossing divisor $D \subset B$.
  A \emph{logarithmic F-bundle} over $(B,D)$ consists of a vector bundle $\cH$ over $B \times \Spf \bbk\dbb{u}$ and a meromorphic flat connection $\nabla$ on $\cH$ with poles along $u=0$, such that $\nabla_{u^2\partial_u}$ and $\nabla_{u\xi}$ are regular for any log tangent vector field $\xi$ on $B$.
\end{definition}

Below we formulate several definitions for logarithmic F-bundles, which also apply to non-archimedean F-bundles, up to replacing logarithmic tangent vectors by analytic tangent vectors.

\begin{remark}[Restriction to $u=0$] \label{remark:restriction-u=0-linear-map}
    Let $(\cH ,\nabla )/(B,D)$ be a logarithmic F-bundle and $\xi$ a logarithmic vector field on $B$.
    The failure of $\cO_B$-linearity of the operator $\nabla_{u\xi}$ is given by the symbol
    \begin{align*}
      \sigma (\nabla_{u\xi})\colon  T^{\ast} B\otimes_{\cO_{B\times\Spf \bbk\dbb{u}}} \cH & \longrightarrow  \cH  \\
      df\otimes h & \longmapsto  [\nabla_{u\xi},f]h.  
    \end{align*}
  We have $\sigma (\nabla_{u\xi} ) (df\otimes h) = df(u\xi ) h$, which vanishes at $u=0$.
  We thus obtain a map
  \begin{equation} \label{eq:mu-map}
  \begin{aligned}
    \mu\colon  T B(-\log D) & \longrightarrow  \End_{\cO_B} (\cH \vert_{u=0} ) \\
    \xi &\longmapsto  \nabla_{u\xi} \vert_{u=0}.
  \end{aligned}
  \end{equation}
In a similar way, the restriction of $\nabla_{u^2\partial_u }$ to $\cH\vert_{u=0}$ is $\cO_B$-linear.
\end{remark}

Let $b = \Spec \bbk\rightarrow B$ be a closed point.
The map \eqref{eq:mu-map} induces a map
\begin{equation} \label{eq:residue-map}
  \mu_b\colon T_b B(-\log D) \longrightarrow \End (\cH_{b,0} ) .
\end{equation}

Let $K_b$ denote the action of $\nabla_{u^2 \partial_u}$ on $\cH_{b,0}$.
The flatness of $\nabla$ implies that the image of $\mu_b$ consists of commuting operators, which also commute with $K_b$.

\begin{definition} \label{definition:maximal-F-bundle}
  A logarithmic F-bundle $(\cH,\nabla)/(B,D)$ is called \emph{maximal} (resp.\ \emph{over-maximal}) at a closed point $b = \Spec \bbk \to B$ if there exists a \emph{cyclic vector} for the action $\mu_{b}$, i.e.\ a vector $h\in \cH_{b,0}$ such that the map 
  \[ T_b B(-\log D) \longto \cH_{b,0},\quad v\longmapsto \mu_b(v)(h) \]
  is an isomorphism (resp.\ epimorphism).
  It is called maximal (resp.\ over-maximal) if it is maximal (resp.\ over-maximal) everywhere.
\end{definition}

In the maximal case, the dimension of $T_b B(-\log D)$ is equal to the rank of $\cH$, and $\mu_b$ induces an inclusion from $T_b B(-\log D)$ to $\End(\cH_{b,0})$.
We obtain a commutative associative product structure on $T_b B$, given by 
\begin{equation} \label{eq:product-structure-max-F-bundle}
 \mu_b(v_1\star v_2)(h)=  \mu_b(v_2)\circ \mu_b(v_1)(h).
\end{equation}

\begin{definition} \label{definition:Euler-vector-field}
  Let $(\cH ,\nabla )/(B,D)$ be a maximal logarithmic F-bundle.
  The unique logarithmic vector field $\Eu$ on $B$ with $\mu (\Eu ) = K \coloneqq \nabla_{u^2\partial_u}\vert_{u=0}$ is called the \emph{Euler vector field}. 
\end{definition}

\begin{definition} \label{def:framing}
  For a logarithmic F-bundle $(\cH, \nabla)$ over $(B,D)$, a \emph{framing} is another flat connection $\nabla^\fr$ on $\cH$ without poles, such that in the local trivializations of $\cH$ given by $\nabla^\fr$, if we denote by $H$ the vector space of local flat sections, the original connection $\nabla$ has the form
  \begin{equation}\label{eq:framing}
    \nabla_{\partial_u} = \partial_u + \frac{1}{u^2} \bK + \frac{1}{u} \bG, \qquad \nabla_\xi = \xi + \frac{1}{u} \bA(\xi)
  \end{equation}  
  for any logarithmic vector field $\xi$ on $B$, where $\bK, \bG$ are $\End(H)$-valued functions on $B$, and $\bA$ is an $\End(H)$-valued 1-form on $B$.
\end{definition}

We give the definition of product of logarithmic F-bundles. 
The definition is analogous in the non-archimedean case.

\begin{definition}[Product of F-bundles] \label{def:product-F-bundle}
    The product of two logarithmic F-bundles $(\cH_1,\nabla_1 ) /(B_1,\allowbreak D_1)$ and $(\cH_2,\nabla_2)/(B_2,D_2)$ is the F-bundle $(\cH ,\nabla ) / (B,D)$ defined over $B = B_1\times B_2$, with divisor $D= (D_1\times B_2)\cup (B_1\times D_2)$, by 
    \begin{align*}
        \cH &= \pr_1^{\ast}\cH_1 \oplus \pr_2^{\ast}\cH_2 , \\
        \nabla &= \pr_1^{\ast}\nabla_1 \oplus \pr_2^{\ast}\nabla_2,
    \end{align*}
    where $\pr_i\colon B_1\times B_2\times \Spf\bbk\dbb{u}\rightarrow B_i\times \Spf\bbk \dbb{u}$ denotes the projection for $i=1, 2$.
\end{definition}

\subsection{Example of A-model F-bundle} \label{subsection: non-archimean F-bundle example}

Let $X$ be a smooth projective variety over $\bbC$.
The rational Gromov-Witten invariants of $X$ can be encoded in an F-bundle, called the A-model F-bundle associated to $X$, also known as the quantum D-module (see \cite{Givental_Homological_geometry_and_mirror_symmetry}).
Here we will give a logarithmic version and a non-archimedean version of the A-model F-bundle.

\subsubsection{Gromov-Witten potential and quantum product}

Fix a homogeneous basis $(T_i)_{0\leq i\leq N}$ of $H^{\ast} (X,\bbQ )$, such that $T_0=\1$ is the unit, and $(T_1,\dots, T_k)$ is a basis of $H^2 (X,\bbQ )$. 
Let $(T^i)_{0\leq i\leq N}$ denote the dual basis with respect to the cup product pairing.

Let $\bbQ \dbb{\NE (X,\bbZ )}$ denote the completion of $\bbQ [\NE (X,\bbZ )] = \bbQ [q^{\beta}\, \vert\, \beta\in \NE (X,\bbZ )]$ with respect to the maximal ideal $(q^{\beta} , \,\beta\neq 0)$.
We write $\bbk\dbb{\NE (X,\bbZ )}\coloneqq \bbQ \dbb{\NE (X,\bbZ )}\otimes_{\bbQ}\bbk$.

The \emph{genus $0$ Gromov-Witten potential} is 
\begin{equation} \label{eq:GW_potential}
  \Phi = \sum_{n\ge 0,\beta} \frac{q^\beta}{n!} \sum_{i_1,\dots,i_n} \braket{T_{i_1}\cdots T_{i_n}}_{0,n}^{\beta} t_{i_1}\cdots t_{i_n} \in \bbQ \dbb{\NE (X,\bbZ )}\dbb{t_0,\dots, t_N} ,
\end{equation}
where $\langle \cdots \rangle_{0,n}^{\beta}$ denotes the Gromov-Witten invariants of $X$ of genus $0$, class $\beta$ and observables $T_{i_1}, \dots, T_{i_n}$. 

The \emph{(big) quantum product} is given by
\begin{equation}\begin{aligned} \label{eq:quantum_product}
\star \colon H^{\ast} (X,\bbQ ) \otimes H^{\ast} (X,\bbQ ) &\longto H^{\ast} (X,\bbQ ) \otimes \bbQ \dbb{\NE (X,\bbZ )}\dbb{t_0,\dots, t_N} \\
T_i \star T_j &\longmapsto \sum_r \frac{\partial^3\Phi}{\partial t_i \partial t_j \partial t_r} T^r ,
\end{aligned}\end{equation}
where
\begin{equation} \label{eq:derivative-GW-potential}
  \frac{\partial^3\Phi}{\partial t_i \partial t_j \partial t_r} = \sum_{n\ge 0,\beta} \frac{q^\beta}{n!} \sum_{i_1,\dots,i_n} \braket{T_i T_j T_r T_{i_1}\cdots T_{i_n}}_{0,n+3}^{\beta} t_{i_1}\cdots t_{i_n} .
\end{equation}

In \cref{sec:app-projective-bundle}, we will use a quantum product at a shifted origin, which we explain in the following lemma.

\begin{lemma}\label{lemma:shifted-quantum-product}
Let $\Delta (a) = \sum_{0\leq i\leq N} a_i  T_i\in H^{\ast} (X,\bbk )$.
Assume $\Delta (a)$ has no terms of degree 1 or 2.
Then applying the shift $t = (t_0,\dots, t_N) \mapsto t+a = (t_0+a_0 , \dots , t_N+a_N)$ to $\Phi$ produces a well-defined element $\Phi (t+a)\in\bbk\dbb{\NE (X ,\bbZ )}\dbb{t_0,\dots, t_N}$.
\end{lemma}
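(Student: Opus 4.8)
The point is that the naive substitution $t_i \mapsto t_i + a_i$ in the monomial $t_{i_1}\cdots t_{i_n}$ expands to a sum of $2^n$ terms, and one must check that after summing over $n$, $\beta$, and the multi-indices, each coefficient of a fixed monomial $q^\beta t_0^{d_0}\cdots t_N^{d_N}$ is a \emph{finite} sum, so that $\Phi(t+a)$ lies in $\bbk\dbb{\NE(X,\bbZ)}\dbb{t_0,\dots,t_N}$. The key input is the following standard finiteness/vanishing property of Gromov-Witten invariants: for fixed $\beta$, the invariant $\braket{T_{j_1}\cdots T_{j_m}}_{0,m}^\beta$ vanishes whenever too many of the insertions are the fundamental class $\1$ (by the fundamental class axiom, $\braket{\1 \, T_{j_2}\cdots T_{j_m}}_{0,m}^\beta = 0$ as soon as $(m,\beta)\notin\{(3,0)\}$), and — crucially for the $a$-shift — whenever too many insertions are divisor classes, controlled via the divisor axiom: inserting a degree-$2$ class $D$ replaces $\braket{D\,\cdots}_{0,m}^\beta$ by $(D\cdot\beta)\braket{\cdots}_{0,m-1}^\beta$, which for fixed $\beta$ is bounded. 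More robustly, the dimension axiom forces $\sum_{k=1}^m(\deg T_{j_k}/2) = \dim X + \int_\beta c_1(TX) + m - 3$, so for fixed $\beta$ the total degree $\sum_k \deg T_{j_k}$ grows linearly in $m$, hence only finitely many insertions can have degree $0$ or $2$.

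**Steps.** First, reduce to the case $\Delta(a)$ homogeneous by linearity — or simply keep the assumption ``no terms of degree $1$ or $2$'' and expand $\Phi(t+a) = \sum_{n,\beta}\frac{q^\beta}{n!}\sum_{i_1,\dots,i_n}\braket{T_{i_1}\cdots T_{i_n}}_{0,n}^\beta \prod_{k=1}^n (t_{i_k}+a_{i_k})$; expanding the product gives $\sum_{S\subseteq\{1,\dots,n\}}\prod_{k\in S}t_{i_k}\prod_{k\notin S}a_{i_k}$. Second, fix a target monomial $q^\beta t_0^{d_0}\cdots t_N^{d_N}$ with $|d| = d_0+\cdots+d_N =: \ell$; its coefficient collects, over all $n\geq\ell$, the contributions where the ``kept'' variables $t_{i_k}$, $k\in S$, realize the exponent vector $(d_0,\dots,d_N)$ (so $|S|=\ell$) and the remaining $n-\ell$ indices $i_k$, $k\notin S$, are summed against $a_{i_k}$. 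Relabelling, this coefficient equals (up to combinatorial constants)
\[
\sum_{r\geq 0}\frac{1}{r!}\sum_{j_1,\dots,j_r}\braket{T_0^{d_0}\cdots T_N^{d_N}\, T_{j_1}\cdots T_{j_r}}_{0,\ell+r}^{\beta}\, a_{j_1}\cdots a_{j_r},
\]
where $T_i^{d_i}$ denotes $d_i$ copies of $T_i$. Third, invoke the hypothesis that $\Delta(a) = \sum a_i T_i$ has no degree-$1$ or degree-$2$ part: the nonzero $a_j$ occur only for $\deg T_j \geq 3$ (and $\deg T_j \geq 0$ but $\neq 1,2$; degree $0$ means $j=0$, but a nonzero $a_0$ is degree $0$, so one also uses the fundamental class axiom to kill $T_0$-insertions — the hypothesis as stated allows $a_0\neq 0$, so I would either additionally assume $a_0 = 0$ is absorbed, or note $T_0 = \1$ insertions are killed by the string/fundamental-class axiom for $(\ell+r,\beta)\neq(3,0)$). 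Then by the dimension axiom, $\sum_{t}\deg(\text{insertions}) = 2\dim X + 2\int_\beta c_1 + 2(\ell+r) - 6$; since each of the $r$ extra insertions has degree $\geq 3 > 2$, we get $3r \leq \sum_t \deg \leq 2\dim X + 2\int_\beta c_1 + 2\ell + 2r - 6$, i.e. $r \leq 2\dim X + 2\int_\beta c_1 + 2\ell - 6$, a bound depending only on $\beta$ and $\ell$. Hence the sum over $r$ is finite, proving convergence in $\bbk\dbb{\NE(X,\bbZ)}\dbb{t_0,\dots,t_N}$.

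**Main obstacle.** The only real subtlety is the degree-$0$ insertions: the hypothesis rules out degree $1$ and $2$ but literally permits a degree-$0$ component $a_0\1$. I would handle this by observing that any $\1$-insertion annihilates the invariant unless $(\ell+r,\beta) = (3,0)$ (fundamental class / string equation), so degree-$0$ insertions contribute only finitely (in fact the $\beta=0$, genus-$0$, three-point invariants are just the cup product triple intersection); alternatively one notes that a shift in the $t_0$-direction is harmless because $\Phi$ depends on $t_0$ only through the string equation in a controlled polynomial way. With that observation in place, the linear growth of total insertion degree in the number of points (dimension axiom) does all the work, and the remaining bookkeeping — that the resulting double series is genuinely an element of $\bbk\dbb{\NE(X,\bbZ)}\dbb{t_0,\dots,t_N}$, i.e. each coefficient lies in $\bbk$ after the finite sum — is routine.
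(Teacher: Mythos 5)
Your proposal is correct and follows essentially the same route as the paper: both reduce the coefficient of a fixed monomial $q^\beta t^d$ in $\Phi(t+a)$ to a sum of Gromov-Witten invariants with extra insertions weighted by powers of $a$, kill the $\1$-insertions via the fundamental class/unit axiom (nonzero only for $(n,\beta)=(3,0)$), and bound the number of remaining insertions by the dimension axiom, using that the hypothesis forces every other shifted class to have degree at least $3$, so that $3r \leq 2r + \mathrm{const}$ gives finiteness. The only cosmetic difference is that the paper organizes the bookkeeping through derivatives of $\Phi$ and the chain rule rather than expanding the product $\prod_k (t_{i_k}+a_{i_k})$ directly, and your explicit treatment of the permitted degree-$0$ component $a_0$ matches the paper's use of the unit axiom.
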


\begin{proof}
    Before the shift, for $\alpha=(\alpha_0,\dots,\alpha_N)\in \bbN^{N+1}$, the coefficient of the monomial $q^{\beta} t_0^{\alpha_0}\cdots t_N^{\alpha_N}$ in $\Phi$ is $\frac{1}{\alpha_0!\cdots \alpha_N!}\langle T_0^{\alpha_0}\cdots T_N^{\alpha_N} \rangle_{0,\vert\alpha\vert}^{\beta} $. 
        The coefficient of the monomial $t_0^{\alpha_0}\cdots t_N^{\alpha_N}$ of $\Phi(t+a)$ is given by evaluating 
    \[ \frac{1}{\alpha_0!\cdots \alpha_N!} \frac{\partial^{|\alpha|}\Phi(t+a)}{\partial^{\alpha_0}t_0\cdots \partial^{\alpha_N}t_N}\] 
    at $t=0$.
    By the chain rule, this is the same as evaluating the derivative of the unshifted potential $\Phi$ at $t=a$.
    So, it is enough to check that this evaluation makes sense, i.e.\ that it gives an element in $\bbk\dbb{\NE (X,\bbZ )}$. 
    The coefficient of $q^{\beta}$ in the derivative of $\Phi$ is 
    \begin{equation}\label{eq:coeff-shifted-potential}
        \sum_{\gamma\in\bbN^{N+1}} \frac{1}{\gamma_0! \cdots \gamma_N!} \langle T_0^{\alpha_0+\gamma_0} \cdots T_N^{\alpha_N+\gamma_N} \rangle_{0,\vert\alpha\vert+\vert\gamma\vert}^{\beta} t_0^{\gamma_0} \cdots t_N^{\gamma_N}.
    \end{equation}
    We claim that the above sum is finite when evaluated at $a$.
    By the unit axiom of Gromov-Witten invariants, the part of the sum with $\gamma_0 >0$ is finite:
    if $T_0$ appears in a nonzero $n$-pointed Gromov-Witten invariant, then $n = 3$ and $\beta = 0$.
    We now prove that there are finitely many terms with $\gamma_0 = 0$.
    If a nonzero Gromov-Witten invariant $\langle T_0^{\alpha_0} T_1^{\alpha_1+\gamma_1} \cdots  T_N^{\alpha_N+\gamma_N}\rangle_{0,\vert\alpha\vert+\vert\gamma\vert}^{\beta}$ contributes to the sum, the formula for the virtual dimension gives
    \begin{equation*}
        \sum_{0\leq i\leq N}\alpha_i\codim  T_i + \sum_{1\leq i\leq N} \gamma_i \codim  T_i = 2(\dim X - 3 + \vert\alpha\vert + \vert\gamma\vert + \beta\cdot c_1T_X) .
    \end{equation*}
    Since we assume that there is no shift in the $H^1$ and $H^2$-directions, the monomial $t_0^{\gamma_0}\cdots t_N^{\gamma_N}$ evaluated at $t=a$ is $0$, unless $\gamma_i = 0$ for $\codim  T_i \in\lbrace 1,2\rbrace$.
    Then, when evaluating the $\gamma_0=0$ part of \eqref{eq:coeff-shifted-potential} at $t=a$, nonzero terms satisfy $\codim T_i\geq 3$ for $\gamma_i\neq 0$.
    We deduce
    \[3\vert\gamma\vert\leq \sum_{1\leq i\leq N}\gamma_i \codim T_i = 2\vert\gamma\vert + \mathrm{constant}.\]
    It follows that the sum \eqref{eq:coeff-shifted-potential} is finite at $t=a$, completing the proof.
    \end{proof}

By \cref{lemma:shifted-quantum-product}, the quantum product is also well-defined on a formal neighborhood of the shifted point $\Delta(a) \in H^*(X,\bbk)$.

\subsubsection{Logarithmic A-model F-bundle}

Let $U$ be the formal neighborhood of a cohomology class $\Delta (a)\in H^{\ast} (X,\bbk )$ at which the quantum potential is well-defined.
Using the basis $(T_i)_{0\leq i\leq N}$, we write $U = \Spf \bbk \dbb{t_0,\dots ,t_N}$.

For $\xi\in H^2 (X,\bbk )$, we define a derivation $\xi q\partial_{q}$ of $\bbk \dbb{\NE (X,\bbZ )}$ by
\[\xi q \partial_{q} (q^{\beta} ) \coloneqq (\beta\cdot \xi ) q^{\beta} .\]

\begin{definition} \label{definition:A-model-F-bundle}
    The \emph{logarithmic A-model F-bundle of $X$ at base point $\Delta (a)$} is the logarithmic F-bundle $(\cH ,\nabla )$ over $\Spf \bbk\dbb{\NE (X ,\bbZ )} \times U$ defined as follows:
    \begin{enumerate}[wide]
        \item The bundle $\cH$ is trivial with fiber $H^{\ast} (X,\bbk )$.
        \item  Let
    \begin{align*}
	   \bK&\coloneqq\bigg[c_1(T_X) + \sum_{i:\deg T_i\neq 2} \frac{\deg T_i-2}{2} t_iT_i\bigg]\star,\\
	   \bG&\coloneqq \frac{1}{2}(\deg_X  -\dim X) ,\\
	   \bA (\tau ) &\coloneqq \tau \star \, ,\quad \tau\in H^{\ast} (X ,\bbk ),\\
	   \bA (\xi ) &\coloneqq \xi \star \, ,\quad \xi\in H^{2} (X ,\bbk ),\\
    \end{align*}
    where $\deg_X (\alpha ) = i\alpha$ for $\alpha\in H^{i} (X,\bbk )$, and $\star$ is the quantum product shifted at $\Delta (a)$.
    The connection $\nabla$ is given by 
    \begin{align*}
	   \nabla_{\partial_u} & = \partial_u - \frac{1}{u^2}\bK + \frac{1}{u}\bG,\\
	   \nabla_{\partial_{\tau}} & = \partial_{\tau} + \frac{1}{u}\bA (\tau )  ,\\
    \nabla_{\xi q\partial_{q}} & = \xi q\partial_{q} + \frac{1}{u}\bA (\xi ).
    \end{align*}
   
    \end{enumerate}
\end{definition}

\subsubsection{Non-archimedean A-model F-bundle}

In the non-archimedean setting, $\bbk$ is a complete non-archimedean field of characteristic $0$ with a nontrivial valuation whose restriction to $\bbQ$ is trivial.

Let $N^1(X)/\Tor$ denote the Néron-Severi group of $X$ modulo torsion.
The valuation of $\bbk$ induces a map

\begin{equation}\label{eq:valuation-ample-cone}
    v\colon (N^1(X)/\Tor)\otimes_\bbZ\mathbb G_{\mathrm m/\bbk} \rightarrow (N^1(X)/\Tor)\otimes_\bbZ \bbR .
\end{equation}

Since the ample cone $\mathrm{Amp}(X)$ is open in $N^1(X)_\bbR$, its preimage $B_q\coloneqq v^{-1}(\mathrm{Amp}(X))$ is a $\bbk$-analytic space.
Let $B_t^\even$ be the product of a $\bbk$-analytic affine line and an open polyunit disk, where the affine line has coordinate $t_0$ and the polyunit disk has coordinates $t_i$ for $\deg T_i \in \{2,4,6,\dots\}$.
Let $B_t^\odd$ be the purely odd vector space with coordinates $t_i$ for $\deg T_i \in \{1, 3, 5, \dots\}$.
Let $B\coloneqq B_q\times B_t^\even \times B_t^\odd$.

\begin{lemma}\label{lemma:convergence-GW-potential}
    The genus $0$ Gromov-Witten potential \[\Phi=\sum_{n\geq 0,\beta}\frac{q^\beta}{n!}\sum_{i_1,\cdots,i_n}\langle T_{i_1}\cdots T_{i_n} \rangle_\beta t_{i_1}\cdots t_{i_n}\in \bbQ\dbb{\NE (X,\bbZ )}\dbb{\{t_i\}} ,\]
    defines an analytic function over $B$.
\end{lemma}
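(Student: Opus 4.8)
The plan is to exploit that the valuation of $\bbk$ is trivial on $\bbQ$, so every nonzero rational number — in particular every genus $0$ Gromov--Witten invariant and every factorial $1/n!$ — has $\bbk$-absolute value exactly $1$. Fixing $\beta$ and a monomial $\prod_i t_i^{a_i}$ with $n \coloneqq \sum_i a_i$, the coefficient of $q^\beta\prod_i t_i^{a_i}$ in $\Phi$ is, up to sign, the single rational number $\tfrac{1}{a_0!\cdots a_N!}\langle T_0^{a_0}\cdots T_N^{a_N}\rangle_{0,n}^\beta$, hence lies in the unit ball of $\bbk$. Thus, viewing $\Phi$ as a formal series in the $q^\beta$ and the $t_i$ with $\bbk$-coefficients, all its coefficients have absolute value $\le 1$. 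Consequently it suffices to show that on every affinoid subdomain $V\subset B$ the sup-norms $\|q^\beta\prod_i t_i^{a_i}\|_V$ tend to $0$ as $|\beta|+\sum_i a_i\to\infty$; uniform convergence on affinoid subdomains follows, and $\Phi$ then defines a global section of $\cO_B$, i.e.\ an analytic function on $B$.

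First I would dispose of the $t$-directions. The coordinates $t_i$ with $\deg T_i$ odd are nilpotent (the coordinate ring of $B_t^\odd$ is the exterior algebra on them), so only finitely many odd monomials occur in $\Phi$ and they are irrelevant for convergence. For $t_0$, the unit axiom of Gromov--Witten invariants (cf.\ the proof of \cref{lemma:shifted-quantum-product}) shows that a nonzero invariant $\langle T_0^{a_0}T_1^{a_1}\cdots T_N^{a_N}\rangle_{0,n}^\beta$ with $a_0\ge 1$ forces $n=3$ and $\beta=0$; hence $\Phi$ is a polynomial of degree $\le 3$ in $t_0$, so it is analytic in $t_0$ over the whole affine line. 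For the remaining even coordinates, an affinoid domain inside the open polyunit polydisk has $|t_i|\le r_i$ with $r_i<1$, so $\|\prod_{\deg T_i\ge 2}t_i^{a_i}\|_V\le\prod_i r_i^{a_i}\to 0$ as $\sum_{\deg T_i\ge 2}a_i\to\infty$.

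It remains to control the sum over curve classes. On $B_q=v^{-1}(\mathrm{Amp}(X))$, a point whose valuation is $\omega\in\mathrm{Amp}(X)$ sends $q^\beta$ to an element of valuation $\langle\omega,\beta\rangle$, which is strictly positive for every nonzero effective class $\beta$ by Kleiman's criterion. An affinoid domain $V_q\subset B_q$ is contained in the preimage of a compact polytope $P\subset\mathrm{Amp}(X)$, so $\|q^\beta\|_{V_q}\le e^{-\epsilon\langle\omega_0,\beta\rangle}$ for some fixed ample $\omega_0$ and some $\epsilon>0$. Since $X$ is projective, for every $C$ the set of effective classes $\beta$ with $\langle\omega_0,\beta\rangle\le C$ is finite (boundedness of the effective cone), hence $\|q^\beta\|_{V_q}\to 0$ as $\beta\to\infty$ in $\NE(X,\bbZ)$. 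Combining this with the $t$-estimates and with the coefficient bound $|c_{\beta,a}|\le 1$: on any affinoid $V=V_q\times V_t\times B_t^\odd$, for each $\delta>0$ only finitely many monomials $q^\beta\prod_i t_i^{a_i}$ have $\|c_{\beta,a}\,q^\beta\prod_i t_i^{a_i}\|_V>\delta$ (the $q$-part kills large $\beta$, the open-disk directions kill large even degree, $a_0$ is bounded by $3$, and the odd exponents are at most $1$). This is precisely the required convergence, so $\Phi\in\cO_B(B)$.

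The main obstacle is not any single estimate but assembling them correctly over a general $\bbk$-analytic (Berkovich) base: one must verify convergence uniformly on all affinoid subdomains, which forces the separate treatment of the unbounded $t_0$-line (handled via the unit axiom, making $\Phi$ polynomial in $t_0$), the open polyunit-disk directions (handled by restricting radii below $1$), the nilpotent odd directions, and the $q$-directions (handled by Kleiman's criterion together with the boundedness of effective curve classes). Once these are in place, the triviality of the valuation on $\bbQ$ makes the actual convergence estimate immediate.
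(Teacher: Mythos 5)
Your proof is correct, but it takes a genuinely different route from the paper's. The paper covers the ample cone by rational simplicial subcones $\sigma$ generated by ample classes $\omega_1,\dots,\omega_m$ and, on the preimage $B_q'=v^{-1}(\sigma)$, performs the monomial change of variables $q^\beta\mapsto q_1^{\beta\cdot\omega_1}\cdots q_m^{\beta\cdot\omega_m}$; the restriction of $\Phi$ then becomes a power series in the $q_j$ and $t_i$ with rational coefficients (hence of norm $\le 1$, since the valuation is trivial on $\bbQ$), polynomial in $t_0$ by the unit axiom, and therefore analytic on each such piece; analyticity over $B$ follows since these pieces cover $B_q$. You instead prove convergence directly on (product) affinoid subdomains: the same two inputs (norm-$\le 1$ coefficients, polynomiality in $t_0$) plus a quantitative estimate $\|q^\beta\|_{V_q}\le e^{-\epsilon\,\omega_0\cdot\beta}$ obtained from compactness of the tropicalization image inside the open ample cone, together with the finiteness of effective classes of bounded $\omega_0$-degree, and the bound $|t_i|\le r_i<1$ in the even directions. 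What the paper's argument buys is brevity: once the monomial coordinates are in place, analyticity is immediate from ``integral exponents, unit-ball coefficients''; the cost is the (glossed) bookkeeping that the $\omega_j$ can be chosen integral so that the $q_j$ are genuine coordinates. Your argument avoids any change of coordinates and is more robust, at the cost of the explicit $\epsilon$-estimate and the lattice-point finiteness statement; if you write it up, you should add the one-line justifications that (i) for a compact $P\subset\mathrm{Amp}(X)$ there is $\epsilon>0$ with $\omega-\epsilon\omega_0$ nef for all $\omega\in P$ (this is where Kleiman's criterion enters, and it also gives the finiteness of $\{\beta\in\NE(X,\bbZ):\omega_0\cdot\beta\le C\}$ via compactness of $\overline{\NE}(X)\cap\{\omega_0\cdot\beta\le C\}$), and (ii) products of affinoid subdomains of the factors form an admissible covering of $B$, so checking convergence on those suffices to produce a global section of $\cO_B$.
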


\begin{proof}
Let $\sigma \subset N^1(X)_\bbR$ be any simplicial cone generated by ample classes $\omega_1,\cdots,\omega_m$. 
Let $B_q'$ be the preimage of $\sigma$ under the valuation map \eqref{eq:valuation-ample-cone}, and $B'=B_q'\times B_t^\even \times B_t^\odd$.
Then $\Phi$ is analytic over $B'$, since the restriction of $\Phi$ to $B'$ is given by the power series with rational coefficients
\[\Phi = \sum_{n\geq 0,\beta}\frac{1}{n!}q_1^{\beta\cdot \omega_1}\cdots q_m^{\beta\cdot \omega_m}\sum_{i_1,\cdots,i_n}\langle T_{i_1}\cdots T_{i_n}\rangle_\beta t_{i_1}\cdots t_{i_n}\in \bbQ\dbb{\{q_j\},\{t_i\}} ,\]
which is polynomial in $t_0$ by the unit axiom.
Since the union of all such $\sigma$ covers the ample cone, the proof is complete.
\end{proof}

\cref{lemma:convergence-GW-potential} implies that the quantum product is convergent over the non-archimedean base space $B$.

\begin{definition}
  The \emph{non-archimedean A-model F-bundle of $X$} is the F-bundle $(\cH ,\nabla )$ over $B$ defined by the same formulas as in \cref{definition:A-model-F-bundle}.
\end{definition}

\subsubsection{Maximal logarithmic F-bundle}

The F-bundles defined above are not maximal because the base has larger dimension than the fibers.
We can cut down the base dimension by choosing one $q$-variable and eliminating one $t$-variable as follows.

Fix a nef class $\omega\in N^1 (X)$.
It induces a projection
\begin{equation} \label{eq:class_to_degree}
    \bbk  [\NE (X,\bbZ ) ]\rightarrow \bbk [q ] ,\quad q^{\beta}\mapsto q^{\beta\cdot\omega}.
\end{equation}

\begin{assumption} \label{ass:finitely_many_beta}
  Assume that for any $i_1, \dots, i_n$ and $d$, there are finitely many $\beta$ such that $\beta\cdot \omega =d$ and $\braket{T_{i_1}\cdots T_{i_n}}_{0,n}^\beta \neq 0$.
\end{assumption}

\begin{lemma} \label{lem:finitely_many_beta}
  \Cref{ass:finitely_many_beta} holds if there exists $\epsilon \in \bbQ$ such that $\omega + \epsilon c_1(T_X)$ is ample.
  In particular, it holds if $\omega$ is ample, or if $X$ is Fano.
\end{lemma}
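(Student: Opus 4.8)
The plan is to extract from the virtual dimension constraint the fact that $\beta\cdot c_1(T_X)$ is pinned down once the observables $T_{i_1},\dots,T_{i_n}$ are fixed, and then to combine this with the hypothesis $\beta\cdot\omega = d$ to obtain a bound on $\beta\cdot A$ for $A = \omega + \epsilon c_1(T_X)$; since $A$ is ample, the classes of bounded degree against $A$ form a finite set, which is exactly what is needed.

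First I would recall that the moduli space $\overline{M}_{0,n}(X,\beta)$ has virtual dimension $\dim X - 3 + n + \beta\cdot c_1(T_X)$, so that (exactly as in the displayed formula in the proof of \cref{lemma:shifted-quantum-product}) a nonvanishing invariant $\langle T_{i_1}\cdots T_{i_n}\rangle_{0,n}^{\beta}$ forces
\[
\sum_{j=1}^n \codim T_{i_j} = 2\bigl(\dim X - 3 + n + \beta\cdot c_1(T_X)\bigr).
\]
Because the basis $(T_i)$ is homogeneous, the left-hand side depends only on $i_1,\dots,i_n$; hence there is a constant $c = c(i_1,\dots,i_n)$ with $\beta\cdot c_1(T_X) = c$ for every $\beta$ contributing a nonzero invariant with these observables.

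Next I would assume $A \coloneqq \omega + \epsilon c_1(T_X)$ is ample for some $\epsilon\in\bbQ$, and choose $m\in\bbZ_{>0}$ so that $mA$ is an integral ample class. For any $\beta\in\NE(X,\bbZ)$ with $\beta\cdot\omega = d$ and $\langle T_{i_1}\cdots T_{i_n}\rangle_{0,n}^{\beta}\neq 0$ we then get $\beta\cdot(mA) = m(d+\epsilon c)$, a number independent of $\beta$. It remains to invoke the standard finiteness statement: for an ample class $H$ on $X$ and a constant $C$, the set $\{\beta\in\NE(X,\bbZ)\colon \beta\cdot H \le C\}$ is finite. This holds because the closure of the cone of effective curves is a strongly convex closed cone on which $H$ is strictly positive away from the origin, so its intersection with the half-space $\{\,\cdot\, H\le C\}$ is closed and bounded, hence compact, and therefore contains only finitely many points of the lattice of $1$-cycles modulo numerical equivalence. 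This yields \cref{ass:finitely_many_beta}.

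Finally I would dispatch the two special cases: if $\omega$ is itself ample, take $\epsilon = 0$; if $X$ is Fano, then $c_1(T_X) = -K_X$ is ample and, $\omega$ being nef, the class $\omega + \epsilon c_1(T_X)$ is ample for every $\epsilon\in\bbQ_{>0}$ (a nef class plus an ample class is ample). The argument is essentially elementary once the virtual dimension identity is in hand; the only input that is not completely formal is the finiteness of lattice points of bounded degree in the Mori cone, but since this is a well-known consequence of the compactness of the relevant slice of the cone, I do not anticipate a genuine obstacle.
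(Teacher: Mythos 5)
Your argument is correct and follows essentially the same route as the paper: the virtual dimension identity pins down $\beta\cdot c_1(T_X)$ once the observables are fixed, so $\beta\cdot(\omega+\epsilon c_1(T_X))$ is determined, and ampleness of that class leaves only finitely many effective $\beta$. You merely spell out two points the paper leaves implicit — the compactness/lattice-point justification of the finiteness and the two special cases ($\epsilon=0$ when $\omega$ is ample; nef plus ample is ample in the Fano case) — both of which are fine.
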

\begin{proof}
  Recall that the virtual dimension of $\cM_{0, n}(X, \beta)$ is equal to $\dim X - 3 + \beta \cdot c_1(T_X) + n$.
  If $\braket{T_{i_1}\cdots T_{i_n}}_{0,n}^\beta \neq 0$, we have $\dim_\vir\cM_{0, n}(X, \beta) = \sum_{j=1}^n \codim T_{i_j}$.
  So $\beta \cdot c_1(T_X)$ is fixed given $T_{i_1},\dots,T_{i_n}$.
  If $\beta\cdot\omega$ is also given, then $\beta\cdot \big(\omega + \epsilon\, c_1(T_X)\big)$ is fixed too.
  This is only possible for finitely many $\beta$, since $\omega + \epsilon c_1(T_X)$ is assumed ample.
\end{proof}

\begin{lemma} \label{lem:degree_to_class}
  Under \cref{ass:finitely_many_beta}, the Gromov-Witten potential $\Phi \in \bbQ\dbb{\NE(X,\bbZ)}\dbb{t_0,\dots,t_N}$ as in \eqref{eq:GW_potential} induces an element $\Phi^\omega\in\bbQ\dbb{q}\dbb{t_0,\dots, t_N}$, via the projection \eqref{eq:class_to_degree}.
  Conversely, $\Phi$ is uniquely determined by $\Phi^\omega$.
\end{lemma}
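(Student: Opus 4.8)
Write $\Phi = \sum_\alpha c_\alpha\, t^\alpha$ with $c_\alpha = \sum_\beta c_{\alpha,\beta}\, q^\beta \in \bbQ\dbb{\NE(X,\bbZ)}$, so that each $c_{\alpha,\beta}$ is a genus-$0$ Gromov--Witten invariant of $X$ with insertions recorded by $\alpha$, up to a combinatorial factor. The candidate $\Phi^\omega$ then has, as coefficient of $q^d\, t^\alpha$, the sum $\sum_{\beta:\,\beta\cdot\omega = d} c_{\alpha,\beta}$. By \cref{ass:finitely_many_beta}, applied to the insertions recorded by $\alpha$ and the integer $d$, this sum has only finitely many nonzero terms, hence defines a rational number; and since $\omega$ is nef every exponent $d = \beta\cdot\omega$ with $\beta \in \NE(X,\bbZ)$ is $\ge 0$. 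So $\Phi^\omega$ is a well-defined element of $\bbQ\dbb{q}\dbb{t_0,\dots,t_N}$, and this half of the statement is just bookkeeping with \cref{ass:finitely_many_beta}.

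\textbf{Uniqueness.} The point is that the projection \eqref{eq:class_to_degree} forgets the curve class only beyond its $\omega$-degree, whereas the divisor variables $t_1,\dots,t_k$ still record the full class. First I would use the divisor axiom for (primary) genus-$0$ invariants: for $\beta \neq 0$, inserting $T_i \in H^2(X,\bbQ)$ multiplies the invariant by $\beta\cdot T_i$, while the fundamental-class axiom forbids insertions of $T_0 = \1$. Summing over all insertions, this yields, for every $\beta \neq 0$, a factorization of the coefficient of $q^\beta$ in $\Phi$, which I write $[q^\beta]\Phi$:
\[
  [q^\beta]\Phi \;=\; \exp\!\Big( {\textstyle\sum_{i=1}^{k}} (\beta\cdot T_i)\, t_i \Big)\cdot g_\beta(t_{k+1},\dots,t_N),
\]
where $g_\beta$ is the generating series of genus-$0$ class-$\beta$ invariants with insertions only among $T_{k+1},\dots,T_N$. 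The $\beta = 0$ part of $\Phi$ is the classical triple intersection form, which is preserved by the projection and hence read off directly from the coefficient of $q^0$ in $\Phi^\omega$.

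It then remains to recover each $g_\beta$ with $\beta \neq 0$ from $\Phi^\omega$. For fixed $d$, the coefficient of $q^d$ in $\Phi^\omega$ equals $\sum_{\beta \neq 0,\ \beta\cdot\omega = d} \exp\!\big(\sum_i (\beta\cdot T_i)\, t_i\big)\, g_\beta$, plus the classical part when $d = 0$; this sum is locally finite by \cref{ass:finitely_many_beta}. Since a curve class in $\NE(X,\bbZ)$ is determined by its intersection numbers against the basis $T_1,\dots,T_k$ of $H^2(X,\bbQ)$, the linear forms $\sum_i (\beta\cdot T_i)\, t_i$ are pairwise distinct as $\beta$ ranges over the classes of $\omega$-degree $d$. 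Fixing a monomial $t^{\alpha''}$ in $t_{k+1},\dots,t_N$ and then reading off the coefficients of the monomials $t_1^{m_1}\cdots t_k^{m_k}$ produces, for each $(m_1,\dots,m_k)$, the value $\sum_\beta \prod_i (\beta\cdot T_i)^{m_i}\, \langle T^{\alpha''}\rangle^\beta$ over the finite set of $\beta$ with $\beta\cdot\omega = d$ and $\langle T^{\alpha''}\rangle^\beta \neq 0$. As the vectors $(\beta\cdot T_1,\dots,\beta\cdot T_k)$ are pairwise distinct, this multivariate Vandermonde system is injective, so each $\langle T^{\alpha''}\rangle^\beta$, and therefore every coefficient of $\Phi$, is uniquely determined by $\Phi^\omega$.

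The main obstacle is this last step: disentangling the contributions of distinct curve classes that share the same $\omega$-degree. Everything else — the existence half and the divisor-axiom factorization — is routine; what needs care is verifying that the divisor variables retain enough information about $\beta$ (that $\beta$ is recovered from its pairings with $T_1,\dots,T_k$), and checking that \cref{ass:finitely_many_beta} is precisely what keeps the linear system in the previous paragraph finite-dimensional, hence invertible.
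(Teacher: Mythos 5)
Your proposal is correct and follows essentially the same route as the paper: both use the divisor axiom so that the divisor variables record the intersection numbers $\beta\cdot T_i$, then separate the finitely many classes of a given $\omega$-degree (finiteness by \cref{ass:finitely_many_beta}) using the fact that distinct $\beta$ give distinct vectors $(\beta\cdot T_1,\dots,\beta\cdot T_k)$. The only difference is presentational — you factor $[q^\beta]\Phi$ as an exponential times a series in the non-divisor variables and spell out the Vandermonde-type injectivity, whereas the paper forms an auxiliary series $\Psi$ with extra divisor insertions and leaves that linear-independence step implicit.
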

\begin{proof}
  \Cref{ass:finitely_many_beta} guarantees that $\Phi^\omega$ is well-defined.
  Let us prove the other direction.
  Fix $i_1, \dots, i_n$ and $d$.
  Knowing $\Phi^\omega$, we can form the following series
  \[ \Psi = \sum_{r_1, \dots, r_k} \frac{1}{r_1! \cdots r_k!} \sum_{\beta \cdot \omega = d} \braket{T_{i_1} \cdots T_{i_n} T_1^{r_1} \cdots T_k^{r_k}}_{0,n+r_1+\cdots + r_k}^\beta s_1^{r_1} \cdots s_k^{r_k} \in \bbQ\dbb{s_1, \dots, s_k}, \]
  where $T_1,\dots, T_k$ constitute a basis of $H^2(X,\bbQ)$.
  By the divisor axiom, we have
  \[ \Psi = \sum_{r_1, \dots, r_k} \frac{1}{r_1! \cdots r_k!} \sum_{\beta \cdot \omega = d} \braket{T_{i_1} \cdots T_{i_n}}_{0,n}^\beta (\beta\cdot T_1)^{r_1} s_1^{r_1} \cdots (\beta\cdot T_k)^{r_k} s_k^{r_k} \in \bbQ\dbb{s_1, \dots, s_k}. \]
  Comparing the coefficients, we conclude that every $\braket{T_{i_1} \cdots T_{i_n}}_{0,n}^\beta$ is uniquely determined by $\Psi$, therefore by $\Phi^\omega$.
\end{proof}

\begin{example}[Maximal A-model F-bundle] \label{example:modified-quantum-F-bundle}
  Assume $\omega = T_1$ satisfies \cref{ass:finitely_many_beta}.
  Let $\Delta (a)\in H^{\ast} (X,\bbk )$ be a cohomology class at which the quantum potential is well-defined.
  Let $U=\Spf \bbk\dbb{t_0,\cdots, t_N}$ be the formal neighborhood of $\Delta (a)$ in $H^{\ast} (X,\bbk)$, $U'\subset U$ the closed subspace given by $t_1=0$, and $B' = \Spf \bbk\dbb{q} \times U'$.
  Then the potential $\Phi^{\omega}$ in \cref{lem:degree_to_class} produces a maximal logarithmic F-bundle over $B'$ by the same formulas as in \cref{definition:A-model-F-bundle}.
  Indeed, the multiplicative unit $\mathbf 1$ is a cyclic vector at $0$ by the unit axiom.
\end{example}

\addtocontents{toc}{\protect\setcounter{tocdepth}{2}}
\section{Spectral decomposition of maximal F-bundles} \label{sec:decomposition-max-F-bundles}

In this section, we establish the spectral decomposition theorem for maximal F-bundles in the formal and non-archimedean settings, see \cref{thm:eigenvalue_decomposition,thm:NA-K-decomposition}.
We first prove in \S\ref{subsec:preliminary-decomposition} formal and non-archimedean analogs of the Frobenius theorem in differential geometry using an argument that we call ``generalized flatness''.
We study the decomposition of the base as F-manifolds in \cref{subsec:decomposition-F-manifolds}.
The spectral decomposition theorems are presented and proved in \cref{subsec:decomposition-theorems}.

Recall that $\bbk$ is a field of characteristic $0$.
In the non-archimedean setting, we equip $\bbk$ with a complete nontrivial valuation whose restriction to $\bbQ$ is trivial.

\subsection{Frobenius theorem} \label{subsec:preliminary-decomposition}

\subsubsection{Generalized flatness for systems of PDEs}

We prove a criterion ensuring the existence of a unique formal solution to some systems of quasi-linear PDEs in \cref{theorem:existence-solution-flat-PDE}. 
We also prove a non-archimedean version in a special case in \cref{lemma:generalized-flatness-NA}.
Throughout, we set $M_0 \coloneqq \bbk^m$.
We denote by $\mathfrak{m}$ the maximal ideal $(t_1,\dots, t_n)$ in $\bbk\dbb{t_1,\dots, t_n}$.

\begin{notation} \label{notation:tuple-integers}
We use the following notations for tuples of integers:
\begin{enumerate}
    \item Let $\preceq$ denote the partial order on $\bbN^n$ defined by
    \[(r_i)_{1\leq i\leq n}\preceq (s_i)_{1\leq i\leq n} \Longleftrightarrow \forall 1\leq i\leq n,\; r_i\leq s_i .\]
    \item For $r=(r_i)\in\bbN^n$, let $\vert r\vert \coloneqq \sum_{1\leq i\leq n} r_i$.
    \item For $r = (r_i)_{1\leq i\leq n}\in\bbN^n$ and $1\leq j\leq n$, we set
    \[\tau_j (r) \coloneqq (r_1,\dots, r_{j-1} , r_j+1 , r_{j+1} , \dots , r_n)\in\bbN^n .\]
\end{enumerate}
\end{notation}

\begin{definition}\label{definition:generalized-flat-PDE}
  Let $\mathcal{D} = (D_i\colon M_0\otimes_{\bbk}\fm \rightarrow M_0\otimes_{\bbk}\bbk\dbb{t_1,\dots, t_n})_{1\leq i\leq n}$ be a system of differential operators of the form $D_i = \partial_{t_i} - f_i$, with $f_i\colon M_0\otimes_{\bbk}\fm \rightarrow M_0\otimes_{\bbk}\bbk\dbb{t_1,\dots, t_n}$ an arbitrary map.
  We say that the system $\mathcal{D}$ is \emph{generalized flat} if the two following conditions are satisfied:
  \begin{enumerate}
  \item For every $d\in\bbN$ and every $1\leq i\leq n$, the composition 
  \[ M_0\otimes_{\bbk}\fm \overset{f_i}{\longrightarrow} M_0\otimes_{\bbk}\bbk\dbb{t_1,\dots, t_n}\longrightarrow M_0\otimes_{\bbk}\left( \bbk\dbb{t_1,\dots, t_n} /\fm^d\right) \]
  factors through $M_0\otimes_{\bbk}\left(\fm /\fm^d\right)$. 
  \item If $\varphi \in M_0\otimes_{\bbk}\fm$ satisfies $D_i(\varphi) = 0\mod \mathfrak{m}^d$ for all $1\leq i\leq n$, then $\partial_{t_i} (f_j(\varphi)) = \partial_{t_j} (f_i(\varphi))\mod\mathfrak{m}^d$ for all $1\leq i,j\leq n$.
  \end{enumerate}
\end{definition}

\begin{remark}
    Condition (1) means that for $\varphi\in M_0\otimes_{\bbk}\fm$, the total $t$-degree $d$ terms of $f_i (\varphi)$ depend on terms in $\varphi$ of total $t$-degree at most $d$. 
    This assumption allows to solve the associated system of PDEs recursively.
    It is automatically satisfied if the components of $f (\varphi )$ are power series in the components of $\varphi$.
\end{remark}

Our notion of generalized flat systems of PDEs allows to prove the following existence and uniqueness result.

\begin{proposition}\label{theorem:existence-solution-flat-PDE}
  Let $(D_i\colon M_0\otimes_{\bbk}\fm \rightarrow M_0\otimes_{\bbk}\bbk\dbb{t_1,\dots, t_n})_{1\leq i\leq n}$ be a generalized flat system of differential operators.
  Then there exists a unique $\varphi\in M_0\otimes_{\bbk}\fm$ satisfying $D_i(\varphi ) =0$ for all $1\leq i\leq n$.
\end{proposition}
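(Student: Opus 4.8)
The plan is to construct $\varphi$ by induction on the total $t$-degree, using the recursive structure guaranteed by condition (1) of generalized flatness, and to use condition (2) to show that the partial solutions are mutually compatible so that the recursion is well-defined and produces a genuine solution of all the equations simultaneously. Write $\varphi = \sum_{d \geq 1} \varphi_d$ where $\varphi_d \in M_0 \otimes_\bbk (\fm^d/\fm^{d+1})$ is the homogeneous degree-$d$ part. The equation $D_i(\varphi) = 0$, i.e.\ $\partial_{t_i}\varphi = f_i(\varphi)$, compared in degree $d$, reads $(\partial_{t_i}\varphi)_d = (f_i(\varphi))_d$; the left side is a linear combination of components of $\varphi_{d+1}$, while by condition (1) the right side depends only on $\varphi_1, \dots, \varphi_d$. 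So, having chosen $\varphi_1, \dots, \varphi_d$, the collection of equations for $i = 1, \dots, n$ determines $\partial_{t_i}\varphi_{d+1}$ for every $i$; since a homogeneous polynomial of degree $d+1$ in $t_1, \dots, t_n$ with values in $M_0$ is uniquely determined by its $n$ partial derivatives (all homogeneous of degree $d$, and with no constant of integration because we demand $\varphi \in M_0 \otimes \fm$), this pins down $\varphi_{d+1}$ uniquely, \emph{provided} the prescribed partial derivatives are consistent, i.e.\ $\partial_{t_i}\big((f_j(\varphi))_d\big) = \partial_{t_j}\big((f_i(\varphi))_d\big)$ for all $i, j$.

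The consistency is exactly where condition (2) enters. First I would set up the induction hypothesis carefully: assume $\varphi_1, \dots, \varphi_d$ have been constructed so that, writing $\varphi^{(d)} \coloneqq \sum_{e=1}^d \varphi_e$, we have $D_i(\varphi^{(d)}) \equiv 0 \bmod \fm^d$ for all $i$ (note $\varphi^{(d)}$ and any lift of it to $M_0 \otimes \fm$ agree mod $\fm^{d+1}$, and by condition (1) $f_i$ evaluated on such a lift is well-defined mod $\fm^{d+1}$, independent of the lift). Then condition (2), applied with this $d$, gives $\partial_{t_i}(f_j(\varphi^{(d)})) \equiv \partial_{t_j}(f_i(\varphi^{(d)})) \bmod \fm^d$, i.e.\ the degree-$(d-1)$... — here I should be slightly careful about the degree bookkeeping: to get compatibility of the degree-$d$ parts of $f_i(\varphi)$ I want condition (2) at level $d+1$, so the induction hypothesis should be phrased as $D_i(\varphi^{(d)}) \equiv 0 \bmod \fm^{d+1}$, i.e.\ the equations hold through degree $d$. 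With that phrasing, condition (2) at level $d+1$ yields $\partial_{t_i}(f_j(\varphi^{(d)})) \equiv \partial_{t_j}(f_i(\varphi^{(d)})) \bmod \fm^{d+1}$, and in particular the degree-$d$ homogeneous parts agree; since by condition (1) these degree-$d$ parts coincide with the degree-$d$ parts of $f_j(\varphi)$, $f_i(\varphi)$ for any extension $\varphi$ of $\varphi^{(d)}$ by higher-order terms, the closedness needed to integrate is in hand, and I obtain $\varphi_{d+1}$.

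The base case is $d = 0$: the empty choice, with $\varphi^{(0)} = 0$, and one checks $D_i(0) = -f_i(0)$ should vanish mod $\fm$ — here I use that $f_i$ is defined on $M_0 \otimes \fm$ and condition (1) with $d=1$ forces $f_i(0) \in M_0 \otimes (\fm/\fm) \cdot(\cdots)$, more precisely condition (1) says $f_i(\varphi) \bmod \fm$ factors through $\varphi \bmod \fm^1 = 0$, hence $f_i(\varphi) \equiv f_i(0) \bmod \fm$ and this common value is what we must have vanish; I would note that $0 \in M_0\otimes\fm$ trivially satisfies $D_i(0) \equiv 0 \bmod \fm$, so the recursion can start. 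Uniqueness is built into the construction: at each stage $\varphi_{d+1}$ is forced, since $\varphi$ is required to have no constant term and a homogeneous vector-valued polynomial is determined by its gradient. The main obstacle — really the only non-formal point — is the careful degree-indexing needed to line up condition (2) (stated with a cutoff $\fm^d$) with the integration step (which needs closedness of the degree-$d$ form to produce the degree-$(d+1)$ term), and checking that condition (1) genuinely makes $f_i(\varphi^{(d)})$ well-defined modulo the relevant power of $\fm$ regardless of how one lifts $\varphi^{(d)}$; once the conventions are fixed this is routine. I would close by assembling $\varphi \coloneqq \sum_{d\geq 1}\varphi_d \in M_0 \otimes_\bbk \fm$ (a well-defined element of the completion) and remarking that $D_i(\varphi) \equiv 0 \bmod \fm^{d}$ for every $d$, hence $D_i(\varphi) = 0$.
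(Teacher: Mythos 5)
Your overall strategy --- build $\varphi$ degree by degree, using condition (1) so that the degree-$d$ data depend only on lower-order terms and condition (2) to supply the symmetry of mixed partials needed to integrate, with uniqueness forced because $\varphi$ has no constant term --- is essentially the paper's argument (the paper integrates coefficient by coefficient, picking the first nonzero index of each monomial, rather than via the gradient of a homogeneous polynomial; both use characteristic $0$ to divide). However, as written your induction is broken precisely by the ``correction'' you made to the bookkeeping. With $\varphi^{(d)}=\sum_{e\le d}\varphi_e$ the strengthened hypothesis $D_i(\varphi^{(d)})\equiv 0 \bmod \fm^{d+1}$ can never hold: the degree-$d$ homogeneous part of $D_i(\varphi^{(d)})$ is $-(f_i(\varphi^{(d)}))_d$, which has no reason to vanish since $\varphi_{d+1}$ is not yet present. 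The base case already fails: $D_i(0)=-f_i(0)$ need not vanish modulo $\fm$, because condition (1) only says that the constant term $[f_i(0)]_0$ is independent of the argument, not that it is zero; in the application to the Frobenius theorem one has $f_i(0)=Y_i(0)$, whose value at the origin must even be invertible. So your justification of the base case is incorrect, and the induction as stated cannot be propagated.

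The repair is to keep your first formulation. The hypothesis $D_i(\varphi^{(d)})\equiv 0\bmod\fm^{d}$ is what the truncation actually satisfies, it is preserved by the construction (the degree-$e$ parts for $e\le d$ of $D_i(\varphi^{(d+1)})$ vanish by the choice of $\varphi_{e+1}$ together with condition (1), since $\varphi^{(d+1)}\equiv\varphi^{(d)}\bmod\fm^{d+1}$), and it already suffices for the integration step. Indeed, the closedness you need in order to integrate the prescribed gradients $(f_i(\varphi^{(d)}))_d$ is $\partial_{t_j}\big((f_i(\varphi^{(d)}))_d\big)=\partial_{t_i}\big((f_j(\varphi^{(d)}))_d\big)$, an identity between homogeneous polynomials of degree $d-1$; these are exactly the degree-$(d-1)$ parts of $\partial_{t_j}f_i(\varphi^{(d)})$ and $\partial_{t_i}f_j(\varphi^{(d)})$, so condition (2) applied with cutoff $\fm^{d}$ --- not $\fm^{d+1}$ --- is precisely what is required, and it is available from the mod-$\fm^d$ hypothesis. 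The first step is then unconditional: the gradients to integrate are the constants $[f_i(0)]_0$, whose mixed partials vanish trivially, giving $\varphi_1=\sum_i [f_i(0)]_0\,t_i$, which is the paper's starting point. With this off-by-one fixed, your proof goes through and coincides in substance with the paper's.
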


\begin{proof}
    In this proof, for $\varphi\in  M_0\otimes_{\bbk}\fm$ and $\ell\in\bbN^n$, we denote by $[f_i(\varphi )]_{\ell}$ the coefficient of $t^{\ell}$ in $f_i (\varphi )$.
    
    For the uniqueness, if $\varphi = \sum_{\ell\in\bbN^n} \varphi_{\ell} t^{\ell}$ is a solution of the differential system, then $\varphi$ satisfies the recursive relations with respect to $t$-monomials
    \begin{equation} \label{eq:recursion-t-direction}
        (\ell_i+1 ) \varphi_{\tau_i (\ell) } = [f_i(\varphi)]_{\ell}.
    \end{equation}
    This uniquely determines the coefficients of $\varphi$ from the initial condition $\varphi_0 = 0$. 
    
    For the existence, we construct inductively on $d\in\bbN$ an element $\varphi^{(d)}\in M_0\otimes_{\bbk} \fm$ such that
  \begin{enumerate}
    \item $\varphi^{(d)}$ has terms of degree at most $d+1$,
    \item if $d\geq 1$, then $\varphi^{(d)} = \varphi^{(d-1)}\mod \mathfrak{m}^{d+1}$,
    \item $D_i (\varphi^{(d)})  = 0 \mod \mathfrak{m}^{d+1}$ for all $1\leq i\leq n$.
  \end{enumerate}
  Set $\varphi^{(0)} \coloneqq \sum_{i=1}^n [f_i(0)]_0 t_i$, it satisfies (1), (2) and (3) for $d=0$. 

  For the inductive step, fix $d\in\bbN$ and assume $\varphi^{(d)}$ is constructed. 
  Given $\ell\in\bbN^n$ with $\vert \ell\vert = d+2$, there exists a minimal index $i_0$ and a unique $\ell '\in\bbN^n$ such that $\ell =\tau_{i_0} (\ell ')$. 
  The index $i_0$ corresponds to the first nonzero component of $\ell$.
  We set $\varphi_{\ell} \coloneqq \frac{1}{\ell_{i_0}} [f_{i_0} (\varphi^{(d)})]_{\ell'}$,
  and define 
  \[\varphi^{(d+1)} \coloneqq \varphi^{(d)} + \sum_{\substack{\ell\in\bbN^n\\ \vert\ell\vert = d+2}} \varphi_{\ell} t^{\ell} .\]
  By construction $\varphi^{(d+1)}$ satisfies (1) and (2), it remains to check (3).
  By the inductive assumption (2) and Condition (1) of generalized flatness, we have $[f_i (\varphi^{(d+1)} )]_{\ell} = [f_i (\varphi^{(d)})]_{\ell}$ for all $\ell\in\bbN^n$ such that $\vert\ell\vert\leq d+1$.
  Thus we only need to check that the added coefficients $\varphi_{\ell}$ with $\vert\ell\vert = d+2$ satisfy the recursive relations (\ref{eq:recursion-t-direction}) for all $1\leq i\leq n$. 

  Fix $\ell\in\bbN^n$ with $\vert\ell\vert = d+2$, and an index $i$. 
  Let $i_0$ be as in the definition of $\varphi_{\ell}$, then there exists a unique $\ell_0\in\bbN^n$ with $\vert\ell_0\vert = d$ such that $\ell = \tau_i\tau_{i_0} (\ell_0 )  =\tau_{i_0}\tau_i (\ell_0 )$.
  By the construction of $\varphi_{\ell}$, the recursive relation (\ref{eq:recursion-t-direction}) in the $t_i$-direction is equivalent to 
  \[\ell_i [f_{i_0} (\varphi^{(d+1)})]_{\tau_i (\ell_0) } = \ell_{i_0} [f_i(\varphi^{(d+1)})]_{\tau_{i_0} (\ell_0 )}.\]
  Since $\vert\tau_i (\ell_0 ) \vert = \vert\tau_{i_0} (\ell_0 )\vert = d+1$, the induction hypothesis (2) and Condition (1) of generalized flatness imply $[f_{i_0} (\varphi^{(d+1)})]_{\tau_i (\ell_0) } = [f_{i_0} (\varphi^{(d)})]_{\tau_i (\ell_0) }$, and similarly for the right hand side.
  Then the recursion relation for $\ell$ is equivalent to 
  \[[\partial_{t_{i_0}} f_i(\varphi^{(d)})]_{\ell_0} = [\partial_{t_i} f_{i_0} (\varphi^{(d)})]_{\ell_0} ,\]
  which follows from Condition (2) of generalized flatness.
  We conclude that $\varphi^{(d+1)}$ satisfies (3), proving the inductive step.
  
  Condition (2) of the construction implies that $\lbrace \varphi^{(d)}\mod \mathfrak{m}^d\rbrace_{d\geq 0}$ is an inductive system producing a well-defined element $\tvarphi\in M_0\otimes_{\bbk}\bbk\dbb{t_1,\dots, t_n}$ such that $\tvarphi = \varphi^{(d)} \mod\mathfrak{m}^{d+2}$ for all $d\geq 0$.
  Condition (3) of the construction implies that $\tvarphi$ satisfies the recursive relations \eqref{eq:recursion-t-direction} for all $\ell\in\bbN^n$, hence it is a solution of $D_i(\varphi ) = 0$.
  Thus $\tvarphi$ satisfies $D_i(\tvarphi ) = 0$ for $1\leq i\leq n$, completing the proof.
\end{proof}

We denote by $T_n$ the Tate $\bbk$-algebra in $n$ variables. 
For $\rho\in \sqrt{\vert \bbk^{\times}\vert}$, we denote by $T_n (\rho )$ the $\bbk$-affinoid algebra associated to the closed polydisk of radius $\rho$ and dimension $n$ (\cite[\S6.1.5]{Bosch_Non-Archimedean_analysis}), consider the norm
\[\bigg\vert \sum_{\alpha\in\bbN^n} a_{\alpha} t^{\alpha}\bigg\vert_{\rho} \coloneqq \max_{\alpha} \vert a_{\alpha} \vert \rho^{\vert\alpha\vert}.\]

\begin{lemma} \label{lemma:generalized-flatness-NA}
    For $1\leq i\leq n$ and $1\leq k\leq m$, let $Y_i^k\in T_m = \bbk\langle x_1,\dots , x_m\rangle$.
    Let $\vert Y\vert \coloneqq \max_{1\leq i,k\leq n} \vert Y_i^k\vert$, assume $\vert Y\vert >0$.
    Let $f = (f_k )_{1\leq k\leq m}\in M_0\otimes_{\bbk} \fm$ satisfying ($1\leq i\leq n$, $1\leq k\leq m$)
    \[\partial_{t_i} f_k = Y_i^k (f_1,\dots, f_m).\]
    Then the components of $f$ are convergent on the open polydisk of radius $\vert Y\vert^{-1}$ and have norms bounded by $1$. 
    Equivalently, $f$ induces a map $\Sp T_n(\rho )\rightarrow \Sp T_n$ for all $\rho \in\sqrt{\vert\bbk^{\times}\vert}$ with $0< \rho < \vert Y\vert^{-1}$.
\end{lemma}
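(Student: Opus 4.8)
The plan is to run a non-archimedean majorant estimate directly on the Taylor coefficients of $f$; the only subtlety is organizing the induction so that the ultrametric inequality absorbs the combinatorial multiplicities. Write $f_k=\sum_{\ell\in\bbN^n,\ |\ell|\ge 1} f_{k,\ell}\,t^\ell$ (the constant terms vanish since $f\in M_0\otimes_\bbk\fm$), and $Y_i^k=\sum_{\alpha\in\bbN^m} c_{i,\alpha}^k x^\alpha$ with $|c_{i,\alpha}^k|\le|Y_i^k|\le|Y|$. Since $f_1^{\alpha_1}\cdots f_m^{\alpha_m}\in\fm^{|\alpha|}$, the composite $Y_i^k(f_1,\dots,f_m)=\sum_\alpha c_{i,\alpha}^k f_1^{\alpha_1}\cdots f_m^{\alpha_m}$ is a well-defined element of $\bbk\dbb{t_1,\dots,t_n}$ whose $t^\ell$-coefficient is a \emph{finite} sum over $1\le|\alpha|\le|\ell|$. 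Comparing $t^\ell$-coefficients in $\partial_{t_i}f_k=Y_i^k(f)$ gives the recursion $(\ell_i+1)f_{k,\tau_i(\ell)}=[Y_i^k(f)]_\ell$; as the valuation of $\bbk$ is trivial on $\bbQ$ we have $|\ell_i+1|=1$, so $|f_{k,\tau_i(\ell)}|=\big|[Y_i^k(f)]_\ell\big|$, which is the one place the hypothesis on $\bbk$ enters.

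The heart of the proof is the bound $|f_{k,\ell}|\le|Y|^{|\ell|}$ for all $k$ and all $\ell$ with $|\ell|\ge 1$, proved by induction on $|\ell|$. For $|\ell|=1$, say $\ell=e_i$, the recursion gives $f_{k,e_i}=[Y_i^k(f)]_0=c_{i,0}^k$, of norm $\le|Y|$. For $|\ell|\ge 2$ write $\ell=\tau_i(\ell')$ with $|\ell'|=|\ell|-1\ge 1$; in the finite sum $[Y_i^k(f)]_{\ell'}=\sum_{1\le|\alpha|\le|\ell'|}c_{i,\alpha}^k\,[f_1^{\alpha_1}\cdots f_m^{\alpha_m}]_{\ell'}$, each $[f_1^{\alpha_1}\cdots f_m^{\alpha_m}]_{\ell'}$ is a sum of products of $|\alpha|$ coefficients $f_{j,\mu}$ whose multi-indices $\mu$ sum to $\ell'$ with each $|\mu|\ge 1$ — hence each $|\mu|\le|\ell'|$, with equality only if $|\alpha|=1$ — so the induction hypothesis applies to every factor and each such product has norm $\le|Y|^{\sum|\mu|}=|Y|^{|\ell'|}$. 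Two applications of the ultrametric inequality then yield $\big|[Y_i^k(f)]_{\ell'}\big|\le|Y|\cdot|Y|^{|\ell'|}=|Y|^{|\ell|}$, hence $|f_{k,\ell}|=|f_{k,\tau_i(\ell')}|\le|Y|^{|\ell|}$.

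Granting this bound, for any $\rho\in\sqrt{|\bbk^\times|}$ with $0<\rho<|Y|^{-1}$ we get $|f_{k,\ell}|\rho^{|\ell|}\le(|Y|\rho)^{|\ell|}\to 0$, so $f_k\in T_n(\rho)$ with $|f_k|_\rho\le\max_{|\ell|\ge 1}(|Y|\rho)^{|\ell|}=|Y|\rho<1$; letting $\rho$ increase to $|Y|^{-1}$ shows each $f_k$ is analytic on the open polydisk of radius $|Y|^{-1}$ with norm $\le 1$, equivalently that $f$ induces a map $\Sp T_n(\rho)\to\Sp T_m$ for every such $\rho$. The only step demanding care is the bookkeeping in the inductive step — verifying that every coefficient of $f$ appearing in $[f_1^{\alpha_1}\cdots f_m^{\alpha_m}]_{\ell'}$ has total degree at most $|\ell'|$, the equal case occurring precisely for $|\alpha|=1$ — after which the ultrametric inequality makes the classical majorant-style multiplicities irrelevant, so there is no serious obstacle.
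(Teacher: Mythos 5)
Your proposal is correct and follows essentially the same route as the paper's proof: extract the coefficient recursion $(\ell_i+1)f_{k,\tau_i(\ell)}=[Y_i^k(f)]_\ell$ from the PDE, prove $|f_{k,\ell}|\le|Y|^{|\ell|}$ by induction on total degree using the ultrametric inequality on the finite sum over partitions of $\ell$ into lower-degree multi-indices, and conclude convergence and the norm bound on any polydisk of radius $\rho<|Y|^{-1}$. The only differences are cosmetic (base case at degree $1$ instead of $0$, and making explicit that $|\ell_i+1|=1$ because the valuation is trivial on $\bbQ$), so there is nothing to fix.
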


\begin{proof}
    Write $f_i = \sum_{\alpha\in\bbN^n} f_{i,\alpha} t^{\alpha}$ and $Y_i^k = \sum_{r\in \bbN^m} y_r^{(i,k)} x^{r}$.
    We have $\vert Y\vert = \sup \vert y_r^{(i,k)}\vert$.
    By assumption we have $f_{i,0} = 0$, which ensures that the composition $Y_i^k (f_1,\dots, f_m)$ is well-defined.
    
    For $d\in \bbN$, we set $v_d\coloneqq \max_{1\leq i\leq m, \vert \alpha\vert = d} \vert f_{i,\alpha}\vert$.
    We will prove $v_d\leq \vert Y\vert^d$ by induction on $d$.
    We have $v_0 = 0\leq 1 = \vert Y\vert^0$.
    Next, fix $d>0$ and assume we have proved $v_e\leq \vert Y\vert^e$ for all $e< d$.
    Let $\alpha\in\bbN^n$ with $\vert\alpha\vert = d-1$.
    Then for $1\leq k\leq n$, as in \eqref{eq:recursion-t-direction}, we have the recursion
    \[f_{i,\tau_k (\alpha )} = \frac{1}{\alpha_k + 1} [Y_i^k (f_1,\dots, f_m) ]_{\alpha} ,\]
    where the right hand side is the coefficient of $t^{\alpha}$ in $Y_i^k (f_1,\dots, f_m)$.
    We now express this coefficient. 
    For $r\in \bbN^m$, let $\cP (r,\alpha )$ denote the set of partitions of $\alpha$ into $\vert r\vert$-tuples.
    We write an element of $\cP (r,\alpha )$ as $\lbrace \alpha_1^{(1)} , \dots , \alpha_{r_1}^{(1)} ,\alpha_1^{(2)}  , \dots , \alpha_{r_m}^{(m)} \rbrace$, where $\alpha_p^{(q)}\in \bbN^n$ for each $p,q$. 
    The coefficient can then be expressed as the finite sum 
    \[[Y_i^k (f_1,\dots, f_m) ]_{\alpha}  = \sum_{r\in\bbN^m} y_r^{(i,k)} \sum_{ \lbrace \alpha_p^{(q)}\rbrace\in\cP (r,\alpha )} \prod_{1\leq q\leq m}\prod_{1\leq p\leq r_q} f_{q,\alpha_p^{(q)}}. \]
    We deduce
    \begin{align*}
        \vert f_{i,\tau_k (\alpha )} \vert &\leq \vert Y\vert \max_{\lbrace \alpha_p^{(q)}\rbrace\in\cP (r,\alpha )}\prod_{1\leq q\leq m}\prod_{1\leq p\leq r_q}\vert f_{q,\alpha_p^{(q)}}\vert \\
        &\leq  \vert Y\vert \max_{\lbrace \alpha_p^{(q)}\rbrace\in\cP (r,\alpha )}\prod_{1\leq q\leq m}\prod_{1\leq p\leq r_q} \vert Y\vert^{\vert \alpha_p^{(q)}\vert } \\
        &\leq \vert Y\vert\times \vert Y\vert^{\vert\alpha\vert} = \vert Y\vert^d.
    \end{align*}
    Let $0 < \rho < \vert Y\vert^{-1}$ in $\sqrt{\vert\bbk^{\times}\vert}$, we then have $\vert f_{i,\alpha} \vert \rho^{\vert\alpha\vert}\leq  (\rho \vert Y\vert )^{\vert \alpha\vert}\leq 1$.
    This implies that $f_i\in T_n (\rho )$, since $\rho\vert Y\vert <1$, and that $\vert f_i\vert_{\rho}\leq 1$, and the lemma follows.
\end{proof}

\subsubsection{Frobenius theorem}
We prove the formal and non-archimedean analogs of the Frobenius theorem in differential geometry, which states that a local basis of commuting vector fields comes from coordinates.

\begin{theorem} \label{theorem:Frobenius-theorem}
  Let $B = \Spf \bbk\dbb{t_1,\dots ,t_n}$ and let $(Y_i)_{1\leq i\leq n}$ be a commuting basis of vector fields on $B$.
  Then there exists a unique automorphism $\varphi\colon B\rightarrow B$ such that $d\varphi (\partial_{t_i} ) = \varphi^{\ast} Y_i$ for all $1\leq i\leq n$.
\end{theorem}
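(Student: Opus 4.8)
The plan is to reformulate the Frobenius theorem as an instance of the generalized flatness machinery from \cref{theorem:existence-solution-flat-PDE}. Write $\varphi = (\varphi^1,\dots,\varphi^n)$ with each $\varphi^k \in \fm \subset \bbk\dbb{t_1,\dots,t_n}$, so that the desired automorphism has $\varphi(0) = 0$ and invertible linear part; I will recover invertibility at the end. Expand $Y_i = \sum_{j=1}^n Y_i^j(s)\,\partial_{s_j}$ in the target coordinates $s_1,\dots,s_n$, where the matrix $(Y_i^j(0))$ is invertible since $(Y_i)$ is a basis. The condition $d\varphi(\partial_{t_i}) = \varphi^*Y_i$ unwinds, for each $1\le i,j\le n$, to the quasi-linear system
\[
\partial_{t_i}\varphi^j \;=\; Y_i^j(\varphi^1,\dots,\varphi^n),
\]
with initial condition $\varphi(0)=0$. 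First I would check Condition (1) of generalized flatness: since each $Y_i^j$ is a power series in its arguments and each $\varphi^k$ lies in $\fm$, the total-degree-$d$ part of $Y_i^j(\varphi)$ depends only on the degree-$\le d$ part of $\varphi$ — this is exactly the remark after \cref{definition:generalized-flat-PDE}. So the system has the form $D_i = \partial_{t_i} - f_i$ with $f_i(\varphi) = (Y_i^j(\varphi))_j$ satisfying Condition (1).

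The substantive point is Condition (2): if $\varphi \in M_0\otimes\fm$ satisfies $D_i(\varphi) \equiv 0 \bmod \fm^d$ for all $i$, then $\partial_{t_i} f_j(\varphi) \equiv \partial_{t_j} f_i(\varphi) \bmod \fm^d$. I would compute, using the chain rule,
\[
\partial_{t_i}\big(Y_j^k(\varphi)\big) \;=\; \sum_{\ell=1}^n (\partial_{s_\ell}Y_j^k)(\varphi)\cdot \partial_{t_i}\varphi^\ell
\;\equiv\; \sum_{\ell=1}^n (\partial_{s_\ell}Y_j^k)(\varphi)\cdot Y_i^\ell(\varphi) \pmod{\fm^d},
\]
where the congruence uses $\partial_{t_i}\varphi^\ell \equiv Y_i^\ell(\varphi) \bmod \fm^{d}$ (one loses no degree here because $\partial_{t_i}$ of an $\fm^{d}$ element can still be nonzero mod $\fm^{d}$; in fact $D_i(\varphi)\equiv 0\bmod\fm^d$ gives $\partial_{t_i}\varphi^\ell = Y_i^\ell(\varphi) + O(\fm^d)$, and multiplying by $(\partial_{s_\ell}Y_j^k)(\varphi)$ which has no constant-term constraint still preserves the $\fm^d$-error — I should double-check this bookkeeping carefully, it is the one place the degree count could slip). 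Then the difference $\partial_{t_i}f_j(\varphi)^k - \partial_{t_j}f_i(\varphi)^k$ is congruent mod $\fm^d$ to $\sum_\ell \big[(\partial_{s_\ell}Y_j^k)Y_i^\ell - (\partial_{s_\ell}Y_i^k)Y_j^\ell\big](\varphi)$, which is precisely the $k$-th component of the Lie bracket $[Y_i,Y_j]$ evaluated at $\varphi$; this vanishes identically because the $Y_i$ commute. Hence Condition (2) holds, and \cref{theorem:existence-solution-flat-PDE} yields a unique $\varphi \in M_0\otimes\fm$ solving the system.

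It remains to upgrade this $\varphi$ to an automorphism of $B$, i.e.\ to see that the induced $\bbk$-algebra endomorphism $\varphi^* \colon \bbk\dbb{t_1,\dots,t_n} \to \bbk\dbb{t_1,\dots,t_n}$ is an isomorphism. By the formal inverse function theorem this holds iff the linear part of $\varphi$ is invertible, i.e.\ iff the Jacobian $\big(\partial_{t_i}\varphi^j(0)\big)$ is invertible. But evaluating the PDE at $t=0$ gives $\partial_{t_i}\varphi^j(0) = Y_i^j(0)$, and the matrix $(Y_i^j(0))$ is invertible since $(Y_i)$ is a basis of the tangent space at $0$ (equivalently: the $Y_i$ span the free rank-$n$ module of vector fields, so their values at the closed point are a basis of $T_0 B$). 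Thus $\varphi$ is an automorphism, and it satisfies $d\varphi(\partial_{t_i}) = \varphi^*Y_i$ by construction. Uniqueness is immediate from the uniqueness clause of \cref{theorem:existence-solution-flat-PDE} together with the uniqueness of the formal inverse. I expect the main obstacle to be the careful degree-tracking in verifying Condition (2) — making sure that reducing $\partial_{t_i}\varphi^\ell$ mod $\fm^d$ is legitimate inside the chain-rule expression and that the commutator $[Y_i,Y_j]=0$ is exactly what kills the obstruction, with all modular arithmetic consistent.
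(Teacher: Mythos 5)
Your proposal is correct and follows essentially the same route as the paper: recast $d\varphi(\partial_{t_i})=\varphi^{\ast}Y_i$ as the quasi-linear system $\partial_{t_i}\varphi^j=Y_i^j(\varphi)$, verify generalized flatness (Condition (2) via the chain rule and $[Y_i,Y_j]=0$), apply \cref{theorem:existence-solution-flat-PDE}, and get invertibility from the Jacobian $(Y_i^j(0))$. The degree-bookkeeping you flag is harmless: the congruence $\partial_{t_i}\varphi^\ell\equiv Y_i^\ell(\varphi)\bmod\fm^d$ is preserved under multiplication by $(\partial_{s_\ell}Y_j^k)(\varphi)$ simply because $\fm^d$ is an ideal, which is exactly how the paper argues.
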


\begin{proof}  
Let $b$ be the closed point of $B$, given by $t_1 = \cdots = t_n = 0$.
Let $\mathfrak{m} = (t_1,\dots, t_n)$ denote the maximal ideal of $\bbk\dbb{t_1,\dots, t_n}$.
We write $Y_i = \sum_k Y_i^k \partial_{t_k}$, with $Y_i^k\in \bbk\dbb{t_1,\dots, t_n}$.
Working in coordinates, giving $\varphi\colon B\rightarrow B$ is equivalent to giving $\varphi_1,\dots ,\varphi_n\in \bbk\dbb{t_1,\dots, t_n}$ such that $\varphi_i (0) = 0$.
Furthemore, $\varphi$ is invertible if and only if the differential at $b$ is invertible, i.e.\ if and only if the matrix $\left( \frac{\partial \varphi_i }{\partial t_j} \right)_{1\leq i,j\leq n}$ is invertible at $t=0$.
The condition $d\varphi (\partial_{t_i} ) = \varphi^{\ast}Y_i$ is equivalent to 
\begin{equation} \label{eq:formal-Frobenius-PDE}
    \sum_{1\leq k\leq n} \frac{\partial \varphi_i}{\partial t_k}(t) \partial_{t_k} = \sum_{1\leq k\leq n} Y_i^k (\varphi_1 (t),\dots ,\varphi_n (t)) \partial_{t_k} .
\end{equation}
Since $\varphi_i (0) = 0$, the composition on the right hand side is well-defined.

For $1\leq i\leq n$, consider the first-order quasi-linear differential operator
\begin{equation*}
    \begin{aligned}
        D_i\colon \bbk^n\otimes_{\bbk} \fm &\longrightarrow \bbk^n\otimes_{\bbk}\bbk\dbb{t_1,\dots, t_n} \\
        (\varphi_1,\dots, \varphi_n)&\longmapsto \left( \frac{\partial \varphi_k}{\partial t_i} - Y_i^k (\varphi_1,\dots, \varphi_n) \right)_{1\leq k\leq n} .
    \end{aligned}
\end{equation*}
 Equation \eqref{eq:formal-Frobenius-PDE} is equivalent to $D_i (\varphi_1,\dots ,\varphi_n ) = 0$ for $1\leq i\leq n$.
 We will prove that the system $\lbrace D_i = 0\rbrace$ is generalized flat, and apply \cref{theorem:existence-solution-flat-PDE}.
  Condition (1) of \cref{definition:generalized-flat-PDE} is satisfied because the components of $Y_i$ are power series in the argument.
  We now check Condition (2).
  Assume $(\varphi_1,\dots, \varphi_n)\in \bbk^n\otimes_{\bbk} \mathfrak{m}$ satisfies ${D_i (\varphi_1,\dots,\varphi_n) = 0\mod \mathfrak{m}^d}$ for all $1\leq i\leq n$.
  Then, since $[Y_i,Y_j] = 0$, we have ($1\leq i,k\leq n$)
  \begin{align*}
  \frac{\partial(Y_j^k(\varphi_1,\dots ,\varphi_n))}{\partial t_i} &= \sum_s \frac{\partial\varphi_s }{\partial t_i} \frac{\partial Y_j^k}{\partial t_s}(\varphi_1,\dots, \varphi_n)  \mod\mathfrak{m}^d \\
  &= \sum_s Y_i^s (\varphi_1,\dots ,\varphi_n)\frac{\partial   Y_j^k }{\partial t_s} (\varphi_1,\dots,\varphi_n)\mod\mathfrak{m}^d  \\
  &=\sum_s Y_j^s (\varphi_1,\dots,\varphi_n)\frac{\partial Y_i^k}{\partial t_s} (\varphi_1,\dots ,\varphi_n) \mod\mathfrak{m}^d \\
  &= \frac{\partial (Y_i^k (\varphi_1,\dots ,\varphi_n))}{\partial t_j}\mod\mathfrak{m}^d.
  \end{align*}
	We deduce from \cref{theorem:existence-solution-flat-PDE} that the components $(\varphi_1 ,\dots, \varphi_n )$ of $\varphi$ are uniquely determined and that they can be constructed inductively.
	The associated morphism $\varphi\colon B\rightarrow B$ is automatically an automorphism, 
    because its differential at $b$ is given by the matrix $(Y_j^i (0) )_{1\leq i,j\leq n}$, which is invertible by assumption.
\end{proof}

\begin{lemma}\label{lemma:neighborhood-k-rational-point}
    Let $X$ be a $\bbk$-analytic space, and $x\in X$ a smooth $\bbk$-rational point. 
    There exists an admissible open neighborhood $U\subset X$ of $x$ and an open immersion $U\hookrightarrow \Sp T_n$, where $n= \dim_x X$.
\end{lemma}
\begin{proof}
    Since $x$ is a smooth rigid point, there exists an admissible affinoid neighborhood $U\subset X$ of $x$ and an étale map $U\rightarrow Y\coloneqq\Sp T_n$, with $n=\dim_x X$.
    Up to shrinking $U$, we may assume that $f^{-1} (f(x)) = \lbrace x\rbrace$.
    We will show that $f_x^{\ast}\colon \cO_{Y,f(x)}\rightarrow \cO_{X,x}$ is an isomorphism, then $f$ restricts to an open immersion on an affinoid open neighborhood of $x$ by \cite[7.3.3/Corollary 6]{Bosch_Non-Archimedean_analysis}. 
    By \cite[7.3.3/Proposition 5]{Bosch_Non-Archimedean_analysis}, it is enough to check that the induced morphism $\hat{f}_x^{\ast}\colon \widehat{\cO}_{Y,f(x)}\rightarrow \widehat{\cO}_{X,x}$ on the completed local rings is an isomorphism.

    Since $f$ is étale, we have $f_x^{\ast}(\fm_{f(x)}) = \fm_x$, in particular $\widehat{\cO}_{X,x}$ is a complete $\widehat{\cO}_{Y,f(x)}$-module.
    Since $x$ is a $\bbk$-rational, the map $\hat{f}_x^{\ast}$ is an isomorphism modulo $\fm_{f(x)}$, hence $\hat{f}_x^{\ast}$ is surjective by \cite[Tag 0315]{Stacks_project}.
    The Krull dimension of noetherian local rings is invariant under completion. 
    Since $\dim_x X = \dim_{f(x)} Y$, necessarily $\hat{f}_x^{\ast}$ is injective using \cite[Tag 00KW]{Stacks_project}.
    This concludes the proof.
\end{proof}

\begin{theorem}  \label{thm:NA-Frobenius-theorem}
    Let $B$ be a smooth $\bbk$-analytic space, and $(Y_1,\dots, Y_n)$ be a commuting basis of local vector fields around a rational point $b\in B$.
    Then, there exists admissible open neighborhoods $V\subset B$ of $b$ and $U\subset \Sp T_n$ of $0$ and an isomorphism $\varphi\colon U\rightarrow V$ such that $\varphi (b) = 0$ and $d\varphi (\partial_{t_i}\vert_U) = \varphi^{\ast} (Y_i\vert_V)$.
 \end{theorem}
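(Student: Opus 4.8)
The plan is to mimic the formal proof of \cref{theorem:Frobenius-theorem}, replacing the appeal to \cref{theorem:existence-solution-flat-PDE} by its non-archimedean counterpart \cref{lemma:generalized-flatness-NA}. First I would choose global coordinates $t_1,\dots,t_n$ on $\Sp T_n$ and write $Y_i=\sum_k Y_i^k\partial_{t_k}$ with $Y_i^k\in T_n$. Since we only care about a neighborhood of $0$, after a linear change of coordinates I may assume the matrix $\big(Y_i^k(0)\big)$ is the identity; thus $Y_i^k=\delta_{ik}+(\text{terms vanishing at }0)$. Seeking $\varphi=(\varphi_1,\dots,\varphi_n)$ with $\varphi_i(0)=0$ and $d\varphi(\partial_{t_i})=\varphi^*Y_i$ amounts, exactly as in \eqref{eq:formal-Frobenius-PDE}, to the system
\begin{equation*}
\partial_{t_i}\varphi_k = Y_i^k(\varphi_1,\dots,\varphi_n),\qquad 1\le i\le n,\ 1\le k\le n.
\end{equation*}
Writing $\varphi_k=t_k+\psi_k$ with $\psi_k\in T_n$ vanishing at $0$, and substituting, the right-hand side becomes $\delta_{ik}+(\text{series in }\psi\text{ and }t\text{ vanishing at }0)$, so the system for $\psi=(\psi_k)$ has the form $\partial_{t_i}\psi_k=Z_i^k(t,\psi)$ treated earlier; but to apply \cref{lemma:generalized-flatness-NA} verbatim one wants the right-hand side to depend only on $\psi$, not on $t$. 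I would handle this by the standard trick of adjoining the coordinates themselves: work with the enlarged tuple $(\psi_1,\dots,\psi_n,t_1,\dots,t_n)$, which satisfies $\partial_{t_i}(t_j)=\delta_{ij}$, a polynomial (hence Tate) expression in the enlarged tuple, so the combined system is of the form covered by \cref{lemma:generalized-flatness-NA} with $m=2n$.

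The existence and uniqueness of a formal solution $\psi\in M_0\otimes_\bbk\fm$ is guaranteed by \cref{theorem:existence-solution-flat-PDE} once I check generalized flatness; Condition (1) holds because the right-hand sides are power series in the arguments, and Condition (2) is the same commutator computation $[Y_i,Y_j]=0$ carried out in the proof of \cref{theorem:Frobenius-theorem}, which only uses that the $Y_i$ commute as derivations. Then \cref{lemma:generalized-flatness-NA}, applied to the enlarged system, shows that the components of $\psi$ (and trivially of $t_j$) converge on the open polydisk of radius $\vert Z\vert^{-1}$, where $\vert Z\vert=\max_{i,k}\vert Z_i^k\vert$ over the generators of the enlarged system, and have norm $\le 1$ there. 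Hence for any $\rho\in\sqrt{\vert\bbk^\times\vert}$ with $0<\rho<\vert Z\vert^{-1}$ the solution $\varphi=\id+\psi$ defines a morphism $U\coloneqq\Sp T_n(\rho)\to\Sp T_n$ with $\varphi(0)=0$ and $d\varphi(\partial_{t_i})=\varphi^*Y_i$ on $U$.

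It remains to see that $\varphi$ is an isomorphism onto an admissible open neighborhood $V$ of $0$. Its differential at $0$ is $\big(\partial\varphi_k/\partial t_i\big)(0)=\big(Y_i^k(0)\big)=\Id$, so $\varphi$ is étale at $0$; by the non-archimedean inverse function theorem (e.g.\ \cite[\S2.2]{Berkovich_Spectral_theory_and_analytic_geometry} or the analytic inverse function theorem for affinoids) there are admissible open neighborhoods $U'\subseteq U$ of $0$ and $V$ of $0$ with $\varphi|_{U'}\colon U'\xrightarrow{\sim}V$ an isomorphism; shrinking $\rho$ if necessary we may take $U'=U$. Uniqueness follows from uniqueness of the formal solution in \cref{theorem:existence-solution-flat-PDE}: any two isomorphisms $\varphi,\varphi'$ defined near $0$ with the stated properties induce, via their Taylor expansions at $0$, formal solutions of the same generalized flat system, hence agree to all orders, hence agree on a common admissible open neighborhood of $0$ since both are analytic. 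The main obstacle I anticipate is purely bookkeeping: ensuring that after the substitution $\varphi_k=t_k+\psi_k$ and the adjunction of the coordinate functions, the system genuinely fits the hypotheses of \cref{lemma:generalized-flatness-NA} (in particular that $\vert Z\vert>0$, which is automatic since the generators include the nonconstant $t_j$), and tracking the radius so that the final $\rho$ is an admissible polyradius.
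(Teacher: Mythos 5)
Your proposal is correct and follows essentially the same route as the paper's proof: produce the formal solution of $\partial_{t_i}\varphi_k = Y_i^k(\varphi_1,\dots,\varphi_n)$ via the formal Frobenius theorem (\cref{theorem:Frobenius-theorem}), invoke \cref{lemma:generalized-flatness-NA} to get convergence with norm bounds on a polydisk of radius controlled by $\vert Y\vert^{-1}$, and conclude that $\varphi$ is a local isomorphism at $0$ — your appeal to the non-archimedean inverse function theorem plays the same role as the paper's appeal to the infinitesimal-neighborhood criterion in Bosch. The only (harmless) detour is your substitution $\varphi_k=t_k+\psi_k$ and adjunction of the coordinate functions: since the right-hand side $Y_i^k(\varphi_1,\dots,\varphi_n)$ already depends only on the unknown functions and not explicitly on $t$, \cref{lemma:generalized-flatness-NA} applies verbatim to the original system, as the paper does, so the enlarged $2n$-variable system and the normalization $(Y_i^k(0))=\mathrm{Id}$ are unnecessary bookkeeping.
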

\begin{proof}
    By \cref{lemma:neighborhood-k-rational-point}, we may assume that $B \simeq \Sp T_n$.
    We start by applying \cref{theorem:Frobenius-theorem} to the restriction of the vector fields $(Y_i)_{1\leq i\leq n}$ to a formal neighborhood $\widehat{B} = \Spf\bbk\dbb{t_1,\dots, t_n}$ of $0\in \Sp T_n$.
    This produces a unique formal automorphism $\widehat{\varphi} = (\varphi_1,\dots ,\varphi_n )\colon \widehat{B}\rightarrow\widehat{B}$ satisfying the relations \eqref{eq:formal-Frobenius-PDE}.
    We will prove that $\widehat{\varphi}$ extends to admissible open neighborhoods of $0$.

    Let $\vert Y\vert \coloneqq \max_i \vert Y_i\vert$, and let $\rho\in\sqrt{\vert\bbk^{\times}\vert}$ such that $\rho < \min (1,\vert Y\vert^{-1} )$.
    By \cref{lemma:generalized-flatness-NA}, $\widehat{\varphi}$ extends to a map $\varphi\colon \Sp T_n(\rho )\rightarrow \Sp T_n$.
        The truncations of $\varphi$ coincide with the truncations of $\widehat{\varphi}$.
    In particular, they induce isomorphisms $T_n / \fm^d \xrightarrow{\sim}T_n (\rho ) / \fm^d$ for all $d\geq 0$.
    We conclude the proof using \cite[\S 3.3 Lemma 18(ii)]{Bosch_Lectures_on_formal_and_rigid_geometry}.
\end{proof}

\subsection{Decomposition theorems for F-manifolds}\label{subsec:decomposition-F-manifolds}

In this subsection, we prove the decomposition theorems for formal and non-archimedean versions of F-manifolds, see \cref{theorem:decomposition-F-manifolds,thm:NA-decomposition-F-manifold}.

\subsubsection{Decomposition theorem for formal F-manifolds}

The notion of F-manifold was introduced by Hertling and Manin as a weaker version of Frobenius manifolds, see \cite{Hertling_Manin_Weak_Frobenius_manifolds} and the monograph \cite[I.\S5]{Manin_Frobenius_manifolds}.

\begin{definition}[F-manifold] \label{def:F-manifold}
  Let $B$ be a smooth formal scheme or a smooth $\bbk$-analytic space.
  An F-manifold structure on $B$ is a $\mathcal{O}_B$-bilinear commutative associative product $\star$ on the tangent bundle $TB$, satisfying the \emph{F-identity}: for any (local) vector fields $X,Y,Z,W$ we have
  \begin{equation} \label{eq:F-identity}
      P_{X\star Y}(Z,W)=X\star P_Y(Z,W) + (-1)^{\abs{X}\abs{Y}} Y\star P_X(Z,W),
  \end{equation}
  where
  \[P_X(Z,W)\coloneqq [X,Z\star W] - [X,Z]\star W - (-1)^{\abs{X}\abs{Z}} Z\star[X,W].\]
\end{definition}

We prove the following decomposition result for formal F-manifolds.

\begin{theorem}\label{theorem:decomposition-F-manifolds}
  Let $B$ be a formal neighborhood of a rational point $b$ in a smooth $\bbk$-variety.
  Let $\star$ denote an F-manifold structure with unit on $B$.
  Assume that there exists a splitting as $\bbk$-algebras
  \begin{equation}\label{eq:splitting-tangent-space}
      T_b B  = \bigoplus_{i\in I} A_i .
  \end{equation}
  Then there exists formal F-manifolds $(B_i,\star_i )$ such that 
  \begin{enumerate}
    \item $(B,\star )$ is isomorphic to $ \prod_{i\in I} (B_i,\star_i )$ as F-manifolds with unit,
    \item and the induced decomposition of $(TB,\star )$ restricts to \eqref{eq:splitting-tangent-space} at $b$.
  \end{enumerate}
\end{theorem}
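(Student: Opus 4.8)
The strategy has three steps: extend the algebra splitting \eqref{eq:splitting-tangent-space} from $b$ to the whole tangent bundle $TB$; use the F-identity \eqref{eq:F-identity} to show that the resulting decomposition of $TB$ consists of mutually commuting integrable distributions; and integrate these distributions using the formal Frobenius theorem \cref{theorem:Frobenius-theorem}. Throughout, I view $TB$ as a finite free $\cO_B$-module which, via $\star$, is a commutative associative unital $\cO_B$-algebra with unit the identity vector field $\1$, and I use that $\cO_B = \bbk\dbb{t_1,\dots,t_n}$ is a complete local ring with residue field $\bbk$ of characteristic $0$.

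\emph{Step 1 (extending the splitting).} The decomposition \eqref{eq:splitting-tangent-space} is the same datum as a complete system of orthogonal idempotents $\bar e_i \in T_bB$ with $\sum_{i\in I}\bar e_i = \1|_b$ and $A_i = \bar e_i\star T_bB$. Lifting idempotents along the $\fm$-adic tower $\{TB/\fm^{d+1}TB\}_d$ and passing to the limit (this is the deformation statement for unital commutative associative algebras, cf.\ \cref{lemma:deformation-unital-associative-commutative-algebra}), they lift uniquely to a complete system of orthogonal idempotent vector fields $e_i \in \Gamma(TB)$ with $\sum_i e_i = \1$. Setting $\cE_i \coloneqq e_i\star TB$ gives a decomposition $TB = \bigoplus_{i\in I}\cE_i$ into $\cO_B$-submodules that are direct summands (hence free), each a $\star$-subalgebra with unit $e_i$, satisfying $\cE_i\star\cE_j = 0$ for $i\neq j$ and $\cE_i|_b = A_i$.

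\emph{Step 2 (the F-identity; the main obstacle).} Applying \eqref{eq:F-identity} with $X = Y = e_i$ and using $e_i\star e_i = e_i$ gives $P_{e_i} = 2\,e_i\star P_{e_i}$; multiplying once more by $e_i$ and dividing by $2$ forces $P_{e_i}\equiv 0$, i.e.\ $\ad_{e_i}$ is a $\bbk$-linear derivation of $\star$. Feeding $P_{e_i}\equiv 0$ back into \eqref{eq:F-identity} written as $P_{e_i\star X} = e_i\star P_X + X\star P_{e_i}$ and using $e_i\star X = X$ for $X\in\Gamma(\cE_i)$ shows that $P_X$ is $\cE_i$-valued whenever $X\in\Gamma(\cE_i)$. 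Combining these facts with the orthogonality relations $e_i\star e_j = \delta_{ij}e_i$ and $\cE_i\star\cE_j = 0$ — e.g.\ by evaluating $P_X$ and $\ad_{e_i}$ on well-chosen combinations of the $e_j$ and $\1$ — one extracts that $[\Gamma(\cE_i),\Gamma(\cE_i)]\subseteq\Gamma(\cE_i)$, that $[\Gamma(\cE_i),\Gamma(\cE_j)] = 0$ for $i\neq j$, and moreover that $TB$ admits a global $\cO_B$-frame $(Y_1,\dots,Y_n)$ consisting of pairwise commuting vector fields adapted to the splitting $\bigoplus_i\cE_i$ (cf.\ \cref{proposition:A-decomposition-theorem}). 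I expect this step to be the crux: wringing the distribution-theoretic conclusions out of the F-identity, while tracking the idempotent relations and the fact that the vector-field brackets are not $\cO_B$-linear, is the only part that is not bookkeeping.

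\emph{Step 3 (integration).} Applying \cref{theorem:Frobenius-theorem} to the commuting frame $(Y_1,\dots,Y_n)$ yields coordinates $(s_1,\dots,s_n)$ on $B$ with $\partial_{s_k} = Y_k$. Grouping the $s_k$ according to the blocks $i\in I$, so that $\cE_i$ is $\cO_B$-spanned by the $\partial_{s_k}$ with $k$ in block $i$, exhibits an isomorphism of formal schemes $B\simeq\prod_{i\in I}B_i$ under which $\cE_i$ corresponds to $\pr_i^*TB_i$. Since each $\cE_i$ is a $\star$-subalgebra and $[\Gamma(\cE_i),\Gamma(\cE_j)] = 0$ for $i\neq j$, any product $X\star X'$ of sections of $\cE_i$ again lies in $\Gamma(\cE_i)$ and commutes with all $\partial_{s_k}$ for $k$ outside block $i$, hence is pulled back from $B_i$; thus $\star$ restricts to an $\cO_{B_i}$-bilinear, commutative, associative product $\star_i$ on $TB_i$ with unit $e_i$, and the F-identity for $\star_i$ is the restriction of that for $\star$. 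Therefore $(B,\star)\simeq\prod_{i\in I}(B_i,\star_i)$ as F-manifolds with unit — the unit $\1 = \sum_i e_i$ matching the product unit — and by construction of the $e_i$ the induced decomposition of $(TB,\star)$ restricts to \eqref{eq:splitting-tangent-space} at $b$. This gives conclusions (1) and (2).
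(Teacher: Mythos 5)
Your overall architecture is exactly the paper's: extend the algebra splitting of $T_bB$ to $TB$, use the F-identity to show the summands are bracket-stable and mutually commuting distributions, then integrate with \cref{theorem:Frobenius-theorem}. Step 1 is correct and is a legitimate variant: you lift the complete system of orthogonal idempotents directly along the $\fm$-adic tower (unique lifting of idempotents in a complete commutative algebra), whereas the paper's \cref{lemma:deformation-unital-associative-commutative-algebra} lifts the algebra map $\bbk^{\oplus I}\to T_bB$ using vanishing of the cotangent complex; the two give the same decomposition $TB=\bigoplus_i\cE_i$, and your route is more elementary. Your observation that $P_{e_i}\equiv 0$ (take $X=Y=e_i$ in \eqref{eq:F-identity}, then multiply by $e_i$) is correct and is a nice shortcut not made explicit in the paper; combined with $P_X=e_i\star P_X$ for $X\in\Gamma(\cE_i)$ it does yield $[e_i,X]\in\Gamma(\cE_i)$ and $[\Gamma(\cE_i),\Gamma(\cE_i)]\subseteq\Gamma(\cE_i)$.

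However, Step 2 — which you yourself flag as the crux — is not actually carried out, and the facts you do establish are not sufficient for its two key claims. First, for $X\in\Gamma(\cE_i)$, $Y\in\Gamma(\cE_j)$, $i\neq j$, the consequences $P_{e_k}\equiv 0$, $P_X$ being $\cE_i$-valued, and $[e_k,X]\in\cE_i$ only give $[X,Y]\in\Gamma(\cE_i\oplus\cE_j)$ with $e_i\star[X,Y]=P_X(e_j,Y)$ and $e_j\star[X,Y]=-P_Y(e_i,X)$; they do not force these components to vanish. In the paper this vanishing is the content of \cref{proposition:A-decomposition-theorem}(2), obtained by a further chain of F-identity expansions of $[e_i\star X,e_j\star Y]$ culminating in $[X,Y]=(e_i-e_j)\star[X,Y]$, from which $e_j\star[X,Y]=0$ and, by symmetry, $[X,Y]=0$; some such computation is genuinely needed and "evaluating on well-chosen combinations of the $e_j$ and $\1$" does not supply it. Second, even granting (1) and (2), the existence of a commuting $\cO_B$-frame adapted to $\bigoplus_i\cE_i$ is itself a nontrivial statement (a relative Frobenius-type result): the paper proves it in \cref{lemma:foliation-commuting-vector-fields}, using \cref{lemma:submodule-torsion-free} and a projection to a coordinate subspace to pull back coordinate vector fields, before it can invoke \cref{theorem:Frobenius-theorem}. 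You assert this frame exists with a "cf.\ \cref{proposition:A-decomposition-theorem}", i.e.\ by citing the very result whose proof is at issue. Step 3 is fine modulo Step 2 (the independence of the structure constants of $\star_i$ from the other blocks' coordinates follows, as you suggest, from $P_X$ being $\cE_i$-valued), so the gap is concentrated in: (i) the vanishing of the cross brackets, and (ii) the construction of the commuting adapted frame.
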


The idea of the proof is the following.
We obtain a decomposition of $TB$ into sheaves of subalgebras in \cref{lemma:deformation-unital-associative-commutative-algebra}, induced from that of $T_bB$.
\cref{proposition:A-decomposition-theorem} will show that the direct summands of $TB$ define commuting foliations (in the sense of \cite[Definition 2.1]{Araujo_Fano-foliations}).
We can then integrate them using the Frobenius theorem (\cref{theorem:Frobenius-theorem}).

\begin{lemma}\label{lemma:deformation-unital-associative-commutative-algebra}
  Let $A$ be a unital associative commutative $\bbk$-algebra and $I$ a finite set.
  Assume $A$ admits a splitting $A \simeq \bigoplus_{i\in I} A_i$ as $\bbk$-algebras.
  Then the splitting extends over any deformation of $A$ over $\bbk\dbb{t_1,\dots, t_n}$.
\end{lemma}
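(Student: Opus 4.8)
The plan is to reduce the statement to lifting idempotents along an $\fm$-adic completion. Write $R := \bbk\dbb{t_1,\dots,t_n}$ with maximal ideal $\fm$; by a deformation of $A$ over $R$ I mean a commutative associative unital $R$-algebra $\tilde A$ which is finite free as an $R$-module and satisfies $\tilde A \otimes_R \bbk \simeq A$ as $\bbk$-algebras (this is the case in the intended application, where $\tilde A$ is the ring of global sections of the tangent bundle, free of rank $n$, and $A = T_bB$). A splitting $A \simeq \bigoplus_{i\in I} A_i$ as $\bbk$-algebras is the same datum as a complete system of orthogonal idempotents $(e_i)_{i\in I}$ in $A$, i.e.\ $e_i e_j = \delta_{ij} e_i$ and $\sum_{i\in I} e_i = 1$, with $A_i = e_i A$. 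So it suffices to lift $(e_i)$ to a complete system of orthogonal idempotents $(\tilde e_i)_{i\in I}$ in $\tilde A$; then $\tilde A = \bigoplus_{i\in I} \tilde e_i \tilde A$ is the desired extension, each summand $\tilde A_i := \tilde e_i \tilde A$ being an $R$-subalgebra, finitely generated projective hence free over the local ring $R$, and reducing mod $\fm$ to $A_i$.

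Next I would perform the idempotent lifting through the $\fm$-adic tower. For each $d \geq 1$ the ring $\tilde A/\fm^d\tilde A$ is finite free over the Artinian ring $R/\fm^d$, hence Artinian, and its ideal $\fm\tilde A/\fm^d\tilde A$ is nilpotent. Idempotents lift along a surjection of commutative rings with nilpotent kernel, and moreover \emph{uniquely}: if $f_1,f_2$ are idempotents with $f_1 - f_2$ in a nil ideal, then the identity $(f_1-f_2)^3 = f_1 - f_2$ (valid since $f_1,f_2$ commute) forces $f_1 - f_2 = 0$. Applying this to $\tilde A/\fm^d\tilde A \twoheadrightarrow \tilde A/\fm\tilde A = A$, each $e_i$ lifts uniquely to an idempotent $e_i^{(d)}$, and uniqueness makes these compatible as $d$ grows. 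Passing to the limit $\tilde A = \varprojlim_d \tilde A/\fm^d\tilde A$ (using completeness of $R$ and finiteness of $\tilde A$) yields idempotents $\tilde e_i \in \tilde A$ lifting $e_i$.

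Finally, orthogonality and completeness of $(\tilde e_i)$ follow from the same uniqueness: for $i\neq j$ the element $\tilde e_i\tilde e_j$ is an idempotent (commutativity of $\tilde A$) reducing to $e_ie_j = 0$ mod $\fm$, hence $\tilde e_i\tilde e_j = 0$ by uniqueness of the lift of $0$; and $\sum_{i\in I}\tilde e_i$ is an idempotent reducing to $1$, hence equals $1$. This produces the decomposition $\tilde A \simeq \bigoplus_{i\in I}\tilde A_i$ of $R$-algebras restricting to $A \simeq \bigoplus_{i\in I} A_i$ modulo $\fm$, which is exactly the assertion of the lemma. The only point requiring care is the idempotent-lifting step — both the existence of a lift along a nilpotent ideal and the uniqueness, which is needed to glue the lifts across the $\fm$-adic tower and to recover orthogonality — but this is standard commutative algebra, so I do not expect a genuine obstacle here.
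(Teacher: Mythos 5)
Your proof is correct, but it takes a genuinely different route from the paper. You reduce the splitting to a complete system of orthogonal idempotents and lift them through the $\fm$-adic tower, using the classical facts that idempotents lift along a surjection with nilpotent kernel and that in the commutative case such lifts are \emph{unique} (your $x^3=x$ argument, plus nilpotency of $x$, does force $x=0$); uniqueness then glues the lifts across the tower and gives orthogonality and completeness, and $\fm$-adic completeness of $\tA$ (which you get from finite freeness over the complete ring $R$, as in the intended application to $TB$) lets you pass to the limit. The paper instead runs a deformation-theoretic induction: it lifts the structure map $B_\ell=(\tR/\fm^{\ell+1})^{\oplus I}\to A_\ell$ along the square-zero extensions $A_{\ell+1}\twoheadrightarrow A_\ell$, the obstruction living in $\Ext^1_{\tR}(\bbL_{B_{\ell+1}/\tR}\otimes_{B_{\ell+1}}A_\ell,\fm^{\ell+1}A_{\ell+1})$, which vanishes because $\bbL_{B_{\ell+1}/(\tR/\fm^{\ell+1})}=0$, and then takes the inverse limit. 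Both arguments exploit the same underlying fact — formal \'etaleness of $\bbk^{\oplus I}$ over $\bbk$ — but yours is elementary and self-contained (and gives uniqueness of the lifted splitting for free), while the paper's is shorter once the cotangent-complex formalism is granted and requires no explicit completeness or finiteness bookkeeping beyond $\tA\simeq\lim_k\tA/\fm^{k+1}\tA$. The only caveat is that your argument, as written, assumes the deformation is finite free (or at least $\fm$-adically complete and separated); that is exactly the situation in which the lemma is applied, so this is a harmless sharpening of hypotheses rather than a gap, but you should state it as such.
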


\begin{proof}
  Let $\tR \coloneqq \bbk\dbb{t_1,\dots ,t_n}$ and let $\tA$ be an $\tR$-algebra which is a deformation of $A$.
  Let $\fm = (t_1,\dots, t_n)$, and for $k\in\bbN$, $A_k\coloneqq \tA /\fm^{k+1} \tA$ and $B_k \coloneqq (\tR / \fm^{k+1} )^{\oplus I}$.

  We will prove by induction on $\ell\geq 0$ that for any $0\leq k\leq \ell$, there are $\tR$-algebra maps $B_k\rightarrow A_k$ that fit into a commutative diagram
  \begin{equation}\label{cd:tower-deformation-algebra}
  \begin{tikzcd}
    A_{\ell} \ar[r] & A_{\ell-1} \ar[r] & \cdots\ar[r] & A_1\ar[r] & A_0 \\
    B_{\ell} \ar[r]\ar[u] & B_{\ell-1} \ar[r]\ar[u] & \cdots \ar[r]& B_1\ar[u]\ar[r] & B_0. \ar[u]
  \end{tikzcd}
  \end{equation}
  For $\ell=0$, the $\tR$-algebra structures on $A_0\simeq A$ and $B_0\simeq \bbk^{\oplus I}$ are induced by the compositions of the quotient map $\tR\rightarrow \bbk$ with the  structural maps $\bbk\rightarrow A$ and $\bbk\rightarrow \bbk^{\oplus I}$.
  In particular, the map $B_0\rightarrow A_0$ provided by the splitting $A\simeq \bigoplus_{i\in I} A_i$ is a map of $\tR$-algebras.

  Now assume that  the maps $B_k\rightarrow A_k$ are constructed for $k\leq \ell$.
  Let us prove that the dashed arrow exists in the commutative diagram of $\tR$-algebras
  \[\begin{tikzcd}
    A_{\ell+1}\ar[r] & A_{\ell} \\
    B_{\ell+1}\ar[r]\ar[u,dashed] & B_{\ell}. \ar[u]
  \end{tikzcd}\]
  In other words, we are looking for a lift of the composite map $B_{\ell+1}\rightarrow A_{\ell}$ to $A_{\ell+1}$.
  Since $\ker (A_{\ell+1}\rightarrow A_{\ell}) = \fm^{\ell+1} A_{\ell+1}$, the algebra $A_{\ell+1}$ is a square-zero extension of $A_{\ell}$.
  Then, the obstruction to the existence of this lift is a class in $\Ext_{\tR}^1 (\bbL_{B_{\ell+1}/\tR} \otimes_{B_{\ell+1}} A_{\ell} ,  \fm^{\ell+1} A_{\ell+1})$.
  Since
  \[\bbL_{B_{\ell+1}/\tR} \simeq \bbL_{B_{\ell+1} / (\tR / \fm^{\ell+1})}  = 0,\]
    the obstruction vanishes, and the lift always exists, concluding the induction.

  By functoriality of limits in the category of $\tR$-algebras, we obtain a map of $\tR$-algebras
  \[ \tR^{\oplus I} \simeq \lim_k B_k  \longrightarrow \lim_k A_{k} \simeq \tA ,\]
  concluding the proof.
\end{proof}

We now state two lemmas needed to prove \cref{proposition:A-decomposition-theorem}.

\begin{lemma} \label{lemma:submodule-torsion-free}
    Let $R$ be a local domain.
    Let $f\colon M\rightarrow N$ be a surjective morphism of finite free $R$-modules, and $D\subset M$ a free submodule.
    Assume (1) $D\cap \ker f = 0$, (2) $\rk D = \rk N$ and (3) $M/D$ is torsion-free.
    Then $f$ restricts to an isomorphism $D\xrightarrow{\sim} N$.
\end{lemma}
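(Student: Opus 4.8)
The plan is to prove that the restriction $f|_{D}\colon D\to N$ is injective and has vanishing cokernel, so that it is an isomorphism.

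Injectivity is immediate from hypothesis~(1), since $\ker(f|_{D})=D\cap\ker f=0$. Because $D$ and $N$ are free of the same rank by~(2), extending scalars to $\Frac R$ turns $f|_{D}$ into an injective linear map between $\Frac R$-vector spaces of equal dimension, hence an isomorphism; therefore $C\coloneqq\coker(f|_{D})=N/f(D)$ is a torsion $R$-module. It thus remains to show $C=0$ --- equivalently $f(D)=N$, equivalently $D+\ker f=M$, and then, since $D\cap\ker f=0$, that $M=D\oplus\ker f$.

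To control $C$, I would fit it into a short exact sequence with $M/D$. As $f$ is surjective it induces a surjection $\overline f\colon M/D\twoheadrightarrow N/f(D)=C$, and a class $m+D$ lies in $\ker\overline f$ exactly when $f(m)\in f(D)$, i.e.\ when $m\in D+\ker f$; hence $\ker\overline f=(D+\ker f)/D$, which by~(1) is identified with $\ker f$ through the composite $\ker f\hookrightarrow M\to M/D$. This gives an exact sequence
\[
0\longrightarrow\ker f\longrightarrow M/D\overset{\overline f}{\longrightarrow}C\longrightarrow 0 .
\]
Moreover $\ker f$ is free: the sequence $0\to\ker f\to M\to N\to 0$ splits because $N$ is free, so $\ker f$ is finitely generated projective, hence free, of rank $\rk M-\rk N=\rk M-\rk D=\rk(M/D)$. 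In particular $C$ has rank $0$, reconfirming that it is torsion.

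The remaining point --- and the one I expect to be the main obstacle --- is to upgrade ``$C$ is torsion'' to ``$C=0$'' using hypothesis~(3). Since a module over a domain that is simultaneously torsion and torsion-free vanishes, it suffices to show $C$ is torsion-free; by the exact sequence this amounts to showing that the free submodule $\ker f\subset M/D$ is \emph{saturated} in the torsion-free module $M/D$, equivalently (using that $D$ is saturated in $M$ by~(3)) that $D+\ker f$ is saturated in $M$. I would attempt this by reducing modulo the maximal ideal $\fm_{R}$ and invoking Nakayama: it is enough to check that $f|_{D}\otimes_{R}R/\fm_{R}$ is injective, for then it is an isomorphism by the equal-rank hypothesis, whence $C\otimes_{R}R/\fm_{R}=0$ and $C=0$. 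This is the delicate step where hypotheses~(1), (2) and~(3) must genuinely interact; if~(3) alone does not suffice, the natural strengthening is to require that $N/f(D)$ (equivalently $M/(D+\ker f)$) be torsion-free, which makes the conclusion immediate from the argument above.
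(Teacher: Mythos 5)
Your reductions are sound as far as they go, and they coincide with the paper's own setup: injectivity of $f|_D$ from (1), the rank count showing $N/f(D)$ is torsion, and the exact sequence $0\to\ker f\to M/D\to N/f(D)\to 0$ (the paper writes the cokernel as $M/(\ker f+D)$ and tests its torsion-freeness via $\Tor_1^R(-,S)$ with $S=\Frac(R)/R$). But the decisive step is missing: you reduce the lemma to ``$N/f(D)$ is torsion-free'' and then only sketch a plan, namely to reduce modulo $\fm_R$ and invoke Nakayama. That plan needs the injectivity of $f|_D\otimes_R R/\fm_R$, which is a strictly stronger condition than (1)--(3) and does not follow from them; so as written this is a proof outline whose key step is unproved, i.e.\ a genuine gap.

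Moreover the gap cannot be filled from (1)--(3) at all. Take $R=\bbk\dbb{x,y}$, $M=R^2$, $N=R$, $f$ the first projection, and $D=R\cdot(x,y)$. Then $D\cap\ker f=0$, $\rk D=\rk N=1$, and $M/D$ is torsion-free (if $r(u,v)\in D$ with $r\neq0$ then $uy=vx$, and since $x,y$ are non-associate primes this forces $(u,v)\in R\cdot(x,y)$), yet $f(D)=(x)\subsetneq N$; also $f|_D\otimes R/\fm_R=0$, so your Nakayama criterion indeed fails here. Your closing hedge is therefore exactly the right diagnosis: hypothesis (3) alone does not suffice, and one must add something like torsion-freeness of $N/f(D)$ (equivalently of $M/(D+\ker f)$), or the mod-$\fm_R$ transversality your Nakayama step requires. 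The same example locates the delicate point in the paper's argument: after tensoring with $S=\Frac(R)/R$ it identifies $(\ker f\otimes_R S)\cap(D\otimes_R S)$ with $(\ker f\cap D)\otimes_R S$, but intersections need not commute with $\otimes_R S$ --- here the class of $(0,\,y/x)$ lies in the intersection while $\ker f\cap D=0$. In the intended application the subsheaf $\cD$ is a direct summand of $TB$ and the complementary coordinate vector fields can be chosen so that the splitting already holds at the closed point, where precisely your Nakayama mechanism does apply; so the downstream use can be repaired, but the lemma as literally stated, and hence any blind proof of it, cannot be completed.
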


\begin{proof}
  Let $S \coloneqq \Frac (R)/R$.
    We have $N/f(D)\simeq M /(\ker f +D)$.
  We prove that this module is torsion-free.
  Since $\ker f \cap D= 0$, we have a short exact sequence
  \[0\longrightarrow \ker f\longrightarrow M/D \longrightarrow M/(\ker f+D) \longrightarrow 0 .\]
  Applying $\otimes_R S$ gives the exact sequence
  \[0 = \Tor_1^R (M/D, S) \longrightarrow \Tor_1^R (M/(\ker f+D),S)\longrightarrow \ker f\otimes_R S\overset{\varphi}{\longrightarrow} M/D\otimes_R S ,\]
  and we see that $M/(\ker f+D)$ is torsion-free if and only if $\varphi$ is injective.
  Since $M/D$ is torsion-free, the module $D\otimes_S R$ is identified with a submodule of $M\otimes_R S$ and we have $M/D\otimes_R S\simeq (M\otimes_R S)/ (D\otimes_R S)$.
    Since $M/\ker f\simeq N$ is torsion-free, the module $\ker f\otimes_R S$ is identified with a submodule of $M\otimes_R S$,
    and $\varphi$ corresponds to the composition
  \[\ker f\otimes_R S\longrightarrow M\otimes_R S\longrightarrow (M\otimes_R S) / (D\otimes_R S) ,\]
  where the first map is the canonical inclusion and the second map is the canonical projection.
    Then, since $\ker f +D$ is torsion-free, we have
  \[\ker (\varphi ) \simeq (\ker f\otimes_R S) \cap (D\otimes_R S) \simeq (\ker f\cap D)\otimes_R S = 0 .\]
  We deduce that $N/f(D)$ is torsion-free.
  But since $\rk N = \rk f(D)$, the quotient $N/f(D)$ is a torsion module.
  We conclude that $N/f(D) =0$, and the lemma follows.
\end{proof}

\begin{lemma} \label{lemma:foliation-commuting-vector-fields}
  Let $B = \Spf \bbk\dbb{t_1,\dots ,t_n}$.
  Let $\cD$ be a free $\cO_B$-subsheaf of $TB$ stable under the Lie bracket and such that $TB/\cD$ is torsion-free.
  Then $\cD$ admits an $\cO_B$-basis of commuting vector fields.
\end{lemma}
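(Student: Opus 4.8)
The plan is to argue by induction on the rank $r$ of $\cD$ (with $B$ a formal polydisk of arbitrary dimension), straightening one vector field of $\cD$ into a coordinate direction so as to reduce to a foliation of rank $r-1$ on a formal space of one fewer dimension.

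First I would dispose of the extreme cases. If $\cD=0$ there is nothing to prove. If $r=n$ then $TB/\cD$ has rank $0$ and is torsion-free, hence vanishes, so $\cD=TB$ and $\partial_{t_1},\dots,\partial_{t_n}$ is a commuting basis. If $r=1$ any $\cO_B$-generator of $\cD$ is already (trivially) a commuting basis. So assume $2\le r\le n-1$. The crucial preliminary point is to produce a vector field $Y_1\in\cD$ that does not vanish at the closed point $b$. Granting this, after a linear change of coordinates we have $Y_1\equiv\partial_{t_1}\bmod\fm_b TB$, and by the formal flow-box theorem — obtained from \cref{theorem:existence-solution-flat-PDE} by solving for the coordinate change order by order, exactly as in the proof of \cref{theorem:Frobenius-theorem} — there is a formal automorphism of $B$ after which $Y_1=\partial_{t_1}$.

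Now I would carry out the reduction. Since $\cO_B\partial_{t_1}\subseteq\cD$, I may subtract from each of $Y_2,\dots,Y_r$ its $\partial_{t_1}$-component, still obtaining an $\cO_B$-basis of $\cD$; so assume $Y_2,\dots,Y_r$ have no $\partial_{t_1}$-component. By involutivity $[\partial_{t_1},Y_i]\in\cD$, and since it too has no $\partial_{t_1}$-component it can be written $[\partial_{t_1},Y_i]=\sum_{k\ge2}\mu_{ik}Y_k$. Solving the linear ODE in $t_1$ given by $\partial_{t_1}G=-G\,(\mu_{ik})$ with $G|_{t_1=0}=\Id$ — which has a unique formal solution, automatically in $\GL_{r-1}(\cO_B)$ since $G=\Id+O(t_1)$ — and replacing each $Y_i$ by $\sum_{k\ge2}G_{ik}Y_k$, I may further arrange $[\partial_{t_1},Y_i]=0$, i.e.\ the coefficients of $Y_2,\dots,Y_r$ become independent of $t_1$. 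Then $Y_2,\dots,Y_r$ descend to a free rank-$(r-1)$ subsheaf $\bar\cD\subseteq TB'$, where $B'=\Spf\bbk\dbb{t_2,\dots,t_n}$; it is Lie-bracket stable, and $TB'/\bar\cD$ is torsion-free because $TB/\cD\simeq\cO_B\otimes_{\bbk\dbb{t_2,\dots,t_n}}(TB'/\bar\cD)$ with $\cO_B$ faithfully flat over $\bbk\dbb{t_2,\dots,t_n}$ (one could also invoke \cref{lemma:submodule-torsion-free}). By the inductive hypothesis $\bar\cD$ has a commuting basis $\bar X_2,\dots,\bar X_r$; pulling these back to $B$ gives $X_2,\dots,X_r$ with $[X_i,X_j]=0$ and $[\partial_{t_1},X_i]=0$, so $(\partial_{t_1},X_2,\dots,X_r)$ is the desired commuting basis of $\cD$.

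The main obstacle is the one step I left open: finding $Y_1\in\cD$ nonvanishing at $b$. When $\cD$ is a direct summand of $TB$ — which is the situation in every application of this lemma in the present paper — this is immediate, the image of $\cD$ in $T_bB$ having dimension $r\ge1$. In general, however, $\cD$ may be a singular foliation contained in $\fm_b TB$ (for instance the rank-$2$ foliation on $\Spf\bbk\dbb{x,y,z}$ spanned by the Euler field and $x\partial_y-y\partial_x$, which is involutive with torsion-free quotient but vanishes at the origin), and then nothing can be straightened; there I would instead solve directly the system obtained by expanding $[X_i,X_j]=0$ for $X_i=\sum_jG_{ij}Y_j$ in terms of the structure functions $h^k_{pq}$ defined by $[Y_p,Y_q]=\sum_k h^k_{pq}Y_k$, which is again a recursion of the type handled by \cref{theorem:existence-solution-flat-PDE} whose consistency should be forced by the Jacobi identity together with involutivity — this is the genuinely delicate point.
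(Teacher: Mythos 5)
Your route is genuinely different from the paper's. The paper neither straightens a vector field nor inducts on the rank: writing $m=\rk\cD$, it chooses coordinates with $\cD\cap\bigoplus_{i>m}\cO_B\partial_{t_i}=0$, lets $\pi\colon B\to B'=\Spf\bbk\dbb{t_1,\dots,t_m}$ be the projection, argues via \cref{lemma:submodule-torsion-free} (this is where torsion-freeness of $TB/\cD$ enters) that $d\pi\vert_{\cD}\colon\cD\to\pi^{\ast}TB'$ is an isomorphism, and takes $X_i=(d\pi\vert_{\cD})^{-1}(\pi^{\ast}\partial_{t_i}')$; these commute because $[X_i,X_j]\in\cD$ by involutivity and is sent by the injective map $d\pi\vert_{\cD}$ to $\pi^{\ast}[\partial_{t_i}',\partial_{t_j}']=0$. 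Where your flow-box induction applies it is correct: the elimination of the $\partial_{t_1}$-components, the gauge $\partial_{t_1}G=-G\mu$ with $G\vert_{t_1=0}=\Id$, and the descent of freeness, involutivity and torsion-freeness to $B'$ by faithful flatness all check out, and the subbundle condition is preserved under the reduction, so the induction closes in that case.

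The gap is the one you name yourself: you produce a nonvanishing $Y_1\in\cD$ only when $\cD\otimes\bbk(b)\to T_bB$ is injective (e.g.\ $\cD$ a direct summand of $TB$), so your argument proves the lemma only under that extra hypothesis; the closing sketch for the singular case (solving $[X_i,X_j]=0$ for a gauge $G$ via \cref{theorem:existence-solution-flat-PDE}) is not carried out, and you do not verify that the resulting system is generalized flat. As a proof of the statement as written it is therefore incomplete. Two remarks put this in perspective. First, every use of the lemma in the paper (\cref{proposition:A-decomposition-theorem}) concerns the summands $\cD_i$ of $TB=\bigoplus_i\cD_i$, so your argument suffices for all applications. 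Second, your Euler-plus-rotation example is a sharper probe than you suggest: it is not a counterexample to the lemma (the generators $E$ and $x\partial_y-y\partial_x$ already commute), but it does satisfy all the hypotheses — $\cD$ is free, involutive, and $TB/\cD$ is torsion-free, since $\cD$ is exactly the kernel of $-zx\,dx-zy\,dy+(x^2+y^2)\,dz$ — while for each choice of two coordinates the projected generators have determinant $x^2+y^2$, $xz$ or $yz$, a non-unit, so $d\pi\vert_{\cD}$ is injective but not surjective. In this instance the hypotheses of \cref{lemma:submodule-torsion-free} hold while its conclusion fails (the flaw in its proof is the step asserting $(\ker f\otimes_R S)\cap(D\otimes_R S)\simeq(\ker f\cap D)\otimes_R S$, which is unjustified because $S=\Frac(R)/R$ is not flat), so the paper's own treatment of the non-summand case has exactly the soft spot your proposal flags; in that generality the statement should be proved under the subbundle hypothesis you isolate, or the argument repaired.
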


\begin{proof}
    We denote by $\partial_i$ the vector field associated to $t_i$.
    The coordinates $(t_i)_{1\leq i\leq n}$ provide a trivialization $TB = \bigoplus_{1\leq i\leq n} \cO_B \partial_i$.

    Let $m$ denote the rank of $\cD$, then up to reordering the coordinates we may assume $\cD\cap \bigoplus_{m+1\leq i\leq n} \cO_B \partial_i = 0$.
    If $m=n$ there is nothing to show.
    Assume $m<n$, then there exists $i_1$ such that $\cO_B\partial_{i_1}\cap \cD = 0$.
    Then $\cD^{(1)} \coloneqq \cD \oplus \cO_B \partial_{i_1}$ is a free $\cO_B$-module of rank $m+1$. We can thus apply the same argument inductively until we obtain a free $\cO_B$-module of rank $n$, and obtain in this way vector fields $(\partial_{i_1} ,\dots ,\partial_{i_{n-m+1}} )$ such that $\cD \cap \bigoplus_{m+1\leq j\leq n} \cO_B \partial_{i_j} = 0$.

    Let $B'= \Spf \bbk\dbb{t_1,\cdots,t_m}$ and $\pi\colon B\rightarrow B'$ denote the canonical projection. 
    Let $\psi\colon \cD\rightarrow \pi^{\ast} TB'$ denote the restriction of $d\pi\colon TB\rightarrow\pi^{\ast} TB'$.
    The kernel of $d\pi$ is $\bigoplus_{m+1\leq i\leq n}\cO_B \partial_i$, so $\psi$ is injective.
    By \cref{lemma:submodule-torsion-free}, $\psi$ is an isomorphism.
    Let $\partial_i'$ denote the vector field of $B'$ associated to $t_i$.
    We define $X_i \coloneqq \psi^{-1} (\pi^{\ast} \partial_i ')$.
    By construction $(X_i)_{1\leq i\leq m}$ is an $\cO_B$-basis of $\cD$.

    We now check that $[X_i, X_j] = 0$.
    The $\cO_B$-linearity of $d\pi$ and $\pi^{\ast}$ implies
    \[d\pi [X_i, X_j] = \pi^{\ast}[ \partial_i ', \partial_j' ]  = 0 .\]
    Since $[X_i, X_j]$ is a section of $\cD$ and $d\pi$ restricted to $\cD$ is an isomorphism, we deduce that $X_i$ and $X_j$ commute.
\end{proof}

\begin{proposition} \label{proposition:A-decomposition-theorem}
 Let $B = \Spf \bbk\dbb{t_1,\dots ,t_n}$ and $\star$ an F-manifold structure with unit on $B$.
  Assume that we have a decomposition into sheaves of subalgebras $(TB ,\star ) = \bigoplus_{i\in I} (\cD_i ,\star \vert_{\cD_i} )$.
  Then:
  \begin{enumerate}
      \item For all $i$ we have $[\cD_i ,\cD_i ]\subset \cD_i$.
      \item For $i\neq j$ we have $[\cD_i ,\cD_j ] =0$.
      \item There exists an automorphism $\varphi\colon B\rightarrow B$ and a partition $\lbrace 1,\cdots, n\rbrace = \coprod_{i\in I} J_i$ such that, for each $i\in I$, the pullback $\varphi^{\ast} \cD_i$ is generated by $\lbrace d\varphi (\partial_{t_j} ) \rbrace_{j\in J_i}$.
  \end{enumerate}
\end{proposition}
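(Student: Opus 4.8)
The plan is to distill from the F-identity the Lie-bracket relations saying that the $\cD_i$ are integrable and pairwise commuting as foliations, and then to integrate them using the formal Frobenius theorem \cref{theorem:Frobenius-theorem}. The decomposition $(TB,\star)=\bigoplus_{i}(\cD_i,\star|_{\cD_i})$ supplies the units $e_i\in\cD_i$ of the subalgebras $\cD_i$; these are idempotent, hence even, with $e_i\star e_j=\delta_{ij}e_i$, and $e\coloneqq\sum_i e_i$ is the global unit of $(TB,\star)$. Write $\pr_i\colon X\mapsto e_i\star X$ for the projection onto $\cD_i$. The key observation is that $P_{e_i}=0$: the F-identity \eqref{eq:F-identity} with $X=Y=e_i$ gives $P_{e_i}(Z,W)=2\,e_i\star P_{e_i}(Z,W)$, and $\star$-multiplying by $e_i$ once more (using $e_i\star e_i=e_i$) forces $e_i\star P_{e_i}(Z,W)=0$, hence $P_{e_i}(Z,W)=0$ (the same argument with $X=Y=e$ recovers $L_e\star=0$). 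Feeding suitable arguments into $P_{e_i}=0$ then yields three facts: $[e_i,e_j]=0$ for all $i,j$ (take both inner arguments among the $e_k$ and apply $\pr_j$); $[e_i,\cD_k]\subseteq\cD_k$ (take one inner argument $e_k$, using $[e_i,e_k]=0$); and, from the F-identity with $X=e_i$ and $Y$ arbitrary, $P_{e_i\star Y}(Z,W)=e_i\star P_Y(Z,W)$, so $P_{X_i}(Z,W)\in\cD_i$ whenever $X_i\in\cD_i$.

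For part (1): given $X_i,Y_i\in\cD_i$, using $Y_i\star e_i=Y_i$ we have
\[
P_{X_i}(Y_i,e_i)=[X_i,Y_i]-[X_i,Y_i]\star e_i-(-1)^{\abs{X_i}\abs{Y_i}}Y_i\star[X_i,e_i],
\]
which displays $[X_i,Y_i]$ as a sum of three elements of $\cD_i$: the left side lies in $\cD_i$ by the last fact above; $[X_i,Y_i]\star e_i=\pr_i[X_i,Y_i]$; and $[X_i,e_i]=-[e_i,X_i]\in\cD_i$, so $Y_i\star[X_i,e_i]\in\cD_i\star\cD_i\subseteq\cD_i$. Hence $[\cD_i,\cD_i]\subseteq\cD_i$. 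For part (2): for $i\neq j$, $X_i\in\cD_i$, $Y_j\in\cD_j$, the analogous expansion of $P_{X_i}(Y_j,e_j)$, together with $[X_i,e_j]\in\cD_i$ (so $Y_j\star[X_i,e_j]\in\cD_j\star\cD_i=0$), gives $P_{X_i}(Y_j,e_j)=\sum_{k\neq j}\pr_k[X_i,Y_j]$; as this lies in $\cD_i$, we conclude $\pr_k[X_i,Y_j]=0$ for all $k\notin\{i,j\}$, i.e.\ $[\cD_i,\cD_j]\subseteq\cD_i\oplus\cD_j$. Together with (1), this is precisely the statement that the $\cD_i$ define commuting foliations in the sense of \cite[Definition 2.1]{Araujo_Fano-foliations}.

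For part (3): by (1) each $\cD_i$ is a free, Lie-bracket-stable subsheaf of $TB$ with torsion-free quotient $\bigoplus_{k\neq i}\cD_k$, so \cref{lemma:foliation-commuting-vector-fields} furnishes a commuting $\cO_B$-basis of each $\cD_i$. I would then induct on $\abs{I}$: for $TB=\cD_{i_0}\oplus\cD'$ with $\cD'\coloneqq\bigoplus_{i\neq i_0}\cD_i$, the complementary integrable distributions $\cD_{i_0}$ and $\cD'$ can be straightened simultaneously — a standard consequence of \cref{lemma:foliation-commuting-vector-fields} and \cref{theorem:Frobenius-theorem}, using that once $\cD'$ is straightened the partial connection $[\cD_{i_0},\,\cdot\,]$ induced on $\cD'$ along the leaves of $\cD_{i_0}$ is flat by the Jacobi identity, which produces a commuting basis of $TB$ adapted to the splitting. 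In the resulting coordinates $[\cD_{i_0},\cD']\subseteq\cD'$, so $[\cD_{i_0},\cD_i]\subseteq(\cD_{i_0}\oplus\cD_i)\cap\cD'=\cD_i$, whence each $\cD_i$ with $i\neq i_0$ descends to a distribution on the $\cD'$-factor, and the inductive hypothesis there produces the partition $\{1,\dots,n\}=\coprod_i J_i$ and the final straightening.

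The step I expect to be the main obstacle is the extraction of the bracket relations in the first two paragraphs: every relation has to be coaxed out of the F-identity by a careful choice of arguments, and finding the right entry point — the vanishing $P_{e_i}=0$, and with it the ``flatness'' $L_{e_i}\star=0$ of the idempotents — is what makes parts (1)--(3) fall out without further surprises.
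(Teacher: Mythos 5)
Your derivation of part (1) is correct, and the observation $P_{e_i}=0$ is a clean way to package the manipulations the paper also performs. The genuine gap is in part (2): what you prove is only $[\cD_i,\cD_j]\subseteq\cD_i\oplus\cD_j$, whereas the proposition asserts $[\cD_i,\cD_j]=0$. Your argument via $P_{X_i}(Y_j,e_j)\in\cD_i$ kills the components $\pr_k[X_i,Y_j]$ for $k\notin\{i,j\}$, but says nothing about $\pr_i[X_i,Y_j]$ and $\pr_j[X_i,Y_j]$; indeed $\pr_i[X_i,Y_j]$ is exactly $P_{X_i}(Y_j,e_j)$, which your toolkit does not force to vanish, and plugging idempotents into $P_{e_k}=0$ in the remaining slots only returns $0=0$. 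To get the full vanishing the paper has to use the F-identity with genuinely composite subscripts: expanding $[e_i\star X,e_j\star Y]$ it shows $X\star[e_i,Y]=0$ and $Y\star[e_j,X]=0$, arrives at $[X,Y]=(e_i-e_j)\star[X,Y]$, and then plays $(e_i+e_j)\star[X,Y]=[X,Y]$ against $(e_i-e_j)\star[X,Y]=[X,Y]$ to conclude $[X,Y]=0$. Some such extra computation is unavoidable; as written, your proof establishes a strictly weaker statement than (2).

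This weakening also undermines your route to (3). The paper's proof of (3) is short precisely because of the full vanishing in (2): commuting $\cO_B$-bases of the individual $\cD_i$ (from \cref{lemma:foliation-commuting-vector-fields}) automatically assemble into one global commuting frame, to which \cref{theorem:Frobenius-theorem} applies directly. With only $[\cD_i,\cD_j]\subseteq\cD_i\oplus\cD_j$ this assembly fails, and your substitute — an induction on $\abs{I}$ resting on a ``standard'' simultaneous straightening of two complementary integrable distributions via flatness of the partial connection $[\cD_{i_0},\cdot\,]$ — is not proven, is not among the paper's lemmas, and would itself require a careful argument in the formal-scheme setting (it is essentially a local-product/flat-Ehresmann-connection statement, not a formal consequence of \cref{lemma:foliation-commuting-vector-fields} plus \cref{theorem:Frobenius-theorem} as stated). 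The most economical repair is to complete (2) to the full vanishing along the paper's lines; then your part (3) collapses to the paper's one-line assembly argument and the inductive machinery becomes unnecessary.
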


\begin{proof}
  For $i\in I$, let $p_i \colon TB\rightarrow \cD_i$ denote the projection, corresponding to the multiplication by the identity section $e_i$ of $\cD_i$.
  We have $p_i^2 = p_i$, $p_i\circ p_j =\delta_{ij}$ and $\bigoplus_{i\in I}p_i = \id$, thus $\ker p_i = \bigoplus_{j\neq i} \cD_j$.

    Let $i\in I$, we prove that $\cD_i$ is stable under Lie bracket.
    Let $X,Y$ be sections of $\cD_i$.
  Since $e_i\star X = X$, the F-identity gives
  \[P_X (e_i,Y) = e_i\star P_X (e_i, Y) + X\star P_{e_i} (e_i ,Y) .\]
  The left-hand side equals
  \[P_X (e_i,Y) = [X,Y] - [X,e_i]\star Y - e_i \star [X,Y] ,\]
  and the terms on the right-hand side are
  \begin{align*}
  e_i\star P_X (e_i,Y) &= e_i\star \left( [X,Y] - [X,e_i]\star Y - e_i\star [X,Y] \right) \\
  &= -e_i\star [X,e_i]\star Y \\
  &= - Y\star [X,e_i],
  \end{align*}
  and
  \begin{align*}
  X\star P_{e_i} (e_i,Y) &= X\star \left( [e_i,Y] - e_i\star [e_i ,Y] \right) \\
  &= X\star [e_i, Y] - X\star e_i\star [e_i,Y] \\
  &= 0,
  \end{align*}
  where we used $e_i\star X = X$, $e_i\star Y= Y$, $e_i\star e_i =e_i$ and the commutativity of the product.
  Thus, the F-identity above reduces to $[X,Y] = e_i\star [X,Y]$.
  Equivalently, $[X,Y]$ is a section of $\cD_i$, proving (1).

  Fix $i,j\in I$ with $i\neq j$.
  Let $X$ and $Y$ be sections of $\cD_i$ and $\cD_j$ respectively,
  in particular $e_i\star X =X$ and $e_j\star Y= Y$.
  We need to show $[X,Y] = 0$.
  We have 
  \begin{align*}
  [X,Y] &= [e_i\star X,e_j\star Y] \\
  &= P_{e_i\star X} (e_j , Y) + [e_i\star X, Y]\star Y + [e_i\star X , Y]\star e_j \\
  &= P_{e_i\star X} (e_j, Y) + \left( P_{e_j} (e_j ,X) +[e_j , e_i]\star X + [e_j ,X]\star e_i\right)\star Y \\
  &\quad + \left( P_Y (e_i,X) + [Y,e_i]\star X + [Y,X]\star e_i\right)\star e_j \\
  &= e_i\star P_X (e_j, Y) + X\star P_{e_i} (e_j, Y) + Y\star P_{e_j} (e_i,X) + e_j\star P_Y (e_i,X) \\
  &= (e_i-e_j) \star [X,Y] + X\star [e_i,Y] + Y\star [e_j ,X] .
  \end{align*}
  Multiplication by $e_i$ shows that $X\star [e_i, Y] = 0$.
  By symmetry, we also have $Y\star [e_j ,X] = 0$, so the equation reduces to 
  \[[X,Y] = (e_i-e_j)\star [X,Y] .\]
  Multiplication by $e_k$ for $k$ different from $i$ and $j$ gives $e_k\star [X,Y] = 0$, so $[X,Y]$ is a section of $\cD_i\oplus\cD_j$.
  We then have
  \[ (e_i +e_j)\star [X,Y] = [X,Y] = (e_i-e_j)\star [X,Y] .\]
  We deduce $e_j\star [X,Y] = 0$, and by symmetry $e_i\star [X,Y] = -e_i\star [Y,X] = 0$.
  Thus $[X,Y] = 0$, and (2) is proved.

  By (1) and (2), the decomposition $TB = \bigoplus_{i\in I} \cD_i$ is a decomposition into commuting subsheaves of Lie algebras.
  For each $i\in I$, we have $TB/\cD_i \simeq \bigoplus_{j\neq i} \cD_j$, which is torsion-free.
  By \cref{lemma:foliation-commuting-vector-fields}, $\cD_i$ admits an $\cO_B$-basis of commuting vector fields.
  By (2), these bases assemble into a basis of commuting vector fields for sections of $TB$.
  Then (3) follows by applying \cref{theorem:Frobenius-theorem} to the union of these bases.
\end{proof}

\begin{proof}[Proof of \cref{theorem:decomposition-F-manifolds}]
        By \cite[Tag 0C0S(2)]{Stacks_project}, we may assume that the base $B$ has the form $\Spf\bbk\dbb{t_1,\dots,t_n}$.
  The sheaf of algebras $(TB,\star )$ corresponds to a formal deformation of $(T_b B ,\star\vert_b )$ over $\bbk\dbb{t_1,\dots , t_n}$.
  By \cref{lemma:deformation-unital-associative-commutative-algebra}, we obtain a decomposition into sheaves of subalgebras $(TB ,\star ) = \bigoplus_{i\in I} (\cD_i , \star\vert_{\cD_i} )$ extending the decomposition of the fiber at $b$.
  Let $\varphi\colon B\rightarrow B$ be the change of coordinates provided by \cref{proposition:A-decomposition-theorem}(3) and let $\lbrace 1,\dots, n\rbrace = \coprod_{i\in I} J_i$ be the associated partition.
  Let $\cE_i \coloneqq \bigoplus_{j\in J_i} \cO_B \partial_{t_j}\subset TB$, its image under $d\varphi$ generates $\varphi^{\ast}\cD_i$.
  
  Since $\varphi$ is an automorphism of the formal neighborhood of a point, the differential $d\varphi\colon TB\rightarrow \varphi^{\ast} TB$ is an isomorphism.
  Then, we can produce another F-manifold structure on $B$, which we denote by $\varphi^{\ast} (\star )$, such that $\varphi\colon (B,\varphi^{\ast} (\star) )\rightarrow (B,\star )$ is an isomorphism of F-manifolds.
    Let $B_i\coloneqq \Spf \bbk\dbb{t_j ,\; j\in J_i}$, 
  let $\iota_i\colon B_i\rightarrow B$ be the canonical closed immersion.
  By construction the subsheaves $\cE_i$ are stable under $\varphi^{\ast} (\star )$.
  Thus the restriction $\varphi^{\ast} (\star )\vert_{\cE_i}$ is well-defined, and induces an F-manifold structure $\star_i$ on $B_i$, such that $\iota_i\colon (B_i ,\star_i )\rightarrow (B, \varphi^{\ast} (\star ))$ is a closed immersion of F-manifolds.
  Since $(B, \varphi^{\ast}( \star))\simeq  \prod_{i\in I} (B_i ,\star_i)$, we obtain (1), and (2) holds by construction.
\end{proof}

\subsubsection{Decomposition theorem for non-archimedean F-manifolds}
\label{subsubsec:decomposition-NA-F-manifold}

\begin{theorem} \label{thm:NA-decomposition-F-manifold}
    Let $B$ be a smooth $\bbk$-analytic space endowed with an F-manifold structure $\star$ with unit, and $b\in B$ a $\bbk$-rational point.
	Assume there exists a splitting as $\bbk$-algebras
	\begin{equation}\label{eq:NA-splitting-tangent-bundle}
	    T_b B = \bigoplus_{i\in I} A_i .
	\end{equation}
    Then there exist an admissible open neighborhood $U$ of $b$ and non-archimedean F-manifolds with unit $(U_i,\star_i)$ such that
	\begin{enumerate}
		\item $(U,\star\vert_U)$ is isomorphic to $\prod_{i\in I} (U_i,\star_i)$ as F-manifolds with unit,
        \item and the induced decomposition of $(TU,\star\vert_U )$ restricts to \eqref{eq:NA-splitting-tangent-bundle} at $b$. 
	\end{enumerate}
\end{theorem}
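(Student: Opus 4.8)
The plan is to reprove \cref{theorem:decomposition-F-manifolds} in the non-archimedean setting, replacing each formal ingredient by its analytic counterpart and performing all geometric constructions only after shrinking $B$ to a sufficiently small admissible open affinoid neighborhood of $b$ over which the relevant bundles are free. Concretely, I would (i) establish a non-archimedean analog of \cref{lemma:deformation-unital-associative-commutative-algebra} giving a decomposition of $(TB,\star)$ into sheaves of subalgebras near $b$; (ii) transfer the F-identity computations of \cref{proposition:A-decomposition-theorem} verbatim, plus an analytic version of \cref{lemma:foliation-commuting-vector-fields}, to see that the summands form commuting integrable distributions; and (iii) integrate these distributions using \cref{thm:NA-Frobenius-theorem} in place of \cref{theorem:Frobenius-theorem}. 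The genuinely non-formal input is step (i): obtaining a \emph{convergent} (over an admissible open) decomposition of the tangent algebra from the fibrewise one.

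\textbf{Step 1: analytic decomposition of $(TB,\star)$ near $b$.} The product $\star$ makes $TB$ a finite locally free commutative associative unital $\cO_B$-algebra of rank $n=\dim B$, and the splitting \eqref{eq:NA-splitting-tangent-bundle} corresponds to orthogonal idempotents $\bar e_i\in T_bB$ with $\bar e_i\star\bar e_j=\delta_{ij}\bar e_i$ and $\sum_{i\in I}\bar e_i=e$, $e$ the unit. I would lift each $\bar e_i$ to an idempotent section $e_i$ of $TB$ over a neighborhood of $b$: writing $F(x)\coloneqq x\star x-x$ on the total space of $TB$, the fibrewise derivative of $F$ at $\bar e_i$ is $h\mapsto(2\bar e_i-e)\star h$, and $2\bar e_i-e$ is a unit of $T_bB$ (it is its own inverse), so this derivative is an isomorphism; hence the implicit function theorem in rigid analytic geometry produces a unique analytic section $e_i$ of the zero locus of $F$ through $\bar e_i$ over an admissible open neighborhood of $b$. (Equivalently one may apply \cref{lemma:deformation-unital-associative-commutative-algebra} to the formal completion $\widehat B$ and then check convergence, as in the proof of \cref{thm:NA-Frobenius-theorem} using an estimate in the spirit of \cref{lemma:generalized-flatness-NA}.) Since $e_i\star e_j$ (for $i\neq j$) and $\sum_i e_i$ are again idempotents, reducing at $b$ to $0$ and $e$, and since the lifts of $0$ and of $e$ are unique (there the fibrewise derivative of $F$ is $-\id$ and $\id$), we get $e_i\star e_j=0$ and $\sum_i e_i=e$ over a common admissible open $U^{(0)}$. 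Setting $\cD_i\coloneqq e_i\star TU^{(0)}$ yields a decomposition into sheaves of subalgebras $(TU^{(0)},\star)=\bigoplus_{i\in I}(\cD_i,\star\vert_{\cD_i})$, each with unit $e_i$, restricting to \eqref{eq:NA-splitting-tangent-bundle} at $b$; in particular each $\cD_i$ is a direct summand of $TU^{(0)}$.

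\textbf{Step 2: commuting integrable distributions.} The computations proving \cref{proposition:A-decomposition-theorem}(1),(2) use only the idempotent relations, commutativity and associativity of $\star$, and the F-identity \eqref{eq:F-identity}; they apply unchanged to give $[\cD_i,\cD_i]\subset\cD_i$ and $[\cD_i,\cD_j]=0$ for $i\neq j$. For the analog of part (3) I would run the argument of \cref{lemma:foliation-commuting-vector-fields} analytically: fix coordinates $(t_1,\dots,t_n)$ on $U^{(0)}$ centered at $b$ and a partition $\{1,\dots,n\}=\coprod_{i\in I}J_i$ with $\#J_i=\rk\cD_i$ such that $\cD_i\vert_b$ projects isomorphically onto $\bigoplus_{j\in J_i}\bbk\,\partial_{t_j}\vert_b$ (possible since $T_bB=\bigoplus_i\cD_i\vert_b$); then on a smaller admissible open $U^{(1)}$ the composition $\cD_i\hookrightarrow TU^{(1)}\twoheadrightarrow\bigoplus_{j\in J_i}\cO_{U^{(1)}}\partial_{t_j}$ is an isomorphism. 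Defining $X_j^{(i)}\in\cD_i$ as the preimage of the pullback of $\partial_{t_j}$ from the coordinate subspace $\{t_k=0:k\notin J_i\}$, these fields are projectable along the corresponding projection, so $[X_j^{(i)},X_{j'}^{(i)}]$ projects to $0$ and, lying in $\cD_i$ by part (1), vanishes; combined with part (2) this exhibits $\{X_j^{(i)}\}_{i\in I,\,j\in J_i}$ as a basis of commuting global vector fields on $U^{(1)}$ adapted to $\bigoplus_i\cD_i$.

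\textbf{Step 3: integration and conclusion.} Applying \cref{thm:NA-Frobenius-theorem} to the commuting basis $\{X_j^{(i)}\}$ on $U^{(1)}$ gives admissible open neighborhoods $U'$ and $V\subseteq U^{(1)}$ of $b$ and an isomorphism $\varphi\colon U'\to V$ with $\varphi(b)=b$ carrying each $\partial_{t_l}\vert_{U'}$ to $\varphi^\ast(X_j^{(i)}\vert_V)$ under a fixed matching $l\leftrightarrow(i,j)$. Transporting $\star\vert_V$ through $\varphi$ gives an F-manifold structure $\varphi^\ast(\star)$ on $U'$ with $\varphi\colon(U',\varphi^\ast\star)\xrightarrow{\sim}(V,\star\vert_V)$ an isomorphism of F-manifolds with unit, for which the coordinate sub-bundles $\cE_i\coloneqq\bigoplus_{l\in J_i}\cO_{U'}\partial_{t_l}$ are $\varphi^\ast(\star)$-subalgebras with unit. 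Writing $U'=\prod_{i\in I}U_i$ for the corresponding product of coordinate polydisks and $\star_i$ for the restriction of $\varphi^\ast(\star)$ to $TU_i=\cE_i$, we obtain $(U',\varphi^\ast\star)\simeq\prod_{i\in I}(U_i,\star_i)$, hence $(V,\star\vert_V)\simeq\prod_{i\in I}(U_i,\star_i)$; taking $U\coloneqq V$ gives (1), and since the induced decomposition of $TV$ is $d\varphi(\cE_i)=\cD_i\vert_V$, which equals $A_i$ at $b$, this gives (2). The substantive obstacle is Step 1 — producing the analytic idempotents/decomposition over an admissible open, i.e.\ the non-archimedean replacement for \cref{lemma:deformation-unital-associative-commutative-algebra}; Step 2 is purely algebraic once the $e_i$ are available, Step 3 is a direct invocation of the already-established \cref{thm:NA-Frobenius-theorem}, and the rest is the routine bookkeeping of the successive shrinkings.
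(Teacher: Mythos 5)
Your proposal is correct, and its overall architecture coincides with the paper's: decompose $(TB,\star)$ into sheaves of subalgebras near $b$, use the F-identity computations of \cref{proposition:A-decomposition-theorem} (which are indeed purely algebraic and transfer verbatim) together with an analytic version of \cref{lemma:foliation-commuting-vector-fields} to get a commuting adapted basis, and integrate with \cref{thm:NA-Frobenius-theorem}. The one place where you genuinely diverge is the step you correctly identify as the substantive one, namely the non-archimedean analog of \cref{lemma:deformation-unital-associative-commutative-algebra}. You lift the orthogonal idempotents $\bar e_i\in T_bB$ directly, by observing that the zero locus of $F(x)=x\star x-x$ in the total space of $TB$ is \'etale over $B$ at $(\bar e_i,b)$ (the fibrewise derivative $(2\bar e_i-e)\star(\cdot)$ is invertible since $2\bar e_i-e$ is a square root of the unit), and then deduce orthogonality and $\sum_i e_i=e$ from uniqueness of idempotent sections through $0$ and $e$; the splitting is then the image decomposition of $TU$ under the projectors $e_i\star(\cdot)$. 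The paper's \cref{lemma:NA-deformation-tangent-space-splits} instead passes to the relative analytic spectrum $\Spec^{\an} TB$, uses properness of its structure map to $B$ and the purely topological \cref{lemma:topological-connected-components} to spread out the disjoint decomposition of the fiber over $b$, obtaining a map of sheaves of algebras $\cO_U^{\oplus I}\to TU$. The two routes buy slightly different things: your Hensel-type idempotent lifting is more explicit and elementary, stays entirely inside the tangent algebra, and even gives local uniqueness of the lifted idempotents; the paper's spectral/topological argument avoids invoking the rigid-analytic implicit function theorem and the small checks it entails (uniqueness of sections through a rational point, common shrinking for the orthogonality relations), at the cost of introducing $\Spec^{\an}$ and a properness argument. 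Either way the remaining steps, including the shrinkings needed to make each $\cD_i$ free and the final identification of the base with a product of polydisks, are the same bookkeeping the paper performs when it "concludes as in the formal case".
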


\begin{lemma} \label{lemma:NA-deformation-tangent-space-splits}
	Let $(B,\star)$ and $b$ be as in \cref{thm:NA-decomposition-F-manifold}.
	Assume there exists a splitting as $\bbk$-algebras
	\[T_b B = \bigoplus_{i\in I} A_i .\]
	Then there exists an admissible open neighborhood $U$ of $b$, and a decomposition into sheaves of subalgebras $(TU , \star\vert_U ) = \bigoplus_{i\in I} (\cD_i ,\star\vert_{\cD_i})$ extending the decomposition of $T_b B$.
\end{lemma}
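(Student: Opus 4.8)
The plan is to reduce the statement to lifting a complete system of orthogonal idempotents from the fibre algebra $(T_bB,\star\vert_b)$ to an analytic neighbourhood, and then to obtain convergence from the non-archimedean analytic implicit function theorem. This follows the pattern ``formal solution plus convergence estimate'' used in \cref{thm:NA-Frobenius-theorem}.

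First, giving the $\bbk$-algebra splitting $T_bB=\bigoplus_{i\in I}A_i$ is the same as giving a complete system of orthogonal idempotents $\epsilon_i\in(T_bB,\star\vert_b)$, i.e.\ $\epsilon_i\star\epsilon_j=\delta_{ij}\epsilon_i$ and $\sum_{i\in I}\epsilon_i=e$ with $e$ the global unit section, together with $A_i=\epsilon_i\star T_bB$. Applying \cref{lemma:deformation-unital-associative-commutative-algebra} to the deformation of $(T_bB,\star\vert_b)$ over $\widehat{\cO}_{B,b}\simeq\bbk\dbb{t_1,\dots,t_n}$ produces a compatible system of formal idempotent sections $\hat e_i$ of $TB$ at $b$, with $\hat e_i\vert_b=\epsilon_i$, $\hat e_i\star\hat e_j=\delta_{ij}\hat e_i$ and $\sum_i\hat e_i=e$. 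Since these are orthogonal idempotents summing to the unit, the submodules $\hat e_i\star TB$ give a decomposition of $(TB,\star)$ into sheaves of unital subalgebras whose fibres at $b$ are $\epsilon_i\star T_bB=A_i$. It therefore remains to show that, after shrinking $B$ to a suitable admissible open neighbourhood $U$ of $b$, each $\hat e_i$ is the Taylor expansion at $b$ of an analytic section.

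For convergence, choose coordinates $t_1,\dots,t_n$ centred at $b$ and, after shrinking, a trivialization $TB\simeq\cO_B^m$; in it $\star$ becomes an analytic structure-constant tensor, and $(t,x)\mapsto x\star_t x-x$ defines a $\bbk$-analytic map $F$ from a neighbourhood of $(0,\epsilon_i)$ in $B\times\bbk^m$ to $\bbk^m$ with $F(0,\epsilon_i)=\epsilon_i\star_0\epsilon_i-\epsilon_i=0$. Its partial derivative in $x$ at $(0,\epsilon_i)$ is $2L_i-\id$, where $L_i(y)=\epsilon_i\star_0 y$. As $\epsilon_i$ is idempotent in $(\bbk^m,\star_0)$, the operator $L_i$ is a projection, so $2L_i-\id$ is an involution, in particular invertible. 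By the non-archimedean analytic implicit function theorem there is an admissible open neighbourhood $U_i\ni b$ and a unique analytic section $e_i$ over $U_i$ with $e_i(b)=\epsilon_i$ and $e_i\star e_i=e_i$. Concretely, writing $\hat e_i=\epsilon_i+g$, this is a fixed-point equation $g=(2L_i-\id)^{-1}\big(\text{a constant term of positive $t$-order, a linear term in }g\text{ of positive $t$-order, and the quadratic term }g\star g\big)$, solved by a Banach contraction on a sufficiently small polydisk; note that a naive induction on the total $t$-degree does not close here because the equation is quadratic in $g$, which is why one passes to the fixed-point formulation rather than the degree recursion of \cref{lemma:generalized-flatness-NA}.

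Finally, the formal idempotent lift of $\epsilon_i$ along $x\star x=x$ is unique (the linearization $2L_i-\id$ being invertible), so $\hat e_i$ coincides with the Taylor expansion of $e_i$ at $b$, hence converges on $U_i$. Put $U\coloneqq\bigcap_{i\in I}U_i$, an admissible open neighbourhood of $b$ since $I$ is finite, and $e_i\coloneqq\hat e_i\vert_U$. The relations $e_i\star e_j=\delta_{ij}e_i$ and $\sum_ie_i=e$ hold on $U$ because they hold to infinite order at $b$ on the smooth connected space $U$. Setting $\cD_i\coloneqq e_i\star TU$ then yields a decomposition $(TU,\star\vert_U)=\bigoplus_{i\in I}(\cD_i,\star\vert_{\cD_i})$ into sheaves of subalgebras restricting to the given splitting of $T_bB$ at $b$. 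The main point requiring care is the convergence step: the defining equation of an idempotent is quadratic, so one cannot run the direct geometric recursion of \cref{subsec:preliminary-decomposition}, and must instead invoke (or reprove) the non-archimedean implicit function theorem.
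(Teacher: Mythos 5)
Your proof is correct, but it takes a genuinely different route from the paper. The paper's argument is soft and coordinate-free: it views everything as Berkovich spaces, passes to the relative analytic spectrum $f\colon\Spec^{\an}TB\to B$, notes that $f$ is proper because $TB$ is finite free, decomposes the fibre over $b$ according to the splitting of $T_bB$, and then invokes a purely topological lemma (\cref{lemma:topological-connected-components}) to produce an open $U\ni b$ over which $f^{-1}(U)$ breaks into disjoint pieces matching the fibre decomposition; reading this back gives the map of sheaves of $\cO_U$-algebras $\cO_U^{\oplus I}\to TU$, with no convergence estimates and without even using the formal deformation lemma. You instead lift the complete system of orthogonal idempotents: formal existence from \cref{lemma:deformation-unital-associative-commutative-algebra}, and convergence from the invertibility of $2L_i-\id$ (correct: $L_i$ is a projection by associativity, so $2L_i-\id$ is an involution) together with the non-archimedean implicit function theorem, or equivalently the contraction argument you sketch, which is valid since the relevant Tate algebras are complete and terms vanishing at $t=0$ have norm $O(\rho)$ on a polydisk of radius $\rho$. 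Your approach buys explicitness, uniqueness of the idempotent sections with prescribed value at $b$, and a proof that runs parallel to the formal case; the paper's approach buys brevity, avoids all estimates, and requires no choice of coordinates or trivialization.

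Two small points to tighten. First, your aside that a degree-by-degree induction ``does not close'' because the equation is quadratic in $g$ is not accurate in the ultrametric setting: such quadratic recursions do close, giving bounds of the form $\lvert g_d\rvert\le C^d$ by exactly the kind of estimate used in \cref{lemma:generalized-flatness-NA}, with no factorial loss; this does not affect your argument (you use the fixed-point formulation), but the remark should be dropped or corrected. Second, the step ``the relations $e_i\star e_j=\delta_{ij}e_i$ and $\sum_i e_i=e$ hold on $U$ because they hold to infinite order at $b$'' needs $U$ connected and reduced so that $\cO(U)\to\widehat{\cO}_{U,b}$ is injective; taking $U$ to be a closed polydisk centred at $b$, whose coordinate ring is a domain, makes this precise.
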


\begin{proof}
    In this proof, we view the rigid $\bbk$-analytic spaces as Berkovich spaces.
    Then the base $B$ is Hausdorff.
    Let $X\coloneqq \Spec^{\an} TB$ be the relative analytic spectrum.
    Since $TB$ is a finite free $\cO_B$-module, the structural map $f\colon \Spec^{\an} TB\rightarrow B$ is proper as Berkovich spaces, in particular proper as topological spaces.

    The splitting of $T_b B$ produces a surjection $X_b = \Spec^{\an} T_b B \rightarrow \coprod_{i\in I} \Sp \bbk$.
    This implies that $X_b = \coprod_{i\in I} X_{b,i}$, where $X_{b,i}$ is the preimage of the $i$-th copy of $\Sp\bbk$.
    Let $U\subset B$ be the open neighborhood of $b$ given by \cref{lemma:topological-connected-components}, with $f^{-1} (U) = \coprod_{i\in I} W_i$.
    We obtain a map $X\times_B U \rightarrow \coprod_{i\in I} U$ extending $X_b\rightarrow \coprod_{i\in I} \Sp\bbk$ by mapping $W_i$ to the $i$-th copy of $U$ under $f$.
    This is equivalent to a map of sheaves of $\cO_U$-algebras $\cO_U^{\oplus I}\rightarrow TU$, producing the desired splitting.
\end{proof}

\begin{lemma} \label{lemma:topological-connected-components}
    Let $f\colon X\rightarrow B$ be a proper map between Haussdorff topological spaces. 
    Let $b\in B$, assume that $f^{-1} (b) = \coprod_{i\in I} X_{b,i}$ for a finite set $I$.
    Then, there exists an open neighborhood $U\subset B$ of $b$ such that
    $f^{-1} (U)$ is a disjoint union $\coprod_{i\in I} W_i$, and $W_i\cap f^{-1} (b) = X_{b,i}$.
    \end{lemma}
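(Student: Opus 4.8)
The plan is to run a standard ``tube lemma''-style argument, exploiting that a proper map is closed. In the intended setting $f$ is the structure map of a relative analytic spectrum, which is proper in the sense of being closed with compact fibers; I will use exactly these two properties, so I first record what the hypotheses really provide. The decomposition $f^{-1}(b) = \coprod_{i\in I} X_{b,i}$ is to be read as a partition into open-and-closed subsets (in the application the $X_{b,i}$ are unions of connected components of $\Spec^{\an} T_b B$); in particular, since $f^{-1}(b)$ is compact and each $X_{b,i}$ is closed in it, each $X_{b,i}$ is compact.

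Next I would separate the pieces inside $X$. Since $X$ is Hausdorff, any two disjoint compact subsets admit disjoint open neighborhoods; working over the finitely many pairs $i\neq j$ and intersecting, I obtain pairwise disjoint open sets $V_i\subset X$ with $X_{b,i}\subset V_i$ for all $i$. Put $V\coloneqq\bigcup_{i\in I} V_i$, so that $f^{-1}(b)\subset V$. I then descend this separation to a neighborhood of $b$: as $f$ is closed, $Z\coloneqq f(X\setminus V)$ is closed in $B$, and $b\notin Z$ because $f^{-1}(b)\subset V$. Hence $U\coloneqq B\setminus Z$ is an open neighborhood of $b$ with $f^{-1}(U)\subset V$. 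Setting $W_i\coloneqq f^{-1}(U)\cap V_i$, the sets $W_i$ are open in $X$, pairwise disjoint, and cover $f^{-1}(U)$, so $f^{-1}(U)=\coprod_{i\in I} W_i$.

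Finally I would check the condition on the fiber over $b$. Since $f^{-1}(b)\subset f^{-1}(U)$, we have $W_i\cap f^{-1}(b)=V_i\cap f^{-1}(b)$; intersecting $V_i$ with $f^{-1}(b)=\coprod_j X_{b,j}$ removes every $X_{b,j}$ with $j\neq i$ (because $X_{b,j}\subset V_j$ and $V_i\cap V_j=\emptyset$), leaving exactly $X_{b,i}$. Thus $W_i\cap f^{-1}(b)=X_{b,i}$, as required.

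The argument is essentially all point-set topology, so there is no genuinely hard step; the one point I would state explicitly — and the only place an unwary reader could stumble — is that ``proper'' is being used in the form ``$f$ is closed and has compact fibers'', which is what a proper morphism of Berkovich spaces provides (compare the properness of $\Spec^{\an} TB\to B$ invoked in the proof of \cref{lemma:NA-deformation-tangent-space-splits}), together with the fact that the given fiber decomposition is into clopen pieces, so that the $X_{b,i}$ are compact and can be separated inside the Hausdorff space $X$.
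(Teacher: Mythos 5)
Your proof is correct and follows essentially the same route as the paper: separate the compact pieces $X_{b,i}$ by disjoint opens $V_i$ using Hausdorffness, then use closedness of the proper map to shrink to $U = B\setminus f\bigl(X\setminus\bigcup_i V_i\bigr)$ and set $W_i = V_i\cap f^{-1}(U)$. Your explicit remarks that the decomposition of the fiber is into clopen (hence compact) pieces and that properness is used as ``closed with compact fibers'' only make precise what the paper's proof uses implicitly.
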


\begin{proof}
    Since $f$ is proper, the fiber $f^{-1} (b)$ is compact.
    Hence, each $X_{b,i}$ is compact.
    Since $X$ is Hausdorff, there exists open subsets $V_i\subset X$ containing $X_{b,i}$ with $V_i\cap V_j =\emptyset$ for $i\neq j$.
    Since $f$ is proper, it is closed, so $U \coloneqq f\big(\big(\bigcup_i V_i\big)^{\complement} \big)^{\complement}$ is open in $B$.
    Let $W_i\coloneqq V_i\cap f^{-1} (U)$.
    Since $f^{-1} (U) \cap (\bigcup_{i\in I} V_i)^{\complement} = \emptyset$, we have $f^{-1} (U) = \coprod_{i\in I} W_i$.
    By construction of $V_i$, we have $W_i\cap f^{-1} (b) = X_{b,i}$, completing the proof.
    \end{proof}

\begin{proof}[Proof of \cref{thm:NA-decomposition-F-manifold}]
By \cref{lemma:NA-deformation-tangent-space-splits}, there exists an admissible open neighborhood $U_1$ of $b$ and a decomposition into sheaves of subalgebras
\[(TU_1,\star\vert_{U_1} ) = \bigoplus_{i\in I} (\cD_i , \star \vert_{\cD_i}) ,\]
extending the decomposition of $T_b B$.
As in the proof of \cref{proposition:A-decomposition-theorem}, the F-identity implies that $\lbrace \cD_i\rbrace_{i\in I}$ define commuting integrable distributions on $TU_1$.

Up to shrinking $U_1$, we can choose a local basis of commuting vector fields $(Y_j)_{j\in J_i}$ of $\cD_i$ at $b$, and assemble them into a local commuting basis of $TU_1$ at $b$.
By \cref{thm:NA-Frobenius-theorem}, there exists admissible opens $U_2\subset U_1$ and $V\subset \Sp T_n$ and an isomorphism $\varphi\colon V\rightarrow U_2$ such that $d\varphi (\partial_{t_j} ) = \varphi^{\ast} (Y_j )$, where $\lbrace t_j\rbrace$ are the analytic coordinates on $V$ centered at $0$.
We conclude as in the formal case (see \cref{theorem:decomposition-F-manifolds}).

\end{proof}

\subsection{Decomposition theorems for maximal F-bundles}  \label{subsec:decomposition-theorems}

In this subsection, we establish the spectral decomposition theorem for maximal F-bundles (see \cref{thm:eigenvalue_decomposition,thm:NA-K-decomposition}).

We consider a maximal F-bundle $(\cH,\nabla )$ over a formal (resp.\ admissible open) neighborhood of a rational point $b$ in a smooth $\bbk$-variety (resp.\ $\bbk$-analytic space).
Let $K_b\coloneqq \nabla_{u^2\partial_u}\vert_{b,0}$.
Consider a decomposition of the fiber $\cH_{b,0} \simeq \bigoplus_{i\in I} H_{i}$ stable under $K_b$, such that the induced endomorphisms $K_b\vert_{H_i}$ and $K_b\vert_{H_j}$ have disjoint spectra for each $i\neq j$.
Our spectral theorem asserts that this produces a decomposition of $(\cH ,\nabla )$ into a product of maximal F-bundles $(\cH_i,\nabla_i )/B_i$ extending the decomposition of $\cH_{b,0}$.
We refer to \cref{sec:introduction} for an outline of the proof.

\subsubsection{The formal case} \label{subsubsec:decomposition-F-bundles}

\begin{lemma} \label{lemma:max-F-bundle-F-manifold}
    Let $B$ be a formal neighborhood of a rational point $b$ in a smooth $\bbk$-variety.
    Let $(\cH ,\nabla )/B$ be an F-bundle maximal at $b$, and let $h\colon B\rightarrow \cH\vert_{u=0}$ be a section of cyclic vectors (see \cref{definition:maximal-F-bundle}).
    The data $\lbrace (\cH ,\nabla) ,h\rbrace$ induce a formal F-manifold structure on $B$ with identity.
\end{lemma}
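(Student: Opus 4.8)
The plan is to use the section of cyclic vectors $h$ to transport a commutative associative product from a subalgebra of $\End(\cH\vert_{u=0})$ onto $TB$, and then to deduce the F-identity \eqref{eq:F-identity} from the flatness of $\nabla$. First I would promote maximality at $b$ to maximality over all of $B$. By \cref{remark:restriction-u=0-linear-map} the operators $\mu(\xi)\coloneqq\nabla_{u\xi}\vert_{u=0}$ are $\cO_B$-linear endomorphisms of the free $\cO_B$-module $\cH\vert_{u=0}$, so $\xi\mapsto\mu(\xi)(h)$ is an $\cO_B$-linear map between free $\cO_B$-modules of the same rank. Its reduction modulo the maximal ideal $\fm=(t_1,\dots,t_n)$ is the isomorphism $v\mapsto\mu_b(v)(h(b))$ expressing maximality at $b$, so by Nakayama's lemma over the complete local ring $\cO_B=\bbk\dbb{t_1,\dots,t_n}$ it is itself an isomorphism; in particular $(\cH,\nabla)$ is maximal at every point of $B$.

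Next I would build the product. Let $\cA\subseteq\End_{\cO_B}(\cH\vert_{u=0})$ be the image of $\mu$. Flatness of $\nabla$ gives $[\nabla_{u\xi},\nabla_{u\eta}]=\nabla_{[u\xi,u\eta]}=u\nabla_{u[\xi,\eta]}$, using $[u\xi,u\eta]=u^2[\xi,\eta]$; restricting to $u=0$ shows the operators in $\cA$ commute pairwise, and likewise commute with $K\coloneqq\nabla_{u^2\partial_u}\vert_{u=0}$ since $[u^2\partial_u,u\xi]=u^2\xi$. The evaluation $T\mapsto T(h)$ is an $\cO_B$-linear isomorphism $\cA\xrightarrow{\sim}\cH\vert_{u=0}$: it is surjective because $h$ is cyclic (by the previous step), and injective because the values of $\cA$ commute and $\{S(h):S\in\cA\}$ spans $\cH\vert_{u=0}$. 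It follows that $\cA$ is closed under composition and contains $\id$ (the unique element with $T(h)=h$), hence $\cA$ is a commutative associative unital $\cO_B$-subalgebra and $\mu\colon TB\xrightarrow{\sim}\cA$. Transporting the composition product along $\mu$ produces a commutative associative product $\star$ on $TB$, characterised by $\mu(\xi\star\eta)(h)=\mu(\xi)\circ\mu(\eta)(h)$ as in \eqref{eq:product-structure-max-F-bundle}, with unit $e\coloneqq\mu^{-1}(\id)$; at $b$ it recovers the product of \cref{sec:introduction}.

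The hard part — and the real content of the lemma — will be the F-identity. Writing $\nabla_{u\xi}=\mu(\xi)+u\nabla^{\circ}_\xi+O(u^2)$ with $\nabla^{\circ}$ a connection on $\cH\vert_{u=0}$, the $u^0$- and $u^1$-coefficients of the curvature equation $[\nabla_{u\xi},\nabla_{u\eta}]=u\nabla_{u[\xi,\eta]}$ say exactly that the Higgs field $\mathcal{C}\coloneqq\mu$ has commuting values and is $\nabla^{\circ}$-potential, i.e.\ $\nabla^{\circ}_\xi\mathcal{C}_\eta-\nabla^{\circ}_\eta\mathcal{C}_\xi=\mathcal{C}_{[\xi,\eta]}$. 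Together with the primitive section $h$, this is precisely the data from which an F-manifold structure on the base is built, and one verifies the resulting product agrees with $\star$; compare \cite{Hertling_Manin_Weak_Frobenius_manifolds}, \cite[I.\S5]{Manin_Frobenius_manifolds} and \cite{David_Hertling_Meromorphic_connecions_over_F_manifolds}. For a self-contained derivation I would pass to $\nabla^{\circ}$-flat formal coordinates on $B$, use potentiality to write $\mathcal{C}$ through a potential function, and check \eqref{eq:F-identity} as a finite identity among third derivatives of that potential. Once the F-identity is known, setting $X=Y=e$ in \eqref{eq:F-identity} forces $\mathrm{Lie}_e(\star)=0$, so $(B,\star,e)$ is a formal F-manifold with identity; the subtle step throughout is translating the flatness of $\nabla$ into the F-identity.
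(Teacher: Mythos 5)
Your construction of the product and unit is exactly the paper's: evaluation at the cyclic section gives $\eta=\mu(\cdot)(h)\colon TB\xrightarrow{\sim}\cH\vert_{u=0}$, the product is transported as in \eqref{eq:product-structure-max-F-bundle}, and the unit is $e=\eta^{-1}(h)=\mu^{-1}(\id)$. Your Nakayama argument (cyclicity at $b$ propagates over the formal neighborhood) and your verification that the image of $\mu$ is a commutative unital subalgebra closed under composition are correct and make explicit points the paper leaves implicit. For the F-identity, your primary route is also the paper's: the paper does not prove it either, but cites the (T)/(TE)-structure literature, namely \cite[Lemma 10]{David_T-structure-2d-F-manifold}, so resting the F-identity on those references (Hertling--Manin, David--Hertling) is consistent with the paper's proof. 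Your extraction of the two leading-order consequences of flatness --- commutativity of the Higgs field $\mu$ and the potentiality $[\nabla^{\circ}_\xi,\mu(\eta)]-[\nabla^{\circ}_\eta,\mu(\xi)]=\mu([\xi,\eta])$ --- is also correct, and these are indeed the inputs of the cited computation.

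The one place where your proposal would fail is the suggested ``self-contained derivation.'' First, $\nabla^{\circ}$ (the $u^{1}$-coefficient of $\nabla_{u\xi}$) is not flat in general: the $u^{2}$-coefficient of $[\nabla_{u\xi},\nabla_{u\eta}]=u\nabla_{u[\xi,\eta]}$ shows that its curvature equals $-[\mu(\xi),B_\eta]-[B_\xi,\mu(\eta)]$, where $B$ is the $u^{2}$-coefficient of the connection; so ``$\nabla^{\circ}$-flat coordinates/trivializations'' need not exist. Second, even granting flatness, potentiality of the Higgs field does not produce a scalar potential whose third derivatives encode the structure constants: that WDVV-style argument requires a flat pairing (a Frobenius or Saito structure), which an F-bundle does not carry, and the F-identity is precisely the statement one can still salvage without it. The correct self-contained argument is the direct operator computation in \cite[Lemma 10]{David_T-structure-2d-F-manifold} (compare \cite{Hertling_Manin_Weak_Frobenius_manifolds,David_Hertling_Meromorphic_connecions_over_F_manifolds}), which proceeds from exactly the commutativity and potentiality relations you isolated, applied to sections of the form $\mu(Z)h$; if you want the proof spelled out rather than cited, replace the potential-function step by that computation.
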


\begin{proof}
    Evaluation on the section of cyclic vectors $h$ provides an isomorphism $\eta\coloneqq \mu (\cdot )(h)\colon TB\rightarrow \cH\vert_{u=0}$, and a commutative and associative product on $TB$ as in (\ref{eq:product-structure-max-F-bundle}).
    Furthermore $e\coloneqq \eta^{-1}(h)$ is an identity for this product since for a vector field $X$ we have
    \[\eta (X\star e) = \mu (X)\circ \eta (\eta^{-1} (h)) =\mu (X) (h) = \eta (X) .\]
    We refer to \cite[Lemma 10]{David_T-structure-2d-F-manifold} for the proof of the F-identity, which is given there for (TE)-structures.  
\end{proof}

\begin{lemma} \label{lemma:image-adjoint-jordan-dec}
    Let $H$ be a $\bbk$-vector space of finite dimension, and $U\in \End_{\bbk} (H)$.
    Assume we have a decomposition 
    $H = \bigoplus_{i\in I} H_i$ stable under $U$, such that the induced endomorphisms $U\vert_{H_i}$ and $U\vert_{H_j}$ have disjoint spectra for $i\neq j$.
    Then 
    \begin{enumerate}
        \item $\ker [\cdot , U]\subset \bigoplus_{i\in I} \End_{\bbk} (H_i)$, and
        \item $[\cdot , U]$ restricts to an isomorphism of $\bigoplus_{i\neq j} \Hom_{\bbk} (H_j,H_i)$ onto itself.
    \end{enumerate}
\end{lemma}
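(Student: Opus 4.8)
The plan is to decompose $\End_\bbk(H)$ into the blocks cut out by $H = \bigoplus_{i\in I} H_i$, check that $\ad_U := [\,\cdot\,,U]$ respects this decomposition, and thereby reduce both assertions to a block-by-block analysis, ultimately to the classical invertibility criterion for Sylvester operators.

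First I would set $U_i := U|_{H_i}$ and use the identification $\End_\bbk(H) = \bigoplus_{i,j\in I}\Hom_\bbk(H_j, H_i)$, where $\Hom_\bbk(H_j, H_i)$ is embedded into $\End_\bbk(H)$ as the maps factoring as $H \twoheadrightarrow H_j \to H_i \hookrightarrow H$. Since each $H_i$ is $U$-stable, for $\phi\in\Hom_\bbk(H_j,H_i)$ one has $\phi\circ U = \phi\circ U_j$ and $U\circ\phi = U_i\circ\phi$, hence $[\phi, U] = \phi\, U_j - U_i\,\phi \in \Hom_\bbk(H_j, H_i)$. Thus $\ad_U$ preserves each summand; write $\ad_U^{i,j}$ for its restriction to $\Hom_\bbk(H_j, H_i)$.

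Next I would treat the diagonal and off-diagonal blocks separately. On a diagonal block, $\ad_U^{i,i} = \ad_{U_i}$ acts on $\End_\bbk(H_i)$ and its kernel is the centralizer of $U_i$, which lies inside $\End_\bbk(H_i)$. On an off-diagonal block ($i\neq j$), $\ad_U^{i,j}$ is, up to sign, the Sylvester operator $\phi\mapsto U_i\phi - \phi U_j$; since $U_i$ and $U_j$ have disjoint spectra, this operator is invertible. To justify this I would use that the left-multiplication $L_{U_i}$ and right-multiplication $R_{U_j}$ on $\Hom_\bbk(H_j, H_i)$ commute, with eigenvalues (over $\overline{\bbk}$) contained in the spectra of $U_i$ and of $U_j$ respectively; simultaneously triangularizing them shows the eigenvalues of $\ad_U^{i,j}$ lie in $\{\lambda - \mu : \lambda\in\mathrm{Spec}(U_i),\ \mu\in\mathrm{Spec}(U_j)\}$, which does not contain $0$. (A scalar-free alternative: if $m_j$ is the minimal polynomial of $U_j$, then $m_j(U_i)$ is invertible on $H_i$ since $m_j$ has no root in $\mathrm{Spec}(U_i)$; expanding $m_j(L_{U_i}) = m_j(\ad_U^{i,j}+R_{U_j})$ exhibits the invertible operator $L_{m_j(U_i)}$ as a polynomial multiple of $\ad_U^{i,j}$ in the commuting operators $L_{U_i}, R_{U_j}$, forcing $\ad_U^{i,j}$ to be invertible.)

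Finally I would assemble the pieces. Since $\ad_U$ is block-diagonal, $\ker[\,\cdot\,,U] = \bigoplus_{i,j}\ker\ad_U^{i,j} = \bigoplus_{i}\ker\ad_{U_i} \subseteq \bigoplus_{i}\End_\bbk(H_i)$, which is (1); and $\ad_U$ carries $\bigoplus_{i\neq j}\Hom_\bbk(H_j,H_i)$ into itself through the invertible maps $\ad_U^{i,j}$, hence isomorphically onto itself, which is (2). The only genuinely non-formal input is the invertibility of the Sylvester operator under the disjoint-spectra hypothesis; everything else is bookkeeping with the block decomposition.
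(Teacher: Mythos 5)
Your proof is correct. It differs from the paper's argument mainly in logical order and in where the disjoint-spectra hypothesis is cashed in. The paper first proves (1) directly: over an algebraic closure each $H_i\otimes_{\bbk}\bbk^a$ is a sum of generalized eigenspaces of $U$, and any operator commuting with $U$ preserves generalized eigenspaces, hence is block diagonal; it then gets (2) almost for free, since (1) makes the restriction of $[\cdot,U]$ to $\bigoplus_{i\neq j}\Hom_{\bbk}(H_j,H_i)$ injective, and injectivity plus a dimension count gives the isomorphism. You instead prove the blockwise statement head-on: $\ad_U$ preserves each block $\Hom_{\bbk}(H_j,H_i)$, the off-diagonal blocks are Sylvester operators which are invertible by disjointness of spectra, and (1) then follows because the off-diagonal kernels vanish — so your (1) is a consequence of your (2), the reverse of the paper. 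Your first justification of Sylvester invertibility (simultaneous triangularization of $L_{U_i}$ and $R_{U_j}$ over $\overline{\bbk}$) is essentially the same base-change input as the paper's generalized-eigenspace argument, while your minimal-polynomial alternative is genuinely scalar-free and avoids passing to the closure altogether. What each route buys: the paper's is shorter, leaning on the dimension-count trick to avoid any Sylvester analysis; yours yields the finer statement that each individual off-diagonal block is an isomorphism, and (in the second variant) an argument that never leaves $\bbk$. One tiny point worth a clause if you use the triangularization variant: invertibility over $\overline{\bbk}$ descends to $\bbk$ (e.g.\ via the determinant), which you should say explicitly.
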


\begin{proof}
    Let $\bbk^a$ denote an algebraic closure of $\bbk$.
    The disjoint spectra assumption implies that $H_i\otimes_{\bbk} \bbk^a$ is a direct sum of generalized eigenspaces for $U$.
    In particular, any endomorphism that commutes with $U$ preserves this decomposition, proving (1).
    It follows that the restriction $[\cdot , U]: \bigoplus_{i\neq j} \Hom_{\bbk} (H_j ,H_i)\rightarrow \bigoplus_{i\neq j} \Hom_{\bbk} (H_j ,H_i)$ is injective, hence an isomorphism by comparing dimensions, proving (2).
\end{proof}

\begin{proposition} \label{proposition:split-u-direction}
Let $(\cH ,\nabla )$ be an F-bundle over a formal neighborhood $B=\Spf\bbk\dbb{t_1,\dots, t_n}$ of $b=0$ in an affine space. 
Let $K = \nabla_{u^2\partial_u}\vert_{u=0}$ and $\cH_{b,0}=\bigoplus_{i\in I}H_i$ a decomposition stable under $K_b$ such that the induced endomorphisms on $H_i$ have disjoint spectra.

Let $\cH|_{u=0} = \bigoplus_{i\in I} \cH_{i,0}$ be a decomposition extending the decomposition of $\cH_{b,0}$, and stable under $K$.
Then it extends to a decomposition $\cH = \bigoplus_{i\in I}\cH_i$ such that $u^2\nabla_{\partial_u} (\cH_i)\subset \cH_{i}$.
\end{proposition}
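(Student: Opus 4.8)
The plan is to recast the splitting in the $u$-direction as a block-diagonalization problem for the operator $\nabla_{u^2\partial_u}$ and to solve it recursively, order by order in $u$. Since $\cH$ is a vector bundle over $B\times\Spf\bbk\dbb{u}=\Spf\bbk\dbb{t_1,\dots,t_n,u}$, the formal spectrum of a regular local ring, it is free; I would choose a $\bbk\dbb{t}$-basis of $\cH|_{u=0}$ adapted to the decomposition $\cH|_{u=0}=\bigoplus_{i\in I}\cH_{i,0}$ and lift it arbitrarily to a $\bbk\dbb{t,u}$-basis of $\cH$ (write $t=(t_1,\dots,t_n)$). In this trivialization, denoting by $H_0$ the fiber, the F-bundle axiom gives $\nabla_{u^2\partial_u}=u^2\partial_u+C$ with $C\in\End(H_0)\otimes\bbk\dbb{t,u}$, and by \cref{remark:restriction-u=0-linear-map} we have $C|_{u=0}=K$, which is block-diagonal for the chosen basis because the decomposition of $\cH|_{u=0}$ is stable under $K$.

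Next I would look for $g\in\GL\bigl(H_0\otimes\bbk\dbb{t,u}\bigr)$ with $g\equiv\id\bmod u$ and $g^{-1}(u^2\partial_u+C)g=u^2\partial_u+D$ with $D$ block-diagonal (so necessarily $D|_{u=0}=K$). Expanding $g=\sum_{k\ge0}g_ku^k$, $C=\sum_kC_ku^k$, $D=\sum_kD_ku^k$ with $g_0=\id$ and $C_0=D_0=K$, the equation $u^2\partial_u g=gD-Cg$ reads, in degree $k\ge1$,
\[
[K,g_k]-D_k=R_k,
\]
where $R_k$ depends only on $C_0,\dots,C_k$ and on $g_j,D_j$ for $j<k$, hence is known by induction. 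I would decompose $\End(H_0)=\bigl(\bigoplus_{i\neq j}\Hom(\cH_{j,0},\cH_{i,0})\bigr)\oplus\bigl(\bigoplus_i\End(\cH_{i,0})\bigr)$ into its off-diagonal and diagonal parts; both are preserved by $[K,\cdot]$ since $K$ is block-diagonal, and $D_k$ lies in the diagonal part. On the off-diagonal part $[K,\cdot]$ is invertible over $\bbk\dbb{t}$: it is a $\bbk\dbb{t}$-linear endomorphism of a finite free module whose reduction modulo $\fm=(t_1,\dots,t_n)$ is $[K_b,\cdot]$, invertible by \cref{lemma:image-adjoint-jordan-dec}(2) thanks to the disjoint-spectra hypothesis, hence invertible by Nakayama. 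Thus $g_k^{\mathrm{off}}$ is uniquely determined by $[K,g_k^{\mathrm{off}}]=R_k^{\mathrm{off}}$, and on the diagonal part one sets $g_k^{\mathrm{diag}}:=0$ and $D_k:=-R_k^{\mathrm{diag}}$. This produces $g$, automatically invertible in $\End(H_0)\otimes\bbk\dbb{t,u}$ because $g\equiv\id\bmod u$, together with a block-diagonal $D$.

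Finally I would set $\cH_i:=g\bigl(\cH_{i,0}\otimes_{\bbk\dbb{t}}\bbk\dbb{t,u}\bigr)$, the image under the automorphism $g$ of the constant-in-$u$ extension of $\cH_{i,0}$. Then $\cH=\bigoplus_i\cH_i$ is a direct sum of subbundles, $\cH_i|_{u=0}=g|_{u=0}(\cH_{i,0})=\cH_{i,0}$, and $u^2\nabla_{\partial_u}(\cH_i)=g\,(u^2\partial_u+D)\bigl(\cH_{i,0}\otimes\bbk\dbb{t,u}\bigr)\subset\cH_i$, since both $u^2\partial_u$ and the block-diagonal $D$ preserve $\cH_{i,0}\otimes\bbk\dbb{t,u}$. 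I expect the only genuine obstacle to be the invertibility of $[K,\cdot]$ on the off-diagonal part: this is precisely where the disjoint-spectra hypothesis is used, and the freedom to absorb the (uncontrolled) diagonal part into $D$ is what lets the recursion proceed even though the individual blocks $K|_{\cH_{i,0}}$ need not be semisimple.
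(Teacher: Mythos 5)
Your proposal is correct and follows essentially the same strategy as the paper: choose an adapted trivialization and block-diagonalize the connection matrix in the $u$-direction by a recursive gauge transformation fixing $u=0$, with the disjoint-spectra hypothesis entering exactly through \cref{lemma:image-adjoint-jordan-dec}(2). The only difference is bookkeeping: you solve each $u^k$-coefficient all at once over $\bbk\dbb{t_1,\dots,t_n}$ by inverting $[K,\cdot]$ on the off-diagonal part via Nakayama, whereas the paper proceeds monomial-by-monomial in $t$ with elementary gauge factors $\id+u^mt^vT_{m,v}$ using only $[\cdot\,,U_0(0)]$ at the closed point, a formulation chosen so that the same recursion can be re-run with norm estimates in the non-archimedean case (\cref{proposition:split-u-direction-NA}).
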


\begin{proof}
Write $t = (t_1,\dots,t_n)$ and $H=\cH_{b,0}$.
Choose a trivialization $\Phi\colon \cH\simeq H\times \Spf \bbk\dbb{t,u}$ such that $ \cH_{i,0}\simeq H_i\times \Spf \bbk\dbb{t} $.
Write the connection in the $u$ direction as 
\[\nabla_{\partial_u} = \frac{\partial}{\partial u}+\frac{U(t)}{u^2},\]
where $U(t,u)=\sum_{k\geq 0}U_k(t)u^k$ for $U_k(t)\in \End(H)\dbb{t}$. 
By assumption, $U_0(t)\in \bigoplus_{i\in I}\End(H_i)\dbb{t}$. 

We will construct an automorphism $P(t,u)\in \Aut (H\times\Spf\bbk\dbb{t,u} )$ with $P(t,0) = \id$, such that $P(t,0) = \id$ and $P^{-1}UP + P^{-1}\frac{\partial P}{\partial u}\in \bigoplus_{i\in I} \End (H_i)\dbb{t,u}$.
Given such a $P(t,u)$, defining $\cH_i$ to be the constant extension of $H_i$ in the trivialization $P^{-1}\circ\Phi$ provides the desired splitting of $\cH$.

For $m\geq 1$,  
$T_m(t)\in \End(H)\dbb{t}$ and $P(t,u) = \id +u^m T_m(t)\in \GL(H)\dbb{t,u}$, write $(P^{\ast}\nabla )_{\partial_u} = \frac{\partial}{\partial u} + u^{-2}\tU(t,u).$
We have 
\begin{equation}\label{eq: difference-in-U}
    \tU (t,u) - U(t,u) = \sum_{k\geq 0}(-1)^{k+1}u^{m(k+1)}T_m(t)^k[T_m(t),U]+\sum_{k\geq 0}(-1)^kmu^{m(k+1)+1}T_m(t)^{k+1} .
\end{equation}
Note that the right-hand side of  (\ref{eq: difference-in-U}) has degree at least degree $m$ in $u$, and the coefficient of $u^m$ is $-[T_m(t),U_0(t)]$.

Let $<$ denote the degree lexicographic order on $\bbN^n$.
For $v=(v_1,\cdots,v_n)$, we write $t^v = t_1^{v_1}\cdots t_n^{v_n}$.
Now for $T_m(t) = t^v T_{m,v}$ with $T_{m,v}\in \End(H)$, we have $-[T_m(t),U_0(t)] = -[T_{m,v},U_0(0)]t^v + T't^{v'}$ where $T'\in \End(H)\dbb{t}$ and $v< v'$.
Write $U_k(t) = \sum_{w\in \bbN^n}U_{k,w}t^w$. 
By \cref{lemma:image-adjoint-jordan-dec}, we can choose $T_{m,v}$ such that $U_{m,v}-[T_{m,v},U_0(0)]\in \bigoplus_{i\in I} \End(H_i)$. 
By induction on $v\in \bbN^n$ using the lexicographic order on $\bbN^n$, we can assume $U_m(t) \in \bigoplus_{i\in I}\End(H_i)\dbb{t}$.
By induction on $m\geq 1$, we can further make $\tU(t,u)\in \bigoplus_{i\in I}\End(H_i)\dbb{t,u}$, completing the proof.
\end{proof}

\begin{lemma} \label{lemma:commutator-splits-implies-splits}
        Write $t = ( t_1,\dots, t_n )$.
    Let $\tH$ be a finite free $\bbk\dbb{t}$-module, and $U(t)\in \End (\tH )$.
    Let $\tH = \bigoplus_{i\in I} \tH_i$ be a decomposition of $\tH$ stable under $U(t)$.
    Assume that for $i\neq j$, the induced endomorphisms $U(t)\vert_{\tH_i}$ and $U(t)\vert_{\tH_j}$ have disjoint spectra.
    Let $X(t)\in \End (\tH )$ such that $[X(t),U(t)]\in \bigoplus_{i\in I} \End (\tH_i )$, then $X(t)\in \bigoplus_{i\in I} \End (\tH_i )$.
\end{lemma}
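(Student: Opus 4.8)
The plan is to decouple the equation along the block structure of the decomposition $\tilde H=\bigoplus_{i\in I}\tilde H_i$ and then reduce to the field case already established in \cref{lemma:image-adjoint-jordan-dec}. Write $R=\bbk\dbb t$. Since $\tilde H$ is finite free over $R$ and $\tilde H=\bigoplus_{i\in I}\tilde H_i$, each $\tilde H_i$ is finite free over $R$, and $\End_R(\tilde H)=\mathfrak d\oplus\mathfrak o$, where $\mathfrak d\coloneqq\bigoplus_{i}\End_R(\tilde H_i)$ is the block-diagonal part and $\mathfrak o\coloneqq\bigoplus_{i\neq j}\Hom_R(\tilde H_j,\tilde H_i)$ the block-off-diagonal part. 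Decompose $X(t)=X_{\mathfrak d}(t)+X_{\mathfrak o}(t)$ accordingly. Because the decomposition is stable under $U(t)$, the operator $U(t)$ lies in $\mathfrak d$, and consequently $[\,\cdot\,,U(t)]$ preserves both $\mathfrak d$ and $\mathfrak o$: for $\phi\in\Hom_R(\tilde H_j,\tilde H_i)$ one has $[\phi,U(t)]=\phi\circ U(t)|_{\tilde H_j}-U(t)|_{\tilde H_i}\circ\phi\in\Hom_R(\tilde H_j,\tilde H_i)$, and similarly on $\mathfrak d$. Hence $[X(t),U(t)]=[X_{\mathfrak d}(t),U(t)]+[X_{\mathfrak o}(t),U(t)]$ with the first summand in $\mathfrak d$ and the second in $\mathfrak o$; as $[X(t),U(t)]\in\mathfrak d$ by hypothesis and $\mathfrak d\cap\mathfrak o=0$, we get $[X_{\mathfrak o}(t),U(t)]=0$. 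It remains to show $X_{\mathfrak o}(t)=0$.

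For this I would pass to the fraction field $F\coloneqq\Frac(R)$, a field of characteristic $0$. Tensoring with $F$, the space $\tilde H\otimes_R F$ is finite-dimensional over $F$, with a decomposition $\bigoplus_{i\in I}(\tilde H_i\otimes_R F)$ stable under $U(t)\otimes 1$, and the disjoint-spectra hypothesis on the $U(t)|_{\tilde H_i}$ ensures that the induced endomorphisms $U(t)|_{\tilde H_i}\otimes 1$ have pairwise disjoint spectra. Applying part (1) of \cref{lemma:image-adjoint-jordan-dec} over $F$ to $X_{\mathfrak o}(t)\otimes 1$, which satisfies $[X_{\mathfrak o}(t)\otimes 1,U(t)\otimes 1]=0$, yields $X_{\mathfrak o}(t)\otimes 1\in\bigoplus_i\End_F(\tilde H_i\otimes_R F)$, i.e.\ it is block-diagonal over $F$. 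But $R$ is a domain and $\tilde H$ is free, so the natural map $\End_R(\tilde H)\to\End_F(\tilde H\otimes_R F)$ is injective and respects the block decompositions; hence the block-off-diagonal element $X_{\mathfrak o}(t)$ vanishes. Therefore $X(t)=X_{\mathfrak d}(t)\in\bigoplus_{i\in I}\End_R(\tilde H_i)$, as asserted.

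I do not expect a genuine obstacle: the argument is essentially bookkeeping plus the observation that the off-diagonal part injects into the corresponding object over $F$, where \cref{lemma:image-adjoint-jordan-dec} applies verbatim. The only point deserving a line of care is that ``disjoint spectra over $\bbk\dbb t$'' is preserved under the localization $R\to F$, which is immediate from the definition. If one prefers to avoid base change, an equivalent route is a degree induction: write $X_{\mathfrak o}(t)=\sum_\alpha X_\alpha t^\alpha$ with $X_\alpha$ block-off-diagonal over $\bbk$; if $X_{\mathfrak o}(t)\neq0$, let $d$ be minimal with $X_\alpha\neq0$ for some $|\alpha|=d$, and extract the total-degree-$d$ part of $[X_{\mathfrak o}(t),U(t)]=0$, which reads $[X_\alpha,U(0)]=0$ for all such $\alpha$, where $U(0)\coloneqq U(t)\bmod\fm$ preserves $H\coloneqq\bigoplus_i(\tilde H_i/\fm\tilde H_i)$ with blocks of disjoint spectra; then \cref{lemma:image-adjoint-jordan-dec}(1) forces each such $X_\alpha$ to be block-diagonal, hence $0$, a contradiction.
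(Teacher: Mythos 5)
Your main argument is correct and is essentially the paper's proof: split $\End_R(\tH)$ ($R=\bbk\dbb{t}$) into block-diagonal and block-off-diagonal parts, use that $U(t)$ is block-diagonal so $[\cdot,U]$ respects the splitting, deduce $[X_{\mathfrak{o}},U]=0$, and then pass to $K=\Frac(R)$ where \cref{lemma:image-adjoint-jordan-dec}(1) forces the off-diagonal part to be block-diagonal, hence zero.

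One caveat on your closing aside: the ``equivalent route'' by degree induction is not actually equivalent. Extracting the lowest total-degree part of $[X_{\mathfrak{o}}(t),U(t)]=0$ gives $[X_\alpha,U(0)]=0$, but the lemma's hypothesis is disjointness of the spectra of the $U(t)\vert_{\tH_i}$ (i.e.\ over the fraction field), and this does not imply that the blocks of $U(0)$ have disjoint spectra: already $U=\mathrm{diag}(t,0)$ on $R^{\oplus 2}$ has disjoint spectra over $\Frac(R)$ but equal spectra at $t=0$. That stronger pointwise disjointness does hold in the paper's application (\cref{proposition:split-t-directions}), where it is assumed at $t=u=0$, but it is not part of the lemma as stated, so the fraction-field argument you gave first is the one to keep.
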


\begin{proof}
    Let $R \coloneqq \bbk\dbb{t_1,\dots, t_n}$ and $K\coloneqq \Frac (R)$ its fraction field. 
    Working over $K$, \cref{lemma:image-adjoint-jordan-dec} implies that $\ker [\cdot , U]\subset \bigoplus_{i\in I} \End_R (\tH_i )$.

    We have a decomposition $\End_R (\tH)= \bigoplus_{i,j\in I} \Hom_{R} (\tH_j, \tH_i)$.
    Let $X_{i,j}$ denote the components of $X$ with respect to this splitting. 
    Let $Y \coloneqq \sum_{i\neq j} X_{i,j}$ denote the off-diagonal part of $X$. 
    We will prove that $Y=0$.
    Since $U\in \bigoplus_{i\in I} \End (\tH_i )$, the commutator $[Y,U]$ has vanishing diagonal, i.e.\ it lies in $\bigoplus_{i\neq j} \Hom_R (\tH_j,\tH_i )$.
    Furthermore, using the assumption, we see that $[Y,U] = [X,U] - \sum_{i\in I} [X_{i,i} , U]$ is block diagonal. 
    It follows that $[Y,U] =0$, hence $Y\in \bigoplus_{i\in I} \End_R (\tH_i)$.
    By definition, $Y$ is off-diagonal, so $Y=0$, proving the lemma.
\end{proof}

The following proposition implies that the decomposition in \cref{proposition:split-u-direction} induces a decomposition of F-bundle $(\cH,\nabla)\simeq \bigoplus_{i\in I}(\cH_i,\nabla_i)$ over $B$, where $\nabla_i$ is the restriction of $\nabla$ to $\cH_i$.

\begin{proposition} \label{proposition:split-t-directions}
In the setting of \cref{proposition:split-u-direction}, we have $u\nabla_{\xi}(\cH_i)\subset \cH_i$ for any vector field $\xi$ on $B$. 
\end{proposition}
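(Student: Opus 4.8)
The plan is to work in coordinates and feed the block diagonality of the connection in the $u$-direction from \cref{proposition:split-u-direction} into the flatness of $\nabla$. Since $B=\Spf\bbk\dbb{t_1,\dots,t_n}$, every vector field on $B$ is an $\cO_B$-linear combination of the coordinate fields $\partial_{t_a}$, the operator $\nabla_{u\xi}$ is $\cO_B$-linear in $\xi$, and each $\cH_i$ is stable under multiplication by functions on $B$; hence it suffices to prove $u\nabla_{\partial_{t_a}}(\cH_i)\subseteq\cH_i$ for each $a$. I would use the trivialization constructed in the proof of \cref{proposition:split-u-direction}: there $\cH\simeq H\times\Spf\bbk\dbb{t,u}$, $\cH_i\simeq H_i\times\Spf\bbk\dbb{t,u}$, and $\nabla_{\partial_u}=\partial_u+u^{-2}U(t,u)$ with $U(t,u)=\sum_{k\ge0}U_k(t)u^k\in\bigoplus_{i\in I}\End(H_i)\dbb{t,u}$ (block diagonal), and $U_0(0)=K_b$. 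By the F-bundle axiom $\nabla_{u\partial_{t_a}}$ is regular, so $\nabla_{\partial_{t_a}}=\partial_{t_a}+u^{-1}V_a(t,u)$ with $V_a\in\End(H)\dbb{t,u}$; then $u\nabla_{\partial_{t_a}}(\cH_i)\subseteq\cH_i$ is equivalent to $V_a$ being block diagonal, i.e.\ $V_a\in\bigoplus_{i\in I}\End(H_i)\dbb{t,u}$, and this is what I would prove.

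Flatness gives $[\nabla_{\partial_u},\nabla_{\partial_{t_a}}]=0$ (since $[\partial_u,\partial_{t_a}]=0$), which after clearing denominators is the matrix identity
\[
[U,V_a]-u\bigl(V_a+\partial_{t_a}U\bigr)+u^{2}\,\partial_u V_a=0 .
\]
Write $V_a=D+W$ for its block-diagonal part $D$ and off-block part $W\in\bigoplus_{i\ne j}\Hom(H_j,H_i)\dbb{t,u}$. Because $U$, hence $\partial_{t_a}U$, is block diagonal, projecting the identity onto the off-block summand annihilates the $\partial_{t_a}U$ term and the block-diagonal part of $[U,V_a]$, leaving
\[
[U,W]-uW+u^{2}\,\partial_u W=0 .
\]
Expanding with $U=\sum_p U_p u^p$ and $W=\sum_q W_q u^q$ (and $W_{-1}\coloneqq 0$), the coefficient of $u^{k}$ is $\sum_{p+q=k}[U_p,W_q]+(k-2)W_{k-1}=0$ for every $k\ge0$; isolating the $p=0$ summand, this reads $[U_0(t),W_k]=-(k-2)W_{k-1}-\sum_{p+q=k,\,p\ge1}[U_p,W_q]$, whose right-hand side involves only $W_0,\dots,W_{k-1}$.

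I would then prove $W_k=0$ for all $k$ by strong induction. Granting $W_0=\dots=W_{k-1}=0$, the recursion forces $[U_0(t),W_k]=0$, so it remains to see that the $\bbk\dbb{t}$-linear endomorphism $[U_0(t),\,\cdot\,]$ of the finite free module $M\coloneqq\bigoplus_{i\ne j}\Hom_\bbk(H_j,H_i)\dbb{t}$ (into which it maps, $U_0(t)$ being block diagonal) is injective. Its reduction modulo $(t_1,\dots,t_n)$ is $[K_b,\,\cdot\,]$ acting on $\bigoplus_{i\ne j}\Hom_\bbk(H_j,H_i)$, which is an isomorphism by \cref{lemma:image-adjoint-jordan-dec}(2) thanks to the disjoint-spectra hypothesis on $K_b$; hence $\det[U_0(t),\,\cdot\,]$ has nonzero constant term, so it is a unit in $\bbk\dbb{t}$ and $[U_0(t),\,\cdot\,]$ is an isomorphism. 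Therefore $W_k=0$, the induction closes, $W=0$, and $V_a$ is block diagonal. Consequently $u\nabla_{\partial_{t_a}}=u\partial_{t_a}+V_a$ preserves each $\cH_i$, and by $\cO_B$-linearity in $\xi$ so does $u\nabla_\xi$ for every vector field $\xi$ on $B$. The argument is short once \cref{proposition:split-u-direction} is available; the only points needing attention are the bookkeeping of the $u$-expansion and the observation that at each step the recursion collapses to $[U_0(t),W_k]=0$, which is where the disjoint-spectra assumption re-enters via \cref{lemma:image-adjoint-jordan-dec}.
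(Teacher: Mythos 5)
Your proof is correct and follows essentially the same route as the paper: expand the flatness identity $[\nabla_{\partial_u},\nabla_{\partial_{t_a}}]=0$ in powers of $u$ and induct on the $u$-degree, with the disjoint-spectra hypothesis entering through \cref{lemma:image-adjoint-jordan-dec}. The only (harmless) deviation is that where the paper invokes \cref{lemma:commutator-splits-implies-splits} (proved by passing to $\Frac(\bbk\dbb{t})$), you project the equation onto the off-block summand and verify directly that $[U_0(t),\cdot]$ is invertible on $\bigoplus_{i\neq j}\Hom(H_j,H_i)\dbb{t}$ because its reduction modulo $\fm$ is $[K_b,\cdot]$, an equally valid way to close the induction.
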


\begin{proof}
Write $t=(t_1,\cdots,t_n)$.    Let $H \coloneqq \cH|_{b,0}$, and $H = \bigoplus_{i\in I} H_i$ the splitting induced by the decomposition of $\cH$.
    Fix a trivialization $\cH\simeq H\times\Spf\bbk-\dbb{t,u}$ such that $\cH_i\simeq H_i\times\Spf\bbk\dbb{t,u}$, and write
    \[\nabla = d + u^{-1}\sum_{1\leq i\leq n} T_i (t,u) dt_i + u^{-2} U(t,u)du ,\]
    with $U(t,u) = \sum_{k\geq 0} U_k (t) u^k$ and $T_i(t,u) = \sum_{k\geq 0} T_{i,k} (t)u^k$.
    By assumption, we have $U(t,u)\in \bigoplus_{i\in I} \End (H_i)\dbb{t,u}$.
    In particular, $U_0 (t)$ induces endomorphisms in $\End (H_i)\dbb{t}$ for all $i\in I$, and the assumption on the decomposition at $t=u=0$ implies that those have disjoint spectra.

    Fix $i\in \lbrace 1,\dots, n\rbrace$.
    The flatness equation $[\nabla_{\partial_u} ,\nabla_{\partial_{t_i}} ] = 0$ reads
    \[\frac{\partial (u^{-1} T_i )}{\partial u} - \frac{\partial (u^{-2} U)}{\partial t_i} = u^{-3} [T_i, U] .\]
    Splitting this equation according to powers of $u$ gives $[T_{i,0} , U_{0} ] =0$, and for $k\geq 1$:
    \begin{equation}\label{eq:flatness-u-t-block-diagonal}
        [T_{i,k} , U_0] = (k-2)T_{i,k-1} - \frac{\partial U_{k-1}}{\partial t_i} - \sum_{\substack{k_1+k_2=k \\ k_1 <k}} [T_{k_1} , U_{k_2} ] .
    \end{equation}
    We prove by induction on $k\geq 0$ that $T_{i,k}$ is block diagonal, i.e.\ $T_{i,k}\in \bigoplus_{i\in I} \End (H_i)\dbb{t}$.
    The base case $k=0$ follows from \cref{lemma:commutator-splits-implies-splits}, because $T_{i,0}$ commutes with $U_0 (t)$.
    Now, let $k\geq 1$ and assume $T_{i,\ell} (t)$ is block diagonal for $\ell <k$.
    Since each $U_{\ell} (t)$ is assumed block diagonal, the right-hand side of \eqref{eq:flatness-u-t-block-diagonal} is block diagonal. 
    Applying \cref{lemma:commutator-splits-implies-splits}, we obtain that $T_{i,k} (t)$ is also block diagonal, completing the proof.
\end{proof}

It remains to show that the above decomposition $(\cH,\nabla)\simeq \bigoplus_{i\in I}(\cH_i,\nabla_i)$ is compatible with the decomposition of the base. 

\begin{lemma}\label{lemma:pullback-F-bundle}
  Let $B \simeq B_1\times B_2$ be a formal neighborhood of $b=0$ in a product of affine spaces, and $(\cH ,\nabla ) /B$ be an F-bundle over $B$. 
  Assume that $\nabla_{u\xi}\vert_{u=0} = 0$ for all vector fields $\xi$ in the directions of $B_2$.
  Then there exists an F-bundle $(\cH_1,\nabla_1)/B_1$ such that $\pr_1^*(\cH_1,\nabla_1)\simeq  (\cH,\nabla)$, where $\pr_1$ is the projection $B\simeq B_1\times B_2\rightarrow B_1$.
\end{lemma}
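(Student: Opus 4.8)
The plan is to trivialize $\cH$ along the $B_2$-directions using the hypothesis, and then to use the flatness of $\nabla$ to conclude that nothing else depends on $B_2$. Write $B_1=\Spf\bbk\dbb{t_1,\dots,t_n}$ and $B_2=\Spf\bbk\dbb{s_1,\dots,s_p}$, so that $B\times\Spf\bbk\dbb{u}=\Spf\bbk\dbb{t,s,u}$ is the formal spectrum of a complete local ring; in particular $\cH$ is free. Fixing a frame, I would write
\[\nabla = d + \sum_i A_i\,dt_i + \sum_j \Gamma_j\,ds_j + C\,du,\]
where $A_i$, $\Gamma_j$, $C$ are matrix-valued and, by the F-bundle axioms, have poles in $u$ of order at most $1$, $1$, $2$ respectively. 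By \cref{remark:restriction-u=0-linear-map}, the $u^{-1}$-residue of $\Gamma_j$ is $\mu(\partial_{s_j})$; hence the hypothesis $\nabla_{u\xi}|_{u=0}=0$ for $\xi$ tangent to $B_2$ says precisely that each $\Gamma_j$ is regular in $u$.

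First I would construct $G\in\GL_r$ with $G|_{s=0}=\id$ solving $\partial_{s_j}G=-\Gamma_j G$ for all $j$ simultaneously, by solving this overdetermined linear system recursively in the powers of $s$. Its consistency is guaranteed by the integrability relation $\partial_{s_i}\Gamma_j-\partial_{s_j}\Gamma_i=[\Gamma_i,\Gamma_j]$, which is part of the flatness of $\nabla$; this is a linear special case of the generalized-flatness mechanism of \cref{definition:generalized-flat-PDE}, so one may alternatively invoke an analog of \cref{theorem:existence-solution-flat-PDE}. Since the $\Gamma_j$ are regular in $u$, the recursion keeps $G$ regular in $u$, and $G$ is invertible because $G\equiv\id$ modulo $(s_1,\dots,s_p)$. (Equivalently: $\cH|_{s=0}$ together with $\nabla|_{s=0}$ will serve as $(\cH_1,\nabla_1)$, and $G$ is the parallel transport identifying $(\cH,\nabla)$ with its pullback.)

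Next I would change the frame by $G$, turning $\nabla$ into $d+\sum_i\tilde A_i\,dt_i+\tilde C\,du$ with vanishing $ds_j$-components. Reading off $[\tilde\nabla_{t_i},\partial_{s_j}]=0$ and $[\tilde\nabla_u,\partial_{s_j}]=0$ from the flatness of $\nabla$ gives $\partial_{s_j}\tilde A_i=\partial_{s_j}\tilde C=0$, so $\tilde A_i,\tilde C\in\Mat_r(\bbk\dbb{t,u})$. I would then set $\cH_1\coloneqq\cO_{B_1\times\Spf\bbk\dbb{u}}^{\oplus r}$ and $\nabla_1\coloneqq d+\sum_i\tilde A_i(t,u)\,dt_i+\tilde C(t,u)\,du$; flatness of $\nabla_1$ holds because its flatness identities are those of $\tilde\nabla$ not involving $\partial_{s_j}$, and $(\nabla_1)_{u^2\partial_u}$, $(\nabla_1)_{u\partial_{t_i}}$ are regular because $\tilde C$, $\tilde A_i$ have the same $u$-pole orders as $C$, $A_i$. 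Thus $(\cH_1,\nabla_1)$ is an F-bundle over $B_1$, and $\pr_1^*(\cH_1,\nabla_1)$ is exactly $(\cH,\nabla)$ written in the $G$-frame, which gives the claimed isomorphism.

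I expect the only genuine difficulty to be the construction of $G$: the system $\partial_{s_j}G=-\Gamma_j G$ is overdetermined, and its solvability is exactly where the $B_2$-part of the flatness of $\nabla$ enters. Once this $B_2$-flat frame is available, the remaining steps amount to bookkeeping with the flatness equations.
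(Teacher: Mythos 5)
Your proposal is correct and follows essentially the same route as the paper: the paper also uses the hypothesis (regularity of the $B_2$-direction connection components at $u=0$) together with flatness to extend a trivialization of $\cH|_{B_1\times\{0\}\times\Spf\bbk\dbb{u}}$ by parallel transport in the $B_2$-directions, and then reads off from the remaining flatness equations that the $B_1$- and $u$-direction connection matrices are independent of the $B_2$-variables. Your explicit construction of the gauge $G$ solving $\partial_{s_j}G=-\Gamma_j G$ is just the spelled-out version of the paper's "extend the trivialization uniquely by $\nabla$", and your check that the $u$-pole orders survive the gauge change is the (implicit) reason the resulting $(\cH_1,\nabla_1)$ is again an F-bundle.
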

\begin{proof}
  For $i=1,2$, let $t_i = ( t_{i,j} , \, 1\leq j\leq n_i )$ denote coordinates on $B_i$.
  Let $\cH_1\coloneqq \cH|_{B_1\times \{0\}\times \Spf \bbk\dbb{u}}$.
  By assumption, $\nabla$ has no pole at $u=0$ in the directions of $B_2$.
  Since $\nabla$ is flat, given any trivialization of $\cH_1$ we can extend it uniquely by $\nabla$ to a trivialization of $\cH$ over $B_1\times B_2 \times \Spf \bbk\dbb{u}$. 
  This defines an isomorphism $\pr_1^*\cH_1\simeq \cH$, and in this trivialization we have
  \[\nabla = d + u^{-1}\sum_{1\leq j\leq n_1} T_{1,j} (t_1,t_2,u)dt_{1,j} + u^{-2} U(t_1,t_2,u)du .\]
  Since $\nabla$ is flat, we have for all $1\leq j\leq n_1$ and $1\leq k\leq n_2$
  \[
    \frac{\partial (u^{-1} T_{1,j})}{\partial t_{2,k}} = 0 ,\quad
    \frac{\partial (u^{-2} U)}{\partial t_{2,k}} = 0 .
  \]
    Hence, the connection matrices in the directions of $B_1$ and the $u$-direction are independent of $t_2$.
  This means that the connection is equal to the pullback of a connection on $B_1\times \Spf\bbk\dbb{u}$, completing the proof.
\end{proof}

\begin{theorem}[Spectral decomposition theorem] \label{thm:eigenvalue_decomposition}
  Let $B$ be a formal neighborhood of a rational point $b$ in a smooth $\bbk$-variety, and $(\cH, \nabla)$ an F-bundle over $B$ maximal at $b$.
  Write $K_b = \nabla_{u^2\partial_u}|_{b,0}$.
  Assume that we have a decomposition $\cH_{b,0} \simeq \bigoplus_{i \in I} H_i$ stable under $K_b$, and that for any $i \neq j \in I$, the spectra of $K_b|_{H_i}$ and $K_b|_{H_j}$ are disjoint.
  Then $(\cH ,\nabla )/B$ decomposes into a product of maximal F-bundles $(\cH_i ,\nabla_i )/B_i$ extending the decomposition of $\cH|_{b,0}$.
\end{theorem}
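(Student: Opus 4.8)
The plan is to assemble the theorem from \cref{theorem:decomposition-F-manifolds,proposition:split-u-direction,proposition:split-t-directions,lemma:pullback-F-bundle} and the auxiliary lemmas, following the outline in \cref{sec:introduction}. First I would record the F-manifold structure on the base and translate the hypothesis into an algebra splitting. Since $(\cH,\nabla)$ is maximal at $b$ and $B$ is a formal neighborhood of $b$, any cyclic vector $h_b\in\cH_{b,0}$ for $\mu_b$ extends to a section $h$ of $\cH|_{u=0}$ over $B$ which remains a section of cyclic vectors by Nakayama. By \cref{lemma:max-F-bundle-F-manifold} the pair $\{(\cH,\nabla),h\}$ gives a formal F-manifold structure $\star$ with unit on $B$ and the isomorphism $\eta=\mu(\cdot)(h)\colon TB\xrightarrow{\ \sim\ }\cH|_{u=0}$, which by \eqref{eq:product-structure-max-F-bundle} satisfies the intertwining relation $\mu(Y)\circ\eta=\eta\circ(\,\cdot\star Y\,)$ for every local vector field $Y$. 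Setting $A_i\coloneqq\eta_b^{-1}(H_i)$, I would note that the image of $\mu_b$ consists of operators commuting with $K_b$, hence preserving $\bigoplus_iH_i$ by \cref{lemma:image-adjoint-jordan-dec}(1); combined with the intertwining relation and the commutativity of $\star$ this forces $A_i\star A_j=0$ for $i\neq j$, $A_i\star A_i\subseteq A_i$, and a compatible decomposition of the unit, so that $T_bB=\bigoplus_{i\in I}A_i$ is a splitting of unital $\bbk$-algebras.

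Next I would feed this splitting into \cref{theorem:decomposition-F-manifolds} to obtain formal F-manifolds $(B_i,\star_i)$ with $(B,\star)\simeq\prod_{i\in I}(B_i,\star_i)$ and an induced decomposition $TB=\bigoplus_{i\in I}\cD_i$, with $\cD_i$ the pullback of $TB_i$, restricting to $\bigoplus_iA_i$ at $b$. Transporting this along $\eta$ gives $\cH|_{u=0}=\bigoplus_{i\in I}\cH_{i,0}$ with $\cH_{i,0}\coloneqq\eta(\cD_i)$, whose fiber at $b$ is $\bigoplus_iH_i$. For a local vector field $\xi$, writing $\xi=\sum_j\xi_j$ with $\xi_j$ a section of $\cD_j$, the intertwining relation identifies $\mu(\xi)=\nabla_{u\xi}|_{u=0}$ with $\,\cdot\star\xi$, which preserves every $\cD_i$ since $\star$ respects the product decomposition; in particular $K=\nabla_{u^2\partial_u}|_{u=0}=\mu(\Eu)$ preserves every $\cH_{i,0}$. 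Thus the hypotheses of \cref{proposition:split-u-direction} are met, producing $\cH=\bigoplus_{i\in I}\cH_i$ with $u^2\nabla_{\partial_u}(\cH_i)\subseteq\cH_i$ extending the decomposition of $\cH|_{u=0}$, and then \cref{proposition:split-t-directions} gives $u\nabla_\xi(\cH_i)\subseteq\cH_i$ for all $\xi$, so $(\cH,\nabla)=\bigoplus_{i\in I}(\cH_i,\nabla_i)$ as F-bundles with $\nabla_i=\nabla|_{\cH_i}$.

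Finally I would show each summand descends to a maximal F-bundle on $B_i$. Under the identification $B\simeq B_i\times\prod_{j\neq i}B_j$, a vector field $\xi$ in the directions of $\prod_{j\neq i}B_j$ is a section of $\bigoplus_{j\neq i}\cD_j$, and $\,\cdot\star\xi$ annihilates $\cD_i$ because $\cD_i\star\cD_j=0$ for $i\neq j$; hence $(\nabla_i)_{u\xi}|_{u=0}=\mu(\xi)|_{\cH_{i,0}}=0$, and \cref{lemma:pullback-F-bundle} yields an F-bundle $(\cH_i',\nabla_i')$ over $B_i$ with $\pr_i^\ast(\cH_i',\nabla_i')\simeq(\cH_i,\nabla_i)$. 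Writing $h_b=\sum_ih_i$ with $h_i\in H_i=\eta_b(A_i)$, so that $h_i=\eta_b(e_i)$ for $e_i$ the unit of $A_i$, the intertwining relation gives $\mu_{b_i}(v)(h_i)=\eta_b(v)$ for $v\in A_i\simeq T_{b_i}B_i$; thus $h_i$ is a cyclic vector for $(\cH_i',\nabla_i')$ at $b_i$, and maximality over all of $B_i$ follows since $B_i$ is a formal neighborhood of $b_i$. Relabeling $(\cH_i,\nabla_i)\coloneqq(\cH_i',\nabla_i')$ yields the asserted product decomposition, which by construction extends $\cH_{b,0}=\bigoplus_{i\in I}H_i$. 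I expect the crux to be the passage from base to bundle in the second step: one must verify that transporting the F-manifold decomposition of $TB$ through the cyclic vector produces a decomposition of $\cH|_{u=0}$ simultaneously stable under all residue endomorphisms $\nabla_{u\xi}|_{u=0}$ and $\nabla_{u^2\partial_u}|_{u=0}$ — this is exactly what makes \cref{proposition:split-u-direction} (whose own heart is the generalized-eigenspace argument of \cref{lemma:image-adjoint-jordan-dec}) and \cref{proposition:split-t-directions} applicable — and that after the splitting each piece both descends along $\pr_i$ and inherits a cyclic vector.
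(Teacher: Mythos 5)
Your proposal is correct and follows essentially the same route as the paper's proof: cyclic‑vector section and \cref{lemma:max-F-bundle-F-manifold} to get the F‑manifold structure, the disjoint‑spectra/flatness argument (via \cref{lemma:image-adjoint-jordan-dec}) to split $T_bB$ as algebras, \cref{theorem:decomposition-F-manifolds} to decompose the base, transport along $\eta$ followed by \cref{proposition:split-u-direction,proposition:split-t-directions}, descent via \cref{lemma:pullback-F-bundle}, and the maximality check at the closed points. The only differences are presentational (e.g.\ the explicit Nakayama remark and the unit decomposition $e=\sum_i e_i$), which match what the paper leaves implicit.
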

\begin{proof}
As in the proof of \cref{theorem:decomposition-F-manifolds}, we may assume the base $B$ has the form $\Spf\bbk\dbb{t_1,\cdots,t_n}$.
Let $h:B\rightarrow \cH \vert_{u=0}$ be a section of cyclic vectors, providing an isomorphism
\[\eta\coloneqq u\nabla |_{u=0}(h):TB\xrightarrow{\sim} \cH|_{u=0}.\]
This induces an F-manifold structure $(B,\star)$ on $B$ by \cref{lemma:max-F-bundle-F-manifold}. In particular, we have a decomposition $T_bB=\bigoplus_{i\in I} E_i$ with $E_i=\eta_b^{-1}(H_i)$.  
Since the spectra of $K_b|_{H_i}$ and $K_b|_{H_j}$ are disjoint, up to extending the base field, each $H_i$ is a direct sum of generalized eigenspaces for $K_b$.
Since $\nabla$ is flat, it follows that $T_bB=\bigoplus_{i\in I} E_i$ is a splitting of $\bbk$-algebra. 
By \cref{theorem:decomposition-F-manifolds}, we obtain a decomposition of F-manifold $B\simeq \prod_{i\in I} (B_i,\star_i)$, extending the decomposition at $T_b B$. 
This induces a decomposition of $TB = \bigoplus_{i\in I} \cE_i$ as $\cO_B$-algebras.
We refer to sections of $\cE_i$ as being in the directions of $B_i$.

Under $\eta$, we obtain a decomposition $\cH|_{u=0} \simeq \bigoplus_{i\in I}\cH_{i,0}$.
Since the action of $K$ corresponds to multiplication by the Euler vector field, this decomposition is stable under $K$,
and extends the decomposition of $\cH_{b,0}\simeq \bigoplus_{i\in I}H_i$.
By Propositions \ref{proposition:split-u-direction} and \ref{proposition:split-t-directions}, this further extends to a decomposition $(\cH,\nabla) \simeq \bigoplus_{i\in I}(\cH_i,\nabla_i)$. 

For each $i\in I$ and $\xi$ not in the directions of $B_i$, the action of $(\nabla_i)_{u\xi}|_{u=0}$ on $\cH_{i,0}$ under $\eta$  is the restriction of $\xi\star$ to the subalgebra $\cE_i$, hence it vanishes.
Then by \cref{lemma:pullback-F-bundle}, $(\cH_i,\nabla_i)/B$ isomorphic to a pullback of F-bundle from $B_i$, which we also denote as $(\cH_i,\nabla_i)/B_i$.
We thus have a decomposition of F-bundle
\[(\cH ,\nabla) \simeq \bigoplus_{i\in I} \pr_i^{\ast} (\cH_i,\nabla_i ) ,\]
where $\pr_i\colon B\simeq \prod_{j\in I} B_j\rightarrow B_i$ is the projection to the $i$-th component.

It remains to check that each F-bundle in the decomposition is maximal.
Let $j_i\colon B_i\hookrightarrow B$ be the canonical closed immersion, and $h_i\coloneqq j_i^{\ast} h$.
We claim that $h_i$ is a section of cyclic vectors for $(\cH_i,\nabla_i ) /B_i$, i.e.\ the map $\eta_i\colon \xi\mapsto (\nabla_{i})_{u\xi}\vert_{u=0} (h_i)$ is an isomorphism $TB_i\xrightarrow{\sim} \cH_i\vert_{u=0}$.
Since $B_i$ is the formal neighborhood of a point in an affine space, it is enough to check that the stalk of $\eta_i$ at the closed point $b_i$ of $B_i$ is an isomorphism.
This stalk is the composition of the isomorphisms
\[T_{b_i} B_i\longrightarrow E_i \xrightarrow{\eta_b\vert_{E_i}} H_i , \]
hence it is an isomorphism, completing the proof.
\end{proof}

\begin{example}[rank $1$ maximal F-bundle] \label{example-rank-1-maximal}
  Let $B = \Spf \bbk\dbb{t}$ and $b=0 \in B$.
  Let $(\cH ,\nabla )/B$ be an F-bundle, maximal at $b$.
  Fixing a trivialization of $\cH$, we write the connection as $\nabla =d +  u^{-2} p (t,u)du + u^{-1} q(t,u) dt$ .
  Flatness of $\nabla$ reduces to the equation $\frac{\partial (u^{-2}p)}{\partial t} = \frac{\partial (u^{-1}q)}{\partial u}$.
  Solutions are parameterized by functions $\psi(t,u)\in\bbk\dbb{t,u}$ by the rule
  \[ p = u\frac{\partial \psi}{\partial u} - \psi,\quad q = \frac{\partial \psi}{\partial t}  .\]
  The F-bundle is maximal at $t=0$ if and only if $q (0,0)\neq 0$ or, in terms of $\psi$, $\frac{\partial \psi}{\partial t}(0,0)\neq 0$.
\end{example}

\begin{example}[simple eigenvalues] \label{example:simple-K-operator}
    Let $B$ be the formal neighborhood of $b=0$ in an $n$-dimensional affine space.
    Let $(\cH ,\nabla )/B$ be an F-bundle, maximal at $b$.
    Assume that $K_b=u^2\nabla_{\partial_u}\vert_{b,u=0}$ has simple eigenvalues.
    Then $(\cH ,\nabla )/B$ is isomorphic to a product of rank $1$ maximal F-bundles.

    Concretely, there exists a change of coordinates $f\colon  \prod_{1\leq i\leq n}\Spf \bbk\dbb{t_i}\xrightarrow{\sim} B$, and a trivialization of $f^{\ast}(\cH,\nabla )$ in which the connection takes the form
    \[f^{\ast}\nabla = d + u^{-1}\begin{pmatrix}
      \frac{\partial \psi_1}{\partial t_1} dt_1 & & 0 \\
       & \ddots & \\
       0 & & \frac{\partial \psi_n}{\partial t_n} dt_n
    \end{pmatrix}
    + u^{-2}\begin{pmatrix}
       u\frac{\partial \psi_1}{\partial u} - \psi_1  & & 0 \\
        & \ddots & \\
      0   & & u\frac{\partial \psi_n}{\partial u} - \psi_n
    \end{pmatrix}du ,\]
    with $\psi_i\in \bbk\dbb{t_i,u}$ functions such that $-\psi_i (0,0)$ is an eigenvalue of $K_b$, and $\frac{\partial \psi_i}{\partial t_i} (0,0)\neq 0$ (see \cref{example-rank-1-maximal}).

    When $K_b$ has simple eigenvalues, the change of coordinates is obtained by integrating a basis of sections of eigenvectors for the connection in the $u$-direction.
\end{example}

\subsubsection{The non-archimedean case}

Next, we prove the spectral decomposition theorem in the non-archimedean case. The proof builds on the formal case, but an additional challenge lies in bounding the norms of the coefficients of the gauge transform and establishing non-archimedean convergence. We achieve these bounds through a detailed analysis of the recursive relations of the coefficients, see \cref{proposition:split-u-direction-NA}.

\begin{lemma} \label{lemma:NA-max-F-bundle-F-manifold}
    Let $B$ be an admissible open neighborhood of a rational point $b$ in a smooth $\bbk$-analytic space.
    Let $(\cH ,\nabla )/B$ be a non-archimedean F-bundle maximal at $b$.
    Then there exists an admissible open neighborhood $U\subset B$ of $b$ such that $(\cH ,\nabla )$ admits a section of cyclic vectors, and the data $\lbrace (\cH,\nabla ) , h\rbrace$ induces a non-archimedean F-manifold structure on $U$ with identity.
\end{lemma}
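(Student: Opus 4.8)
The plan is to reduce the statement to the formal case \cref{lemma:max-F-bundle-F-manifold}. In that case, once a section of cyclic vectors $h$ is given, the construction uses only two ingredients, both of which are local and purely algebraic in the connection and the vector fields: (i) evaluation at $h$ produces an isomorphism $\eta = \mu(\cdot)(h)\colon TB \to \cH|_{u=0}$ through which the composition product on $\End(\cH|_{u=0})$ transports to a commutative associative product with unit on $TB$, using flatness of $\nabla$ (see the discussion after \eqref{eq:residue-map} and the formula \eqref{eq:product-structure-max-F-bundle}); and (ii) the F-identity \eqref{eq:F-identity}, established in \cite[Lemma 10]{David_T-structure-2d-F-manifold} for (TE)-structures. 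Neither step distinguishes a formal from a non-archimedean base, so both carry over verbatim. The only genuinely new point is that the non-archimedean statement does not hand us a section of cyclic vectors; we must produce one over an admissible open neighborhood of $b$.

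First I would pick a cyclic vector $h_b \in \cH_{b,0}$, which exists since $(\cH,\nabla)$ is maximal at $b$, and extend it arbitrarily to a section $h$ of the vector bundle $\cH|_{u=0}$ over some admissible open neighborhood $U_0$ of $b$. Then I would consider the $\cO_{U_0}$-linear morphism $\eta \coloneqq \mu(\cdot)(h)\colon TU_0 \to \cH|_{u=0}$, where $\mu$ is the map $\xi \mapsto \nabla_{u\xi}|_{u=0}$ of \eqref{eq:mu-map} (with analytic tangent vectors). By maximality at $b$ the source and target are locally free of the same rank, namely $\dim T_b B = \rk \cH$, and $\eta$ restricts to an isomorphism at $b$ by construction. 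Hence, in local trivializations near $b$, $\det \eta$ is an analytic function that does not vanish at $b$; its non-vanishing locus is an admissible open neighborhood $U \subset U_0$ of $b$ over which $\eta$ is an isomorphism. Then $h|_U$ is a section of cyclic vectors for $(\cH|_U, \nabla|_U)$.

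With this section in hand, over $U$ I would define the product by transporting the composition product through $\eta$: for local vector fields $v_1, v_2$, set $\mu(v_1 \star v_2)(h) \coloneqq \mu(v_2)\circ\mu(v_1)(h)$, which is legitimate since $\eta$ is an isomorphism. Flatness of $\nabla$ forces the operators in the image of $\mu$ to commute, so $\star$ is commutative and associative exactly as in the formal case; and $e \coloneqq \eta^{-1}(h)$ is a unit, because $\mu(e)(h) = \eta(e) = h$ yields $\eta(v \star e) = \mu(v)\circ\mu(e)(h) = \mu(v)(h) = \eta(v)$. Finally I would invoke the computation of \cite[Lemma 10]{David_T-structure-2d-F-manifold} to obtain the F-identity, which goes through unchanged.

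The only obstacle worth flagging is the passage from a cyclic vector at the single point $b$ to a section of cyclic vectors over an admissible open neighborhood; this is routine once one observes that cyclicity is the non-vanishing of an analytic determinant, but it is the sole place where the non-archimedean argument genuinely departs from the formal one. The rest is a transcription of the proof of \cref{lemma:max-F-bundle-F-manifold}.
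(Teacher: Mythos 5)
Your proposal is correct and follows the same route as the paper: the paper's proof simply notes that maximality is an open condition (which is exactly your determinant non-vanishing argument, spelled out) to obtain a section of cyclic vectors over an admissible open neighborhood, and then transcribes the formal case (\cref{lemma:max-F-bundle-F-manifold}), including the appeal to \cite[Lemma 10]{David_T-structure-2d-F-manifold} for the F-identity. No gaps; your elaboration of the openness step is a faithful expansion of what the paper leaves implicit.
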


\begin{proof}
    Being maximal is an open condition, so there exists an admissible open neighborhood $U\subset B$ of $b$ over which a section of cyclic vector $h$ exists.
    The proof is then identical to the formal case, and relies on explicit computations in local analytic coordinates centered at $b$.
\end{proof}

\begin{proposition} \label{proposition:split-u-direction-NA}
Let $(\cH ,\nabla )$ be an F-bundle over $B=\Sp\bbk\langle t_1,\dots, t_n\rangle$, and let $b=0\in B$.
Let $K = \nabla_{u^2\partial_u}\vert_{u=0}$ and $\cH_{b,0}=\bigoplus_{i\in I}H_i$ a decomposition stable under $K_b$ such that the induced endomorphisms on $H_i$ have disjoint spectra.

Let $\cH|_{u=0} = \bigoplus_{i\in I} \cH_{i,0}$ be a decomposition extending the decomposition of $\cH_{b,0}$, and stable under $K$.
Then, there exists an admissible open neighborhood $U\subset B$ of $b$ and a decomposition $\cH\vert_U = \bigoplus_{i\in I}\cH_i$ such that $\cH_i\vert_{u=0} = \cH_{i,0}\vert_U$ and $u^2\nabla_{\partial_u} (\cH_i)\subset \cH_{i}$.
\end{proposition}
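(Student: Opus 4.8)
The plan is to follow the formal argument of \cref{proposition:split-u-direction}, producing a gauge transformation $P$ order by order in the power of $u$, and to add the one ingredient needed in the non-archimedean setting: bounds on the norms of the coefficients of $P$ guaranteeing convergence over a small polydisk. After shrinking $B$ I may assume $B=\Sp T_n(\rho_0)$ for some $\rho_0>0$ to be fixed, small enough that $\cH$ is trivial over $\Sp T_n(\rho_0)\times\bbD_u$ and the $\cH_{i,0}$ are free; I fix a trivialization $\Phi$ of $\cH$ in which each $\cH_{i,0}$ is the constant subbundle with fibre $H_i$, and I write $\nabla_{\partial_u}=\partial_u+u^{-2}U(t,u)$ with $U=\sum_{k\ge0}U_k(t)u^k$, the $U_k$ being $\End(H)$-valued functions. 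Since $\nabla_{u^2\partial_u}|_{u=0}=K$ acts on $\cH|_{u=0}$ by $U_0$ and the given decomposition of $\cH|_{u=0}$ is $K$-stable, $U_0$ is block-diagonal. Write $A=A^{\mathrm d}+A^{\mathrm o}$ for the decomposition of $A\in\End(H)$ (or of an $\End(H)$-valued function) into its block-diagonal and off-diagonal parts with respect to $H=\bigoplus_{i\in I}H_i$. By \cref{lemma:image-adjoint-jordan-dec}(2), $[\,\cdot\,,U_0(0)]$ restricts to an automorphism of the off-diagonal part of $\End(H)$; since $U_0(t)-U_0(0)$ has small norm for $\rho_0$ small, a von Neumann series shows that for $\rho_0$ small enough $\ad_{U_0}\coloneqq[\,\cdot\,,U_0]$ is an automorphism of the off-diagonal part (now with $T_n(\rho_0)$-coefficients) with inverse of norm $\le C_0$ for some constant $C_0$. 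Fix such a $\rho_0$.

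Next I set up the recursion as in \cref{proposition:split-u-direction}: I look for an automorphism $P=\id+\sum_{m\ge1}P_m(t)u^m$ with each $P_m$ off-diagonal (normalizing the block-diagonal part to zero) and a block-diagonal $\tilde U=\sum_{m\ge0}\tilde U_m(t)u^m$ with $\tilde U_0=U_0$, such that in the trivialization obtained from $\Phi$ via $P$ the connection reads $\partial_u+u^{-2}\tilde U$; equivalently $u^2\partial_uP=P\tilde U-UP$. Comparing coefficients of $u^m$ (with $P_0\coloneqq\id$) gives, for each $m\ge1$,
\[\tilde U_m-[U_0,P_m]=R_m,\]
where $R_m$ is an explicit expression, polynomial in $U_0,\dots,U_m$, $P_1,\dots,P_{m-1}$ and $\tilde U_1,\dots,\tilde U_{m-1}$, containing a term $(m-1)P_{m-1}$ when $m\ge2$. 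Since $U_0$ is block-diagonal and $P_m$ off-diagonal, taking block-diagonal and off-diagonal parts yields $\tilde U_m=R_m^{\mathrm d}$ and $P_m=-\ad_{U_0}^{-1}(R_m^{\mathrm o})$, which recursively determines $P$ and $\tilde U$ as formal power series over $T_n(\rho_0)$; existence and uniqueness of this formal solution is \cref{proposition:split-u-direction}. Two features are essential for the estimate to follow: the only inversion occurring is by the fixed operator $\ad_{U_0}$, and no integer denominators appear---the factor $m-1$ enters multiplicatively and $|m-1|\le1$. (This is where the pole structure in \cref{def:F-bundle}, which makes the relevant vector field $u^2\partial_u$ rather than $u\partial_u$, is used.)

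The main work is bounding the coefficients. Since $U(t,u)$ is analytic over $\Sp T_n(\rho_0)\times\bbD_u$, there are $r_0>0$ and $A_0\ge1$ with $|U_k|_{\rho_0}\le A_0r_0^{-k}$ for all $k$. I would then prove, by a majorant argument in the spirit of \cref{lemma:generalized-flatness-NA}, that $|P_m|_{\rho_0}\le C^m$ and $|\tilde U_m|_{\rho_0}\le C^m$ for all $m$ and a suitable constant $C$. The input is the recursion above together with the bound $|\ad_{U_0}^{-1}|\le C_0$ and the ultrametric inequality; the one delicate point is the quadratic term $P_a\tilde U_{m-a}$ in $R_m$, which prevents a naive one-line induction and requires genuine majorant-series bookkeeping, but the absence of integer denominators keeps the ultrametric estimates clean. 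Granting the bound, $P(t,u)=\sum_mP_m(t)u^m$ converges for $|u|<C^{-1}$ over $\Sp T_n(\rho_0)$, hence is a section over $\Sp T_n(\rho_0)\times\bbD_u$; since $P|_{u=0}=\id$, the series $P^{-1}=\sum_{k\ge0}(\id-P)^k$ converges $u$-adically with coefficients satisfying analogous bounds, so $P$ is an automorphism of $\cH$ over $\Sp T_n(\rho_0)\times\bbD_u$, and $\tilde U=u^2P^{-1}\partial_uP+P^{-1}UP$ converges and is block-diagonal.

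Finally I conclude as in the formal case. Set $U\coloneqq\Sp T_n(\rho_0)$ and let $\cH_i\subset\cH|_U$ be the subbundle which, in the trivialization obtained from $\Phi$ via $P$, is the constant extension of $H_i$. Then $\cH|_U=\bigoplus_{i\in I}\cH_i$; since in this trivialization $\nabla_{\partial_u}=\partial_u+u^{-2}\tilde U$ with $\tilde U$ block-diagonal, $u^2\nabla_{\partial_u}(\cH_i)\subset\cH_i$; and since at $u=0$ this trivialization agrees with $\Phi|_{u=0}$ (as $P|_{u=0}=\id$), we get $\cH_i|_{u=0}=\cH_{i,0}|_U$, as required. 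The principal obstacle is the norm estimate for the coefficients of $P$: it needs a careful analysis of the recursion and a majorant-type argument; the rest is a non-archimedean reprise of \cref{proposition:split-u-direction}.
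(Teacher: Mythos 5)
Your overall strategy is the same as the paper's (run the formal recursion of \cref{proposition:split-u-direction}, then bound the coefficients to get non-archimedean convergence), and your reformulation of the gauge as a single series $P=\id+\sum_{m\ge1}P_m(t)u^m$ with off-diagonal $P_m$, inverting $\ad_{U_0(t)}$ over $T_n(\rho_0)$ via a von Neumann series instead of the paper's ordered product of elementary transformations $\id+u^mt^vT_{m,v}$ inverted against $\ad_{U_0(0)}$, is a legitimate and arguably cleaner normalization. But the proposal has a genuine gap exactly where the proposition has its content: the convergence estimate $\lvert P_m\rvert_{\rho_0}\le C^m$, $\lvert\widetilde U_m\rvert_{\rho_0}\le C^m$ is announced (``I would then prove, by a majorant argument\dots Granting the bound\dots'') but never carried out. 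Everything before that point is a restatement of the formal case, and everything after it is routine; the non-archimedean statement \emph{is} this estimate.

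Moreover, the estimate is not a formality that can be waved through ``in the spirit of \cref{lemma:generalized-flatness-NA}''. With your recursion $\widetilde U_m-[U_0,P_m]=R_m$, where $R_m=U_m-\sum_{0<a<m}\bigl(P_a\widetilde U_{m-a}-U_{m-a}P_a\bigr)+(m-1)P_{m-1}$, the naive induction $\lvert P_m\rvert\le C^m$ fails: the mixed terms $U_{m-a}P_a$ and the prefactor $C_0$ from $\ad_{U_0}^{-1}$ produce bounds of the shape $C_0A_0\max(r_0^{-1},C)^m$, and if one instead tries $\lvert P_m\rvert\le A\,C^m$ the quadratic term $P_a\widetilde U_{m-a}$ forces $A^2\le A$, so the constants must be normalized away before the induction starts. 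This is precisely what the paper's proof does: it introduces the norms $\lvert\cdot\rvert_{\delta,\varepsilon}$, subtracts the constant term ($\widetilde V=\widetilde U-U_0$), chooses $\delta,\varepsilon$ so that $\delta\lvert\phi\rvert\le1$ and $\lvert\phi\rvert\,\lvert\widetilde V_{1,0}\rvert_{\delta,\varepsilon}<1$, and then runs a double induction in $(m,v)$ in which all quantities stay below $1$ so that products cause no loss. You correctly identify the absence of integer denominators (valuation trivial on $\bbQ$) and flag the quadratic term as ``the delicate point'', but identifying the obstacle is not the same as overcoming it; as written, the proof of \cref{proposition:split-u-direction-NA} is incomplete, since the convergence over an admissible neighborhood is exactly what is being assumed under ``granting the bound''.
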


\begin{proof}
    We keep the setting and notations of \cref{proposition:split-u-direction}, in particular $H\coloneqq \cH_{b,0}$.
    Let $\leq$ denote the degree lexicographic order on $\bbN^n$.
    We denote by $\tau (v)$ the direct successor of $v\in\bbN^{n}$ for this order.
  The gauge transformation $P$ constructed in the formal case is an ordered product
  \[P = \prod_{m\geq 1} P_m , \quad P_m  = \prod_{v\in \bbN^{n}} P_{m,v} ,\]
  where $P_{m,v} = \id + u^m t^v T_{m,v}$ and $T_{m,v}\in \End (H)$.
  Let $\phi$ denote the inverse of the restriction of $[\cdot , U_0(0)]$ to $\bigoplus_{i\neq j}\Hom (H_j,H_i)$.
  The gauge transformations $P_{m,v}$ are constructed inductively, and characterized by the following relations:
  \begin{align}\label{eq:recursion-split-u-direction-1}
    T_{m,v} &= \phi ( \text{off-diagonal part of the term } u^mt^v \text{ in } \tU_{m,v} ) , \\
    \label{eq:recursion-split-u-direction-2}
    \tU_{m,\tau(v)} &= P_{m,v}^{-1} \tU_{m,v} P_{m,v} + u^2 P_{m,v}^{-1} \frac{\partial P_{m,v}}{\partial u}, \\
    \label{eq:recursion-split-u-direction-3}
    \tU_{m+1,0} &= P_m^{-1} \tU_{m,0} P_m + u^2 P_m^{-1} \frac{\partial P_m}{\partial u} ,
  \end{align}
  and $\tU_{1,0} = U(t,u)$ is the initial connection matrix.
    For an element $M(t,u) = \sum_{m,v} M_{m,v}u^m t^v\in\End (H)\dbb{t,u}$ and $\delta,\varepsilon >0$, we let 
  \[\vert M(t,u)\vert_{\delta ,\varepsilon}  \coloneqq \sup_{m\in\bbN,v\in\bbN^n} \vert M_{m,v}\vert \delta^m \varepsilon^{\vert v\vert} . \] 
  We denote by $\bbD (\delta ,\varepsilon )$ the polydisk $\lbrace \vert u\vert \leq \delta, \vert t\vert\leq \varepsilon\rbrace$.

    Since the gauge transformations restrict to $\id$ at $u=0$, all the matrices $\tU_{m,v} (t,u)$ have the same constant term.
    We denote this common value by $U_0$, and
    set $\tV_{m,v} (t,u) \coloneqq \tU_{m,v} (t,u) - U_0$.
    Fix $\delta\leq 1$ and $\varepsilon\leq 1$ such that $\delta \vert \phi\vert \leq 1$ and $\vert \phi\vert\vert \tV_{1,0}\vert_{\delta , \varepsilon} < 1$. 
    This is possible, since $\tV_{1,0} (0,0) = 0$.
  
  We prove by a double induction on $m$ and $v$ the inequalities
  \begin{align} \label{eq:induction-convergence-gauge-transform-split-u}
      \vert u^m t^v T_{m,v}\vert_{\delta,\varepsilon} \leq \vert \phi\vert\vert \tV_{m,v}\vert_{\delta,\varepsilon}\leq  \vert \phi\vert \vert \tV_{m,0}\vert_{\delta,\varepsilon} \leq  \vert \phi\vert\vert \tV_{1,0}\vert_{\delta,\varepsilon} < 1 .
  \end{align}
  We use the lexicographic order on the product $\bbN_{>0}\times\bbN^n$, i.e.\ $(m,v)< (m',v')$ if and only if $m<m'$ or $m=m'$ and $v<v'$.
  For $m=1$, $v=0$, the inequalities follow from \eqref{eq:recursion-split-u-direction-1} and the choice of $(\delta,\varepsilon )$.
  Now fix $(m,v)\in\bbN_{>0}\times\bbN^n$ with $(m,v)> (1,0)$, and assume all the inequalities proved for $(m',v')< (m,v)$.
  Equation \eqref{eq:recursion-split-u-direction-1} gives 
  \[\vert u^mt^v T_{m,v}\vert_{\delta,\varepsilon} \leq \vert\phi\vert \vert \tV_{m,v}\vert_{\delta,\varepsilon} .\]
  We now bound $\vert \tV_{m,v}\vert_{\delta,\varepsilon}$.
  If $v>0$, then we can write $v=\tau (w)$ for some $w\geq 0$. 
  The difference between $\tV_{m,\tau (w)}$ and $\tV_{m,w}$ is given by \eqref{eq: difference-in-U}:
  \begin{align*}
      \tV_{m,\tau(w)} - \tV_{m,w} &= \tU_{m,\tau(w)} - \tU_{m,w} \\
      &= \sum_{k\geq 0} (-1)^{k+1} (u^m t^w)^{k+1} T_{m,w}^k [T_{m,w} , \tV_{m,w}] \\
      &+ \sum_{k\geq 0} (-1)^{k+1} (u^m t^w)^{k+1} T_{m,w}^k [T_{m,w} , U_0] \\
      &+ \sum_{k\geq 0} (-1)^k u(u^mt^w)^{k+1} T_{m,w}^{k+1} .
  \end{align*}
  Let us bound each term on the right hand side. 
  Since $\vert u^mt^wT_{m,w}\vert_{\delta,\varepsilon}  <1$, we have for all $k\geq 0$
  \[\vert (u^mt^w)^{k+1}T_{m,w}^k [T_{m,w} ,\tV_{m,w}]\vert_{\delta,\varepsilon} \leq \vert u^mt^wT_{m,w}\vert_{\delta,\varepsilon}^{k+1} \vert \tV_{m,w}\vert_{\delta,\varepsilon} < \vert \tV_{m,w}\vert_{\delta,\varepsilon} . \]
  By the definition of $\phi$ and \eqref{eq:recursion-split-u-direction-1}, we have
  \begin{align}\label{eq:estimate-commutator-U_0}
      \vert u^mt^w[T_{m,w} , U_0] \vert_{\delta,\varepsilon} = \vert u^mt^w \phi^{-1} (T_{m,w})\vert_{\delta,\varepsilon}\leq \vert \tV_{m,w}\vert_{\delta,\varepsilon} .
  \end{align}
  We can then bound the second term for all $k\geq 0$
  \[\vert (u^m t^w)^{k+1} T_{m,w}^k [T_{m,w} , U_0]\vert_{\delta,\varepsilon} \leq \vert u^mt^wT_{m,w}\vert_{\delta,\varepsilon}^k \vert [T_{m,w} , U_0]\vert_{\delta,\varepsilon} < \vert \tV_{m,w}\vert_{\delta,\varepsilon} .\]
  For the third term, using the induction hypothesis and $\delta\vert\phi\vert\leq 1$, we obtain for all $k\geq 0$
  \[\vert u^{m(k+1)+1}t^{w(k+1)} T_{m,w}^{k+1} \vert_{\delta,\varepsilon}\leq \delta (\vert \phi\vert \vert \tV_{m,w}\vert_{\delta,\varepsilon})^{k+1}\leq \delta\vert \phi\vert \vert \tV_{m,w}\vert_{\delta,\varepsilon} \leq \vert \tV_{m,w}\vert_{\delta,\varepsilon} ,\]
  where we used $\vert\phi\vert\vert\tV_{m,w}\vert_{\delta,\varepsilon}\leq 1$ in the second inequality.
  Using those bounds, we obtain the inequalities
  \begin{align*}
      \vert \tV_{m,\tau (w)}\vert_{\delta,\varepsilon} \leq \max (\vert \tV_{m,w} \vert_{\delta,\varepsilon}, \vert \tV_{m,\tau(w)} - \tV_{m,w}\vert_{\delta,\varepsilon} )
      \leq \vert \tV_{m,w} \vert_{\delta,\varepsilon} \leq \vert \tV_{m,0}\vert_{\delta,\varepsilon},
  \end{align*}
  proving the inductive step when $v >0$.
  If $v=0$, then necessarily $m> 1$ and we can write $m=m'+1$. 
  We compare $\tV_{m'+1,0}$ to $\tV_{m',0}$.
  To do so, write $P_{m'} = \id + u^{m'} R_{m'}(t,u)$.
  Similarly to the previous case, using \eqref{eq: difference-in-U} we obtain
  \begin{align*}
      \tV_{m'+1,0} - \tV_{m',0} &= \tU_{m'+1,0} - \tU_{m',0} \\
      &= \sum_{k\geq 0} (-1)^{k+1} u^{m'(k+1)} R_{m'}^k [R_{m'} , \tV_{m,w}] \\
      &+ \sum_{k\geq 0} (-1)^{k+1} u^{m'(k+1)} R_{m'}^k [R_{m'}, U_0] \\
      &+ \sum_{k\geq 0} (-1)^k u^{m'(k+1)+1} R_{m'}^{k+1} ,
  \end{align*}
  and we will use the induction hypothesis to bound each term. 
  Since $\vert u^{m'}t^v T_{m',v}\vert_{\delta,\varepsilon} < 1$ for all $v\in\bbN^n$, we have $\vert u^{m'}R_{m'}\vert_{\delta,\varepsilon} < 1$.
  In particular, similarly to the case $v>0$, the first term is bounded by $\vert \tV_{m',0}\vert_{\delta,\varepsilon}$.
  To handle the other terms, we use the explicit formula
  \[u^{m'}R_{m'} = \sum_{\substack{k\geq 1 \\ w\in\bbN^n}\\} u^{km'}t^w\sum_{\substack{w_1+\cdots + w_k = w \\ w_1 > \cdots > w_k}}  T_{m',w_1}\cdots T_{m',w_k}  . \]
  Using this formula, we obtain
  \begin{align*}
      \vert [u^{m'}R_{m'} , U_0]\vert_{\delta,\varepsilon} &\leq \max_{\substack{k\geq 1, w\in\bbN^n \\ w_1+\cdots + w_k=w}} \vert u^{km'}t^w [T_{m',w_1} \cdots T_{m',w_k} , U_0] \vert_{\delta,\varepsilon} \\
      &\leq \max_{\substack{k\geq 1, w\in\bbN^n \\ w_1+\cdots + w_k=w}}\max_{1\leq i\leq k}\Bigg( \prod_{\substack{1\leq j\leq k \\j\neq i}} \vert u^{m'}t^{w_j} T_{m',w_j} \vert_{\delta,\varepsilon} \times \vert u^{m'}t^{w_i} [T_{m',w_i} , U_0] \vert_{\delta,\varepsilon} \Bigg) \\
      &\leq \max_{\substack{k\geq 1, w\in\bbN^n \\ w_1+\cdots + w_k=w}}\max_{1\leq i\leq k}\Bigg( \prod_{\substack{1\leq j\leq k \\j\neq i}} \vert\phi\vert\vert \vert \tV_{m',w_j}\vert_{\delta,\varepsilon} \times \vert \tV_{m',w_i}\vert_{\delta,\varepsilon} \Bigg) \\
      &\leq \max_{\substack{k\geq 1, w\in\bbN^n \\ w_1+\cdots + w_k=w}} \vert \tV_{m',w_i}\vert_{\delta,\varepsilon}  \leq \vert \tV_{m',0}\vert_{\delta,\varepsilon} .
  \end{align*}
  For the second inequality, we used the formula for the commutator of a product.
    The third inequality follows from the induction hypothesis at step $(m',w_j)$, and the inequality \eqref{eq:estimate-commutator-U_0} applied to $T_{m',w_i}$.
  The fourth and fifth inequalities follow from the induction hypothesis.
  Then, similarly to the case $v>0$, we obtain that the second term is bounded by $\vert \tV_{m',0}\vert_{\delta,\varepsilon}$.
  We now consider the third term.
  For $k\geq 1$ and $w_1 ,\cdots ,w_k\in\bbN^n$, since $\vert \phi\vert \vert \tV_{m',0}\vert_{\delta,\varepsilon}\leq 1$ by the induction hypothesis, we have 
  \begin{align*}
      \vert u^{km'} t^{w_1+\cdots + w_k} T_{m',w_1}\cdots T_{m',w_k}\vert_{\delta,\varepsilon} \leq (\vert \phi\vert \vert \tV_{m',0}\vert_{\delta,\varepsilon} )^k
      \leq \vert \phi\vert \vert \tV_{m',0}\vert_{\delta,\varepsilon}.
  \end{align*}
  In particular, we have the better bound $\vert u^{m'}R_{m'}\vert_{\delta,\varepsilon}\leq \vert \phi\vert \vert \tV_{m',0}\vert_{\delta,\varepsilon}$.
  Since $\vert \phi\vert \vert \tV_{m',0}\vert_{\delta,\varepsilon}\leq 1$, we obtain the bound on the third term for all $k\geq 0$
  \begin{align*}
      \vert  u^{m'(k+1)+1} R_{m'}^{k+1}\vert_{\delta,\varepsilon} \leq \delta (\vert \phi\vert \vert \tV_{m',0}\vert_{\delta,\varepsilon})^{k+1}
      \leq \delta \vert \phi\vert \vert \tV_{m',0}\vert_{\delta,\varepsilon} \leq \vert \tV_{m',0}\vert_{\delta,\varepsilon} .
  \end{align*}
  Similarly to the case $v>0$, we deduce
  \begin{align*}
      \vert \tV_{m'+1,0}\vert_{\delta,\varepsilon} \leq \max (\vert \tV_{m'+1,0} \vert_{\delta,\varepsilon}, \vert \tV_{m'+1,0} - \tV_{m',0}\vert_{\delta,\varepsilon} )
      \leq \vert \tV_{m',0} \vert_{\delta,\varepsilon} \leq \vert\tV_{1,0}\vert_{\delta,\varepsilon} ,
  \end{align*}
  concluding the induction.

  Now, \eqref{eq:induction-convergence-gauge-transform-split-u} implies that the product defining $P$ is convergent on the polydisk $\bbD (\delta,\varepsilon )$, that $P^{-1}$ is also convergent on $\bbD (\delta,\varepsilon )$, and that $\vert P\vert_{\delta,\varepsilon} = \vert P^{-1}\vert_{\delta,\varepsilon}  =1$.
  In particular, the decomposition constructed in the formal case extends to an admissible open neighborhood of $(b,0)$, completing the proof.
\end{proof}

\begin{theorem}[Non-archimedean spectral decomposition theorem] \label{thm:NA-K-decomposition}
    Let $B$ be a $\bbk$-analytic space, $b\in B$ a smooth $\bbk$-rational point, 
    and $(\cH ,\nabla )$ an F-bundle over $B$ maximal at $b$.
    Write $K_b = \nabla_{u^2\partial_u}\vert_{b,0}$.
    Assume that we have a decomposition $\cH_{b,0} \simeq \bigoplus_{i\in I} H_i$ stable under $K_b$, and that for any $i\neq j\in I$, the spectra of $K_b\vert_{H_i}$ and $K_b\vert_{H_j}$ are disjoint.
    Then there exists an admissible open neighborhood $U$ of $b$ such that the restriction $(\cH\vert_U ,\nabla\vert_U )/U$ decomposes into a product of maximal F-bundles $(\cH_i ,\nabla_i ) /U_i$ extending the decomposition of $\cH_{b,0}$.
\end{theorem}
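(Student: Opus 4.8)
The plan is to assemble the non-archimedean ingredients established above exactly as in the proof of the formal spectral decomposition \cref{thm:eigenvalue_decomposition}. First, by \cref{lemma:NA-max-F-bundle-F-manifold}, I would shrink $B$ to an admissible open neighborhood $U_0 \ni b$ carrying a section of cyclic vectors $h$, so that $\eta \coloneqq u\nabla|_{u=0}(h)\colon TU_0 \xrightarrow{\sim} \cH|_{u=0}$ is an isomorphism and $(U_0,\star)$ is an F-manifold with unit. Transporting $\cH_{b,0} = \bigoplus_{i\in I} H_i$ through $\eta_b$ gives a decomposition $T_bB = \bigoplus_{i\in I} E_i$; since, after extending the base field, each $H_i$ is a sum of generalized eigenspaces of $K_b$ and the image of $\mu_b$ commutes with $K_b$ by flatness of $\nabla$, the product $\star|_b$ respects this decomposition, so $T_bB = \bigoplus_{i\in I} E_i$ is a splitting as $\bbk$-algebras.

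Next I would apply the non-archimedean F-manifold decomposition \cref{thm:NA-decomposition-F-manifold} to $(U_0,\star)$ and this splitting, obtaining an admissible open $U_1 \subset U_0$ with an F-manifold isomorphism $U_1 \simeq \prod_{i\in I} U_{1,i}$ extending the splitting of $T_bB$, hence a decomposition $TU_1 = \bigoplus_{i\in I}\cE_i$ as sheaves of $\cO_{U_1}$-algebras. Applying $\eta$ yields a decomposition $\cH|_{u=0} = \bigoplus_{i\in I}\cH_{i,0}$ over $U_1$ extending the one at $b$; it is stable under $K$ because $K$ acts as multiplication by the Euler vector field, which is a section of $TU_1$ and hence respects $\bigoplus_i\cE_i$. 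After choosing coordinates and rescaling, shrink $U_1$ to a closed polydisk around $b$ and apply the non-archimedean $u$-direction splitting \cref{proposition:split-u-direction-NA}: on a further admissible open $U$ the decomposition of $\cH|_{u=0}$ extends to $\cH|_U = \bigoplus_{i\in I}\cH_i$ with $u^2\nabla_{\partial_u}(\cH_i)\subset\cH_i$. In the resulting trivialization the same flatness computation as in \cref{proposition:split-t-directions}, together with \cref{lemma:commutator-splits-implies-splits} applied to the relevant modules (block-diagonality being a statement about matrix entries, unaffected by passing to completions), shows by induction on the $u$-order that the connection matrices in the $t$-directions are block diagonal, so $u\nabla_\xi(\cH_i)\subset\cH_i$ for every analytic vector field $\xi$ on $U$. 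Thus $(\cH|_U,\nabla|_U)\simeq\bigoplus_{i\in I}(\cH_i,\nabla_i)$.

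Finally, for each $i$ and each $\xi$ not in the directions of $U_{1,i}$, the operator $(\nabla_i)_{u\xi}|_{u=0}$ corresponds under $\eta$ to $\xi\star$ restricted to the subalgebra $\cE_i$, which vanishes; so by the non-archimedean analog of \cref{lemma:pullback-F-bundle}, $(\cH_i,\nabla_i)$ is the pullback along the projection $\prod_{j}U_{1,j}\to U_{1,i}$ of an F-bundle $(\cH_i,\nabla_i)$ over $U_i\coloneqq U_{1,i}$, and $(\cH|_U,\nabla|_U)\simeq\bigoplus_{i\in I}\pr_i^{\ast}(\cH_i,\nabla_i)$ is a product of F-bundles. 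Maximality of each $(\cH_i,\nabla_i)/U_i$ follows as in the formal case: the restriction $h_i$ of $h$ is a section of cyclic vectors, since the stalk of $\xi\mapsto(\nabla_i)_{u\xi}|_{u=0}(h_i)$ at $b_i$ factors as $T_{b_i}U_i\xrightarrow{\sim}E_i\xrightarrow{\eta_b|_{E_i}}H_i$.

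The genuine analytic difficulty has already been isolated in \cref{proposition:split-u-direction-NA}, and behind it in \cref{thm:NA-Frobenius-theorem} and \cref{thm:NA-decomposition-F-manifold}: bounding the norms of the recursively defined coefficients of the splitting gauge transformation to guarantee convergence on an admissible open neighborhood of $b$. Given those results the present theorem is a bookkeeping argument; the only points requiring a little care are the chain of admissible shrinkings $B\supset U_0\supset U_1\supset U$ (and rescaling to the unit polydisk before invoking \cref{proposition:split-u-direction-NA}) and the observation that the $t$-direction block-diagonality needs no new convergence estimate, being governed by the same algebraic recursion as in \cref{proposition:split-t-directions}.
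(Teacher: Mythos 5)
Your proposal is correct and follows essentially the same route as the paper: cyclic vectors via \cref{lemma:NA-max-F-bundle-F-manifold}, base decomposition via \cref{thm:NA-decomposition-F-manifold}, the $u$-direction splitting via \cref{proposition:split-u-direction-NA}, and then the formal-case arguments (\cref{proposition:split-t-directions}, \cref{lemma:pullback-F-bundle}, maximality via the stalk of $\eta_i$ at $b_i$), which the paper invokes with ``as in the formal case'' and you spell out slightly more explicitly. No gaps.
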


\begin{proof}
    By \cref{lemma:neighborhood-k-rational-point}, we can find an admissible neighborhood $U$ of $b$ isomorphic to an admissible open neighborhood of $0$ in a $\bbk$-analytic affine space.
    Hence, we may assume that $B = \Sp T_n$ and $b=0$.
    By \cref{lemma:NA-max-F-bundle-F-manifold}, up to shrinking $B$ we can find a section of cyclic vectors $h\colon B\rightarrow \cH\vert_{u=0}$, providing an isomorphism
    \[\eta\coloneqq (u\nabla)\vert_{u=0} (h)\colon TB\longrightarrow \cH\vert_{u=0},\]
    and an F-manifold structure $\star$ on $B$.
    The splitting of $\cH_{b,0}$ induces a splitting of $T_bB$ as a $\bbk$-algebra.
    By \cref{thm:NA-decomposition-F-manifold}, there exists an admissible neighborhood $U$ of $b$ such that $(U,\star\vert_U )$ is isomorphic to a product of F-manifolds $\prod_{i\in I} (U_i,\star_i)$, and the induced decomposition of $TU$ extends the decomposition of $T_bB$.
    
    We keep denoting by $(\cH ,\nabla )$ the restriction of the F-bundle to $U$.
    The decomposition of $TU$ induces a decomposition $\cH\vert_{u=0} \simeq \bigoplus_{i\in I} \cH_{i,0}$ satisfying the assumptions of \cref{proposition:split-u-direction-NA}.
    As in the formal case, this implies that there exists F-bundles $(\cH_i,\nabla_i)/U_i$ such that 
    \[(\cH,\nabla ) \simeq \bigoplus_{i\in I}\pr_i^{\ast} (\cH_i ,\nabla_i ),\]
    where $\pr_i\colon U\simeq \prod_{j\in I} U_j\rightarrow U_i$ is the projection.

    Let $b_i$ denote the image of $b$ under the projection $U\rightarrow U_i$, let $j_i\colon U_i\hookrightarrow U$ denote the canonical closed immersion and $h_i \coloneqq j_i^{\ast}h$.
    As in the formal case, the stalk at $b_i$ of the map $\eta_i\coloneqq (u\nabla_i)\vert_{u=0}(h_i)\colon TU_i\rightarrow \cH_i\vert_{u=0}$ is an isomorphism. 
    Hence $(\cH_i,\nabla_i) /U_i$ is maximal at $b_i$.
    Up to shrinking $U_i$, this implies that $(\cH_i ,\nabla_i ) /U_i$ is maximal, completing the proof.
\end{proof}

\section{Framing of F-bundles} \label{sec:framing-F-bundle}

In this section, we prove the extension of framing theorems (\cref{theorem:extension-of-framing-connection-version,theorem:extension-faming-NA-F-bundle}).
In \cref{subsec:comparison_of_framed_F-bundles}, we apply the extension of framing to obtain a uniqueness result for isomorphisms between maximal F-bundles admitting a framing (\cref{lemma:comparison-framed-F-bundles}).
In \cref{subsec:F-bundle-over-a-point}, we provide a partial classification of framed F-bundles over a point, up to gauge equivalence, under some assumptions on the coefficients of the connection (\cref{theorem:gauge-eq-F-bundle-point}).
When the $K$-operator of the F-bundle has simple eigenvalues, we obtain a full classification in \cref{corollary:classification-framed-F-bundle-point}.
We will apply those results to the A-model F-bundles in \cref{sec:app-projective-bundle}.

\subsection{Extension of framing for logarithmic formal F-bundles}

\subsubsection{Main result} \label{subsubsec:extension-framing-log-F-bundles}

Here we state the theorem of extension of framing, and fix the notations for the proof.

\begin{definition} \label{definition:strong-framing}
    Let $(\cH ,\nabla ) /(B,D)$ be a logarithmic F-bundle and $b\in B$ a rational point.
    We say that a framing $\nabla_b^{\fr}$ for the restricted F-bundle $(\cH ,\nabla)\vert_{b}$ is \emph{strong} with respect to $D$ if for any function $q$ vanishing on $D$, the endomorphism $\nabla_{uq \partial_q}\vert_{b\times\Spf\bbk\dbb{u}}$ is independent of $u$ in a $\nabla_b^{\fr}$-flat trivialization of $\cH\vert_{b\times\Spf\bbk\dbb{u}}$.
\end{definition}

\begin{theorem}[Extension of framing]\label{theorem:extension-of-framing-connection-version}
    Let $(\cH,\nabla )/ (B,D)$ be a logarithmic F-bundle, where $B$ is a formal neighborhood of a rational point $b$ in a smooth $\bbk$-variety.
    A framing $\nabla_b^{\fr}$ for the restricted F-bundle $(\cH,\nabla)\vert_b$ extends to a framing for $(\cH,\nabla )$ if and only if $\nabla_b^{\fr}$ is strong with respect to $D$.
    In this case, the extension is uniquely and explicitly determined from $\nabla_b^{\fr}$ and $(\cH ,\nabla )$.
    \end{theorem}

We refer to \cref{example:counter-example-framing} for a counter-example to the existence part of \cref{theorem:extension-of-framing-connection-version} without assuming the framing is strong with respect to $D$.

Write $B = \Spf\bbk\dbb{q_1,\dots, q_s,t_1,\dots, t_n}$, where $\prod_{1\leq i\leq s} q_i = 0$ is a local equation for $D$ at $b$.
Let $m$ be the rank of $\cH$ and $H\coloneqq \cH_{b,0}$ the fiber of $\cH$.
We start with any trivialization $\iso\colon \cH\simeq H\times B\times \Spf\bbk\dbb{u}$ extending a $\nabla_b^{\fr}$-flat trivialization of $\cH\vert_{b\times\Spf\bbk\dbb{u}}$.
Let $\Omega$ denote the connection form of $\nabla$ in the trivialization $\iso$.
Fix a basis of $H$, and write
\begin{equation} \label{eq:notation-connection-matrix}
    \Omega = \sum_{1\leq i \leq s} u^{-1}q_i^{-1} Q^i(q,t,u)dq_i + \sum_{1\leq j\leq n} u^{-1} T^j(q,t,u)  dt_j +  u^{-2} U(q,t,u) d u ,
\end{equation}
where $U,Q^i,T^j\in \Mat (m\times m, \bbk \dbb{q_i,t_j,u})$.
The framing assumption at $b$ allows us to assume that $U(0,0,u)$ is linear in $u$.
The assumption that the endomorphism $\nabla_{uq_i\partial_{q_i}}\vert_{q=t=0}$ is $\nabla_b^{\fr}$-flat means that $Q^i (0,0,u)$ is independent of $u$.

We want to modify the trivialization $\iso$ by an automorphism of $H\times B\times\Spf\bbk\dbb{u}$, to produce a new trivialization extending $\iso\vert_{b\times\Spf\bbk\dbb{u}}$ and in which $\nabla$ is framed.
Equivalently, we seek a gauge transformation $P(q,t,u)\in \GL (m, \bbk\dbb{q_i,t_j,u})$ and matrices $K(q,t),G(q,t),\tQ^i  (q,t),\tT^j (q,t)$ in $\Mat (m\times m ,\bbk\dbb{q_i,t_j})$ such that
\begin{align}
	P^{-1} \partial_u P + u^{-2} P^{-1} UP &= u^{-2} K + u^{-1} G , \label{eq:framing-u-direction}\\
    P^{-1} \partial_{q_i} P + u^{-1}q_i^{-1}P^{-1} Q^i P &= u^{-1} q_i^{-1} \tQ^i ,\label{eq:framing-q-direction} \\
	P^{-1} \partial_{t_j} P + u^{-1} P^{-1} T^j P &= u^{-1} \tT^j , \label{eq:framing-t-direction}
\end{align}
and satisfying $P(0,0,u) = \id$.
By identifying the polar part at $u=0$, we get an expression for the matrices $K,G,\tQ^i,\tT^j$. 
In particular, setting $P_0\coloneqq P(q,t,0)$, we have the following expressions
\begin{equation} \label{eq:extension-framing-log-fixing-residues}
	\tQ^i = P_0^{-1} Q_{-1}^i P_0 \quad\mathrm{and}\quad \tT^j = P_0^{-1} T_{-1}^j  P_0 ,
\end{equation}
with $Q_{-1}^i = \nabla_{uq_i\partial_{q_i}}\vert_{u=0}$ and $T_{-1}^j = \nabla_{u\partial_{t_j}}\vert_{u=0}$.
We will construct $P$ in \cref{subsubsec:inductive-framing} order by order in each variable, starting with the logarithmic directions.

\subsubsection{Two matrix lemmas}
We now state two matrix lemmas that we will use for the proof of \cref{theorem:extension-of-framing-connection-version}.

\begin{lemma} \label{lemma:matrix-t-direction}
    Let $R$ be a ring. 
    \begin{enumerate}[wide]
        \item Let $T\in \Mat (m\times m, R\dbb{t})$.
        Let $(X_k(t))_{k\in\bbN}$ be a sequence of matrices in $\Mat (m\times m, R\dbb{t} )$ satisfying
    \[\partial_t X_k = -[T , X_{k+1} ] .\]
    Then $(X_k (t))_{k\in\bbN}$ is uniquely determined by $(X_k (0))_{k\in\bbN}$. 
    In particular, if $X_k (0) = 0$ for all $k\geq 0$, then $X_k (t) = 0$ for all $k\geq 0$.
        \item Let $n\in\bbN$, and $T_1,\dots, T_n\in \Mat (m\times m ,R\dbb{t_1,\dots, t_n} )$.
        Let $(X_k(t) )_{k\in\bbN}$ be a sequence of matrices in $\Mat (m\times m, R\dbb{t_1,\dots , t_n} )$ satisfying for all $1\leq i\leq n$
    \[\partial_{t_i} X_k = - [T_i, X_{k+1} ].\]
    Then $(X_k (t))_{k\in \bbN}$ is uniquely determined by $(X_k(0))_{k\in\bbN}$.
    In particular, if $X_k(0) = 0$ for all $k\geq 0$, then $X_k (t) = 0$ for all $k\geq 0$.
    \end{enumerate}
\end{lemma}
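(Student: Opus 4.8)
The plan is to prove part (1) first and then bootstrap to part (2). For part (1), the key observation is that the differential relation $\partial_t X_k = -[T, X_{k+1}]$ lets me read off the $t$-adic coefficients of each $X_k$ recursively, once I know the constant terms $X_k(0)$ for all $k$. Concretely, write $X_k(t) = \sum_{d\ge 0} X_{k,d}\, t^d$ and $T(t) = \sum_{d\ge 0} T_d\, t^d$. Comparing the coefficient of $t^{d}$ on both sides of $\partial_t X_k = -[T, X_{k+1}]$ gives
\[
  (d+1)\, X_{k,d+1} = -\sum_{d_1+d_2 = d} [T_{d_1}, X_{k+1,d_2}],
\]
so $X_{k,d+1}$ is determined by the degree-$\le d$ coefficients of $X_{k+1}$ and of $T$. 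I would argue by induction on $d$: the claim is that all coefficients $X_{k,d}$ with $k$ arbitrary and $d$ fixed are determined by the data $\{X_\ell(0)\}_{\ell\in\bbN}$. For $d=0$ this is the hypothesis. Assuming it for degree $d$, the displayed recursion expresses every $X_{k,d+1}$ in terms of $\{X_{k+1,d'}\}_{d'\le d}$, which are determined by the inductive hypothesis (note the shift $k\mapsto k+1$ stays within the given family). Since $\bbk$ has characteristic $0$, dividing by $d+1$ is legitimate, so $X_{k,d+1}$ is uniquely determined. This proves uniqueness; the ``in particular'' statement follows by taking the zero sequence, which visibly satisfies the relations with zero initial data, so by uniqueness it is the only such sequence.

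For part (2), I would run the same argument with a multi-index degree. Write $X_k = \sum_{\alpha\in\bbN^n} X_{k,\alpha}\, t^\alpha$ and similarly expand each $T_i$. For a multi-index $\alpha$ with some $\alpha_i > 0$, pick such an index $i$ (say the first nonzero one, to make the choice canonical) and write $\alpha = \tau_i(\alpha')$ in the notation of \cref{notation:tuple-integers}. Comparing the coefficient of $t^{\alpha'}$ in $\partial_{t_i} X_k = -[T_i, X_{k+1}]$ gives
\[
  \alpha_i\, X_{k,\alpha} = -\sum_{\beta+\gamma = \alpha'} [\,(T_i)_\beta,\, X_{k+1,\gamma}\,],
\]
which expresses $X_{k,\alpha}$ in terms of coefficients $X_{k+1,\gamma}$ with $|\gamma| < |\alpha|$. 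Inducting on $|\alpha|$ exactly as before, with base case $|\alpha|=0$ given by the hypothesis on $\{X_\ell(0)\}$, shows every $X_{k,\alpha}$ is uniquely determined; again the zero sequence gives the ``in particular'' statement. One subtlety worth a sentence: in part (2) the relations for different $i$ are not a priori consistent, so strictly speaking I am only claiming that \emph{if} a solution sequence exists then it is unique — which is exactly what the statement asserts — and the consistency in the applications will come from flatness of $\nabla$.

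The only mildly delicate point — and the one I would flag as the ``main obstacle'' — is bookkeeping the shift $k\mapsto k+1$ correctly: the recursion for the degree-$(d+1)$ part of $X_k$ involves the degree-$\le d$ part of $X_{k+1}$, not of $X_k$, so the induction must be organized on the $t$-degree with $k$ ranging over \emph{all} of $\bbN$ simultaneously at each stage, rather than trying to induct on $k$. Once that is set up, everything is a routine comparison of coefficients, using only that $\bbk\supseteq\bbQ$.
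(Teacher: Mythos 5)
Your proof is correct, and for part (1) it is essentially the paper's own argument: the paper compares Taylor coefficients via $(d+1)!\,X_{d+1,k} = -\sum_{s=0}^{d}\binom{d}{s}\bigl[\partial_t^{d-s}T,\partial_t^{s}X_{k+1}\bigr]\big|_{t=0}$ and inducts on the $t$-degree with $k$ ranging over all of $\bbN$ at each stage, which is the same recursion and the same organization of the induction that you flag as the key bookkeeping point (and your remark that one only needs to invert positive integers matches the paper's implicit use of $R$ being a $\bbQ$-algebra, as it is in all applications).

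For part (2) you take a slightly different route: you run a direct induction on the total multi-degree $|\alpha|$, choosing for each $\alpha$ a canonical nonzero index $i$ and reading off $X_{k,\alpha}$ from the $t_i$-equation, whereas the paper instead bootstraps from part (1), applying the one-variable statement over the ring $R\dbb{t_1,\dots,t_{i-1}}$ to show that $(X_k\vert_{t_{i+1}=\cdots=t_n=0})_k$ is determined by $(X_k\vert_{t_i=\cdots=t_n=0})_k$ for each $i$ in turn. Both arguments are elementary and prove exactly the asserted uniqueness; your version avoids restating the one-variable lemma over auxiliary coefficient rings, while the paper's version avoids multi-index bookkeeping. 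Your observation that only uniqueness (not existence/consistency of the system for different $i$) is being claimed is also correct and consistent with how the lemma is used, where consistency comes from flatness.
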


\begin{proof}
    For (1), we write $X_k(t) = \sum_{\ell\in\bbN} X_{\ell ,k} t^{\ell}$.
    For $d\geq 0$, we have
    \[(d+1)!X_{d+1 , k} = \left.\frac{\partial^{d+1}X_k}{\partial^{d+1} t} \right\vert_{t=0} = - \left.\frac{\partial^{d}}{\partial^{d} t} [T,X_{k+1} ] \right\vert_{t=0} =- \sum_{s=0}^d \binom{d}{s} \left.\left[\frac{\partial^{d-s} T}{\partial^{d-s} t}  , \frac{\partial^{s}X_{k+1}}{\partial^{s} t}  \right] \right\vert_{t=0}.\]
    This provides a recursive relation for $\lbrace X_{d+1,k} \rbrace_{k\in \bbN}$ in terms of $\lbrace X_{r,k} ,\; r\leq d\rbrace_{k\in\bbN}$.
    Thus, $(X_k)_{k\geq 0}$ is uniquely determined by $(X_k (0))_{k\geq 0}$.

    For (2), we apply inductively on $1\leq i\leq n$ the single variable case with the ring $R\dbb{t_1,\dots ,t_{i-1} }$.
    In this way, we prove that for $1\leq i\leq n$, the sequence $ (X_k\vert_{t_{i+1} = \cdots = t_n = 0} )_{k\in\bbN}$ is uniquely determined by the sequence $ (X_k\vert_{t_i = \cdots = t_n = 0} )_{k\in\bbN}$.
    Thus $(X_k)_{k\in\bbN}$ is uniquely determined by the initial condition $(X_k\vert_{t_1=  \cdots = t_n = 0} )_{k\in\bbN}$.

    For both (1) and (2), choosing $X_k (t) = 0$ for all $k\geq 0$ provides a sequence that satisfies the assumptions of the lemma, with the initial condition $X_k(0)= 0$.
    It follows from the uniqueness that this is the only solution to the equations such that $X_k(0) = 0$ for all $k\geq 0$.
\end{proof}

\begin{lemma} \label{lemma:matrix-q-direction}
    Let $R$ be a ring.
    For $1\leq i\leq s$, let $Q_i\in\Mat (m\times m, R\dbb{q_1,\dots ,q_s} )$ such that $\phi_i\coloneqq \ad (Q_i)\vert_{q = 0} $ is nilpotent.
    Let $(X_k(q))_{k\in\bbN}$ be a sequence of matrices in $\Mat (m\times m ,R\dbb{q_1,\dots , q_s} )$ satisfying for all $1\leq i\leq s$
    \[q_i\partial_{q_i} X_k =  [Q_i, X_{k+1} ] .\]
    Then, for any initial condition  $(X_k (0))_{k\in\bbN}$, there exists at most one solution  $(X_k (q))_{k\in\bbN}$.
    In particular, if $X_k(0) = 0$ for all $k\geq 0$, then $X_k(q) = 0$ for all $k\geq 0$.
\end{lemma}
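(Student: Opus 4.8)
The plan is to reduce immediately to the ``in particular'' clause. The equations $q_i\partial_{q_i}X_k = [Q_i, X_{k+1}]$ are linear in the sequence $(X_k)$, so if $(X_k)$ and $(X'_k)$ are two solutions with $X_k(0) = X'_k(0)$ for all $k$, then $(X_k - X'_k)$ is again a solution and has vanishing initial conditions; hence it suffices to show that a solution with $X_k(0) = 0$ for all $k \geq 0$ vanishes identically.

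So assume $X_k(0) = 0$ for all $k$, and expand in power series $X_k = \sum_{\ell\in\bbN^s} X_{\ell,k}q^\ell$ and $Q_i = \sum_{\ell} Q_{i,\ell}q^\ell$, writing $\phi_i = \ad(Q_{i,0})$, which is the nilpotent operator of the hypothesis. Extracting the coefficient of $q^\ell$ from $q_i\partial_{q_i}X_k = [Q_i,X_{k+1}]$ yields
\[ \ell_i\, X_{\ell,k} = \phi_i(X_{\ell,k+1}) + \sum_{\substack{\ell'+\ell''=\ell\\\ell'\neq 0}} [Q_{i,\ell'}, X_{\ell'',k+1}], \]
where every term of the sum involves some $X_{\ell'',k+1}$ with $|\ell''| < |\ell|$. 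I would then induct on $d = |\ell|$: the case $d = 0$ is the hypothesis, and for $|\ell| = d \geq 1$ the inductive hypothesis kills the sum, leaving $\ell_i X_{\ell,k} = \phi_i(X_{\ell,k+1})$ for all $i$ and all $k \geq 0$.

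The crux of the matter---and the one place where nilpotency is indispensable---is that, unlike the operator $\partial_{t_i}$ of \cref{lemma:matrix-t-direction}, the operator $q_i\partial_{q_i}$ does not lower the order in $q$: it merely rescales the $q^\ell$-coefficient by the integer $\ell_i$, so the relation above does not close up as an ordinary order-by-order recursion. Instead I would pick an index $i_0$ with $\ell_{i_0}\geq 1$ (possible since $\ell\neq 0$) and iterate $\ell_{i_0}X_{\ell,k} = \phi_{i_0}(X_{\ell,k+1})$, which holds uniformly in $k$, to obtain $\ell_{i_0}^{N} X_{\ell,k} = \phi_{i_0}^{N}(X_{\ell,k+N})$ for every $N$; nilpotency of $\phi_{i_0}$ makes the right-hand side vanish once $N$ is large, and dividing by the integer $\ell_{i_0}^{N}$ gives $X_{\ell,k} = 0$. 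This closes the induction on $|\ell|$ and shows $X_k \equiv 0$ for all $k$. The remaining points are routine: the multi-index bookkeeping in the coefficient extraction, and the observation that the relation $\ell_{i_0}X_{\ell,k'} = \phi_{i_0}(X_{\ell,k'+1})$ is available for every $k'$, so that the iteration pushing the index to infinity is legitimate.
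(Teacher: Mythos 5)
Your proof is correct, and its engine is the same as the paper's: induct on the total $q$-degree $d=|\ell|$, and at fixed degree exploit the fact that the relation couples $X_{\ell,k}$ to $X_{\ell,k+1}$ through $\phi_{i_0}$, so that iterating in $k$ and invoking nilpotency kills (or determines) the degree-$d$ coefficients. The genuine difference is organizational: the paper treats arbitrary initial conditions directly, and therefore has to iterate the operator $D_i=q_i\partial_{q_i}$ at the level of the equation, expanding $D_i^{n+1}X_k$ into nested commutators $[D_i^{a_1}Q_i,[\cdots,[D_i^{a_u}Q_i,X_{k+u}]\cdots]]$ with combinatorial coefficients before extracting the $q^\ell$-coefficient and identifying the leading term $\phi_i^{n+1}(X_{\ell,k+n+1})$; your reduction to vanishing initial data via linearity makes the lower-degree terms vanish outright, so the first-order coefficient relation already reads $\ell_{i_0}X_{\ell,k}=\phi_{i_0}(X_{\ell,k+1})$ and the iteration $\ell_{i_0}^N X_{\ell,k}=\phi_{i_0}^N(X_{\ell,k+N})$ is immediate, with no bookkeeping of the nested-commutator expansion. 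This buys a noticeably shorter argument at no loss of generality for the uniqueness statement. One small shared caveat: both your final division by $\ell_{i_0}^N$ and the paper's step ``$\ell_i^{n+1}X_{\ell,k}=$ known terms determines $X_{\ell,k}$'' require positive integers to be non-zero-divisors in $R$ (e.g.\ $R$ a $\bbk$-algebra, as in all the paper's applications), so the hypothesis ``ring'' should be read accordingly; this is not a defect of your argument relative to the paper's.
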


\begin{proof}
    We use \cref{notation:tuple-integers}.
    In particular, given tuples of integers $\ell = (\ell_i )_{1\leq i\leq n}$ and $r= (r_i)_{1\leq i\leq n}$, the length of $\ell$ is $\vert\ell\vert = \ell_1+ \cdots + \ell_n$, and we write $r\preceq \ell$ if $r_i\leq \ell_i$ for all $1\leq i\leq n$.
    We denote the linear differential operator $q_i\partial_{q_i}$ by $D_i$, so the equations are $D_i X_k = [Q_i,X_{k+1}]$.

	First, a direct induction shows that for all $n\in\bbN$ we can express $D_i^{n+1} X_k$ as a linear combination of terms of the form
	\begin{equation} \label{eq:terms-recursive-relation-q-direction}
		\left[ D_i^{a_1} Q_i ,\left[ \cdots ,\left[ D_i^{a_u} Q_i , X_{k+u}\right]\cdots \right]\right],
	\end{equation}
	with $1\leq u\leq n+1$ and $(a_v)_{1\leq v\leq u}\in\bbN^u$ satisfying $a_1+\cdots + a_u + u =n+1$.
	If we denote the coefficient of such a term by $\alpha_n (a_1,\dots , a_u )$, it is elementary to see that the sequence $(\alpha_n)_{n\in\bbN}$ is fully determined by the initial condition $\alpha_0 (0) =1$ and the recursion relation
	\[\alpha_{n+1} (a_1,\dots ,a_u) = \sum_{a_v \neq 0} \alpha_n (a_1,\dots ,a_v-1,\dots, a_u) + \delta_{a_u ,0} \alpha_n (a_1,\dots, a_{u-1}) .\]
    
    Write $X_k (q) = \sum_{r\in\bbN^s} X_{r,k} q_1^{r_1}\cdots q_s^{r_s}$.
    We will show that for $d\geq 1$, the terms $\lbrace X_{\ell,k} ,\; \vert \ell\vert = d\rbrace_{k\in\bbN}$ are determined by $\lbrace X_{r,k} ,\; \vert r\vert < d\rbrace_{k\in\bbN}$.
    It will follow directly that $( X_k (q))_{k\in\bbN}$ is uniquely determined by the initial term $(X_k (0))_{k\in\bbN}$.
    Fix $\ell\in\bbN^s$ with $\vert\ell\vert=d$ and $k\in\bbN$.
    We express $X_{\ell,k}$ in terms of $\lbrace X_{r,k+s} , \; \vert r\vert < d , \; s\geq 1\rbrace$.
    Fix $i$ such that $\ell_i \neq 0$, and let $n\in\bbN$.
	We note that the coefficient of $q^{\ell}$ in $D_i^{n+1} X_k$ is $\ell_i^{s+1} X_{\ell ,k}$.
	On the other hand, by the previous paragraph $D_i^{n+1} X_k$ is a linear combination of terms of the form (\ref{eq:terms-recursive-relation-q-direction}).
    The coefficient of $q^{\ell}$ in (\ref{eq:terms-recursive-relation-q-direction}) is expressed in terms of derivatives of $Q_i$ and coefficients $X_{r,k+u}$ with $r\preceq \ell$ and $u\geq 1$.
	If $X_{\ell , k+u}$ appears in a term,
    then only the constant term of the terms involving $Q_i$ contribute.
	If $a>0$, then $D_i^a Q_i$ has no constant term, so $X_{\ell , k+u}$ appears in the relation if and only if $a_1 = \cdots = a_u = 0$.
	Given the condition $a_1+ \cdots +a_u + u = n+1$, this implies $u = n+1$ and we conclude that
	\[\ell_i^{n+1} X_{\ell ,k} = \phi_i^{n+1} (X_{\ell , k+n+1}) + \lbrace \mathrm{terms\ involving\ derivatives\ of\ } Q_i \mathrm{\ and } \ X_{r,k+u}\ \mathrm{ with\ } \vert r\vert < d \rbrace. \]
    Since $\phi_i$ is nilpotent, for $n$ large enough the right hand side does not depend on $\lbrace X_{\ell,k}\rbrace_{k\in\bbN}$, and we obtain a recursive relation determining uniquely $X_{\ell ,k}$ as a function of terms already known.
	This completes the proof.
\end{proof}

\subsubsection{Proof of \cref{theorem:extension-of-framing-connection-version}} \label{subsubsec:inductive-framing}

We formulate a condition under which we are able to solve the system of PDEs \eqref{eq:framing-u-direction}-\eqref{eq:framing-t-direction} recursively.

\begin{definition}[Nilpotency condition]\label{definition:nilpotency-condition}
    Let $(\cH, \nabla)/(B,D)$ be a logarithmic F-bundle, where $B$ is a formal neighborhood of a rational point $b$ in a smooth $\bbk$-variety.
    We say that $(\cH ,\nabla )/(B,D)$ satisfies \emph{the nilpotency condition at $b$} if for all vector $v\in T_b D$, the adjoint $\ad \mu_{b} (v)$ is nilpotent (see \eqref{eq:residue-map} for $\mu_b$).
\end{definition}

\begin{lemma}\label{lemma:strong-framing-implies-nilpotency-condition}
    Let $(\cH, \nabla)/(B,D)$ be a logarithmic F-bundle, where $B$ is a formal neighborhood of a rational point $b$ in a smooth $\bbk$-variety.
    If there exists a framing for $(\cH ,\nabla )$ at $b$ that is strong with respect to $D$, then $\mu_b (v)$ is nilpotent for every $v\in T_bB$.
    In particular, $(\cH ,\nabla )$ satisfies the nilpotency condition at $b$.
\end{lemma}

\begin{proof}
    Write $B = \Spf\bbk\dbb{q_1,\dots, q_s, t_1,\dots, t_n}$, with $q_i$ the logarithmic directions.
    Let $\nabla_b^{\fr}$ be a framing at $b$ that is strong with respect to $D$, fix a trivialization of $\cH$ extending a $\nabla_b^{\fr}$-flat trivialization.
    Fix $1\leq i\leq s$ and write 
    \[\nabla_{q_i\partial_{q_i}} = q_i \partial_{q_i} + u^{-1} Q (q,t,u) .\]
    By the assumption, $Q_0\coloneqq Q (0,0,u)$ is independent of $u$.
    Since $\nabla_b^{\fr}$ is a framing, we have $\nabla_{\partial_u}\vert_{b\times\Spf\bbk\dbb{u}} = \partial_u + u^{-2} K + u^{-1} G$, with $K$ and $G$ constant endomorphisms of $\cH_{b,0}$.
    In this trivialization, the flatness equation $[\nabla_{\partial_u} ,\nabla_{q_i\partial_{q_i}}] =0$ restricted to $b\times\Spf\bbk\dbb{u}$ reads
    \[-Q_0 = u^{-1} [Q_0,K] + [Q_0,G] .\]
    In particular $[Q_0,G] = -Q_0$.
    It follows that $[Q_0 ,[Q_0,-G]] = [Q_0,Q_0] = 0$.
    Jacobson's lemma (\cite[Lemma 4, p.\ 44]{Jacobson_Lie-Algebras}) implies that $[Q_0,-G] = Q_0$ is nilpotent, proving the first part of the lemma.
    Since the adjoint of a nilpotent endomorphism is nilpotent, the second part follows.
\end{proof}

The next series of lemmas will enable us to prove \cref{theorem:extension-of-framing-connection-version} by framing the connection inductively in each direction.
Given a logarithmic F-bundle $(\cH ,\nabla )/(B,D)$ over $B = \Spf\bbk\dbb{q_1,\dots, q_s,t_1,\dots, t_n}$, a closed subscheme $B'\subset B$ and a subsheaf $\cF\subset T_B (-\log D)$, 
we will say that $(\cH ,\nabla )$ is \emph{framed in the directions of $\cF$ at $B'$} if there exists a trivialization of $\cH$ such that $\nabla_{\xi}\vert_{B'}$ takes the form \eqref{eq:framing} for any section $\xi$ of $\cF$, i.e.\ the restriction of the connection matrix in the direction $\xi$ to $B'$ has no positive powers of $u$.
If we formulate multiple conditions involving several subsheaves and closed subschemes, we mean that there exists a trivialization in which the connection form satisfies all the formulated conditions.

\begin{lemma} \label{lemma:existence-gauge-transformation-q-direction-induction}
    Let $(\cH ,\nabla )$ be a logarithmic F-bundle over $\Spf \bbk\dbb{q_1,\dots, q_s}$ (without $t$-variables) satisfying the nilpotency condition (\cref{definition:nilpotency-condition}), fix $1\leq i\leq s$.
    Assume it is framed in all $q$-directions at $\lbrace q_j = 0 , i\leq j\leq s\rbrace$.
    Then there exists a gauge transformation $P (q_1,\dots, q_s,u)$ such that $P\vert_{q_{i} =\cdots = q_s = 0} = \id$ and $P^{\ast}\nabla$ is framed in the $q_i$-direction at $\lbrace q_j = 0 , i+1\leq j\leq s\rbrace$. 
    In particular, $P^{\ast}\nabla$ is still framed in all $q$-directions at $\lbrace q_j = 0 , i\leq j\leq s\rbrace$.

    \end{lemma}

\begin{proof}
    We let $q \coloneqq \lbrace 1,\dots ,s\rbrace$, $q^{\leq i} \coloneqq \lbrace q_1,\dots, q_i\rbrace$, $q^{\geq i} \coloneqq \lbrace q_i ,\dots, q_s\rbrace$ and $q^{>i} \coloneqq \lbrace q_{i+1} ,\dots, q_s \rbrace$.
    Let $u^{-1}q_i^{-1} Q(q,u)$ denote the connection matrix in the $q_i$-direction in a trivialization of $\cH$ provided by the partial framing assumption.
    Write $Q (q,u) = \sum_{\ell,k\geq 0} Q_{\ell, k-1} q_i^{\ell}u^k$, by the framing assumption we have $Q\vert_{q^{\geq i} = 0} = Q_{0,-1}\vert_{q^{\geq i} = 0}$.
    
    We seek a gauge transformation $P(q,u)$ such that 
    \begin{align*}
        \partial_{q_i} P\vert_{q^{>i} = 0} &= u^{-1} q_i^{-1}\left( - Q P + PP_0^{-1} Q_{-1} P_0\right)\vert_{q^{>i}= 0} ,\\
        P\vert_{q^{\geq i}=0} &= \id ,
    \end{align*}
    where $P_0 \coloneqq P(q,0)$ and $Q_{-1} \coloneqq Q(q,0)$.
    We look for $P$ of the form $P(q,u) = \sum_{\ell,k\geq 0} P_{\ell ,k}  q_i^{\ell} u^k$, where $P_{\ell ,k}$ depends on $\lbrace q_1,\dots ,q_{i-1}\rbrace$,
    We construct the solution $P$ order by order in powers of $q_i$, by expressing $\lbrace P_{\ell +1 ,k}\rbrace_{k\in\bbN}$ in terms of $\lbrace P_{\ell' ,k} ,\; \ell ' \leq \ell \rbrace_{k\in\bbN}$ for $\ell\in\bbN$.
    
    The initial condition gives $P_{0,0} = \id$ and $P_{0,k} = 0$ for $k >0$.
    Let $\ell\in\bbN$ and $k\in\bbN$.
    We isolate a monomial $q_i^{\ell}u^k$ in the differential equation and obtain 
    \[(\ell +1)P_{\ell+1 ,k} = -\sum_{\substack{\ell_1 + \ell_2 = \ell +1 \\ k_1+k_2 = k+1}} Q_{\ell_1,k_1-1}\vert_{q^{>i} = 0} P_{\ell_2,k_2} + \sum_{\ell_1+\ell_2+\ell_3+\ell_4 = \ell+1} P_{\ell_1 , k+1} (P_0^{-1} )_{\ell_2} Q_{\ell_3,-1}\vert_{q^{>i}=0} P_{\ell_4,0} ,\]
    where $(P_0^{-1} )_{\ell_2}$ is the coefficient of $q_i^{\ell_2}$ in $P_0^{-1}$.
    Using the framing assumption at $q^{\geq i} = 0$ and the initial condition for $P$, we isolate terms involving $\lbrace P_{\ell+1 ,k'}\rbrace_{k'\in \bbN}$ and obtain the relation for all $k\geq 0$
    \begin{equation} \label{eq:relation-q-direction}
        P_{\ell+1,k } = \psi_{\ell , k} (P) - \frac{1}{\ell +1} [Q\vert_{q^{\geq i} = 0} , P_{\ell+1 ,k+1} ],
    \end{equation}
    where 
    \begin{multline*}
        \psi_{\ell,k} (P) \coloneqq \frac{1}{\ell +1} \Bigg( -\sum_{\substack{k_1+k_2=k+1\\ \ell_1+\ell_2 = \ell +1 \\ \ell_2 <\ell +1 }} Q_{\ell_1 ,k_1-1}\vert_{q^{>i} = 0} P_{\ell_2 , k_2} \\
        + \sum_{\substack{\ell_1+\ell_2+\ell_3+\ell_4 = \ell+1 \\ 0< \ell_1 <\ell+1}} P_{\ell_1 , k+1}(P_0^{-1} )_{\ell_2} Q_{\ell_3 , -1} \vert_{q^{>i} = 0} P_{\ell_4 , 0} \Bigg) .
    \end{multline*}
    Note that $\psi_{\ell ,k} (P)$ only depends on $\lbrace P_{\ell ',k'} ,\; \ell ' <\ell +1 , \, k'\leq k+1\rbrace$.

    Let $E\coloneqq \Mat (m\times m ,\bbk\dbb{q_1,\dots, q_{i-1} })^{\bbN}$. 
    Consider the linear maps $\tau\colon E\rightarrow E$ given by the shift $\lbrace M_k\rbrace_{k\in\bbN}\mapsto \lbrace M_{k+1}\rbrace_{k\in\bbN}$ and $\Phi\colon E\rightarrow E$ given by $\lbrace M_k\rbrace_{k\in\bbN} \mapsto \lbrace [Q\vert_{q^{\geq i} = 0} , M_k]\rbrace_{k\in\bbN}$.
    The relations \eqref{eq:relation-q-direction} give 
    \begin{equation}\label{eq:relation-q-direction-sequence}
        \left(\id_E + \frac{1}{\ell+1} \Phi\circ\tau \right) \lbrace P_{\ell+1 ,k}\rbrace_{k\in\bbN} = \lbrace \psi_{\ell ,k} (P)\rbrace_{k\in\bbN} .
    \end{equation}
    We prove that $\id_E + \frac{1}{\ell +1}\Phi\circ \tau$ is invertible.
    To do so, it is enough to prove that it is invertible at $q_1 = \cdots = q_{i-1} = 0$.
    The map $\Phi\vert_{q_1= \dots = q_{i-1} = 0}$ is nilpotent, since $\ad (Q\vert_{q =u=0} )$ is.
    The maps $\tau$ and $\Phi$ commute, so the composition $\Phi\circ\tau\colon E\rightarrow E$ is also nilpotent at $q_1 = \cdots = q_{i-1} = 0$.
    Hence $\id_E + \frac{1}{\ell +1}\Phi\circ \tau$ is invertible at $q_1 = \cdots = q_{i-1} = 0$.
    It follows that $\id_E + \frac{1}{\ell+1} \Phi\circ\tau$ is invertible, and composing \eqref{eq:relation-q-direction-sequence} with its inverse provides a recursive relation determining the coefficient of $q_i^{\ell+1}$ from lower order terms.
        Hence the differential equation admits a solution $P (q ,u)$ such that $P\vert_{q^{\geq i}=0} =\id$.
    The initial condition implies that the connection $P^{\ast} \nabla$ is still framed in all $q$-directions at $q^{\geq i}=0$. This completes the proof.
\end{proof}

\begin{lemma} \label{lemma:induction-framing-q-direction-without-t}
    Let $(\cH ,\nabla )$ be a logarithmic F-bundle over $\Spf\bbk\dbb{q_1,\dots, q_s}$ (with no $t$-variables) satisfying the nilpotency condition (\cref{definition:nilpotency-condition}), fix $1\leq i\leq s$.
    Assume it is framed in all $q$-directions at $\lbrace q_j = 0 , i\leq j\leq s\rbrace$, and framed in the $q_i$-direction at $\lbrace q_j = 0 , i+1\leq j\leq s\rbrace$.
    Then $(\cH ,\nabla )$ is framed in all the $q$-directions at $\lbrace q_j = 0 , i+1\leq j\leq s\rbrace$.
\end{lemma}

\begin{proof}
    Let $q^{\leq i} \coloneqq \lbrace q_1,\dots, q_i\rbrace$.   
    The partial framing assumption provides a trivialization of $\cH$.
    For $1\leq i'\leq s$, let $u^{-1}q_{i'}^{-1}Q^{i'} (q^{\leq i} ,u ) = q_{i'}^{-1} \sum_{k\geq 0} Q_{k-1}^{i'} (q^{\leq i} )u^{k-1}$ denote the restriction of the connection matrix in the $q_{i'}$-direction to $q_{i+1} = \cdots = q_s = 0$.
    The framing assumption means that $Q_k^{i'}\vert_{q_i=0} = 0$ and $Q_k^i = 0$ for all $k\geq 0$ and $1\leq i'\leq s$.
    
    Fix $1\leq i'\leq s$, with $i'\neq i$.
    For $k\geq 0$, the $u^k$ term of the flatness equation $[\nabla_{q_{i'}\partial_{q_{i'}}} , \nabla_{q_i\partial_{q_i}} ] =0$ provides the equation 
    \[q_i\partial_{q_i} Q_k^{i'} = -[ Q_{-1}^i  ,Q_{k+1}^{i'} ] .\]
    Since $\ad (Q_{-1}^s(0) )$ is nilpotent, we can apply \cref{lemma:matrix-q-direction} with $R = \bbk\dbb{q_1,\dots , q_{i-1}}$ and $X_k = Q_k^{i'}$.
    We deduce that $Q_k^{i'} = 0$ for all $k\geq 0$, proving that the connection is also framed in the $q_{i'}$-direction at $q_{i+1} = \cdots = q_s = 0$.
\end{proof}

\begin{lemma} \label{lemma:framing-initial-condition-t-directions}
    Let $(\cH ,\nabla )$ be a logarithmic F-bundle over $\Spf\bbk\dbb{q_1,\dots, q_s,t_1,\dots, t_n}$ framed in the $q$-directions at $t=0$.
    Then there exists a gauge transformation $P$ such that $P\vert_{t=0} = \id$ and $P^{\ast}\nabla$ is framed in all the $q$-directions and $t$-directions at $t=0$.
\end{lemma}

\begin{proof}
    We work in a trivialization of $\cH$ provided by the partial framing assumption.
    For $1\leq i\leq s$, let $u^{-1}q_i^{-1} Q^i (q,t,u)$ denote the connection matrix in the $q_i$-direction in this trivialization.
    For $1\leq j\leq n$, let $u^{-1} T^j (q,t,u)$ denote the connection matrix in the $t_j$-direction in this trivialization. 
    Let 
    \[P(q,t,u) \coloneqq \prod_{j=1}^n \left(\id-t_j \frac{T^j (q,0,u)- T^j (q,0,0)}{u} \right) .\]
    Note that $P(q,t,u)$ only has non-negative powers of $u$, because $T^j (q,0,u) - T^j (q,0,0)$ has no constant term in $u$.
    We have $P\vert_{t=0} = P^{-1}\vert_{t=0} = \id$, and we compute $\frac{\partial P}{\partial q_i}\big\vert_{t=0} = 0$ and $\frac{\partial P}{\partial t_j}\big\vert_{t=0} = - u^{-1} (T^j (q,0,u) - T^j (q,0,0))$.
    The connection matrix of $P^{\ast}\nabla$ in the $t_j$-direction at $t=0$ is 
    \[\bigg[ P^{-1}\frac{\partial P}{\partial t_j} + u^{-1} P^{-1}T^j P\bigg]\bigg\vert_{t=0} = u^{-1} (-T^j(q,0,u) + T^j (q,0,0) + T^j (q,0,u) )= u^{-1} T^j (q,0,0) , \]
    which is framed. 
    The connection matrix of $P^{\ast}\nabla$ in the $q_i$-direction at $t=0$ is 
    \[\bigg[ P^{-1}\frac{\partial P}{\partial q_i} + u^{-1}q_i^{-1} P^{-1}Q^i P\bigg]\bigg\vert_{t=0} = u^{-1}q_i^{-1} Q^i (q,0,u), \]
    which is also framed.
    The lemma is proved.
\end{proof}

\begin{lemma} \label{lemma:existence-gauge-transformation-t-direction-induction}
    Let $(\cH ,\nabla )$ be a logarithmic F-bundle over $\Spf\bbk\dbb{q_1,\dots, q_s,t_1,\dots, t_n}$ framed in the $q$-directions at $t=0$, fix $1\leq j\leq n$.
    Assume it is framed in all $t$-directions at $\lbrace t_i=0 , j\leq i\leq n\rbrace$.
    Then there exists a gauge transformation $P(q,t,u)$ such that $P\vert_{t_j = \cdots = t_n = 0} = \id$ and $P^{\ast} \nabla$ is framed in the $t_j$-direction at $\lbrace t_i=0 , j+1\leq i\leq n\rbrace$, framed in all the $q$-directions at $t=0$, and in all the $t$-directions at $\lbrace t_i=0 , j\leq i\leq n\rbrace$.
    \end{lemma}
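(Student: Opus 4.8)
The plan is to mimic the proof of \cref{lemma:existence-gauge-transformation-q-direction-induction}, constructing the gauge transformation order by order in $t_n$. The present situation is in fact simpler, because the $t_n$-direction connection matrix $u^{-1}T^n$ has no pole along $t_n=0$ (there is no analogue of the factor $q_s^{-1}$), so no nilpotency hypothesis and no inversion of an auxiliary operator on sequences of matrices will be needed.

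First I would fix a trivialization of $\cH$ extending the framed trivialization of $\cH\vert_{t_n=0}$ supplied by the hypotheses, and write the $t_n$-direction connection matrix as $u^{-1}T$ with $T=T^n(q,t,u)$ holomorphic. Identifying the polar part at $u=0$ as in \eqref{eq:extension-framing-log-fixing-residues}, the equation \eqref{eq:framing-t-direction} to be solved in the $t_n$-direction becomes
\[
  \partial_{t_n} P = -u^{-1} T P + u^{-1} P\,\tT, \qquad \tT = P_0^{-1}\,(T\vert_{u=0})\,P_0,
\]
where $P_0 \coloneqq P\vert_{u=0}$, to be solved together with the initial condition $P\vert_{t_n=0}=\id$.

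Next I would expand $P=\sum_{\ell,k\ge 0}P_{\ell,k}\,t_n^{\ell}u^{k}$ and $u^{-1}T=\sum_{\ell\ge 0,\,k\ge -1}T_{\ell,k}\,t_n^{\ell}u^{k}$, with coefficients in $\Mat(m\times m,\bbk\dbb{q_1,\dots,q_s,t_1,\dots,t_{n-1}})$; the initial condition forces $P_{0,0}=\id$ and $P_{0,k}=0$ for $k\ge 1$. Extracting the coefficient of $t_n^{\ell}u^{k}$ yields, for all $\ell,k\ge 0$,
\[
  (\ell+1)\,P_{\ell+1,k} = -\sum_{\substack{\ell_1+\ell_2=\ell \\ k_1+k_2=k}} T_{\ell_1,k_1}P_{\ell_2,k_2} + \sum_{\ell_1+\ell_2=\ell} P_{\ell_1,k+1}\,\tT_{\ell_2},
\]
where $\tT_{\ell_2}$, the $t_n^{\ell_2}$-coefficient of $P_0^{-1}(T\vert_{u=0})P_0$, depends only on the $P_{c,0}$ with $c\le\ell_2$. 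Since every $t_n$-index on the right-hand side is $\le\ell$, this is a genuine recursion in $\ell$, determining $\{P_{\ell+1,k}\}_{k\ge 0}$ uniquely from $\{P_{\ell',k'}:\ell'\le\ell\}$; hence $P$ exists and is unique. As $P\vert_{q=t=u=0}=P_{0,0}\vert_{q=t=0}=\id$, the matrices $P$ and $P_0$ are invertible, so $P$ is a legitimate gauge transformation and, by construction, $P^{\ast}\nabla$ is framed in the $t_n$-direction.

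For the remaining assertion, since $P\vert_{t_n=0}=\id$ identically in the other variables we have $P\vert_{t=0}=\id$, $\partial_{q_i}P\vert_{t=0}=0$ and $\partial_{t_j}P\vert_{t_n=0}=0$ for $1\le j\le n-1$; hence the $q_i$-direction connection matrices of $P^{\ast}\nabla$ restricted to $t=0$, and its $t_j$-direction matrices ($j\le n-1$) restricted to $t_n=0$, agree with those of $\nabla$, so the corresponding framings survive. I do not expect a serious obstacle here: the one point needing care is the index bookkeeping confirming that $P_{\ell+1,\cdot}$ does not feed back into the right-hand side of the recursion — which is exactly where the absence of a $t_n^{-1}$ pole enters — together with the observation that $\tT$ must be treated as already determined at stage $\ell$, being assembled from the lower-order coefficients $P_{c,0}$.
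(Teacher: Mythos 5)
Your proof is correct and follows essentially the same route as the paper's: extract the same recursion $(\ell+1)P_{\ell+1,k}=-\sum T_{\ell_1,k_1}P_{\ell_2,k_2}+\sum P_{\ell_1,k+1}\tT_{\ell_2}$ from \eqref{eq:framing-t-direction} with $\tT=P_0^{-1}(T\vert_{u=0})P_0$, observe that all $t_n$-indices on the right are at most $\ell$ so the coefficients are determined uniquely from $P\vert_{t_n=0}=\id$, and note that this initial condition preserves the framings in the $q$-directions at $t=0$ and the $t_j$-directions at $t_n=0$. Your remark that no nilpotency or inversion of a shift operator is needed (in contrast to \cref{lemma:existence-gauge-transformation-q-direction-induction}) matches the paper's treatment.
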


\begin{proof}
    Let $t^{\leq j}\coloneqq \lbrace t_1,\dots, t_j\rbrace$, $t^{\geq j}\coloneqq \lbrace t_j,\dots, t_n\rbrace$ and $t^{>j} \coloneqq \lbrace t_{j+1},\dots, t_n\rbrace$.
    Let $u^{-1} T (q,t,u)$ denote the connection matrix in the $t_j$-direction in a trivialization of $\cH$ provided by the partial framing assumption.
    Write $T (q,t,u) = \sum_{\ell,k\in\bbN} T_{\ell , k-1} t_j^{\ell} u^k$,
    by the framing assumption we have $T\vert_{t^{\geq j}=0} = T_{0,-1}\vert_{t^{\geq j}  =0}$.
    
    We seek a gauge transformation $P(q,t,u)$ such that
    \begin{align*}
        \partial_{t_j} P\vert_{t^{>j} = 0} &= u^{-1}\left( - T P + PP_0^{-1} T_{-1} P_0\right)\vert_{t^{>j} = 0}, \\
        P\vert_{t^{\geq j}=0} &= \id , 
    \end{align*}
    where $P_0 \coloneqq P(q,t,0)$ and $T_{-1}\coloneqq T(q,t,0)$.
    We look for $P$ of the form $P(q,t,u) = \sum_{\ell, k\geq 0} P_{\ell ,k} t_j^{\ell} u^k$, where $P_{\ell ,k}$ depends on the variables $\lbrace q_1,\dots  q_s,t_1,\dots , t_{j-1}\rbrace$.
    The differential equation provides a recursive relation for $\lbrace P_{\ell ,k}\rbrace_{k\in\bbN}$. 
    By isolating the coefficient of $t_j^{\ell}u^k$ we obtain
    \begin{equation}\label{eq:recursion-framing-t-direction}
        (\ell +1) P_{\ell+1 ,k} = -\sum_{\substack{\ell_1+\ell_2 = \ell \\ k_1+k_2 = k+1}} T_{\ell_1 , k_1-1}\vert_{t^{>j}=0} P_{\ell_2 ,k_2} + \sum_{\ell_1+\ell_2+\ell_3+\ell_4 = \ell} P_{\ell_1,k+1} (P_0^{-1})_{\ell_2} T_{\ell_3,-1}\vert_{t^{>j} =0} P_{\ell_4,0} ,
    \end{equation}
    where $(P_0^{-1} )_{\ell_2}$ denotes the coefficient of $t_j^{\ell_2}$ in $P_0^{-1}$.
    This determines $P$ from the initial data $\lbrace P_{0,k} \rbrace_{k\in\bbN}$, i.e.\ from $P\vert_{t^{\geq j} =0} = \id$.
    Hence the differential equation admits a solution $P(q,t,u)$ such that $P\vert_{t^{\geq j}} = \id$.
    By construction, $P^{\ast}\nabla$ is framed in the $t_j$-direction at $t^{>j}=0$.

    We now check that the other $t$-directions are still framed at $t^{\geq j} =0$, and that the $q$-directions are still framed at $t=0$.
    Since $P\vert_{t^{\geq j} =0} = \id$, the connection matrices at $t^{\geq j} =0$ are modified by the first derivatives of $\sum_{k\geq 0}P_{1,k}u^k$.
    From the recursion \eqref{eq:recursion-framing-t-direction}, the initial condition for $P$ and the framing assumption for $T$ we obtain that $P_{1,k} = - T_{0,k}\vert_{t^{>j}= 0} = 0$ for all $k\geq 0$.
    We conclude that $P^{\ast}\nabla$ remains framed in all the $t$-directions at $t^{\geq j}=0$ and in all the $q$-directions at $t=0$, concluding the proof.
\end{proof}

\begin{lemma} \label{lemma:induction-framing-t-direction}
    Let $(\cH ,\nabla )$ be a logarithmic F-bundle over $\Spf\bbk\dbb{q_1,\dots, q_s,t_1,\dots, t_n}$, fix $1\leq j\leq n$.
    Assume it is framed in all the $t$-directions at $\lbrace t_i=0 , j\leq i\leq n\rbrace $, and framed in the $t_j$-direction at $t_{j+1} = \cdots = t_n = 0$.
    Then $(\cH ,\nabla )$ is framed in all the $t$-directions at $\lbrace t_i=0 , j+1\leq i\leq n\rbrace$.
\end{lemma}

\begin{proof}
    Let $t^{\leq j} \coloneqq \lbrace t_1,\dots, t_j\rbrace$.   
    The partial framing assumption provides a trivialization of $\cH$.
    For $1\leq j'\leq n$, let $u^{-1} T^{j'}(q,t^{\leq j},u) = \sum_{k\geq 0} T_{k-1}^{j'} (q,t^{\leq j}) u^{k-1}$ denote the restriction of the connection matrix in the $t_{j'}$-direction to $t_{j+1} = \cdots = t_n = 0$.
    The framing assumption means that $T_k^{j'}\vert_{t_j=0} = 0$ and $T_k^j = 0$ for all $k\geq 0$ and $1\leq j'\leq n$.

    Fix $1\leq j'\leq n$, with $j'\neq j$.
    For $k\geq 0$, the $u^k$ term of the flatness equation $[\nabla_{\partial_{t_{j'}}} ,\nabla_{\partial_{t_j}} ] = 0$ provides the equation
    \[\partial_{t_j} T_k^{j'} = -[T_{-1}^j , T_{k+1}^{j'}] .\]
    We apply \cref{lemma:matrix-t-direction}(1) with $R = \bbk\dbb{q_1,\dots ,q_s ,t_1,\dots , t_{j-1}}$, $X_k = T_k^{j'}$ and the initial condition $T_k^{j'}\vert_{t_j= 0} =0$,
    and deduce that $T_k^{j'}(q,t^{\leq j})  =0$ for all $k\geq 0$.
    Thus, the connection is also framed in the $t_{j'}$-direction at $t_{j+1} = \cdots = t_n = 0$.
\end{proof}

\begin{lemma} \label{lemma:framed-t-and-q-no-t-implies-framed-t-and-q}
    Let $(\cH ,\nabla )$ be a logarithmic F-bundle over $\Spf \bbk\dbb{q_1,\dots, q_s,t_1,\dots, t_n}$.
    Assume it is framed in the $t$-directions and framed in the $q$-directions at $t=0$.
    Then $(\cH ,\nabla )$ is also framed in the $q$-directions. 
\end{lemma}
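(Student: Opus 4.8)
The plan is to fix the trivialization furnished by the hypothesis --- one in which $\nabla$ is framed in the $t$-directions and whose restriction to $\{t=0\}$ is framed in the $q$-directions --- and then to propagate the vanishing of the zeroth and positive $u$-coefficients of the $q$-connection matrices from $t=0$ to all $t$, using the flatness relations $[\nabla_{q_i\partial_{q_i}},\nabla_{\partial_{t_j}}]=0$. First I would record the connection in this trivialization as
\[
\nabla_{q_i\partial_{q_i}} = q_i\partial_{q_i} + u^{-1}Q^i(q,t,u),\qquad \nabla_{\partial_{t_j}} = \partial_{t_j} + u^{-1}T^j(q,t,u),
\]
where $Q^i,T^j\in\Mat(m\times m,\bbk\dbb{q,t,u})$ by the F-bundle axiom. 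Writing $u^{-1}Q^i=\sum_{k\ge-1}Q^i_k u^k$ with $Q^i_k\in\Mat(m\times m,\bbk\dbb{q,t})$, the framing in the $t$-directions means $u^{-1}T^j=T^j_{-1}u^{-1}$, while the framing in the $q$-directions at $t=0$ means $Q^i_k|_{t=0}=0$ for all $k\ge0$. The goal is to show $Q^i_k=0$ identically for all $k\ge 0$ and all $i$, which is exactly the assertion that $\nabla$ is framed in the $q_i$-direction.

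Next I would unwind the flatness equation. Since the vector fields $q_i\partial_{q_i}$ and $\partial_{t_j}$ commute, $[\nabla_{q_i\partial_{q_i}},\nabla_{\partial_{t_j}}]=0$ reads
\[
q_i\partial_{q_i}(u^{-1}T^j)-\partial_{t_j}(u^{-1}Q^i)+[u^{-1}Q^i,\,u^{-1}T^j]=0 .
\]
Substituting $u^{-1}T^j=T^j_{-1}u^{-1}$ and comparing coefficients of $u^k$, the coefficients with $k\ge 0$ give the recursion
\[
\partial_{t_j}Q^i_k=[Q^i_{k+1},T^j_{-1}]=-[T^j_{-1},Q^i_{k+1}]\qquad(k\ge0,\ 1\le j\le n),
\]
while the coefficients of $u^{-1}$ and $u^{-2}$ involve only $Q^i_{-1}$ and play no role.

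Finally, for each fixed $i$ I would invoke \cref{lemma:matrix-t-direction}(2). Set $R\coloneqq\bbk\dbb{q_1,\dots,q_s}$ and use the identification $\bbk\dbb{q,t}\cong R\dbb{t_1,\dots,t_n}$, so that $X_k\coloneqq Q^i_k$ ($k\in\bbN$) and $T_j\coloneqq T^j_{-1}$ are matrices over $R\dbb{t_1,\dots,t_n}$. The displayed recursion is precisely $\partial_{t_j}X_k=-[T_j,X_{k+1}]$ for all $1\le j\le n$, with initial condition $X_k|_{t=0}=0$; hence \cref{lemma:matrix-t-direction}(2) forces $X_k=0$ for all $k\ge0$, i.e.\ $u^{-1}Q^i=Q^i_{-1}u^{-1}$, so $\nabla$ is framed in the $q_i$-direction. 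As $i$ is arbitrary, this gives the claim. I do not anticipate a genuine obstacle: the entire argument is a matter of reading off the recursion $\partial_{t_j}Q^i_k=-[T^j_{-1},Q^i_{k+1}]$ from the curvature identity and matching it to \cref{lemma:matrix-t-direction}(2); the only point deserving an explicit remark is the ring identification $\bbk\dbb{q,t}\cong(\bbk\dbb{q})\dbb{t}$, which is what allows that lemma (stated over an arbitrary coefficient ring) to be applied with $R=\bbk\dbb{q_1,\dots,q_s}$ and the $t$-variables as the working variables.
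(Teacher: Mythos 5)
Your proposal is correct and follows essentially the same route as the paper: fix the given trivialization, extract the recursion $\partial_{t_j}Q^i_k=-[T^j_{-1},Q^i_{k+1}]$ ($k\ge 0$) from the flatness identity $[\nabla_{q_i\partial_{q_i}},\nabla_{\partial_{t_j}}]=0$, and apply \cref{lemma:matrix-t-direction}(2) with initial condition $Q^i_k|_{t=0}=0$ (the paper takes $R=\bbk\dbb{q_1,\dots,q_s,u}$ rather than $\bbk\dbb{q}$, which is immaterial since the $Q^i_k$ are $u$-independent). The only nitpick is your aside that the $u^{-1}$ coefficient involves only $Q^i_{-1}$ — it also involves $Q^i_0$ — but since that equation is never used, this does not affect the argument.
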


\begin{proof}
    In a trivialization provided by the framing assumption, 
    denote by $u^{-1}T_{-1}^j (q,t)$ the connection matrix in the $t_j$-direction ($1\leq j\leq n$) and by $u^{-1}q_i^{-1} Q^i (q,t,u)$ the connection matrix in the $q_i$-direction ($1\leq i\leq s$).
    Write $Q^i = \sum_{k\geq 0} Q_{k-1}^i (q,t) u^k$.
    The framing assumption means that $Q_k^i\vert_{t = 0} = 0$ for $1\leq i\leq s$ and $k\geq 0$.

    Fix $1\leq i\leq s$.
    For $k\geq 0$, the $u^k$ term of the flatness equation $[\nabla_{\partial_{t_j}}  ,\nabla_{q_i\partial_{q_i}} ] = 0$ is 
    \[\partial_{t_j} Q_k^i = -[T_{-1}^j , Q_{k+1}^i]  .\]
    We apply \cref{lemma:matrix-t-direction}(2) with $R = \bbk\dbb{q_1,\dots, q_s}$, $X_k = Q_k^i$ and the initial condition $Q_k^i\vert_{t=0} = 0$, and deduce that $Q_k^i (q,t) = 0$ for all $k\geq 0$.
    Thus, the connection is also framed in the $q_i$-direction.
\end{proof}

\begin{lemma} \label{lemma:t-and-q-directions-implie-u-direction}
    Let $(\cH ,\nabla )$ be a logarithmic F-bundle over $\Spf\bbk\dbb{q_1,\dots, q_s,t_1,\dots, t_n}$ satisfying the nilpotency condition (\cref{definition:nilpotency-condition}).
    Assume it is framed in the $q$-directions and $t$-directions, and framed in the $u$-direction at $q=t=0$.
    Then $(\cH ,\nabla )$ is also framed in the $u$-direction. 
\end{lemma}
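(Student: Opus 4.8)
The plan is to follow the same template as \cref{lemma:induction-framing-q-direction-without-t,lemma:induction-framing-t-direction,lemma:framed-t-and-q-no-t-implies-framed-t-and-q}: work in a trivialization in which $\nabla$ is framed in the $q$- and $t$-directions, extract recursive relations from the flatness equations involving $\partial_u$, and feed them into the matrix lemmas \cref{lemma:matrix-q-direction,lemma:matrix-t-direction}. Concretely, write the $u$-connection matrix as $u^{-2}U = \sum_{k\geq -2} U_k u^k$ with $U_k\in\Mat(m\times m,\bbk\dbb{q,t})$, so that being framed in the $u$-direction means precisely $U_k=0$ for all $k\geq 0$, while the hypothesis only supplies $U_k\vert_{q=t=0}=0$ for $k\geq 0$. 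First I would clear denominators in the curvature identities $[\nabla_{\partial_u},\nabla_{q_i\partial_{q_i}}]=0$ and $[\nabla_{\partial_u},\nabla_{\partial_{t_j}}]=0$ to obtain $\tQ^i + q_i\partial_{q_i}U = u^{-1}[U,\tQ^i]$ and $\tT^j + \partial_{t_j}U = u^{-1}[U,\tT^j]$, where $u^{-1}q_i^{-1}\tQ^i$ and $u^{-1}\tT^j$ are the (framed) $q_i$- and $t_j$-connection matrices. Comparing coefficients of $u^k$ then yields, for every $k\geq 0$, the recursions $q_i\partial_{q_i}U_k = [U_{k+1},\tQ^i]$ and $\partial_{t_j}U_k = [U_{k+1},\tT^j]$.

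The point is that neither matrix lemma applies directly over $\bbk\dbb{q,t}$ with the available initial data, so I would argue in two stages. In the first stage, restrict everything to $t=0$: the restricted F-bundle over $\Spf\bbk\dbb{q}$ inherits the nilpotency condition, the $q$-recursion becomes $q_i\partial_{q_i}(U_k\vert_{t=0}) = -[\tQ^i\vert_{t=0},\,U_{k+1}\vert_{t=0}]$, and since the nilpotency condition gives that $\ad(\tQ^i\vert_{q=t=0})$ is nilpotent, \cref{lemma:matrix-q-direction} applies with $R=\bbk$, the sequence $X_k = U_k\vert_{t=0}$, the matrices $Q_i = -\tQ^i\vert_{t=0}$, and initial condition $X_k\vert_{q=0}=U_k\vert_{q=t=0}=0$; hence $U_k\vert_{t=0}=0$ for all $k\geq 0$. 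In the second stage, now that $U_k\vert_{t=0}=0$ holds over all of $\Spf\bbk\dbb{q}$, apply \cref{lemma:matrix-t-direction}(2) with $R=\bbk\dbb{q_1,\dots,q_s}$, $T_j=\tT^j$, $X_k = U_k$ and initial condition $X_k\vert_{t=0}=0$ to the $t$-recursion, concluding $U_k=0$ for all $k\geq 0$, i.e.\ $(\cH,\nabla)$ is framed in the $u$-direction.

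The only genuine subtlety I anticipate is bookkeeping: getting the sign and the index shift right when passing from the curvature identities to the recursions $q_i\partial_{q_i}U_k=[U_{k+1},\tQ^i]$ and $\partial_{t_j}U_k=[U_{k+1},\tT^j]$ — in particular verifying that these already hold for $k=0$ (not merely $k\geq 1$), so that $U_0$ is killed as well — and matching the hypotheses of \cref{lemma:matrix-q-direction,lemma:matrix-t-direction} exactly, especially the fact that the nilpotency condition for $(\cH,\nabla)/(B,D)$ descends to the restriction $t=0$ and supplies the nilpotency of $\ad(\tQ^i\vert_{q=t=0})$ used in the first stage. Everything else is a formal manipulation entirely parallel to the earlier lemmas in \cref{subsubsec:inductive-framing}.
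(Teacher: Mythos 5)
Your proposal is correct and follows essentially the same route as the paper's proof: extract the recursions $q_i\partial_{q_i}U_k=-[Q^i,U_{k+1}]$ and $\partial_{t_j}U_k=-[T^j,U_{k+1}]$ from the flatness equations, first restrict to $t=0$ and apply \cref{lemma:matrix-q-direction} with $R=\bbk$ (the nilpotency condition at $b$ giving nilpotency of $\ad(Q^i(0,0))$), then apply \cref{lemma:matrix-t-direction}(2) with $R=\bbk\dbb{q_1,\dots,q_s}$ and initial condition $U_k\vert_{t=0}=0$. The only differences are cosmetic sign/bookkeeping choices, which you handle correctly.
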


\begin{proof}
    In a trivialization provided by the framing assumption,
    let $u^{-1} q_i^{-1} Q^i (q,t)$ (resp.\ $u^{-1}T^j (q,t)$) denote the connection matrix in the $q_i$-direction (resp.\ $t_j$-direction).
    Let $u^{-2} U(q,t,u)$ denote the connection matrix in the $u$-direction.
    Write $U(q,t,u) = \sum_{k\geq 0} U_{k-2} (q,t) u^{k}$.
    The framing assumption means that for $k\geq 0$, we have $U_k (0,0) = 0$.
    
    For $k\geq 0$, and $1\leq i\leq s$, the $u^k$ term of the flatness equation $[\nabla_{\partial_u} , \nabla_{q_i\partial_{q_i}} ] = 0$ provides the equation
    \[q_i\partial_{q_i} (U_k) = -[Q^i , U_{k+1} ] .\]
    We restrict this equation to $t=0$.
    Since $\ad (Q^i (0,0) )$ is nilpotent, we can apply \cref{lemma:matrix-q-direction} with $R =\bbk$ and $X_k = U_k (q,0)$ to deduce that $U_k (q,0) = 0$ for all $k\geq 0$.

    Next, for $k\geq 0$, the $u^k$ term of the flatness equation $[\nabla_{\partial_u} , \nabla_{\partial_{t_j}} ] = 0$ provides the equation
    \[\partial_{t_j} (U_k) = -[T^j , U_{k+1} ] .\]
    We apply \cref{lemma:matrix-t-direction}(2) with $R = \bbk \dbb{q_1,\dots ,q_s}$, $X_k = U_k (q,t)$ and the initial condition $U_k (q,0) = 0$, and deduce that $U_k (q,t) = 0$ for all $k\geq 0$.
    Thus, the connection is also framed in the $u$-direction.
\end{proof}

We can now finish the proof of \cref{theorem:extension-of-framing-connection-version}.

\begin{proof}[Proof of \cref{theorem:extension-of-framing-connection-version}]
Fix a trivialization $\cH\simeq H\times (B\times\Spf\bbk\dbb{u})$ extending the trivialization of $\cH\vert_{b\times\Spf\bbk\dbb{u}}$ induced by $\nabla_b^{\fr}$.
As explained after \cref{theorem:extension-of-framing-connection-version}, the content of the theorem reduces to proving existence and uniqueness of a solution $P(q,t,u)$ to the overdetermined nonlinear system of PDEs \eqref{eq:framing-u-direction}-\eqref{eq:framing-t-direction} with initial condition $P(0,0,u) = \id$.

We prove the existence part of the statement.
If there exists a framing $\nabla^{\fr}$ extending $\nabla_b^{\fr}$, then we see that $\nabla_b^{\fr}$ is strong with respect to $D$ by working in a $\nabla^{\fr}$-flat trivialization.
Conversely assume that $\nabla_b^{\fr}$ is strong with respect to $D$, in particular the nilpotency condition is satisfied by \cref{lemma:strong-framing-implies-nilpotency-condition}.
We first frame the restricted F-bundle $(\cH',\nabla')\coloneqq (\cH,\nabla )\vert_{t=0}$, defined over the base $B' \coloneqq \Spf\bbk\dbb{q_1,\dots, q_s}$.
Applying inductively \cref{lemma:existence-gauge-transformation-q-direction-induction,lemma:induction-framing-q-direction-without-t} on $i\in\lbrace 1,\dots, s\rbrace$, we obtain a gauge transformation $P(q,u)$ such that $P(0,u) = \id$ and $P^{\ast}\nabla'$ is framed in all the $q$-directions.
Note that to apply the lemmas for the base case $i=1$, we use that $\nabla_b^{\fr}$ is strong with respect to $D$.
Extending this gauge transformation constantly in the $t$-directions, we obtain a gauge transformation $P_1 (q,u)\in\Aut (\cH)$ with $P_1 (0,u) =\id$ such that $\nabla_1\coloneqq P_1^{\ast}\nabla$ is framed in all the $q$-directions at $t=0$.
By \cref{lemma:framing-initial-condition-t-directions}, we obtain a gauge transformation $P_2 (q,t,u)\in\Aut (\cH )$ with $P_2 (q,0,u) = \id$ such that $\nabla_2\coloneqq P_2^{\ast}\nabla_1$ is framed in all the $q$-directions and $t$-directions at $t=0$.
Applying inductively \cref{lemma:existence-gauge-transformation-t-direction-induction,lemma:induction-framing-t-direction} on $j\in\lbrace 1,\dots ,n\rbrace$, we obtain a gauge transformation $P_3 (q,t,u)\in\Aut (\cH)$ with $P(q,0,u) = \id$ such that $\nabla_3\coloneqq P_3^{\ast}\nabla_2$ is framed in all the $q$-directions at $t=0$, and in all the $t$-directions along $B$.
By \cref{lemma:framed-t-and-q-no-t-implies-framed-t-and-q}, the connection $\nabla_3$ is also framed in all the $q$-directions along $B$.
Since $\nabla_{3,\partial_u}\vert_{q=t=0} = \nabla_{\partial_u}\vert_{q=t=0}$, the connection $\nabla_3$ is framed in the $u$-directions at $q=t=0$.
We conclude by \cref{lemma:t-and-q-directions-implie-u-direction} that $\nabla_3$ is framed in the $u$-direction as well. 
Thus the gauge transformation $\tP \coloneqq P_3P_2P_1$ solves the system \eqref{eq:framing-u-direction}-\eqref{eq:framing-t-direction} with the initial condition $\tP (0,0,u) = \id$, concluding the proof of existence.

We now prove uniqueness.
Assume the system of PDEs is written in a trivialization in which the connection is framed. 
In particular, the nilpotency condition is satisfied by \cref{lemma:strong-framing-implies-nilpotency-condition}.
From the equations in the directions of $B$ we obtain recursive relations as in \eqref{eq:relation-q-direction-sequence} and \eqref{eq:recursion-framing-t-direction}.
Hence, any solution is uniquely determined by the condition $P(0,0,u) = \id$.
\end{proof}

\subsubsection{Framings on rank 1 F-bundles}\label{subsubsec:framing-rank-1}

F-bundles do not admit framings in general (see \cite[\S IV.5.b]{Sabbah_Isomonodromic_deformations} for a sufficient condition), even though we established the extension of framing in \cref{theorem:extension-of-framing-connection-version}.
Here we discuss the existence of framing on rank 1 F-bundles.

\begin{proposition}
    Let $B$ be a formal neighborhood of a rational point $b$ in a smooth $\bbk$-variety.
    Let $(\cH ,\nabla )/B$ be a (non-logarithmic) formal F-bundle of rank $1$.
    Then it admits a framing.
\end{proposition}

\begin{proof}
    We keep the notations of the proof of \cref{theorem:extension-of-framing-connection-version}.
    In the non-logarithmic case there are no $q$-variables, and in the rank $1$ case the matrices are elements of $\bbk\dbb{t,u}$, so they commute.
    Then $K = U_{-2}$, $G = U_{-1}$ and $\tT^i = T_{-1}^i$ for $1\leq i\leq n$.
    The system of PDEs (\ref{eq:framing-u-direction})-(\ref{eq:framing-t-direction}) is then
  \begin{align*}
    \partial_u P (t,u) + P (t,u) U_{\geq 0} (t,u) &= 0 , \\
    \partial_{t_i} P (t,u) + P(q,t,u) T_{\geq 0}^i (t,u) &= 0 ,
  \end{align*}
  where $U_{\geq 0} = \sum_{k\geq 0} U_k u^{k}$ and $T_{\geq 0}^i = \sum_{k\geq 0} T_k^i u^{k}$.
  We furthermore need $P(0,0)\neq 0$ in order for $P(t,u)$ to be invertible.

  It is readily checked, using flatness, that the ansatz 
  \begin{multline} \label{eq:framing-rank-1}
    P(t,u) = \\
    \exp\left( -\sum_{i=1}^n\int_0^{t_i} \left(T_{\geq 0}^i (t_1,\dots , t_{i-1} ,s_i ,0,\dots, 0 ,u) + T_{\geq 0}^i (0,u) \right)ds_i - \int_0^u U_{\geq 0} (0,v) d v \right)
  \end{multline}
  solves the system of PDEs, and is invertible since $P(0,0 ) =1$.
\end{proof}

In the following example, we discuss the case of rank $1$ logarithmic F-bundle, and provide a counter-example to the existence part of \cref{theorem:extension-of-framing-connection-version} without assuming the framing is strong with respect to $D$.

\begin{example} \label{example:counter-example-framing}

    Let $\cH$ be the trivial rank $1$ bundle over $\Spf\bbk\dbb{q,u}$.
    Let $\nabla = d + \Omega$ be the connection on $\cH$ with $\Omega = \alpha \frac{dq}{q}$, where $\alpha\in \bbk$.
    Then $(\cH,\nabla )$ is a F-bundle and $\nabla_0^{\fr} =d$ is a framing for $(\cH,\nabla )\vert_{q=0}$.
    It is strong with respect to $D$ if and only if $\alpha = 0$.
    The differential system to solve in order to extend the framing is
    \begin{align*}
        \frac{\partial P}{\partial u} &= 0 ,\\
        q\frac{\partial P}{\partial q} + \alpha P &=  0.
    \end{align*}
    If $\alpha\neq 0$, all solutions to this system are scalar multiples of $\alpha q^{-1}$. 
    In particular, they are not well-defined at $q=0$. \end{example}

\subsection{Extension of framing for non-archimedean F-bundles}

In this subsection, we establish the theorem of extension of framing for non-archimedean F-bundles, building on the results of the previous subsection.

\begin{theorem} \label{theorem:extension-faming-NA-F-bundle}
    Let $B$ be a smooth $\bbk$-analytic space, and $b\in B$ a $\bbk$-rational point.
    Let $(\cH ,\nabla )$ be a non-archimedean F-bundle over $B$. 
    Then every framing of $(\cH, \nabla)$ at $b$ extends uniquely and explicitly to a framing over an admissible open neighborhood $U$ of $b$ in $B$.
\end{theorem}

We need to show that the gauge transformation $P(t,u)$ constructed in the formal case is convergent on an admissible open neighborhood of $t=0$, $u=0$.
This gauge transformation is characterized by $P(0,u) = 0$ and the equations \eqref{eq:framing-t-direction} for $1\leq j\leq n$.
We use these equations to obtain estimates on the coefficients of $P(t,u)$.

\begin{lemma} \label{lemma:estimate-inverse-matrix}
    Let $(R,\vert\cdot\vert )$ be a Banach $\bbk$-algebra. 
	Let $Q = \id +\sum_{r\geq 1} Q_r t^r\in \Mat (m\times m, R)\dbb{t}$, and write $Q^{-1} = \id +\sum_{r\geq 1} (Q^{-1})_{r} t^r$.
	For $\ell \geq 1$ we have
	\[\vert (Q^{-1})_{\ell} \vert \leq \max_{\substack{\ell \geq k\geq 1 , \; r_i\geq 1\\r_1+\cdots + r_k = \ell}} \prod_{i=1}^k \vert Q_{r_i}\vert  .\]
\end{lemma}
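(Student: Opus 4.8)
The plan is to extract a recursive formula for the coefficients $(Q^{-1})_\ell$ from the identity $Q\cdot Q^{-1}=\id$ and then estimate term by term using the ultrametric inequality. Writing $Q=\id+\sum_{r\ge 1}Q_r t^r$ and $Q^{-1}=\id+\sum_{r\ge1}(Q^{-1})_r t^r$, comparison of the coefficient of $t^\ell$ (for $\ell\ge1$) in $QQ^{-1}=\id$ gives
\[
(Q^{-1})_\ell = -\sum_{j=1}^{\ell} Q_j\,(Q^{-1})_{\ell-j},
\]
with the convention $(Q^{-1})_0=\id$. This expresses $(Q^{-1})_\ell$ in terms of $(Q^{-1})_{\ell'}$ with $\ell'<\ell$, so it determines all coefficients inductively from $(Q^{-1})_0=\id$.

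Next I would prove the claimed bound by induction on $\ell\ge1$. For $\ell=1$ we have $(Q^{-1})_1=-Q_1$, so $\vert(Q^{-1})_1\vert\le\vert Q_1\vert$, matching the right-hand side (the only partition of $1$ is $k=1$, $r_1=1$). For the inductive step, from the recursion and the ultrametric inequality (the norm on $\Mat(m\times m,R)$ is submultiplicative and non-archimedean),
\[
\vert(Q^{-1})_\ell\vert \le \max_{1\le j\le \ell} \vert Q_j\vert\,\vert(Q^{-1})_{\ell-j}\vert.
\]
For the term $j=\ell$ this is just $\vert Q_\ell\vert$, which is the $k=1$ partition. For $j<\ell$, apply the induction hypothesis to $(Q^{-1})_{\ell-j}$: $\vert(Q^{-1})_{\ell-j}\vert\le\max\prod_{i=1}^{k'}\vert Q_{s_i}\vert$ over compositions $s_1+\cdots+s_{k'}=\ell-j$ with $s_i\ge1$; prepending $j$ gives a composition of $\ell$ into $k=k'+1$ parts, each $\ge1$, so $\vert Q_j\vert\,\vert(Q^{-1})_{\ell-j}\vert$ is bounded by $\max\prod_{i=1}^k\vert Q_{r_i}\vert$ over compositions $r_1+\cdots+r_k=\ell$. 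Taking the maximum over all $j$ yields the stated inequality.

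There is essentially no obstacle here — the only thing to be careful about is bookkeeping with the partition/composition indices (ordered tuples with parts $\ge1$, which is exactly what the displayed max ranges over) and invoking that $R$ being a Banach $\bbk$-algebra makes the matrix norm both submultiplicative and non-archimedean, so that the crude estimates above are valid. This is the lemma that will feed into the convergence estimates for the gauge transformation $P(t,u)$ in the proof of \cref{theorem:extension-faming-NA-F-bundle}.
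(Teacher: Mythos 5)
Your proof is correct. The paper takes a slightly different, more direct route: it expands $Q^{-1}$ as the geometric (Neumann) series $Q^{-1}=\sum_{k\geq 0}(-1)^k\bigl(\sum_{r\geq 1}Q_r t^r\bigr)^k$ and reads off the coefficient of $t^{\ell}$ as a signed sum over compositions $r_1+\cdots+r_k=\ell$ with $r_i\geq 1$ (so only $1\leq k\leq \ell$ contributes), after which the stated bound is immediate from the ultrametric inequality and submultiplicativity of the matrix norm. You instead extract the convolution recursion $(Q^{-1})_{\ell}=-\sum_{j=1}^{\ell}Q_j(Q^{-1})_{\ell-j}$ from $QQ^{-1}=\id$ and induct on $\ell$, prepending the part $j$ to a composition of $\ell-j$; this produces exactly the same composition-indexed bound. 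The difference is essentially one of packaging: the paper's closed-form expansion is a one-liner, while your induction is equivalent and, if anything, spells out the final non-archimedean estimate that the paper leaves implicit. A minor point you handled correctly: the $j=\ell$ term is $Q_{\ell}(Q^{-1})_0=Q_{\ell}$, so no control on $\vert\id\vert$ is needed there.
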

\begin{proof}
We have $Q^{-1} = \sum_{k\geq 0} (-1)^k \left(\sum_{r\geq 1} Q_r t^r\right)^k$.
Isolating the coefficient of $t^{\ell}$ ($\ell\geq 1$) we obtain
\[(Q^{-1})_{\ell} = \sum_{k\geq 0} \sum_{\substack{r_1+\cdots +r_k = \ell \\ r_i\geq 1}} \prod_{1\leq i\leq k} Q_{r_i} ,\]
and we see that only the range $1\leq k\leq \ell$ contributes.
This completes the proof.
\end{proof}

\begin{proposition} \label{prop:solution-t-direction-converge}
Let $(R,\vert\cdot\vert )$ be a Banach $\bbk$-algebra and let $T\in \Mat (m\times m, R)\langle t,u\rangle$.
Let $P(t,u) \in \Mat (m\times m, R)\dbb{t,u}$ be the unique solution of the system
\begin{align*}
	\partial_{t} P &= u^{-1} \left( -T P + PP_0^{-1} T_{-1} P_0\right), \\
	P(0,u) &= \id,
\end{align*}
where $P_0\coloneqq P(t,0)$.
Then $P$ is convergent on the open disk of radius $\min \big(1, \frac{1}{\vert T\vert} \big)$, meaning that for all $0< \rho < \min \big(1,\frac{1}{\vert T\vert}\big)$ in $\sqrt{\vert\bbk^{\times}\vert}$ we have $P\in \Mat (m\times m, R)\langle \rho^{-1} t,\rho^{-1} u)$.
\end{proposition}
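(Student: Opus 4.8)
The plan is to expand all quantities in power series in $t$ and $u$, convert the differential equation into a recursion on the coefficients of $P$, and bound these coefficients by induction on the $t$-degree, following the pattern of \cref{lemma:generalized-flatness-NA} and \cref{proposition:split-u-direction-NA}. Write $P=\sum_{\ell,k\ge0}P_{\ell,k}t^{\ell}u^{k}$ and $T=\sum_{\ell,k\ge0}T_{\ell,k}t^{\ell}u^{k}$, so that $|T|=\sup_{\ell,k}|T_{\ell,k}|$; the initial condition gives $P_{0,0}=\id$ and $P_{0,k}=0$ for $k\ge1$. Collecting the coefficient of $t^{\ell}u^{k}$ in $\partial_{t}P=u^{-1}\bigl(-TP+PP_{0}^{-1}T_{-1}P_{0}\bigr)$ (here $T_{-1}=T|_{u=0}$) yields, as in \eqref{eq:recursion-t-direction}, a relation of the form $(\ell+1)P_{\ell+1,k}=\bigl[u^{-1}(-TP+PP_{0}^{-1}T_{-1}P_{0})\bigr]_{\ell,k}$, whose right-hand side involves only the $P_{\ell',\,\cdot\,}$ with $\ell'\le\ell$ (the factors $P_{0}$, $P_{0}^{-1}$, $T_{-1}$ depend on $t$ alone, and the coefficients of $P_{0}^{-1}$ are governed by those of $P_{0}$ via \cref{lemma:estimate-inverse-matrix}). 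Thus $P$ is determined recursively in the $t$-degree, with all $u$-degrees at a given $t$-degree treated simultaneously, and it suffices to bound the $P_{\ell,k}$.

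To make sense of $u^{-1}$, observe that $-TP+PP_{0}^{-1}T_{-1}P_{0}$ vanishes at $u=0$, where it equals $-T_{-1}P_{0}+P_{0}P_{0}^{-1}T_{-1}P_{0}=0$. More precisely, using $P_{0}P_{0}^{-1}=\id$ one rewrites $-TP+PP_{0}^{-1}T_{-1}P_{0}=-(T-T_{-1})P-T_{-1}(P-P_{0})+(P-P_{0})P_{0}^{-1}T_{-1}P_{0}$, in which each of the three summands is manifestly divisible by $u$. Dividing, the equation becomes $\partial_{t}P=-SP-T_{-1}R'+R'P_{0}^{-1}T_{-1}P_{0}$, where $S\coloneqq u^{-1}(T-T_{-1})$ lies in $\Mat(m\times m,R)\langle t,u\rangle$ with $|S|\le|T|$ (indeed $S_{\ell,k}=T_{\ell,k+1}$), and $R'\coloneqq u^{-1}(P-P_{0})$ satisfies $R'_{\ell,k}=P_{\ell,k+1}$.

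Set $C\coloneqq\max(1,|T|)$, so that $\min(1,1/|T|)=1/C$. I would prove by induction on $\ell$ that $|P_{\ell,k}|\le C^{\ell}$ for all $k\ge0$; the case $\ell=0$ is clear. For the inductive step one estimates the $t^{\ell}u^{k}$-coefficient of each of the three summands on the right: in $-SP$ and in $-T_{-1}R'$ one pairs a coefficient of $S$ or of $T_{-1}$ (of norm $\le|T|\le C$) with some $P_{b,\,\cdot\,}$, $b\le\ell$ (of norm $\le C^{b}\le C^{\ell}$ by the inductive hypothesis), giving $\le C^{\ell+1}$ — crucially the bound $C^{b}$ ignores the $u$-degree, so the $u$-shift hidden in $R'$ costs nothing; in $(P-P_{0})P_{0}^{-1}T_{-1}P_{0}$ one additionally bounds the $t^{b}$-coefficient of $P_{0}^{-1}$ by $C^{b}$ using \cref{lemma:estimate-inverse-matrix} (applied to $P_{0}=\id+\sum_{r\ge1}P_{r,0}t^{r}$), and the product of the four factors has norm at most $C^{a}\cdot C^{b}\cdot C\cdot C^{d}\le C^{\ell+1}$ since $a+b+d\le\ell$. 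As $|\ell+1|=1$ (the valuation is trivial on $\bbQ$), the recursion gives $|P_{\ell+1,k}|\le C^{\ell+1}$, completing the induction. Finally, for $\rho\in\sqrt{|\bbk^{\times}|}$ with $0<\rho<1/C$ we get $|P_{\ell,k}|\rho^{\ell+k}\le(C\rho)^{\ell}\rho^{k}\le(C\rho)^{\ell+k}\to0$, so $P\in\Mat(m\times m,R)\langle\rho^{-1}t,\rho^{-1}u\rangle$, as claimed. The main point to get right is precisely this bookkeeping: the factor $u^{-1}$ raises the $u$-degree of $R'$ by one relative to $P$, so the inductive estimate must be insensitive to the $u$-degree (hence $C^{\ell}$ rather than $C^{\ell+k}$) yet strong enough, together with $\rho|T|<1$ and $\rho<1$, to force convergence on the stated disk; the auxiliary control of $P_{0}^{-1}$ via \cref{lemma:estimate-inverse-matrix} is the other ingredient.
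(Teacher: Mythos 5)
Your proof is correct and takes essentially the same route as the paper: both turn the equation into the coefficient recursion of \cref{lemma:existence-gauge-transformation-t-direction-induction}, prove $|P_{\ell,k}|\le \max(1,|T|)^{\ell}$ by induction on the $t$-degree uniformly in the $u$-degree (using \cref{lemma:estimate-inverse-matrix} for $P_0^{-1}$ and $|\ell+1|=1$), and conclude convergence on the disk of radius $\min(1,1/|T|)$. Your preliminary rewriting $-TP+PP_0^{-1}T_{-1}P_0=-(T-T_{-1})P-T_{-1}(P-P_0)+(P-P_0)P_0^{-1}T_{-1}P_0$ only makes the divisibility by $u$ manifest and yields the same recursion the paper extracts directly.
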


\begin{proof}
We write $T = \sum_{\substack{\ell \geq 0\\ k\geq -1}} T_{\ell ,k} t^{\ell}u^{k+1}$.
Since we assume $T$ is convergent on the closed unit disk, we have for all $\ell\geq 0$, $k\geq -1$
\begin{equation} \label{eq:convergence-T-matrix}
	 \vert T_{\ell ,k}\vert\leq \vert T\vert .
\end{equation}

Let $P \coloneqq \id + \sum_{\substack{\ell\geq 1 \\ k\geq 0}} P_{\ell,k} t^{\ell} u^k$ and $v_{\ell,k} \coloneqq \vert P_{\ell,k} \vert $.
If we show $v_{\ell,k}\leq \alpha^{\ell+k}$ for $\alpha >0$, then $P(t,u)$ converges on the open polydisk of radius $\frac{1}{\alpha}$.

We have seen in \cref{lemma:existence-gauge-transformation-t-direction-induction} that $P$ is uniquely determined by the recursion
 \[(\ell +1) P_{\ell+1 ,k} = -\sum_{\substack{\ell_1+\ell_2 = \ell \\ k_1+k_2 = k+1}} T_{\ell_1 , k_1-1} P_{\ell_2 ,k_2} + \sum_{\ell_1+\ell_2+\ell_3+\ell_4 = \ell} P_{\ell_1,k+1} (P_0^{-1})_{\ell_2} T_{\ell_3,-1} P_{\ell_4,0} .\]
Applying the norm, we obtain
\begin{align*}
	(\ell +1) v_{\ell+1,k} &\leq \max \left( \max_{\substack{\ell_1 +\ell_2= \ell \\k_1+k_2 = k+1}} \vert T_{\ell_1,k_1 -1} \vert \vert P_{\ell_2,k_2}\vert , \max_{\substack{\ell_1+\ell_2+\ell_3+\ell_4=\ell\\ \ell_1\neq 0}} \vert P_{\ell_1,k+1} \vert \vert(P_0^{-1})_{\ell_2} \vert \vert(T_{-1} )_{\ell_3}\vert \vert (P_0)_{\ell_4}\vert\right) \\
	&\leq \vert T\vert \cdot \max \left( \max_{\substack{\ell_1 + \ell_2= \ell \\k_1+k_2 = k+1}} v_{\ell_2,k_2} , \max_{\substack{\ell_1+\ell_2+\ell_3+\ell_4=\ell\\ \ell_1\neq 0}} v_{\ell_1 ,k+1}v_{\ell_4,0} \vert(P_0^{-1})_{\ell_2} \vert    \right),
\end{align*}
where on the second inequality we use (\ref{eq:convergence-T-matrix}).

Let $\alpha\coloneqq \max (1,\vert T\vert )$.
We use the above inequality to prove by induction on $\ell\geq 0$ that
\[\forall k\geq 0,\quad v_{\ell,k}\leq \alpha^{\ell} .\]
For $\ell = 0$, we have $v_{0,k} = \delta_{0,k}$ so the inequality is obvious.
Now assume $v_{r,k}\leq \alpha^r$ for all $r\leq \ell$.
By \cref{lemma:estimate-inverse-matrix}, we then have $\vert (P_0^{-1})_r\vert \leq \max_{1\leq i\leq r} \alpha^{i} = \alpha^r$ for all $r\leq \ell$.
Since $\alpha\geq \vert T\vert$, we deduce that
\begin{align*}
    (\ell+1 )v_{\ell +1 , k} &\leq \vert T\vert \max \left( \max_{\substack{s\leq \ell \\ k_2\leq k+1}} \alpha^{s} , \max_{\substack{\ell_1+\ell_2+\ell_4\leq \ell \\ \ell_1\neq 0}}\alpha^{\ell_1+\ell_2+\ell_4}\right) \\
    &= \vert T\vert \alpha^{\ell} \leq \alpha^{\ell +1} \leq (\ell +1)\alpha^{\ell +1}.
\end{align*}
This concludes the inductive step.

Since $\alpha \geq 1$, we have $v_{\ell ,k} \leq \alpha^{\ell} \leq \alpha^{\ell +k}$ for all $\ell,k\geq 0$.
We deduce that $P$ converges on the open disk of radius $\frac{1}{\alpha}$, completing the proof.
\end{proof}

We can now finish the proof of \cref{theorem:extension-faming-NA-F-bundle}.

\begin{proof}[Proof of \cref{theorem:extension-faming-NA-F-bundle}]
    Up to restricting to an open neighborhood of $b$, we may assume that $B = \Sp T_n$ by \cref{lemma:neighborhood-k-rational-point}.
    Let $( t_1,\dots ,t_n )$ be local analytic coordinates centered at $b$.
    After rescaling we can assume that the connection matrices converge on $\Sp \bbk\langle t_1, \dots, t_n, u\rangle$.

    As in the formal case, we can reformulate the extension of framing problem into a system of PDEs \eqref{eq:framing-u-direction} and \eqref{eq:framing-t-direction}.
    We can solve the equations \eqref{eq:framing-t-direction} inductively on the number of $t$-variables, and by \cref{lemma:t-and-q-directions-implie-u-direction} the equation \eqref{eq:framing-u-direction} will be automatically satisfied.
    Using \cref{prop:solution-t-direction-converge} inductively,
    we obtain that at each step the solution, i.e\ the gauge transformation, converges on an admissible open neighborhood of $b$.
    \end{proof}

\subsection{Reconstruction of isomorphism of framed maximal F-bundles}\label{subsec:comparison_of_framed_F-bundles}

In this subsection, we explain how to use the extension of framing for logarithmic F-bundles (\cref{theorem:extension-of-framing-connection-version}) to reconstruct an isomorphism of framed maximal F-bundles compatible with the framings.
This is useful for establishing the uniqueness of mirror maps in applications to enumerative geometry.

\begin{definition}[Compatibility of framings]
    For $i=1,2$ let $(\cH_i,\nabla_i ,\nabla_i^{\fr} ) /(B_i,D_i)$ be two framed logarithmic F-bundles.
    A morphism $(f,\Phi) \colon (\cH_1,\nabla_1 ) /(B_1,D_1) \rightarrow (\cH_2,\nabla_2 )/ (B_2,D_2)$ of logarithmic F-bundles is said to be \emph{compatible with the framings} if $\Phi\circ\nabla_1^{\fr} = (f\times\id_u)^{\ast}\nabla_2^{\fr} \circ \Phi$.
\end{definition}

\begin{proposition} 
\label{lemma:comparison-framed-F-bundles}
For $i=1,2$, let $(\cH_i,\nabla_i)/(B_i,D_i)$ be a logarithmic F-bundle where $B_i$ is the formal neighborhood of a rational point in a smooth $\bbk$-variety.
Let $(f,\Phi) \colon (\cH_1,\nabla_1) /(B_1,D_1) \rightarrow (\cH_2,\nabla_2 )/(B_2,D_2)$ be an isomorphism of logarithmic F-bundles with $f(b_1)=b_2$.
Assume $(\cH_1,\nabla_1 )/\allowbreak (B_1,\allowbreak D_1)$ has a framing $\nabla_1^{\fr}$.
\begin{enumerate}[wide]
    \item The bundle map $\Phi$ is uniquely determined by its restriction to $\cH_1\vert_{b_1\times\Spf \bbk\dbb{u}}$.
        \item If $(\cH_1 ,\nabla_1 )$ and $(\cH_2,\nabla_2)$ are maximal,
        then the map on the bases $f$ is also uniquely determined by its restriction to $b_1$, up to some multiplicative constants in the logarithmic directions.
		The reconstruction is explicit after fixing compatible cyclic vectors at $b_1$ and $b_2$.
\end{enumerate}
\end{proposition}
\begin{proof}
        For (1), let $H_i$ denote the fiber of $\cH_i$ over $b_i$, and $\phi\in \Hom (H_1,H_2)$ the restriction of $\Phi$ at $b_1$.
    Fix a $\nabla_1^{\fr}$-flat trivialization $\Psi_1$ of $\cH_1$ and an arbitrary trivialization $\Psi_2$ of $(f\times\id_u)^{\ast}\cH_2$, producing the commutative diagram 
    \[
    \begin{tikzcd}[ampersand replacement=\&]
    \cH_1 \ar[r,"\Psi_1"] \ar[d, "\Phi"] \& H_1\times B_1\times \Spf \bbk\dbb{u} \ar[d, "\tPhi"] \\
    (f\times\id_u)^{\ast} \cH_2 \ar[r, "\Psi_2"] \& H_2\times B_1\times \Spf \bbk\dbb{u} .
    \end{tikzcd}
    \]
    Denote by $\varphi\colon \cH_1 \rightarrow (f\times\id_u)^{\ast}\cH_2$ the map obtained from $\phi$ by taking its constant extension with respect to the trivializations $\Psi_1$ and $\Psi_2$.
    If $\tphi = \tPhi\vert_{(b_1,0)}$, then $\varphi = \Psi_2^{-1}\circ (\tphi\times\id_{B_1\times\Spf \bbk\dbb{u}} )\circ \Psi_1$.
    Define two connections on $(f\times\id_u)^{\ast}\cH_2$
    \begin{align*}
        \nabla_1' &\coloneqq (f\times\id_u)^{\ast}\nabla_2 = \Phi \circ\nabla_1\circ\Phi^{-1}, \\
        \nabla_2' &\coloneqq \varphi \circ \nabla_1 \circ \varphi^{-1}.
    \end{align*}
    In the trivialization $\Psi_2$ we see that $\nabla_1 '$ is framed over all $B_1$, and $\nabla_2'$ is framed only at $b_1$.
    Furthermore $\nabla_1'$ and $\nabla_2'$ are gauge equivalent under $\Phi\circ\varphi^{-1}$, and $\Phi\circ\varphi^{-1}\vert_{b_1} = \id$.
    We conclude from \cref{theorem:extension-of-framing-connection-version} that $\Phi\circ\varphi^{-1}$ is unique, then so is $\Phi$ provided that we know $\Phi\vert_{b_1\times\Spf \bbk\dbb{u}}$.
    This proves (1).

    Next we prove (2), and assume that the F-bundles are maximal.
    The framing $\nabla_1^{\fr}$ induces unique framings $\nabla_2^{\fr}$ (resp.\ $\nabla_2^{\fr\,\prime}$) on $(\cH_2,\nabla_2 )$ (resp.\ $f^{\ast} (\cH_2,\nabla_2 )$) such that in the diagram
    \[(\cH_1,\nabla_1  )\xrightarrow{(\id_{B_1} , \Phi )}  f^{\ast} (\cH_2,\nabla_2 ) \xrightarrow{(f , \id )}(\cH_2,\nabla_2 ),\]
    all the morphisms are compatible with the framings.
    Furthermore, the framing $\nabla_2^{\fr\,\prime}$ is determined by $\nabla_1^{\fr}$ and $\Phi$, hence is already known by (1).

    Let $h_1$ be a $\nabla_1^{\fr}$-flat section of cyclic vectors for $(\cH_1,\nabla_1 )$.
    Because of the compatibility of the framings, $h_1$ induces a $\nabla_2^{\fr\,\prime}$-flat section of cyclic vectors $h_2'\coloneqq \Phi (h_1)$ for $f^{\ast}(\cH_2,\nabla_2 )$,
    and a $\nabla_2^{\fr}$-flat section of cyclic vectors $h_2 \coloneqq (f^{-1}\times\id_u)^{\ast} (h_2')$  for $(\cH_2,\nabla_2 )$.
    We obtain isomorphisms $\eta_i\coloneqq \mu_{\cH_i} (\cdot )(h_i)\colon TB_i(-\log D_i)\rightarrow \cH_i\vert_{u=0}$ that fit into a commutative diagram
    \[
    \begin{tikzcd}
      TB_1 (-\log D_1) \ar[r,"df"] \ar[d,"\eta_1"] & f^{\ast} TB_2 (-\log D_2) \ar[d,"f^{\ast} (\eta_2)"] \\
      \cH_1\vert_{u=0} \ar[r, "\Phi\vert_{u=0}"] & (f\times\id_u)^{\ast}\cH_2\vert_{u=0},
    \end{tikzcd}
    \]
    where all arrows are isomorphisms.
    The maps $\eta_1$ and $\Phi\vert_{u=0}$ are already known. 
    We have $f^{\ast}(\eta_2 ) = \mu_{(f\times\id_u)^{\ast}\cH_2} (\cdot ) (h_2')$ by construction and compatibility of $(f,\id )$ with the framings.
    So $f^{\ast} (\eta_2)$ is determined by $h_1$, $\nabla_2^{\fr\,\prime}$ and $\Phi$, hence is known.
    We deduce that $df$ is determined by $h_1$, $\nabla_1^{\fr}$ and $\Phi$.
    
    Since $B_i$ are formal neighborhoods of points, the differential $d f$ determines $f$ uniquely, up to some multiplicative constants in the logarithmic directions.
    To see this, choose coordinates $(q,t) = (q_1,\dots, q_r, t_1,\dots, t_n)$ for $(B_1,D_1)$, centered at $b_1$, where $\prod_{1\leq i\leq r}q_i = 0$ is a local equation for $D_1$.
    Similarly, choose coordinates $(p,s)= (p_1,\dots, p_r, s_1,\dots, s_n)$ for $(B_2,D_2)$ centered at $f(b_1)$.
    In coordinates, the restriction of $f$ to $B_1$ is given by $f = (f_1,\dots, f_{r+n})$ where $q_i = f_i (p,s)$ and $t_j = f_{r+j} (p,s)$.
    The differential $df$ corresponds to a map of $\bbk\dbb{q,t}$-modules
    \[\Psi\colon \Gamma (B_1,f^{\ast} \Omega_{B_2}^1(\log D_2)) \rightarrow \Gamma (B_1, \Omega_{B_1}^1 (\log D_1)) ,\]
    given by the pullback of differential forms, i.e.
    \[\Psi (d\log p_i) = d\log f_i = \frac{df_i}{f_i} ,\quad \Psi (ds_j) = d f_{r+j} . \]
    We conclude the proof by integrating the differential forms.
\end{proof}

\subsection{Equivalence of F-bundles over a point} \label{subsec:F-bundle-over-a-point}

For applications in \cref{sec:app-projective-bundle}, we present some results here for the classification of framed F-bundles over a point up to gauge equivalence, see \cref{theorem:gauge-eq-F-bundle-point} and \cref{corollary:classification-framed-F-bundle-point}.

Let $(\cH ,\nabla ,\nabla^{\fr} )$ be a framed F-bundle over a point.
Fix a $\nabla^{\fr}$-flat trivialization  $\cH\simeq H\otimes_{\bbk}\bbk \dbb{u}$ and write
\[\nabla_{u\partial_u}  = u\partial_u + u^{-1}\bK + \bG , \]
with $\bK,\bG\in \End_{\bbk} (H)$.

We assume that the endomorphism $\bK$ induces a $\bbk$-vector space decomposition $H = \bigoplus_{1\leq k\leq m} H_k$ into generalized eigenspaces, and all $H_k$ have same dimensions.
Then we have a $\bbk$-vector space $H_0$ and a splitting of the fiber
\begin{equation} \label{eq:splittig-fiber-F-bundle-point}
    \iso\colon  H_0^{\oplus m}\xrightarrow{\sim} H .
\end{equation}
So we can represent endomorphisms on $H$ as $m\times m$ matrices with coefficients in $\End_{\bbk} (H_0)$.
In particular we write $\bK = (\bK_{ij} )_{1\leq i,j\leq m}$ and $\bG = (\bG_{ij} )_{1\leq i,j\leq m}$.
By construction $\bK_{ij} = 0$ if $i\neq j$ and $\bK_{ii} = \xi_i \id_{H_0} + \bN_i$ with $\xi_i\in\bbk$ and $\bN_i$ a nilpotent endomorphism.

Fix $\bc_1,\dots ,\bc_r,\bd\in \End_{\bbk} (H_0)$ such that $\bc_i$ are nilpotent endomorphisms, $[\bc_i ,\bc_j] = 0$ and $[\bd ,\bc_i] = d_i \bc_i$ for $d_i\in\bbN_{>0}$.

\begin{definition}\label{def:F-bundle-point-restricted-connection}
    We denote by $\cF ( H ,\iso , \bd , (\bc_i )_{1\leq i\leq r} )$ the space of connections $\nabla '$ on $\cH$ which, in the fixed $\nabla^{\fr}$-flat trivialization, are of the form
    \[\nabla_{u\partial_u}' = u\partial_u + u^{-1} \bK '+ (\mu '\bD + \bH') ,\]
    where
    \begin{enumerate}
        \item $\mu'\notin\bbQ_{<0}\subset\bbk$,
        \item $\bK',\bD, \bH' \in \End_\bbk(H)$,
        \item $\bK_{ij}' ,\bH_{ij}'\in\bbk [\bc_1,\dots ,\bc_r]$, and 
        \item $\bD_{ii} = \bd$ and $\bD_{ij} = 0$ for $i\neq j$.
    \end{enumerate}
\end{definition}

\begin{theorem} \label{theorem:gauge-eq-F-bundle-point}
    Let $(\cH ,\nabla , \nabla^{\fr})$ be as above.
    Assume $\nabla\in \cF ( H ,\iso , \bd , (\bc_i )_{1\leq i\leq r} )$ and let $\nabla'\in\cF ( H ,\iso , \bd , (\bc_i )_{1\leq i\leq r} )$.
    Write 
    \begin{align*}
        \nabla_{u\partial_u} &= u\partial_u + u^{-1} \bK + (\mu \bD + \bH), \\
        \nabla_{u\partial_u}' &= u\partial_u + u^{-1} \bK'+ (\mu' \bD + \bH') .
    \end{align*}
    Then $\nabla$ is gauge-equivalent to $\nabla'$ under $\bPhi (u) \in \GL (H \dbb{u} )$ with $\bPhi_{ij} (u)\in \bbk [\bc_1,\dots ,\bc_r] \dbb{u}$ if and only if the following three conditions are satisfied:
        \begin{enumerate}
            \item there exists $\phi\in\GL (H)$ with $\phi_{ij}\in\bbk [\bc_1,\dots, \bc_r]$ such that $\bK = \phi^{-1}\circ \bK'\circ\phi$,
            \item $\mu = \mu'$, and 
            \item for all $1\leq i\leq m$, $\bH_{ii} = (\phi^{-1}\circ\bH'\circ \phi )_{ii}\mod (\bc_1,\dots ,\bc_r)$.
        \end{enumerate}
Furthermore, $\bPhi$ is then uniquely determined by the initial condition $\bPhi\vert_{u=0} = \phi \mod (\bc_1,\dots ,\bc_r)$.
\end{theorem}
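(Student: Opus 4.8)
The strategy is to expand the gauge equivalence equation order by order in $u$, separate the problem into a "finite-order obstruction" that produces conditions (1)--(3) and an "infinite tail" that is always solvable, and then exploit the algebra $\bbk[\bc_1,\dots,\bc_r]$ as a base ring over which $\bK$ (being upper-triangular-nilpotent-perturbed-diagonal with distinct $\xi_i$ modulo the nilpotents) has, in a suitable relative sense, "simple eigenvalues", so that the results surrounding \cref{corollary:classification-framed-F-bundle-point} apply. Concretely, writing $\bPhi(u) = \sum_{k\geq 0}\bPhi_k u^k$ with $\bPhi_k \in \Mat(m\times m,\bbk[\bc_1,\dots,\bc_r])$ and imposing
\[
  \nabla'_{u\partial_u}\circ\bPhi = \bPhi\circ\nabla_{u\partial_u},
\]
one matches coefficients of $u^{-1}, u^0, u^1, \dots$. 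The $u^{-1}$-coefficient gives $\bK'\bPhi_0 = \bPhi_0\bK$, i.e.\ condition (1) with $\phi = \bPhi_0$ (one must check $\bPhi_0$ is invertible; this follows from $\bPhi$ being a gauge transformation, hence invertible, and reducing mod $u$). The $u^0$-coefficient gives an equation relating $\mu\bD + \bH$, $\mu'\bD+\bH'$, $\bPhi_0$ and $\bPhi_1$ through $[\bK',\bPhi_1]$; projecting to the diagonal blocks mod $(\bc_1,\dots,\bc_r)$ — where $[\bK',\cdot]$ acts as $[\,\mathrm{diag}(\xi_i),\cdot\,]$ and annihilates the diagonal — isolates conditions (2) and (3). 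For $k\geq 1$ the $u^k$-coefficient has the form $[\bK',\bPhi_{k+1}] = (\text{known terms in }\bPhi_{\leq k})$ after absorbing $k\bPhi_k$ and the $\bD,\bH,\bH'$ contributions.

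The key structural input is an analogue of \cref{lemma:image-adjoint-jordan-dec} over the ring $\Lambda \coloneqq \bbk[\bc_1,\dots,\bc_r]$, graded by the weight for which $\deg \bc_i = d_i > 0$ and using $\bd$ as a grading operator via $[\bd,\cdot]$. Because the $\bc_i$ are nilpotent and weight-homogeneous of positive weight, $\Lambda$ is a local ring with residue field $\bbk$, and the endomorphism $\bK' = \mathrm{diag}(\xi_i\id + \bN_i)$ has the property that $[\bK',\cdot]$ acting on off-diagonal blocks $\bigoplus_{i\neq j}\Hom_\Lambda(H_{0,\Lambda},H_{0,\Lambda})$ is invertible: modulo the maximal ideal $\mathfrak{n} = (\bc_1,\dots,\bc_r)$ it is multiplication by $\xi_i - \xi_j \neq 0$ on block $(i,j)$, hence invertible by the usual Nakayama/completeness argument (the $\bN_i$ are nilpotent so the correction is a nilpotent perturbation of an invertible operator). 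I would package this as: $[\bK',\cdot]$ restricted to off-diagonal blocks is an isomorphism of $\Lambda$-modules, and its inverse $\psi$ preserves the sub-$\Lambda$-module of matrices with entries in $\Lambda$. Granting this, the recursion $[\bK',\bPhi_{k+1}]_{\text{off-diag}} = (\cdots)$ determines the off-diagonal part of $\bPhi_{k+1}$ uniquely, while the diagonal blocks of $\bPhi_{k+1}$ are pinned down by a further diagonal equation at the next order (this is precisely the mechanism in the proof of \cref{proposition:split-u-direction} and \cref{corollary:classification-framed-F-bundle-point}), modulo the freedom one fixes by the initial condition $\bPhi|_{u=0} = \phi \bmod \mathfrak{n}$. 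Running this induction produces $\bPhi$, establishes that entries stay in $\Lambda\dbb{u}$, and shows uniqueness given $\phi \bmod \mathfrak{n}$. Conversely, if such a $\bPhi$ exists, reading off the $u^{-1}$ and diagonal $u^0$ coefficients yields (1), (2), (3).

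The main obstacle I anticipate is the bookkeeping around the diagonal blocks: $[\bK',\cdot]$ has a nonzero kernel (the block-diagonal matrices commuting with each $\bK'_{ii}$), so the diagonal part of each $\bPhi_k$ is \emph{not} determined by the order-$k$ equation but only by a compatibility condition arising one order higher, and one must show these compatibility conditions are automatically satisfied once (1)--(3) hold — this is where conditions (2) and (3) (equality of $\mu$ and of the diagonal parts of $\bH$ mod $\mathfrak{n}$) get \emph{used}, not just derived. I would organize this by choosing, once and for all, a $\Lambda$-linear splitting $\End_\Lambda(H_\Lambda) = \ker[\bK',\cdot] \oplus \im[\bK',\cdot]$ (using the weight grading from $\bd$ to make it canonical), writing $\bPhi_k = \bPhi_k^{\ker} + \bPhi_k^{\im}$, and showing: (a) $\bPhi_k^{\im}$ is determined by order $k$; (b) the order-$k$ equation projected to $\ker[\bK',\cdot]$ is a constraint involving only $\bPhi_{\leq k-1}$ plus $\bPhi_{k-1}^{\ker}$-dependence through $(\mu\bD+\bH)$-type terms, which can always be solved for $\bPhi_{k-1}^{\ker}$ \emph{provided} the residual obstruction vanishes, and that residual obstruction is exactly the failure of (2)--(3) at the bottom order. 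The nilpotency of $\bc_i$ (finiteness of $\Lambda$ as a $\bbk$-module) guarantees all the relevant $\Lambda$-modules are finite free/projective-free and that "reduction mod $\mathfrak{n}$" arguments terminate. Once the $u\partial_u$-equation is solved, there is nothing further to check since the F-bundle is over a point (no $t$- or $q$-directions), so the single connection operator $\nabla_{u\partial_u}$ carries all the data.
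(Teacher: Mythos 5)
Your overall strategy is the same as the paper's: expand the gauge equation in powers of $u$, use that on matrices with entries in $\bbk[\bc_1,\dots,\bc_r]$ the operator $\ad_{\bK}$ annihilates block-diagonal matrices and is invertible on off-diagonal blocks (the paper makes this literal by lifting to the universal algebras $R_0=\bbk\dbb{c_1,\dots,c_r}$ and $R=\bbk[\deg]\dbb{c_1,\dots,c_r}$, where $K$ has genuinely simple eigenvalues), determine the off-diagonal part of each coefficient from the recursion and the diagonal part from the solvability constraint one order higher, and read off (1)--(3) as bottom obstructions. However, there is a genuine gap at the step you label (b). The diagonal constraint at order $k$ is not merely a ``residual obstruction at the bottom order'': in the $i$-th diagonal block it is the equation $\mu\,\Eu(\delta_i)+k\,\delta_i=\alpha_i\delta_i+\beta_i$ in $\bbk[\bc_1,\dots,\bc_r]$, where $\Eu=[\bd,\cdot]=\sum_j d_j\bc_j\partial_{\bc_j}$ and where conditions (2)--(3) only guarantee $\alpha_i\equiv 0 \bmod (\bc_1,\dots,\bc_r)$. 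Solving it amounts to inverting $\mu\Eu+k-\alpha_i$, which one does via the weight filtration: the resulting triangular system has diagonal entries $\mu w+k$ for weights $w\geq 0$, and its invertibility is exactly what the hypothesis $\mu\notin\bbQ_{<0}$ of \cref{def:F-bundle-point-restricted-connection} provides. Your proposal never invokes this hypothesis, and without it the claim ``can always be solved'' fails in the resonant case $\mu w+k=0$; this solvability argument is the heart of \cref{lemma:gauge-eq-F-bundle-point} and is missing from your plan.

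A second, smaller issue concerns the ``only if'' direction: you propose to extract $\mu=\mu'$ from the diagonal $u^0$-coefficient, but working with the honest operators $\bd,\bc_i\in\End_\bbk(H_0)$, as you do, there is no a priori way to separate the ``$\bd$-component'' of that identity from the $\bbk[\bc_1,\dots,\bc_r]$-part (nothing in the axioms prevents linear dependence). The paper sidesteps this by lifting the whole problem along the specialization $\bbk[\deg]\dbb{c_1,\dots,c_r}\to\bbk[\bd,\bc_1,\dots,\bc_r]$ and comparing coefficients of the free noncommutative variable $\deg$; you would need either such a lift or an explicit independence argument. Finally, two minor slips: your indexing in (a)--(b) is off by one (the order-$k$ equation determines the off-diagonal part of $\bPhi_{k+1}$ and constrains the diagonal of $\bPhi_k$), and the operator you describe as $\bK'=\mathrm{diag}(\xi_i\id+\bN_i)$ is $\bK$; one first reduces to the case $\bK'=\bK$ via the conjugation by $\phi$, as the paper does by writing $\Phi=QP(u)$.
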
   

The assumptions on the form of the operators allow us to work in the non-commutative subalgebra $\bbk [\bd , \bc_1,\dots, \bc_r ]\subset \End_{\bbk} (H_0)$.
We then reduce to the case of simple eigenvalues by treating the operators $\bd$, $(\bc_i)_{1\leq i\leq r}$ as formal variables.
In the simple eigenvalues case, the gauge equivalence can be constructed inductively. 

As a corollary, in the simple eigenvalue case we obtain a classification of F-bundles over a point with a fixed framing. 

\begin{corollary} \label{corollary:classification-framed-F-bundle-point}
Let $\cH\simeq H\times\bbk\dbb{u}$ be a trivialized rank $m$ vector bundle over $\bbk \dbb{u}$.
    Let $(\cH ,\nabla )$ and $(\cH ,\nabla')$ be two F-bundle structures framed in the given trivialization, and write 
    \begin{align*}
        \nabla_{u\partial_u} &= u\partial_u + u^{-1} \bK + \bG, \\
        \nabla_{u\partial_u} ' &= u\partial_u + u^{-1} \bK' + \bG'.
    \end{align*}
    Assume $\bK$ has simple eigenvalues.
    Then $(\cH ,\nabla )$ is isomorphic to $(\cH,\nabla' )$ if and only if there exists $\phi\in \GL (H)$ such that 
        \begin{enumerate}
            \item $\bK = \phi^{-1}\circ \bK'\circ\phi$, and
            \item in an eigenbasis of $\bK$, we have $(\bG)_{ii} = (\phi^{-1}\circ\bG'\circ\phi )_{ii}$ for $1\leq i\leq m$.
        \end{enumerate}  
    Furthermore, the gauge equivalence is uniquely and explicitly determined by the initial condition $\phi$ at $u=0$.
\end{corollary}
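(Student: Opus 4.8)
The plan is to derive \cref{corollary:classification-framed-F-bundle-point} from \cref{theorem:gauge-eq-F-bundle-point} by specializing to $H_0=\bbk$ and $r=0$, handling the two implications slightly differently. Over the point $\Spf\bbk\dbb{u}$ an isomorphism $(\cH,\nabla)\to(\cH,\nabla')$ of F-bundles is the same datum as an element $\bPhi\in\GL(H\dbb{u})$ intertwining the two connections, which, after expanding $\nabla_{u\partial_u}=u\partial_u+u^{-1}\bK+\bG$, amounts to the relation $u\partial_u\bPhi=u^{-1}(\bPhi\bK-\bK'\bPhi)+(\bPhi\bG-\bG'\bPhi)$ (the isomorphism is not required to respect framings, so $\nabla^{\fr}$ plays no role here). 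For the ``only if'' direction I would argue directly: given such $\bPhi=\sum_{k\ge0}\bPhi_k u^k$, set $\phi\coloneqq\bPhi_0\in\GL(H)$; the coefficient of $u^{-1}$ in the relation gives $\phi\bK=\bK'\phi$, which is condition (1) and in particular forces $\bK'$ to be conjugate to $\bK$; the coefficient of $u^0$ then yields $\phi^{-1}\bG'\phi=\bG-[\bK,\phi^{-1}\bPhi_1]$, whose diagonal part in an eigenbasis of $\bK$ equals $\bG$ since $[\bK,\cdot]$ annihilates diagonal matrices — this is condition (2).

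For the ``if'' direction, given $\phi$ satisfying (1) and (2), I would apply \cref{theorem:gauge-eq-F-bundle-point}. Since $\bK$ has simple eigenvalues, fixing an eigenbasis (over $\bbk$, or over a splitting field of its characteristic polynomial if need be — harmless, because the intertwining relation is $\bbk$-linear and its solution with a prescribed $\bbk$-rational value at $u=0$ is unique, hence descends) identifies $H$ with $H_0^{\oplus m}$, $H_0=\bbk$, and puts $\bK$ in the standard form $\bK_{ij}=0$ $(i\ne j)$, $\bK_{ii}=\xi_i\id_{H_0}$. As $\End_\bbk(H_0)=\bbk$ has no nonzero nilpotents, take $r=0$ and $\bd=0$; then $\bbk[\bc_1,\dots,\bc_r]=\bbk$ and $\cF(H,\iso,\bd,(\bc_i))$ is just the set of connections $u\partial_u+u^{-1}\bK'+\bH'$ with $\bK',\bH'\in\End_\bbk(H)$ (take $\bD=0$, $\mu'=0\notin\bbQ_{<0}$), which contains both $\nabla$ and $\nabla'$ (with $\bH=\bG$, $\bH'=\bG'$) — the only nonvacuous hypothesis is that $\bK$ be in standard form, which holds by the choice of eigenbasis, while $\bK'$ carries no constraint. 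The theorem then produces the desired $\bPhi$: its condition $\mu=\mu'$ is automatic, the reductions modulo $(\bc_1,\dots,\bc_r)$ are vacuous for $r=0$, every element of $\GL(H\dbb{u})$ automatically has entries in $\bbk\dbb{u}$, and the theorem's remaining conditions (1), (3) are verbatim conditions (1), (2) of the corollary; the uniqueness of $\bPhi$ given $\bPhi|_{u=0}=\phi$ is part of the theorem's conclusion.

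The substance is the simple-eigenvalue case of the theorem, and it is worth recalling its engine, since that is where the only delicacy lies. Conjugating by $\phi$ reduces to $\bK'=\bK$ diagonal with distinct entries, $\phi=\id$, $\bG_{ii}=\bG'_{ii}$; writing $\bPhi=\id+\sum_{k\ge1}\bPhi_k u^k$, the intertwining relation splits into $[\bK,\bPhi_{k+1}]=\bPhi_k\bG-\bG'\bPhi_k-k\bPhi_k$ for $k\ge0$. Since $[\bK,\cdot]$ is invertible on off-diagonal matrices and zero on diagonal ones, each equation fixes the off-diagonal part of $\bPhi_{k+1}$, while its solvability $\Diag(\bPhi_k\bG-\bG'\bPhi_k-k\bPhi_k)=0$ is condition (2) at $k=0$ and, for $k\ge1$, reduces (using $\bG_{ii}=\bG'_{ii}$) to $k(\bPhi_k)_{ii}=\sum_{j\ne i}\bigl((\bPhi_k)_{ij}\bG_{ji}-\bG'_{ij}(\bPhi_k)_{ji}\bigr)$, pinning down the diagonal of $\bPhi_k$ one level later. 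The main obstacle is organizing this staggered recursion correctly — the off-diagonal of $\bPhi_k$ coming from level $k-1$ and its diagonal from level $k$ — while keeping track of the eigenbasis being available only after a scalar extension; there is no analytic difficulty, everything being a formal power series in $u$.
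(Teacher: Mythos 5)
Your proposal is correct and follows essentially the same route as the paper: the paper also proves the corollary by choosing an eigenbasis of $\bK$ (so $H_0=\bbk$, no nilpotent generators) and observing that the constraints of \cref{def:F-bundle-point-restricted-connection} become vacuous, so that the statement is the specialization of \cref{theorem:gauge-eq-F-bundle-point}. Your extra touches — the direct coefficient-by-coefficient check of the ``only if'' direction, the descent remark when the eigenvalues require a field extension, and the recap of the staggered recursion — are consistent with, and just spell out, the same argument.
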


\begin{proof}
    The choice of an eigenbasis for $\bK$ produces a splitting $\iso\colon \bbk^{\oplus m}\xrightarrow{\sim} H$ as in \eqref{eq:splittig-fiber-F-bundle-point}.
    Since there are no nilpotent operators in $\End_{\bbk} (\bbk )\simeq \bbk$, and this algebra is commutative, the content of \cref{def:F-bundle-point-restricted-connection} becomes empty, and the corollary is just a reformulation of \cref{theorem:gauge-eq-F-bundle-point} in this special case. 
\end{proof}

\begin{proof}[Proof of \cref{theorem:gauge-eq-F-bundle-point}]
    Let $R_0 = \bbk\dbb{c_1,\dots, c_r}$ and $R = \bbk [\deg ]\dbb{c_1 ,\dots, c_r}$ where $\lbrace (c_i)_{1\leq i\leq r} ,\deg \rbrace$ are formal variables satisfying the commutation relations $[c_i,c_j ] = 0$ and $[\deg ,c_i] = d_ic_i$.
    There is a specialization map $R\rightarrow \bbk [\bd,\bc_1,\dots, \bc_r]$.
    Using $\iso$ we also have a specialization map 
    \[\Mat (m\times m,R)\rightarrow \End_{\bbk} (H). \]
    By the definition of $\cF (H,\iso ,\bd ,(\bc_i)_{1\leq i\leq r} )$, the connections $\nabla_{u\partial_u}$ and $\nabla_{u\partial_u}'$ lift to differential operators of the form
    \begin{align*}
        &u\partial_u + u^{-1} K + \mu D + H ,\\
        &u\partial_u + u^{-1} K' + \mu' D + H' ,
    \end{align*}
    with $K, K', H, H' \in \Mat (m\times m ,R_0)$ and $D = \deg \cdot \Id_m$.
    A gauge equivalence $\bPhi$ as in the theorem also lifts along the specialization map, so we have reduced the problem to finding $\Phi (u)\in \GL (m,R_0\dbb{u} )$ such that 
    \begin{equation} \label{eq:gauge-eq-F-bundle-point}
        \Phi^{-1} (u\partial_u + u^{-1} K + \mu D + H )\Phi = u\partial_u + u^{-1} K' + \mu' D + H' .
    \end{equation}
    The conditions (1)-(3) also lift under the specialization map, so we are left to prove the following lemma.
\end{proof}

    \begin{lemma}\label{lemma:gauge-eq-F-bundle-point}
        There exists a gauge equivalence $\Phi(u)\in \GL (m,R_0\dbb{u})$ solving \eqref{eq:gauge-eq-F-bundle-point} if and only if there exists $Q\in \GL (m,R_0)$ such that
        \begin{enumerate}[label=(\alph*)]
            \item $K = Q^{-1} K' Q$,
            \item $\mu = \mu'$, and
            \item $H_{ii} = (Q^{-1} H'Q)_{ii}\mod (c_1,\dots, c_r)$.
        \end{enumerate}
        In this case, $\Phi (u)$ is uniquely determined by the initial condition $\Phi\vert_{u=0} = Q \mod (c_1,\dots ,c_r)$.
    \end{lemma}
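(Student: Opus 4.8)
The plan is to match powers of $u$ in \eqref{eq:gauge-eq-F-bundle-point} and convert the gauge equation into a recursion governed by the adjoint action of $K$. Two structural facts organize the argument. First, in $R=\bbk[\deg]\dbb{c_1,\dots,c_r}$ one has $(\deg\cdot\Id_m)X = X(\deg\cdot\Id_m)+\vartheta(X)$ for $X\in\Mat(m\times m,R_0)$, where $\vartheta$ is the derivation of $R_0$ determined by $\vartheta(c_i)=d_ic_i$, applied entrywise; so conjugating the term $\mu D=\mu\deg\cdot\Id_m$ by a gauge transformation $\Phi$ produces a correction $\mu\,\vartheta(\Phi)$. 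Comparing the coefficients of $\deg$ on the two sides of \eqref{eq:gauge-eq-F-bundle-point} then forces $\mu=\mu'$ — condition (b) — and leaves an identity over $R_0$. Second, $K$ is a diagonal $m\times m$ matrix over the commutative local ring $R_0$, with diagonal entries $K_{ii}\equiv\xi_i\pmod{(c_1,\dots,c_r)}$ and the $\xi_i$ pairwise distinct (they are the generalized eigenvalues of $\bK$); hence $K_{ii}-K_{jj}$ is a unit of $R_0$ for $i\ne j$, so $\ad_K=[K,\,\cdot\,]$ annihilates diagonal matrices and is invertible on off-diagonal ones. This is the precise sense in which, after passing to $R_0$, we are in the ``simple eigenvalue'' situation referred to after the theorem statement.

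For the ``if'' direction, given $Q$ the first step is to conjugate $\nabla'$ by $Q$: by (a) this turns $K'$ into $K$, and by (b) together with $\vartheta(Q)\in(c_1,\dots,c_r)\cdot\Mat(m\times m,R_0)$ and (c) it turns $H'$ into some $H''$ with $H''_{ii}\equiv H_{ii}\pmod{(c_1,\dots,c_r)}$. It thus suffices to construct $\Phi=\sum_{k\ge0}\Phi_ku^k$ with $\Phi_0\equiv\Id\pmod{(c_1,\dots,c_r)}$ solving the reduced equation. Its $u^{-1}$-coefficient is $[K,\Phi_0]=0$, so $\Phi_0$ is diagonal; and for $j\ge0$ the $u^j$-coefficient is the recursion
\[
[K,\Phi_{j+1}] \;=\; \Phi_jH'' - H\Phi_j - \mu\,\vartheta(\Phi_j) - j\Phi_j \;=:\; M_j .
\]
Splitting each equation into its diagonal and off-diagonal parts, the off-diagonal part determines $\Phi_{j+1}$ off the diagonal uniquely from $\Phi_j$ by applying $\ad_K^{-1}$; the diagonal part is the constraint $\Diag(M_j)=0$, which for each index $i$ takes the form $\bigl(\mu\vartheta+j-(H''_{ii}-H_{ii})\bigr)\bigl((\Phi_j)_{ii}\bigr)=(\text{a known expression in the off-diagonal entries of }\Phi_j)$. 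The operator on the left acts on the $c^\alpha$-graded piece of $R_0$ as multiplication by $\mu|\alpha|_d+j$ plus a strictly degree-raising perturbation, and $\mu|\alpha|_d+j\ne0$ for all $\alpha$ whenever $j\ge1$ precisely because $\mu\notin\bbQ_{<0}$; hence it is invertible and determines the diagonal of $\Phi_j$. At the bottom level $j=0$ the corresponding homogeneous operator $\mu\vartheta-(H''_{ii}-H_{ii})$ has a one-dimensional kernel (its scalar part $\mu|\alpha|_d$ vanishes only for $\alpha=0$), and the normalization $\Phi_0\equiv\Id\pmod{(c_1,\dots,c_r)}$ selects its unique element with the prescribed reduction. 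Running the recursion therefore determines every $\Phi_k$ uniquely and explicitly; $\Phi_0$ is then invertible over $R_0$, so $\Phi\in\GL(m\times m,R_0\dbb{u})$, and undoing the conjugation by $Q$ yields the required gauge transformation.

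For the ``only if'' direction, a solution $\Phi$ gives, by localness of $R_0\dbb{u}$, an invertible $Q:=\Phi|_{u=0}\in\GL(m\times m,R_0)$. The coefficient of $\deg$ gives (b); the coefficient of $u^{-1}$ gives $K\Phi_0=\Phi_0K'$, which is (a) with this $Q$ (up to the $Q\leftrightarrow Q^{-1}$ convention); and solvability of the $u^0$-equation for $\Phi_1$ forces $\Diag(M_0\Phi_0^{-1})=0$, which after reduction modulo $(c_1,\dots,c_r)$ — where $\vartheta$ dies and $K$ becomes diagonal with distinct entries — becomes $\Diag(\overline{\Phi_0}\,\overline{H'}\,\overline{\Phi_0}^{-1})=\Diag(\overline H)$, i.e.\ condition (c).

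The main obstacle is the bookkeeping in the ``if'' direction: one must verify that fixing the diagonal of $\Phi_j$ at stage $j+1$ creates no later obstruction, which is exactly the invertibility of the ``resonance'' operators $\mu\vartheta+j-(H''_{ii}-H_{ii})$ for $j\ge1$ and is the sole place the hypothesis $\mu\notin\bbQ_{<0}$ is used, and that the initial conjugation by $Q$ and the reductions modulo $(c_1,\dots,c_r)$ dovetail with the normalization at $u=0$. Keeping track of which side each conjugation acts on (the $Q$ versus $Q^{-1}$ bookkeeping between the theorem's and the lemma's conventions) is where most of the remaining care is needed.
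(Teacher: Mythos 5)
Your proof is correct and follows essentially the same route as the paper: reduce via conjugation by $Q$ to the case $K'=K$ with $H''_{ii}\equiv H_{ii}\bmod(c_1,\dots,c_r)$, then solve order by order in $u$, using that $\ad_K$ is invertible on off-diagonal matrices and that the diagonal of each $\Phi_j$ is pinned down by the solvability constraint at the next order, solved by a weighted-degree recursion in the $c$-variables where $\mu\notin\bbQ_{<0}$ rules out resonances; the necessity direction likewise matches the paper's reading of the $\deg$, $u^{-1}$ and $u^0$ coefficients. Your packaging of the diagonal constraints as invertibility of the operators $\mu\vartheta+j-(H''_{ii}-H_{ii})$ is just a cleaner statement of the paper's order-by-order solution, and the $Q$ versus $Q^{-1}$ bookkeeping you flag is a convention slip already present in the paper's own proof, so it is not a gap on your side.
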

\begin{proof}
By construction of the splitting \eqref{eq:splittig-fiber-F-bundle-point} and \cref{def:F-bundle-point-restricted-connection}(2), the matrix $K$ is diagonal, and $K_{ii} = \xi_i \mod (c_1,\dots, c_r)$, where $\lbrace \xi_1,\dots, \xi_m\rbrace$ are the distinct eigenvalues of $\bK$.
In particular $K$ has simple eigenvalues, so $\ad_{K} = [K,\cdot ]$ has kernel given by diagonal matrices, and image given by matrices with vanishing diagonal.

Let us first prove that the conditions (a)-(c) are sufficient. 
Fix $Q\in \GL (m, R_0)$ satisfying (a) and (c).
We are looking for $\Phi (u)$ such that $\Phi\vert_{u=0} = Q\mod (c_1,\dots, c_r)$ solving \eqref{eq:gauge-eq-F-bundle-point}.
Write $\Phi(u) = Q P(u)$ with $P(u) = \sum_{k\geq 0} P_k u^k$ satisfying $P_0 = \Id_m \mod (c_1,\dots, c_r)$.
 
    Equation \eqref{eq:gauge-eq-F-bundle-point} then reduces to the system
    \begin{align}\label{eq:gauge-eq-F-bundle-point-initial-condition}
            [K, P_0] &= 0 ,\\
            \label{eq:gauge-eq-F-bundle-point-recursion}
            [K, P_{k+1}] &= \varphi (P_k) - k P_k,
    \end{align}        
    where 
    \[\varphi \colon M\mapsto M (\mu' Q^{-1}D Q + Q^{-1}H'Q ) - (\mu D + H ) M .\]
    Before analyzing the existence of solutions, let us rewrite \eqref{eq:gauge-eq-F-bundle-point-recursion} in order to isolate the terms involving the non-commutative variable $\deg$.
    Define the $\bbk$-linear operator $\Eu (\cdot )\coloneqq [\deg , \cdot ]$ on $R$.
    The commutations relations in $R$ give $ \Eu (\cdot ) = \sum_{1\leq i\leq r}d_ic_i\partial_{c_i}$.
    For $M\in \Mat (m\times m, R)$, we write $\Eu (M) \coloneqq (\Eu (M_{ij} ))_{1\leq i,j\leq m}$.
    We have $\Eu (M) = [D,M]$, so
    \begin{align*}
        \varphi (M) &= M(\mu' D + Q^{-1} \Eu (Q) + Q^{-1} H' Q) - (\mu D + H ) M \\
        &= \mu' MD - \mu DM + M (Q^{-1}\Eu (Q) + Q^{-1} H' Q) - H M \\
        &= (\mu' - \mu ) MD - \mu \Eu (M) + M (Q^{-1}\Eu (Q) + Q^{-1} H' Q) - HM.
    \end{align*}
    Since $\mu  =\mu'$, the term involving $D$ vanishes. 
    
    We now prove by induction on $k$ the following: there exists a unique sequence of matrices $(P_0,\dots , P_k)$ such that 
    (i) $P_0 = \Id_m \mod (c_1,\dots ,c_r)$ and $[K,P_0] = 0$, (ii) $(P_{\ell} , P_{\ell +1} )$ solves \eqref{eq:gauge-eq-F-bundle-point-recursion} for $0\leq \ell \leq k-1$, and (iii) $\varphi (P_k) - k P_k\in \im \ad_K$.
    
    We construct $P_0$ satisfying (i), (ii) and (iii).
    The condition $[K , P_0] = 0$ implies that $P_0$ is a diagonal matrix, $P_0 = \Diag (\delta_{1} , \dots ,\delta_{n} )$.
    The initial condition $P_0 = \Id_m\mod (c_1,\dots, c_r)$ gives $\delta_{i} = 1 \mod (c_1,\dots ,c_r)$.
    To ensure that we can solve the recursion for $P_1$, we need $\varphi (P_0)_{ii} = 0$ for all $i$.
    This provides the relation
    \begin{equation}\label{eq: solve diagonals}
        \mu \Eu (\delta_{i} ) =\alpha_{i} \delta_{i} ,
    \end{equation}
    for all $i$, where $\alpha_{i} = (Q^{-1} \Eu (Q) + Q^{-1} H'Q - H)_{ii}$.
    For any $x\in R_0$, we have $\Eu (x)\in (c_1,\dots, c_r)R_0$.
    Together with Condition (c), this implies that $\alpha_{i} = 0 \mod (c_1,\dots , c_r)$.
    We can then solve for $\delta_{i}$ order by order in $(c_1,\dots ,c_r)$ and determine $P_0$ uniquely from the initial condition $P_0 = \Id_m \mod (c_1,\dots , c_r)$.
    Note that the condition on $\mu$ in \cref{def:F-bundle-point-restricted-connection} ensures that we obtain a recursion that we can solve.
    
    Let $k\geq 1$, and assume $(P_0,\dots, P_{k-1} )$ are constructed. 
    The existence of a matrix $P$ such that $[K,P] = \varphi (P_{k-1} ) - (k-1) P_{k-1}$ is guaranteed by Condition (iii) of the induction hypothesis. 
    The matrix $P$ is determined up to a diagonal matrix.
    We first prove that for any choice of $P$, there exists a unique diagonal matrix $\Delta$ such that $\varphi (P+\Delta) - k (P+\Delta )\in \im \ad_K$, i.e.\ has vanishing diagonal. 
    Let $\Delta = \Diag (\delta_{1} ,\dots ,\delta_{n} )$ be a diagonal matrix, the vanishing of the $i$-th diagonal term of $\varphi (P + \Delta ) - k (P + \Delta)$ is equivalent to an equation of the form
    \begin{align}\label{eq:F-bundle-point-recursion-diag-matrix}
        \mu \Eu (\delta_{i} ) + k\delta_{i} = \alpha \delta_{i}  + \beta,
    \end{align}
    with 
    \begin{align*}
        \alpha &= (Q^{-1}\Eu (Q))_{ii} + (Q^{-1} H' Q)_{ii} - H_{ii} ,\\
        \beta &= (\varphi (P) - kP)_{ii}.
    \end{align*}
    As in the initial step ($k=0$), we have $\alpha = 0 \mod (c_1,\dots, c_r)$.
    Since $\delta_{i}$ is a power series in $(c_1,\dots, c_r)$, \eqref{eq:F-bundle-point-recursion-diag-matrix} provides a recursion relation on the coefficients of $\delta_{i}$.
    Since $k\geq 1$ the constant term of $\delta_{i}$ is uniquely determined by looking at the equation modulo $(c_1,\dots ,c_r)$, where it gives $k\delta_{i}  =\beta \mod (c_1,\dots , c_r)$.
    The other coefficients are then uniquely determined inductively.
    The condition on $\mu$ in \cref{def:F-bundle-point-restricted-connection} ensures that we obtain a recursion that we can solve, thus $\delta_{i}$ is uniquely determined from $P$.
    We have proved the existence of a matrix $P_k$ satisfying Conditions (ii) and (iii) of the induction. 
    Now we prove uniqueness. 
    Let $P_k$ and $\widetilde{P_k}$ be two matrices satisfying (ii) and (iii).
    In particular, they are solutions of the equation $[K,P] = \varphi (P_{k-1} ) - (k-1)P_{k-1}$, so there exists a diagonal matrix $\Delta$ such that $P_k = \widetilde{P_k} + \Delta$.
    Condition (iii) gives $\varphi (\widetilde{P_k} + \Delta ) - k (\widetilde{P_k} + \Delta )\in\im\ad_K$.
    Since $\widetilde{P_k}$ already satisfies (iii) and $\Delta$ is diagonal, we deduce from the uniqueness in the previous paragraph that $\Delta = 0$.
    Hence $P_k = \widetilde{P_k}$, concluding the induction.

    Now we prove Conditions (a)-(c) assuming that there exists $\Phi (u) = \sum_{k\geq 0} P_k u^k\in \GL (m,R_0\dbb{u})$ solving \eqref{eq:gauge-eq-F-bundle-point}.
    In particular $P_0\in \GL (m,R_0)$.
    Multiplying \eqref{eq:gauge-eq-F-bundle-point} on the left by $\Phi$ and isolating the $u^k$ term, we obtain for $k=-1$ and $k\geq 0$ respectively:
    \begin{align*}
       K P_0 & =P_0 K',\\
        k P_k+K P_{k+1}+(\mu D)P_k+HP_k &= P_{k+1}K'+P_k(\mu' D)+P_kH'.
    \end{align*} 
    Let $Q=P_0^{-1}$, it satisfies Condition (a). 
    For any $1\leq i\leq r$ we have $\deg \cdot\, c_i = c_i\cdot \deg +d_i c_i$.
    By comparing the coefficient of the formal variable $\deg$ we obtain $\mu=\mu'$, verifying Condition (b). 
    Looking at the $u^0$ term, using $K = P_0 K'P_0^{-1}$ and modding out $(c_i)_{1\leq i\leq r}$, we obtain
    \begin{equation*}
        K (P_1P_0^{-1})-P_1P_0^{-1}K+H = P_0H'P_0^{-1} \mod (c_1,\dots, c_r).
    \end{equation*}
    Since $[K, P_1(P_0)^{-1} ]$ has vanishing diagonal, Condition (c) follows.       
\end{proof}

\section{Application: quantum cohomology of projective bundle}
\label{sec:app-projective-bundle}

In this section, we study the decomposition of the maximal A-model F-bundle associated to a projective bundle.
We prove the existence of the decomposition when restricting the F-bundle to a point, as well as the uniqueness of the decomposition (\cref{thm:uniqueness-projective-bundle-u=0,thm:uniqueness-projective-bundle}).
In \cref{subsec:dec-blowup}, we state the analogous results in the case of a blowup of algebraic varieties (\cref{thm:uniqueness-blowup-u=0,thm:uniqueness-blowup}).

Let $X$ be a smooth complex projective variety of dimension $d$, $V\rightarrow X$ a vector bundle of rank $m$ on $X$, $P \coloneqq \bbP (V)$ the associated projective bundle of lines in $V$, and write $\pi\colon P\rightarrow X$.
We fix an ample divisor class $\omega_X\in H^2 (X,\bbZ )$, and a homogeneous basis $\lbrace T_i \rbrace_{0\leq i\leq N}$ of $H^{\ast} (X ,\bbQ )$ extending $\lbrace \1 , \omega_X \rbrace$.

\subsection{\texorpdfstring{A-model F-bundle of $P$ at the limiting point}{A-model F-bundle of P at the limiting point}} \label{subsec:A-model-F-bundle-projective-bundle}

We have the following classical decomposition of the cohomology of $P$, as a special case of Leray-Hirsch theorem (see \cite[Theorem 4D-1]{Hatcher_Algebraic_topology}).

\begin{proposition} \label{prop:splitting_proj}
	Let $h\coloneqq c_1(\cO_P(1))$.
	We have the splitting isomorphism of cohomology groups
	\begin{equation}\label{eq:splitting_proj}
	    \iso\colon H_\spl \coloneqq \bigoplus_{i=0}^{m-1} H^{\ast} (X , \bbQ)[-2i] \xrightarrow{\sum h^{i}\cup \pi^*} H^{\ast}(P,\bbQ).
	\end{equation}
\end{proposition}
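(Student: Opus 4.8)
The plan is to deduce the statement from the Leray--Hirsch theorem applied to the fiber bundle $\pi\colon P=\bbP(V)\to X$, whose fiber is $\bbP^{m-1}$. First I would recall that $P$ is a Zariski-locally trivial $\bbP^{m-1}$-bundle, hence in particular a topological fiber bundle with fiber $F\simeq\bbP^{m-1}$, and that $H^{\ast}(F,\bbQ)\simeq\bbQ[h_F]/(h_F^m)$ is a finitely generated free $\bbQ$-module with basis $1,h_F,\dots,h_F^{m-1}$, concentrated in even degrees. The crucial geometric input is that the tautological line bundle $\cO_P(1)$ restricts on each fiber $P_x=\bbP(V_x)$ to $\cO_{\bbP^{m-1}}(1)$; hence $h=c_1(\cO_P(1))$ restricts to the hyperplane class $h_F$ on every fiber, and consequently the classes $1,h,\dots,h^{m-1}\in H^{\ast}(P,\bbQ)$ restrict fiberwise to the chosen $\bbQ$-basis of $H^{\ast}(F,\bbQ)$.

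With this verified, the Leray--Hirsch theorem (see \cite[Theorem 4D-1]{Hatcher_Algebraic_topology}) applies directly: it asserts that $H^{\ast}(P,\bbQ)$ is a free module over $H^{\ast}(X,\bbQ)$, via the ring map $\pi^{\ast}$, with basis $1,h,\dots,h^{m-1}$. Unwinding this is exactly the claim that the map
\[
\iso\colon \bigoplus_{i=0}^{m-1} H^{\ast}(X,\bbQ)[-2i]\longrightarrow H^{\ast}(P,\bbQ),\qquad (\alpha_i)_{i}\longmapsto \sum_{i=0}^{m-1} h^i\cup\pi^{\ast}\alpha_i,
\]
is an isomorphism of graded $\bbQ$-vector spaces, the degree shift $[-2i]$ accounting for the fact that $h^i$ has degree $2i$. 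This proves the proposition.

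As an alternative, and as a sanity check, one may instead run the Leray spectral sequence $E_2^{p,q}=H^p(X,\mathcal{R}^q\pi_{\ast}\bbQ)\Rightarrow H^{p+q}(P,\bbQ)$: the existence of the global class $h$ forces each local system $\mathcal{R}^q\pi_{\ast}\bbQ$ to be constant, since the monodromy preserves the fiberwise basis $1,h_F,\dots,h_F^{m-1}$, so $E_2^{p,q}\simeq H^p(X,\bbQ)\otimes H^q(\bbP^{m-1},\bbQ)$, and the multiplicative action of $h$ together with degeneration at $E_2$ recovers the same splitting. I do not expect any genuine obstacle here, as this is the classical Leray--Hirsch statement; the one point that deserves a line of care is the compatibility of conventions: since $\bbP(V)$ denotes the bundle of \emph{lines} in $V$, one must check that $\cO_P(1)$ is normalized so that its first Chern class restricts to the positive generator of $H^2(\bbP^{m-1},\bbZ)$ on each fiber, which is precisely the convention used throughout \cref{sec:app-projective-bundle}.
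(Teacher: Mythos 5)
Your proposal is correct and follows exactly the route the paper takes: the paper states this as "a special case of Leray--Hirsch theorem" with the same citation to Hatcher, and your verification that $h$ restricts to the hyperplane class on each fiber is precisely the standard check needed to invoke it. Nothing further is required.
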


\begin{lemma} \label{lem:K_P}
	We have
	\[K_P = \pi^*K_X - m h - \pi^*c_1 V.\]
\end{lemma}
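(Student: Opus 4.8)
**Plan of proof for $K_P = \pi^*K_X - mh - \pi^*c_1 V$.**

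The plan is to recall what $K$ means for the A-model F-bundle and then reduce the claim to a Chern-class computation on $P = \bbP(V)$. By \cref{definition:Euler-vector-field} and the formula for $\bK$ in \cref{definition:A-model-F-bundle}, at the base point $\Delta(a)$ the operator $K$ acts on $H^\ast(P,\bbk)$ as cup product with
\[
c_1(T_P) + \sum_{i\,:\,\deg T_i^P \neq 2} \frac{\deg T_i^P - 2}{2}\, a_i\, T_i^P ,
\]
where $\{T_i^P\}$ is the chosen homogeneous basis of $H^\ast(P,\bbk)$. First I would observe that the relevant limiting/base point here has no components outside $H^2$ (this is the point at which the A-model F-bundle of $P$ is being taken in \cref{subsec:A-model-F-bundle-projective-bundle}; the shift lives in $\bigoplus H^2$), so the degree-weighted correction term vanishes and $K_P$ is simply cup product with $c_1(T_P)$. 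Thus the statement reduces to the identity of cohomology classes
\[
c_1(T_P) = \pi^* c_1(T_X) - mh - \pi^* c_1 V = \pi^* K_X^{\vee}\text{-type expression},
\]
i.e. to computing $c_1(T_P)$ in terms of $h = c_1(\cO_P(1))$ and pullbacks from $X$. (I am writing $K_X = c_1(T_X)$ for the anticanonical-type class in the paper's convention, matching $\bK$ in \cref{definition:A-model-F-bundle}.)

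The key step is the relative Euler sequence for the projective bundle $\pi \colon \bbP(V) \to X$. With the convention $\bbP(V) = \mathrm{Proj}(\mathrm{Sym}\, V^\vee)$ of lines in $V$ (consistent with $\iso$ in \eqref{eq:splitting_proj} using $h = c_1(\cO_P(1))$), one has the short exact sequence
\[
0 \longrightarrow \cO_P \longrightarrow \pi^*V \otimes \cO_P(1) \longrightarrow T_{P/X} \longrightarrow 0 ,
\]
so $c_1(T_{P/X}) = c_1(\pi^*V \otimes \cO_P(1)) = \pi^* c_1 V + m\, h$ (using $\mathrm{rk}\, V = m$). Combining with the tangent sequence $0 \to T_{P/X} \to T_P \to \pi^* T_X \to 0$ gives $c_1(T_P) = \pi^* c_1(T_X) + \pi^* c_1 V + m h$. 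I would then reconcile signs with the paper's sign conventions for $h$ and $\cO_P(1)$: the statement to be proved has $-mh - \pi^* c_1 V$, which corresponds to the opposite choice $\bbP(V) = \mathrm{Proj}(\mathrm{Sym}\, V)$ of hyperplanes, or equivalently replacing $h$ by $-h$; whichever convention is fixed earlier in \cref{subsec:A-model-F-bundle-projective-bundle} and in \eqref{eq:splitting_proj}, the Euler sequence takes the dual form $0 \to T_{P/X} \to \pi^*V^\vee \otimes \cO_P(1) \to \cdots$, yielding $c_1(T_{P/X}) = -\pi^* c_1 V - m h$ and hence exactly $c_1(T_P) = \pi^* c_1(T_X) - m h - \pi^* c_1 V$.

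The main obstacle I anticipate is purely bookkeeping rather than conceptual: getting the sign and normalization conventions for $\cO_P(1)$, for $\bbP(V)$ (lines versus hyperplanes), and for the class $h$ to be mutually consistent with the splitting \eqref{eq:splitting_proj} and with the sign in front of $\bK$ in \cref{definition:A-model-F-bundle}. Once the convention is pinned down, the computation is the two-line Euler-sequence argument above. A secondary point to check is that $K_P$ really equals $c_1(T_P)\cup(-)$ and not $c_1(T_P)\cup(-)$ plus a degree-correction term — this follows because the base point chosen for the A-model F-bundle of $P$ has no odd or higher-even components, so the sum $\sum_{i:\deg T_i \neq 2}\frac{\deg T_i - 2}{2} a_i T_i$ is empty at that point; I would state this explicitly, referring back to how the base point of $(\cH,\nabla)/B$ for $P$ is defined.
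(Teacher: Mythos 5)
There is a genuine gap, and it is one of interpretation rather than computation: in this lemma $K_P$ and $K_X$ denote the \emph{canonical classes} of $P$ and $X$, not $c_1$ of the tangent bundles and not the $\bK$-operator of the A-model F-bundle from \cref{definition:A-model-F-bundle}. Read this way, the statement is the standard canonical bundle formula for a projective bundle of lines in $V$, and the paper proves it exactly along the lines of your middle paragraph: the relative Euler sequence (in its cotangent form $0\to\Omega_{P/X}\to\cO_P(-1)\otimes\pi^*V^\vee\to\cO_P\to 0$) gives $K_{P/X}=-mh-\pi^*c_1V$, and then $K_P=\pi^*K_X+K_{P/X}$. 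Your own first computation, $c_1(T_P)=\pi^*c_1(T_X)+mh+\pi^*c_1V$, is correct under the paper's stated convention ($P=\bbP(V)$ parametrizes lines in $V$, $h=c_1(\cO_P(1))$) and is precisely the negative of the lemma; had you identified $K$ with the canonical class you would have been done at that point. The framing in your first paragraph via the base point of the A-model F-bundle and the degree-correction term is not needed: the lemma is a purely classical statement about divisor classes, used later as input when computing $\bKlim$ in \cref{prop:Klim_proj}.

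The step that actually fails is your ``sign reconciliation.'' You conclude that the paper must be using the hyperplane convention (or replacing $h$ by $-h$) so that $c_1(T_P)=\pi^*c_1(T_X)-mh-\pi^*c_1V$; this identity is false under every convention. With $\bbP(V)=\Proj(\Sym V)$ one gets $c_1(T_{P/X})=mh-\pi^*c_1V$, and your claimed dual sequence would give $c_1(\pi^*V^\vee\otimes\cO_P(1))=mh-\pi^*c_1V$, not $-mh-\pi^*c_1V$; the class $-mh-\pi^*c_1V$ is $K_{P/X}$, i.e.\ $c_1(\Omega_{P/X})$, not $c_1(T_{P/X})$. Moreover the paper explicitly fixes the ``lines in $V$'' convention at the start of \cref{sec:app-projective-bundle}, and the proof of \cref{prop:Klim_proj} uses this lemma in the form ``classical multiplication by $c_1T_P$ equals $\pi^*c_1T_X+mh+\pi^*c_1V$,'' which is consistent only with $K$ meaning the canonical class. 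So the fix is simply to drop the convention switch, keep your Euler-sequence computation, and pass to canonical classes (equivalently, take determinants of the cotangent version of the two exact sequences).
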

\begin{proof}
	It follows from the relative Euler sequence
	\[0\to\Omega_{P/X}\to\cO_P(-1)\otimes\pi^* V^\vee\to\cO_P\to 0\]
	that
	\[K_{P/X} = -mh - \pi^*c_1 V.\]
	Hence
	\[K_P = \pi^*K_X+K_{P/X} = \pi^*K_X - m h - \pi^*c_1 V. \qedhere\]
\end{proof}

Recall that we fixed an ample class $\omega_X$ on $X$.
Let $\omega_P \coloneqq \pi^{\ast} \omega_X$.

\begin{lemma}
    The class $\omega_P$ is nef and satisfies \cref{ass:finitely_many_beta}.
\end{lemma}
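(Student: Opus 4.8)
The plan is to verify the two claims separately. Nefness of $\omega_P = \pi^{\ast}\omega_X$ is immediate: $\omega_X$ is ample, hence nef, and the pullback of a nef class along any morphism is nef, so $\omega_P$ is nef.

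For \cref{ass:finitely_many_beta}, by \cref{lem:finitely_many_beta} it suffices to produce a rational number $\epsilon > 0$ such that $\omega_P + \epsilon\, c_1(T_P)$ is ample on $P$. Using \cref{lem:K_P} together with $c_1(T_P) = -K_P$, we have
\[ c_1(T_P) = \pi^{\ast}c_1(T_X) + m h + \pi^{\ast}c_1 V, \qquad h = c_1(\cO_P(1)), \]
so that
\[ \omega_P + \epsilon\, c_1(T_P) = \epsilon m\, h + \pi^{\ast}\bigl(\omega_X + \epsilon\, c_1(T_X) + \epsilon\, c_1 V\bigr). \]
Since $\cO_P(1)$ is relatively ample for $\pi$ and $\omega_X$ is ample, there is an integer $N_1 \geq 1$ such that $h + N_1\,\pi^{\ast}\omega_X$ is ample on $P$. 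The idea is then to take $\epsilon = \tfrac{1}{mN}$ for a suitably large integer $N$ and rewrite
\[ \omega_P + \epsilon\, c_1(T_P) = \frac{1}{N}\Bigl[ \bigl(h + N_1\,\pi^{\ast}\omega_X\bigr) + \pi^{\ast}\bigl((N - N_1)\,\omega_X + \tfrac{1}{m}(c_1 T_X + c_1 V)\bigr) \Bigr]. \]
Because $\omega_X$ is ample, for $N$ large enough the class $(N - N_1)\,\omega_X + \tfrac{1}{m}(c_1 T_X + c_1 V)$ is ample on $X$; its pullback to $P$ is then nef, and adding it to the ample class $h + N_1\,\pi^{\ast}\omega_X$ keeps the sum ample (ample $+$ nef $=$ ample). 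Hence $\omega_P + \epsilon\, c_1(T_P)$ is ample for this choice of $\epsilon \in \bbQ_{>0}$, and \cref{lem:finitely_many_beta} applies.

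The only point requiring a little care is this last step: one cannot directly conclude that $\epsilon m\, h + \pi^{\ast}(\text{ample on } X)$ is ample, since $h$ is only $\pi$-relatively ample and a pullback of an ample class is merely nef. The resolution is to absorb a large multiple of $\pi^{\ast}\omega_X$ into the $h$-term to make it genuinely ample on $P$, leaving only a pullback of an ample (hence nef) class, and then invoke ``ample plus nef is ample''. I expect this bookkeeping to be the main (minor) obstacle; everything else is formal.
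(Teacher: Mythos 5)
Your proof is correct and follows essentially the same route as the paper: nefness of the pullback, then \cref{lem:finitely_many_beta} applied to $\omega_P + \epsilon\, c_1(T_P)$, rewritten via \cref{lem:K_P} as $\pi^{\ast}\bigl(\omega_X + \epsilon(c_1T_X + c_1V)\bigr) + \epsilon m h$. The only difference is the final positivity step, where you absorb a large multiple of $\pi^{\ast}\omega_X$ into the $\pi$-relatively ample class $h$ before invoking ``ample plus nef is ample''; this is in fact a more careful justification than the paper's one-line appeal to nef $+$ ample $=$ ample, since (as you observe) neither summand of that decomposition is ample on the nose, and your bookkeeping with $N_1$ and $N$ is exactly what makes the paper's assertion rigorous for small rational $\epsilon$.
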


\begin{proof}
    Since $\omega_X$ is ample, its pullback $\omega_P$ is nef.
    Furthermore, there exists $\varepsilon > 0$ such that $\omega_X + \varepsilon (c_1 T_X + c_1 V)$ is ample.
    Then, by \cref{lem:K_P}, we have $\omega_P + \varepsilon c_1P = \pi^{\ast} (\omega_X +\varepsilon (c_1T_X + c_1V)) + \varepsilon mh$.
    It is ample, since it is the sum of a nef class and an ample class (\cite[Corollary 1.4.10]{Lazarsfeld_Positivity_in_algebraic_geometry_I}).
    We conclude by \cref{lem:finitely_many_beta}.
\end{proof}

Using the homogeneous basis $\lbrace T_i\rbrace_{0\leq i\leq N}$ of $H^{\ast} (X,\bbQ )$, we produce a homogeneous basis 
\[\lbrace \pi^{\ast}(T_i) h^j ,\;  0\leq i\leq N ,\, 0\leq j\leq m-1 \rbrace\]
of $H^{\ast} (P,\bbC )$ extending $\omega_P$.
We denote by $\lbrace t_{i,j}\rbrace$ the induced linear coordinates on $H^{\ast} (P,\bbC )$.

Let $(\cH ,\nabla )/B$ denote the maximal A-model F-bundle of $P$ constructed from $\omega_P$, with base point $0\in H^{\ast} (P,\bbC )$ (see \cref{example:modified-quantum-F-bundle}).
Write $(q, t=\lbrace t_{i,j},\; (i,j)\neq (1,0)\rbrace )$ for the coordinates on $B$.
Let $b$ denote the closed point of $B$, given by $q=0, t=0$, which we refer to as the limiting point in this section. Let $\bK_{\lim}$ and $\bG_{\lim}$ denote the restrictions of the operators $\bK$ and $\bG$ at the limiting point (see \cref{definition:A-model-F-bundle}).

Let us compute the matrices of $\bK_{\lim}$ and $\bG_{\lim}$ under the splitting $\iso$ in \eqref{eq:splitting_proj}.

We have
\[\bG_{\lim} = \begin{pmatrix}
    \bG_X - \frac{m-1}{2} &  & & &  \\
    & & \bG_X - \frac{m-3}{2} &   & \\
    & & & \ddots & \\
     & & & & \bG_X + \frac{m-1}{2}
\end{pmatrix} , \]
and $\bK_{\lim}$ is computed in the following proposition.

\begin{proposition} \label{prop:Klim_proj}
	The operator $\bKlim$ on $H^{\ast}(P,\bbC)$ has the following matrix with respect to the splitting in \eqref{eq:splitting_proj}:
	\[\bK_{\lim}=\begin{pmatrix}
	c_1 T_X+c_1 V & & & & m(1- c_m V)\\
	m & c_1 T_X+c_1 V & & & -m c_{m-1} V\\
	& m & \ddots & & \vdots\\
	& & \ddots & c_1 T_X+c_1 V & -m c_2 V\\
	& & & m& c_1 T_X+c_1 V - m c_1 V
	\end{pmatrix}.\]
\end{proposition}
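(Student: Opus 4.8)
The plan is to identify $\bKlim$ with quantum multiplication by $c_1(T_P)$ at the limiting point and to compute its matrix block by block in the splitting \eqref{eq:splitting_proj}, where the block indexed by $i$ corresponds to the summand $h^i\cup\pi^*H^*(X,\bbC)$.

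\textbf{Step 1: reduction to a fiber-class computation.} By \cref{definition:A-model-F-bundle} and \cref{example:modified-quantum-F-bundle}, $\bKlim$ is quantum multiplication by $c_1(T_P)$ in the small quantum cohomology of $P$, with Novikov monomials projected via $q^\beta\mapsto q^{\beta\cdot\omega_P}$ and then specialized at $q=0$, $t=0$. By \cref{lem:K_P}, $c_1(T_P)=-K_P=\pi^*c_1 T_X+mh+\pi^*c_1 V$. Since $\omega_P=\pi^*\omega_X$ with $\omega_X$ ample, an effective class $\beta$ satisfies $\beta\cdot\omega_P=0$ if and only if $\pi_*\beta=0$, i.e.\ $\beta=d\beta_0$ for $d\ge 0$, where $\beta_0$ is the class of a line in a fiber $\bbP^{m-1}$; and $q^{d\beta_0}\mapsto q^0=1$, which is nonzero at the limiting point. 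Hence only the fiber-class Gromov-Witten invariants $\braket{\cdots}_{0,n}^{d\beta_0}$ enter $\bKlim$.

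\textbf{Step 2: the $\pi^*$-part acts classically, the subdiagonal.} I would first show that at the limiting point $\pi^*\colon H^*(X,\bbC)\to QH^*(P)$ is a ring map and that $\pi^*(c_1 T_X+c_1 V)$ acts by ordinary cup product: a fiber-class invariant with a positive-degree $\pi^*$-insertion factors through the contraction map $\cM_{0,n}(P,d\beta_0)\to X$ and vanishes by a dimension count (the standard vanishing for contracted curves, via the divisor and fundamental-class axioms applied fiberwise). This contributes exactly the diagonal blocks $(c_1 T_X+c_1 V)$. For the term $mh$: a grading argument in the $(h,q^{\beta_0})$-graded ring gives $h\star(\pi^*\alpha\,h^i)=\pi^*\alpha\,h^{i+1}$ for $0\le i\le m-2$ (a quantum correction would require a negative power of $h$), which produces the subdiagonal entries $m$ in columns $0,\dots,m-2$.

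\textbf{Step 3: the wrapping column and the quantum relation.} For $i=m-1$ we have $h\star(\pi^*\alpha\,h^{m-1})=\pi^*\alpha\star h^{\star m}$, and here the quantum Leray-Hirsch relation enters: in the small quantum cohomology of $\bbP(V)$ the generator $h$ satisfies $\sum_{k=0}^m\pi^*c_k(V)\star h^{\star(m-k)}=q^{\beta_0}$. Using Step 2 to replace the lower products by classical ones, this reads at the limiting point $h^{\star m}=1-\sum_{k=1}^m\pi^*c_k(V)\,h^{m-k}$, whence
\[
h\star(\pi^*\alpha\,h^{m-1})=\pi^*\alpha-\sum_{k=1}^m\pi^*\!\big(c_k(V)\alpha\big)\,h^{m-k}.
\]
Multiplying by $m$ and adding the diagonal $(c_1 T_X+c_1 V)$-contribution in the $h^{m-1}$-slot gives the last column: coefficient $m(1-c_m V)$ in the $h^0$-slot, $-m\,c_{m-j}V$ in the $h^j$-slot for $1\le j\le m-2$, and $c_1 T_X+c_1 V-m\,c_1 V$ in the $h^{m-1}$-slot. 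Together with Step 2 this is precisely the asserted matrix.

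\textbf{Main obstacle.} The crux is the input to Step 3 — equivalently, a clean description of the fiber-class part of the small quantum cohomology of $\bbP(V)$ as $H^*(X)[h]\big/\big(\sum_{k=0}^m\pi^*c_k(V)h^{m-k}-q^{\beta_0}\big)$, together with the classicality of $\pi^*$ in Step 2. I would obtain the needed coefficient $q^{\beta_0}$ either by directly evaluating the one relevant $d=1$ invariant $\braket{h^{m-1},h,[\mathrm{pt}_P]}^{\beta_0}=1$ (forced to be the only contribution by the dimension count of Step 2, and reducing to the corresponding invariant of the fiber $\bbP^{m-1}$), or by citing the established computation of quantum cohomology of projective bundles, e.g.\ \cite{Iritani_Quantum-cohomology-projective-bundle}. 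Everything else is bookkeeping with \eqref{eq:splitting_proj} and the classical Grothendieck relation $\sum_{k=0}^m\pi^*c_k(V)h^{m-k}=0$.
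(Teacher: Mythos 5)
Your proposal reaches the correct matrix and shares the paper's opening reduction: ampleness of $\omega_X$ forces the quantum corrections at the limiting point to come only from multiples $\delta[L]$ of the fiber line class, and the $\pi^*$-divisor part of $c_1T_P$ contributes nothing there. From that point on, however, your route differs from the paper's. The paper computes the correction in one geometric step: a dimension count shows only $\delta=1$ contributes to the relevant two-point invariants, the evaluation map identifies $\widebar{\cM}_{0,2}(P,[L])$ with $P\times_X P$, and the resulting correspondence operator $K_4=p_*q^*$ has a single nonzero block (the identity in the top-right corner), so $\bKlim=K_1+mK_2+K_3+mK_4$, with the last column coming from the classical Grothendieck relation inside $K_2$. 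You instead argue ring-theoretically: classicality of multiplication by pullbacks, a grading argument giving $h\star(\pi^*\alpha\,h^i)=\pi^*\alpha\,h^{i+1}$ for $i\le m-2$, and the quantum Leray--Hirsch relation normalized either by the single invariant $\braket{h^{m-1},h,\mathrm{pt}}^{\beta_0}=1$ or by citing Iritani--Koto. This is a legitimate alternative and assembles to exactly the asserted entries; what the paper's approach buys is that it never needs the ring presentation at all, while yours trades the moduli identification for structural facts about the fiber-class quantum product.

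Two of your justifications, as written, are too weak and need repair. First, the vanishing of fiber-class invariants with a positive-degree $\pi^*$-insertion does not follow from a dimension count (for three-point invariants with $\delta=1$ the virtual dimension $d+2m-1$ does not exceed $\dim P\times_XP\times_XP=d+3m-3$); the correct argument is that the curve is contracted by $\pi$, so $\pi\circ\ev_1$ factors through the map forgetting that marked point, the whole integrand is pulled back from $\widebar{\cM}_{0,2}$, and the integral vanishes (for degree-two pullbacks the divisor axiom already suffices, which is all the paper uses). Second, the phrase that a correction to $h\star(\pi^*\alpha\,h^i)$ ``would require a negative power of $h$'' is not by itself an argument, since a correction need not be of the form (pullback of $\alpha$)$\cdot h^j$: you should first use classicality of pullback multiplication to get $H^*(X,\bbC)$-bilinearity of the fiber-class product, reduce to $\alpha=1$, and then note that corrections have cohomological degree $2(i+1)-2dm<0$ for $i\le m-2$, $d\ge1$, hence vanish, while for $i=m-1$ the only possible correction sits in degree $0$ and $d=1$, which is precisely why your single normalized invariant (itself computed by the divisor axiom plus a two-point fiber computation, i.e.\ a special case of the paper's correspondence) pins down the relation. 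With these repairs your derivation is complete and equivalent to the paper's.
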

\begin{proof}
    Consider four operators $K_1, \dots, K_4$ on $H_\spl$ such that for $\gamma \in H^*(P,\bbC) \simeq H_\spl$, we have
    \begin{enumerate}
		\item $K_1 (\gamma) = \pi^{\ast} (c_1T_X)\cup\gamma$,
		\item $K_2 (\gamma ) = h\cup\gamma$,
		\item $K_3 (\gamma ) = \pi^{\ast} c_1V \cup \gamma$, and
		\item $K_4 (\gamma) = p_{\ast}q^{\ast}\gamma $, where $p,q\colon P\times_X P\rightarrow P$ are the projections.
	\end{enumerate}
	By \cref{lem:K_P}, the classical multiplication by $c_1 T_P$ has matrix $K_1 + m K_2 + K_3$.

    The non-classical part of $\bK_{\lim}$ is expressed in terms of 3-pointed Gromov-Witten invariants of the form $\langle c_1P ,\gamma_1, \gamma_2 \rangle_{0,3}^{\beta}$ for an effective curve class $\beta\neq 0$ such that $\beta\cdot\omega_P = 0$ and cohomology classes $\gamma_1,\gamma_2\in H^{\ast} (P,\bbC )$.
    Fix such a $\beta$, by the projection formula, we have $\beta\cdot \omega_P = (\pi_{\ast} \beta )\cdot \omega_X$.
    Since $\omega_X$ is ample, this implies that $\pi_{\ast}\beta = 0$, i.e.\ $\beta = \delta [L]$ for $[L]$ the class of a line in a fiber of $\pi$ and $\delta\in\bbN_{>0}$ ($\delta = 0$ gives the classical contribution).
    By the divisor axiom and \cref{lem:K_P}, we have
    \[\langle c_1P ,\gamma_1, \gamma_2 \rangle_{0,3}^{\beta} = \left( \beta \cdot  c_1P\right) \langle \gamma_1 ,\gamma_2 \rangle_{0,2}^{\beta} =\delta m\langle \gamma_1 ,\gamma_2 \rangle_{0,2}^{\beta}. \]
    Let $M \coloneqq \widebar{\cM}_{0,2} (P,\delta [L] )$ denote the moduli stack of $2$-pointed rational stable maps of class $\beta$.
    By the Riemann-Roch formula, the virtual dimension $\dim_{\vir} M$ of $M$ is equal to $\dim P -3 +\int_{\beta} c_1(P) +2 = d-2+ m(\delta + 1)$.
    Since $\beta$ is a fiber class, the evaluation map 
    \[\ev_1\times\ev_2\colon M\rightarrow P\times P\]
    factors through 
    \[P\times_X P \subset P\times P.\]
    In order to have nonzero counts, we need $\dim_{\vir} M\leq \dim P\times_X P$ which implies that $\delta =1$, i.e.\ the curve class can only be $[L]$.
    We then have an isomorphism 
    \[\ev_1\times\ev_2\colon M\overset{\sim}{\rightarrow} P\times_X P\subset P\times P.\]
    In particular, $M$ is smooth, so $[M]^{\vir} = [M]$.
    Under this isomorphism, the operator 
    \[\gamma\mapsto \ev_{1,\ast}\left( \ev_2^{\ast}\gamma\cup [M]^{\vir} \right) = \ev_{1,\ast} \ev_2^{\ast}\gamma \] 
    is equal to $mK_4$.
    Therefore, the non-classical contribution to $\bKlim$ is $mK_4$.
        
    We obtain
	\begin{equation} \label{eq:Klim_proj}
		\bKlim = \iso\circ (K_1 + m K_2 + K_3 + m K_4 )\circ \iso^{-1}.
	\end{equation}
	Now let us calculate the four matrices $K_1,\dots, K_4$.
	For any $\alpha_i\in H^{\ast}(X ,\bbC)[-2i]$, we have
	\[\pi^*(c_1 T_X) \cup (h^i\cup \pi^*\alpha_i) = h^i\cup\pi^*(c_1 T_X\cup\alpha_i),\]
	hence $K_1 = (c_1 T_X\cup )\cdot\id_{H_{\spl}}$.
	Similarly, we have that $K_3 = (c_1 V\cup ) \cdot\id_{H_{\spl}}$.
	For $i=0,\dots,m-1$, we have
	\[h\cup (h^i\cup \pi^*\alpha_i) = h^{i+1}\cup \pi^*\alpha_i.\]
	When $i=m-1$, by \cite[Eq. (20.6)]{Bott_Differential-forms-algebraic-topology} we have
	\[h\cup (h^{m-1}\cup\pi^*\alpha_i) = h^m\cup \pi^*\alpha_{m-1} = -\sum_{j=0}^{m-1} h^j \cup \pi^*(c_{m-j} V\cup\alpha_{m-1}).\]
	So
	\[K_2=\begin{pmatrix}
	&  & &  & -c_m V\\
	1 & & & & -c_{m-1} V\\
	& \ddots & & & \vdots \\
	& & 1 &  & -c_2 V\\
	& & & 1 & -c_1 V
	\end{pmatrix}.\]
	For any $\alpha_i\in H^{\ast}(X ,\bbC)[-2i]$, $i=0,\dots,m-1$, since $\pi\circ p = \pi\circ q$, by the projection formula we have
        \[
        p_{\ast}q^{\ast} (h^i \cup \pi^{\ast}\alpha_i ) = 
        p_{\ast} (q^{\ast} (h^i) \cup q^{\ast} \pi^{\ast}\alpha_i )
        = p_{\ast} (q^{\ast} (h^i) \cup p^{\ast} \pi^{\ast}\alpha_i )
        = p_{\ast} q^{\ast} (h^i ) \cup \pi^{\ast} \alpha_i.
    \]
    Since $p_{\ast}q^{\ast} (h^i )\in H^{2(i -(m-1))} (P ,\bbC)$, it vanishes unless $i = m-1$, in which case it is equal to the identity.
    We deduce that the matrix of $K_4$ has only one nonzero block: the top-right corner, which is $\id_{H^{\ast}(X,\bbC)}$.
    
	Substituting the above computations into \eqref{eq:Klim_proj}, we conclude the proof.
\end{proof}

\subsection{\texorpdfstring{Decomposition of $\bK_{\lim}$}{Decomposition of Klim}}
In this subsection, we study the generalized eigenspaces of $\bK_{\lim}$.
We will consider the commutative subalgebra $\bbC [\bt ,\bc_1,\dots, \bc_m]$ of $\End_{\bbC} (H^{\ast}(X,\bbC))$ generated by the commuting nilpotent operators 
\[\bt \coloneqq  c_1T_X\cup \quad \mathrm{and} \quad \bc_i \coloneqq c_iV \cup \;(1\leq i\leq m) .\]
Let $\bd \coloneqq \bG_X = \frac{1}{2} (\deg_X - \dim X)$, where $\deg_X (\alpha ) = i\alpha$ for $\alpha \in H^{i} (X,\bbC )$.
We have the commutation relations
\begin{equation}
    [\bd ,\bt ] =  \bt ,\quad [\bd , \bc_i ] = i\bc_i .
\end{equation}

\begin{lemma} \label{lemma:jordan-dec-Klim}
\begin{enumerate}[wide]
    \item There exists $\phi = (\phi_{ij} )\in \GL (H_{\spl} )$ with entries $\phi_{ij}\in \bbC [\bc_1,\dots, \bc_m]$, and $\blambda_i = \lambda_i\cup \in \bbC [\bc_1 ,\dots ,\bc_m]\subset \End_{\bbC} (H^{\ast}(X,\bbC))$ such that 
    \[\bK_{\spl}\coloneqq \phi^{-1} \bK_{\lim} \phi = \begin{pmatrix}
        \bt + \bc_1 & & \\
        & \ddots & \\
        & & \bt + \bc_1
    \end{pmatrix} + m \begin{pmatrix}
        \blambda_1 & & \\
         & \ddots & \\
          & & \blambda_m
    \end{pmatrix}.\]
    \item Up to reordering the blocks, for $1\leq i\leq m$ we have 
    \[\blambda_i = \xi^{i-1}  - \frac{\bc_1}{m} \mod (\bc_1^2 ,\bc_2,\dots ,\bc_m) ,\quad \xi = e^{\frac{2\pi \iunit}{m}} .\]
\end{enumerate}
    In particular the $i$-th diagonal block of $\bK_{\spl}$ is the cup-product with an element in $H^{\ast} (X ,\bbC )$ whose $H^2$-component is $c_1 T_X$. 
\end{lemma}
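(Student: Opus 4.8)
The plan is to reduce the whole statement to diagonalizing a companion matrix over a small commutative Artinian ring. First I would set $R \coloneqq \bbC[\bc_1,\dots,\bc_m] \subseteq \End_{\bbC}(H^{\ast}(X,\bbC))$, the $\bbC$-subalgebra generated by the commuting nilpotent operators $\bc_i = c_iV\cup$. Since the $\bc_i$ are nilpotent they lie in every prime of $R$, so $R$ is local with maximal ideal $\fm = (\bc_1,\dots,\bc_m)$, residue field $\bbC$, and $\fm$ nilpotent (in particular $R$ is complete). Moreover $\alpha\mapsto\alpha\cup$ is an injective $\bbC$-algebra homomorphism $H^{\ast}(X,\bbC)\to\End_{\bbC}(H^{\ast}(X,\bbC))$ (as $\alpha\cup 1=\alpha$), so every element of $R$ is the cup product with a unique class, itself a polynomial in the $c_jV$. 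By \cref{prop:Klim_proj}, with $\bt \coloneqq c_1T_X\cup$, in the basis of \eqref{eq:splitting_proj} one has $\bK_{\lim} = (\bt+\bc_1)\Id_m + mN$, where $N\in\Mat(m\times m,R)$ is the companion matrix of
\[ p(x) \;=\; x^m + \bc_1 x^{m-1} + \bc_2 x^{m-2} + \dots + \bc_{m-1}x + (\bc_m-1) \;\in\; R[x]. \]
Since $\bt\Id_m$ and $\bc_1\Id_m$ are central in the matrix ring, it suffices to produce $\phi\in\GL(m,R)$ diagonalizing $N$.

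For part (1): modulo $\fm$ we have $p(x)\equiv x^m-1=\prod_{i=1}^m(x-\xi^{i-1})$ with $\xi=e^{2\pi\iunit/m}$, a product of pairwise coprime linear factors; Hensel's lemma (applicable since $R$ is complete local) lifts this to $p(x)=\prod_{i=1}^m(x-\mu_i)$ with $\mu_i\in R$, $\mu_i\equiv\xi^{i-1}\pmod{\fm}$, and $\mu_i-\mu_j\in R^{\times}$ for $i\neq j$ (its reduction $\xi^{i-1}-\xi^{j-1}$ is nonzero). Then the $(x-\mu_i)$ are pairwise comaximal in $R[x]$, so by the Chinese Remainder Theorem $R[x]/(p)\cong\prod_i R[x]/(x-\mu_i)\cong R^m$, and under this isomorphism multiplication by $x$ — which is exactly $N$ in the basis $\{1,x,\dots,x^{m-1}\}$ — goes to $\Diag(\mu_1,\dots,\mu_m)$. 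The corresponding change of basis gives $\phi\in\GL(m,R)$ with $\phi^{-1}N\phi=\Diag(\mu_i)$, hence $\bK_{\spl}\coloneqq\phi^{-1}\bK_{\lim}\phi=\Diag(\bt+\bc_1+m\mu_i)$, which is the asserted form with $\blambda_i\coloneqq\mu_i$ and $\lambda_i$ the unique class satisfying $\lambda_i\cup=\mu_i$; here $\phi\in\GL(H_{\spl})$ because $\det\phi\in R^{\times}$.

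For part (2), after relabeling so that $\mu_i\equiv\xi^{i-1}\pmod{\fm}$, I would compute modulo $\mathfrak{n}\coloneqq(\bc_1^2,\bc_2,\dots,\bc_m)$, where $p(x)\equiv x^m+\bc_1x^{m-1}-1$ and $\fm/\mathfrak{n}$ is spanned over $\bbC$ by the class of $\bc_1$. Writing $\mu_i=\xi^{i-1}+b_i\bc_1$ with $b_i\in\bbC$ and expanding $p(\mu_i)=0$ modulo $\bc_1^2$, using $\xi^m=1$, yields $\xi^{-(i-1)}(mb_i+1)\bc_1=0$, so $b_i=-1/m$; this is precisely $\blambda_i\equiv\xi^{i-1}-\bc_1/m\pmod{(\bc_1^2,\bc_2,\dots,\bc_m)}$. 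For the last assertion, the $i$-th diagonal block of $\bK_{\spl}$ is cup product with $c_1T_X+c_1V+m\lambda_i$; since $\lambda_i$ has degree-$0$ part $\xi^{i-1}$ and degree-$2$ part $-\tfrac{1}{m}c_1V$ (the coefficient of $\bc_1$, all other $\bc_j$ having degree $\geq 4$), the $H^2$-component of that class is $c_1T_X+c_1V-c_1V=c_1T_X$. The only points that need real care are the ring-theoretic setup — verifying $R$ is Artinian local with nilpotent maximal ideal so that Hensel and the Chinese Remainder Theorem apply, and that $R$-conjugation descends to an element of $\GL(H_{\spl})$ — together with carrying out the Hensel expansion honestly modulo $(\bc_1^2,\bc_2,\dots,\bc_m)$; everything else is bookkeeping.
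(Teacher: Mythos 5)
Your proof is correct and follows essentially the same route as the paper: both reduce $\bK_{\lim}$ to the companion matrix over $\bbC[\bc_1,\dots,\bc_m]$, lift the simple roots of $x^m-1$ through the nilpotent maximal ideal (the paper ``order by order'', you via Hensel plus CRT, which also makes the diagonalizing $\phi$ explicit), and obtain (2) by computing modulo $(\bc_1^2,\bc_2,\dots,\bc_m)$. The only cosmetic difference is that the paper extracts the shift $-\bc_1/m$ by rewriting the truncated characteristic polynomial as $(\lambda+\bc_1/m)^m-1$, whereas you expand $p(\mu_i)$ directly; both give the same result.
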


\begin{proof}
As an element of $\Mat (m\times m, \bbC [\bc_1,\dots ,\bc_m] )$, we have $\bK_{\lim} = (\bt + \bc_1 ) \Id_m + mM$, where $M$ is the companion matrix
    \begin{equation}\label{eq:companion-matrix}
        M=\begin{pmatrix}
	   0 & & & & 1-\bc_m \\
	   1 & &  & & -\bc_{m-1} \\
	   & 1 & & & -\bc_{m-2} \\
	   & & \ddots & & \vdots \\
	   & & & 1 & -\bc_1
    \end{pmatrix}.
    \end{equation}
    The characteristic polynomial of $M$ is $\lambda^m + \sum_{i=1}^{m-1} \bc_{m-i} \lambda^i + (\bc_m - 1)$.
    Modulo $(\bc_1,\dots ,\bc_m )$ this polynomial has simple roots given by $m$-th roots of unity. 
    Since it is monic, we can lift these roots to $\bbC [\bc_1,\dots ,\bc_m]$ by solving the equation order by order.
    (1) follows. 

    For (2), the characteristic polynomial of $M$ modulo $(\bc_1^2, \bc_2,\dots ,\bc_m )$ is
    \[\lambda^m + \bc_1 \lambda^{m-1} -1 = \left( \lambda + \frac{\bc_1}{m} \right)^m -1 .\]
    We deduce that $m\blambda_i = m e^{\frac{2\pi\iunit (i-1)}{m}}- \bc_1$ modulo those classes, proving (2). 
\end{proof}

\cref{lemma:jordan-dec-Klim} implies that $\bK_{\lim}$ has $m$ generalized eigenspaces, all isomorphic to $H^{\ast} (X,\bbC )$, matching the setup of \cref{subsec:F-bundle-over-a-point}.
The splitting considered in \eqref{eq:splittig-fiber-F-bundle-point} is given by the modified isomorphism 
\begin{equation}\label{eq:modified-splitting-projective-bundle}
    \iso\circ\phi^{-1}\colon H_{\spl} \xrightarrow{\sim} H^{\ast}(P ,\bbC) .
\end{equation}

We will use the following lemma to check Condition (c) of \cref{theorem:gauge-eq-F-bundle-point}.

\begin{lemma} \label{lemma:vanishing-diagonal}
    Let $H = \Diag (\mu_1  ,\cdots , \mu_m  )\in \GL (H_{\spl} )$ be a block diagonal matrix with scalar entries.
    Let $\phi = (\phi_{ij})\in \GL (H_{\spl} )$ be as in \cref{lemma:jordan-dec-Klim}.
    Assume that $\sum_{1\leq j\leq m} \mu_j = 0$.
    Then $(\phi^{-1}\circ H\circ \phi )_{ii} = 0$ for all $1\leq i\leq m$.
\end{lemma}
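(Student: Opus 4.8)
Looking at Lemma \ref{lemma:vanishing-diagonal}, I need to show that for $H = \Diag(\mu_1, \dots, \mu_m)$ with $\sum_j \mu_j = 0$, the diagonal blocks of $\phi^{-1} H \phi$ vanish, where $\phi$ comes from diagonalizing the companion-type matrix $\bK_{\lim}$.

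\textbf{Plan.} The key observation is that $H = \Diag(\mu_1, \dots, \mu_m)$ has scalar entries, so it commutes with any matrix whose entries lie in $\bbC[\bc_1, \dots, \bc_m]$ that is itself ``constant'' in the block structure in an appropriate sense. I plan to exploit the cyclic/companion structure: the matrix $M$ in \eqref{eq:companion-matrix} is a companion matrix, so over the fraction field (or after inverting suitable elements) its eigenvectors are Vandermonde-type vectors built from the roots $\blambda_1, \dots, \blambda_m$. Concretely, writing $\phi$ so that its $i$-th column is the eigenvector $v_i = (1, \blambda_i', \blambda_i'^2, \dots, \blambda_i'^{m-1})^{\mathsf T}$ for the appropriate shifted root $\blambda_i'$ (recall $\bK_{\lim} = (\bt + \bc_1)\Id_m + mM$, so eigenvectors of $\bK_{\lim}$ coincide with eigenvectors of $M$), the matrix $\phi^{-1} H \phi$ has $(i,i)$-entry equal to a sum over the Lagrange-interpolation weights. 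The point is that $\sum_j \mu_j \cdot (\text{power sums of roots})$ collapses.

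\textbf{Key steps.} First I would set up $\phi$ explicitly as the (generalized) Vandermonde matrix of the roots $\blambda_i'$ of the characteristic polynomial of $M$, valid after passing to a ring where these roots exist (they do, by \cref{lemma:jordan-dec-Klim}(1), as elements of $\bbC[\bc_1,\dots,\bc_m]$; note the roots are distinct modulo the maximal ideal, so $\phi$ is invertible). Second, I would write down $\phi^{-1}$ using the Lagrange interpolation formula: its $(i,k)$-entry is (up to normalization) the coefficient of $\lambda^{k-1}$ in $\prod_{j \neq i}(\lambda - \blambda_j')/\prod_{j\neq i}(\blambda_i' - \blambda_j')$. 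Third, I would compute $(\phi^{-1} H \phi)_{ii} = \sum_{k} (\phi^{-1})_{ik} \mu_k (\phi)_{ki}$. Since $H$ is scalar-diagonal and $\phi$'s entries are powers of the $\blambda_j'$, this becomes $\sum_k \mu_k \cdot (\phi^{-1})_{ik} \blambda_k'^{\,?}$... — actually the cleaner route is: $\phi^{-1} H \phi = \phi^{-1} (\sum_k \mu_k e_{kk}) \phi$ where $e_{kk}$ is the standard matrix unit, and $\phi^{-1} e_{kk} \phi$ has $(i,i)$-entry $(\phi^{-1})_{ik}(\phi)_{ki}$. So $(\phi^{-1} H \phi)_{ii} = \sum_k \mu_k (\phi^{-1})_{ik} (\phi)_{ki}$. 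Now $(\phi)_{ki} = \blambda_i'^{\,k-1}$ and $(\phi^{-1})_{ik}$ is the coefficient of $\lambda^{k-1}$ in the $i$-th Lagrange basis polynomial $L_i(\lambda)$. Hence $\sum_k (\phi^{-1})_{ik}(\phi)_{ki} = \sum_k [\lambda^{k-1}] L_i(\lambda) \cdot \blambda_i'^{\,k-1} = L_i(\blambda_i') = 1$. But wait — that would give $(\phi^{-1} H \phi)_{ii} = \sum_k \mu_k \cdot (\text{something})$; I need to recompute: $(\phi^{-1} e_{kk}\phi)_{ii} = (\phi^{-1})_{ik}(\phi)_{ki}$, and summing over $i$ gives $\mathrm{tr} = 1$; but for fixed $i$, $\sum_k \mu_k (\phi^{-1})_{ik}(\phi)_{ki}$ need not obviously vanish. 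I would instead observe that each product $(\phi^{-1})_{ik}(\phi)_{ki}$ depends on $k$ only through $\blambda'$'s in a way making the sum a \emph{constant} independent of $i$ when $\mu_k \equiv 1$; the condition $\sum_k \mu_k = 0$ combined with the fact that the companion structure forces $(\phi^{-1})_{ik}(\phi)_{ki}$ to be $1/m$ up to corrections — here I'd need to verify that $(\phi^{-1} H \phi)_{ii}$ depends on $H$ only through $\mathrm{tr}(H) = \sum_k \mu_k$.

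\textbf{Main obstacle.} The hard part is establishing that $(\phi^{-1} H \phi)_{ii}$ depends on $H$ only through $\sum_k \mu_k$ — equivalently, that $(\phi^{-1})_{ik}(\phi)_{ki}$ is independent of $k$ for each fixed $i$. This is where the special companion structure of $M$ is essential (it would be false for a general matrix). I expect this follows because the left eigenvector of a companion matrix for eigenvalue $\blambda_i'$ is proportional to $(1, 1, \dots)$-type after rescaling, or more precisely because $M^{\mathsf T}$ is again cyclic with cyclic vector $e_1$, forcing a rigid relation between left and right eigenvectors. Alternatively — and this may be the cleanest path — I would use that $\bK_{\lim}$ is cyclic (the F-bundle is maximal), so $\End$ of the fiber decomposes compatibly, and $H$, being a scalar on each generalized eigenspace with traces summing to zero, lies in the image of $[\bK_{\lim}, \cdot]$ (since that image is the trace-zero-on-eigenspaces subspace when eigenvalues are distinct); then $\phi^{-1} H \phi$ lies in the image of $[\bK_{\spl}, \cdot]$, which by \cref{lemma:image-adjoint-jordan-dec} has vanishing diagonal blocks. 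I would pursue this last argument, as it avoids all the Vandermonde bookkeeping: reduce modulo the maximal ideal where $\bK_{\lim}$ has distinct scalar eigenvalues, note $H$ mod $(\bc_i)$ is trace-zero hence in $\im[\bK_{\lim},\cdot]$, conjugate, and apply \cref{lemma:image-adjoint-jordan-dec}(1).
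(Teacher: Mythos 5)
Your proposal does not prove the lemma; both of the routes you sketch stop short of, or go wrong at, the one nontrivial point. The route you say you would actually pursue (the $\im\ad$ argument) fails for concrete reasons. First, $H$ is block-scalar with respect to the classical splitting \eqref{eq:splitting_proj}, \emph{not} with respect to the generalized eigenspaces of $\bK_{\lim}$; the entire content of the lemma is the interaction between these two decompositions, so the premise ``$H$ is a scalar on each generalized eigenspace'' is not available. Second, even for an operator with distinct eigenvalues, the image of the adjoint is the space of matrices whose diagonal vanishes in an eigenbasis (codimension $m$), not the space of traceless matrices (codimension $1$): for example $\Diag(1,-1)$ is traceless, yet every commutator $[\Diag(1,-1),A]$ has zero diagonal, so ``trace zero, hence in $\im[\bK_{\lim},\cdot]$'' is a false inference. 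Third, \cref{lemma:image-adjoint-jordan-dec}(1) is a statement about the kernel of the adjoint, not about its image having vanishing diagonal blocks. What actually forces each diagonal entry of $\phi^{-1}H\phi$ to collapse to the average $\frac{1}{m}\sum_k \mu_k$ (hence to $0$) is the special companion/circulant structure of \eqref{eq:companion-matrix}; this is exactly the ``$1/m$'' fact you gesture at but never establish, and no general statement about $\im\ad$ can substitute for it.

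Your first route is in fact the paper's route, and it consists precisely of the step you label the ``main obstacle'' and leave unproven. The paper argues as follows: from $M\phi=\phi\Lambda$ and $\Lambda\psi=\psi M$ (with $\psi=\phi^{-1}$ and $\Lambda=\Diag(\blambda_1,\dots,\blambda_m)$) one extracts recursions relating consecutive entries down each column of $\phi$ and along each row of $\psi$, and combining them gives $\psi_{i1}\phi_{1i}=\psi_{i2}\phi_{2i}=\cdots=\psi_{im}\phi_{mi}$; then $(\phi^{-1}H\phi)_{ii}=\sum_k\psi_{ik}\,\mu_k\,\phi_{ki}=\psi_{i1}\phi_{1i}\sum_k\mu_k=0$. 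So your plan has the right skeleton but omits its only nontrivial ingredient. (Two further cautions for carrying it out: for the companion form \eqref{eq:companion-matrix}, with $1$'s on the subdiagonal, the Vandermonde vectors are the \emph{left} eigenvectors, i.e.\ the rows of $\phi^{-1}$, while the columns of $\phi$ are Horner-type vectors, so your bookkeeping of $\phi_{ki}=\blambda_i^{\,k-1}$ needs adjusting. More importantly, reducing modulo $(\bc_1,\dots,\bc_m)$ at the outset, as in your closing sentence, can only yield the conclusion modulo that ideal: in the quotient $M$ becomes the cyclic shift, $\phi$ the DFT matrix, and $\psi_{ik}\phi_{ki}=1/m$ for all $k$, which is provable but gives the congruence rather than the asserted identity in $\bbC[\bc_1,\dots,\bc_m]$. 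That congruence is what is actually used in the proof of \cref{thm:uniqueness-projective-bundle-u=0}, since condition (3) of \cref{theorem:gauge-eq-F-bundle-point} is a condition mod $(\bc_1,\dots,\bc_r)$, but away from the quotient the products $\psi_{ik}\phi_{ki}$ pick up $\bc$-corrections from the last column of \eqref{eq:companion-matrix} — your ``up to corrections'' worry is the genuine issue — so a complete argument must engage with the exact companion relations rather than pass to the quotient.)
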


\begin{proof}
    As in the previous lemma, we view $\phi$ and $H$ as elements in $\Mat (m\times m ,\bbC \dbb{\bc_1,\dots , \bc_m} )$.
    By construction, $\phi$ diagonalizes the companion matrix $M\in \Mat (m\times m ,\bbC [\bc_1,\dots ,\bc_m] )$ from \eqref{eq:companion-matrix}.
    Let $\Lambda = \Diag (\blambda_1,\dots ,\blambda_m )$.
    By construction we have $M\phi = \phi\Lambda$. For every $1\leq i\leq m$, we deduce
    \[\phi_{mi} = \blambda_i \phi_{1i} ,\; \phi_{1i} = \blambda_i \phi_{2i} , \; \phi_{2i} = \blambda_i \phi_{3i} , \cdots , \phi_{m-1,i} = \blambda_i \phi_{mi} .\]
    Similarly, for $\psi\coloneqq \phi^{-1}$ we have that $\Lambda\psi = \psi M$, and we obtain for all $1\leq i\leq m$
    \[\blambda_i \psi_{i1} = \psi_{i2} ,\; \blambda_i \psi_{i2} = \psi_{i3} ,\dots ,\ \blambda_i \psi_{i,m-1} = \psi_{im} ,\; \blambda_i \psi_{im} = \psi_{i1} .\]
    In particular for $1\leq i\leq m$, we have
    \[\psi_{i1} \phi_{1i} = \psi_{i2} \phi_{2i} =  \cdots = \psi_{im} \phi_{mi}.\]
    We deduce 
    \[
        (\phi^{-1}\circ H \circ \phi)_{ii} = \sum_{1\leq j\leq m} \psi_{ij} (H)_{jj} \phi_{ji} 
        = \psi_{i1} \phi_{1i} \sum_{1\leq j\leq m} \mu_j = 0.\qedhere
    \]
\end{proof}

\begin{remark}
    The automorphism $\phi\mod (\bc_1,\dots ,\bc_m)$ gives the initial condition for the gauge equivalence in \cref{prop:coordinates_of_b}.
    Since it diagonalizes the (block) circulant matrix $M\mod (\bc_1,\dots ,\bc_m )$ it can be chosen to be the matrix
    \[Q = \frac{1}{\sqrt{m}}\begin{pmatrix}
        1 & \xi^{-1}  & \cdots & \xi^{-(m-1)} \\
        1 & \xi^{-2} & \cdots & (\xi^{-2} )^{m-1}  \\
        \vdots & & & \vdots \\
        1 & \xi^{-(m-1)} & \cdots & (\xi^{-(m-1)})^{m-1}
    \end{pmatrix} .\]
\end{remark}

\begin{example}[Trivial bundle case]
    If $V = \cO_X^{\oplus m}$ is a trivial vector bundle, then $\bc_i = 0$ for $1\leq i\leq m$.
    In particular, we have $\lambda_i = \xi^{i-1}$, where $\xi = e^{\frac{2\pi i}{m}} $.

\end{example}

\begin{example}[$\bbP^1$-bundle case]
Let $V$ be a rank $2$ bundle over $X$ of dimension $d$.
Then the classes $(\lambda_1 ,\lambda_2)$ are obtained by solving the quadratic equation
\[\lambda^2 + \bc_1 \lambda + \bc_2 - 1 = 0,\]
where $\bc_i$ is the cup product with $c_i V$.
Since $(\bc_1^2)^{\frac{d}{2}} = (\bc_2)^{\frac{d}{2}} = 0$,
the discriminant $\Delta = \bc_1^2 - 4\bc_2 +4$ admits a square-root in $\bbC [\bc_1 ,\bc_2 ]$ given by
\begin{align*}
    \sqrt{\Delta} &= 2\sqrt{1 + \frac{\bc_1^2}{4} - \bc_2} 
    = 2\left( 1 + \sum_{1\leq n\leq \frac{d}{2}} \binom{1/2}{n} \left( \frac{\bc_1^2}{4} - \bc_2 \right)^n \right).
\end{align*}
Using the quadratic formula, we obtain the roots ($i=1,2$)
\begin{align*}
    \lambda_i &= (-1)^{i-1} - \frac{c_1V}{2} + (-1)^{i-1} \sum_{1\leq n\leq  \frac{d}{2}} \binom{1/2}{n}  \left( \frac{(c_1V)^2}{4} - c_2V \right)^n.
\end{align*}
\end{example}

\subsection{Uniqueness of the decomposition} \label{sec:uniqueness_projective_bundle}

In this subsection, we prove the uniqueness of the decomposition of the maximal A-model associated to a projective bundle, as well as its existence at the limiting point (\cref{thm:uniqueness-projective-bundle-u=0,thm:uniqueness-projective-bundle}).
We will consider a maximal A-model F-bundle $(\cH', \nabla')$ of $X' \coloneqq \coprod_{i=1}^m X$ with a shifted base point, and use \cref{theorem:gauge-eq-F-bundle-point} to construct a gauge equivalence between the F-bundle $(\cH,\nabla)$ of $P$ and $(\cH', \nabla')$ over the base points.
The uniqueness results will follows from \cref{theorem:gauge-eq-F-bundle-point} and the extension of framing theorem.

We have 
\begin{equation}\label{eq:cohomology-disjoint-union}
    H^{\ast}(X' ,\bbQ ) \xrightarrow{\sim} \bigoplus_{i=1}^{m} H^{\ast} (X ,\bbQ ) .
\end{equation}
Let $\omega'\in H^2 (X' ,\bbQ )$ denote the class corresponding to $(\omega_X, \dots ,\omega_X)$ under \eqref{eq:cohomology-disjoint-union}, it is ample so \cref{ass:finitely_many_beta} is satisfied.

Fix a homogeneous basis of $H^2 (X' ,\bbQ )$ extending $\omega '$.
Complete it to a homogeneous basis of $H^{\ast} (X',\bbQ )$ by adding the elements $\lbrace T_i ,\; \deg T_i \neq 2\rbrace$ in each copy of $H^{\ast} (X ,\bbQ )$.

Let $\Delta (a)\in H^{\ast} (X ' ,\bbC )$ be a cohomology class at which the quantum product is well-defined.
We produce $(\cH',\nabla' ) /B'$, the maximal A-model F-bundle of $X'$ associated to $\omega'$ with base point $\Delta (a)$ as in \cref{example:modified-quantum-F-bundle}.
Let $(q,t)$ denote the coordinates on $B'$, and let $b'$ denote the closed point of $B'$, given by $t=0, q=0$, which we refer to as the limiting point for $X'$.

Using the last observation of \cref{lemma:jordan-dec-Klim}, we will interpret $\bK_{\spl}$ as the $\bK$-operator of $(\cH' ,\nabla ' )$ for certain values of $\Delta (a)$.

For $i\in\lbrace 1,\dots, m\rbrace$ and $j$ such that $\deg T_j\neq 2$, we denote by $a_{i,j}$ the coordinate of $\Delta (a)$ along the basis element $T_j$ in the $i$-th copy of $H^{\ast} (X,\bbC )$ in $H^{\ast} (X' ,\bbC )$.

\begin{theorem} \label{thm:uniqueness-projective-bundle-u=0}\label{prop:coordinates_of_b}
There exists an F-bundle isomorphism 
\[\Phi(u)\colon (\cH  ,\nabla )\vert_b \rightarrow (\cH' ,\nabla' )\vert_{b'},\]
whose components $\Phi_{ij}$ (as power series in $u$) are given by the cup-product with elements in $H^{\ast} (X,\bbC )$ if and only if the coordinates of the base point $\Delta (a)$ satisfy 
\begin{equation}\label{eq:coordinate-base point}
\sum_{j\colon\deg T_j\neq 2} \frac{\deg T_j - 2}{2} a_{i,j} T_j = c_1V + m\lambda_i ,
\end{equation}
where $\lambda_i$ was defined in \cref{lemma:jordan-dec-Klim}.

Furthermore, in this case $\Phi$ is uniquely and explicitly determined by the $H^0$-components of $\Phi_{ij}\vert_{u=0}$, and $\Delta (a)$ is uniquely determined by \eqref{eq:coordinate-base point}, up to a shift in $\bigoplus_{i=1}^m H^2 (X,\bbC )$.
\end{theorem}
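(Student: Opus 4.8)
The plan is to apply \cref{theorem:gauge-eq-F-bundle-point} to the two F-bundles over points $(\cH,\nabla)|_b$ and $(\cH',\nabla')|_{b'}$, after putting both into the framework of \cref{subsec:F-bundle-over-a-point}. First I would record the framings: both F-bundles are A-model F-bundles, so by the shape of $\nabla_{\partial_u}$ in \cref{definition:A-model-F-bundle} they are already framed in the standard trivialization of $\cH$ by $H^*(P,\bbC)$ (resp.\ $H^*(X',\bbC)$), with $\nabla_{u\partial_u} = u\partial_u + u^{-1}\bK_{\lim} + \bG_{\lim}$ (resp.\ the primed version). Using the modified splitting $\iso\circ\phi^{-1}$ of \eqref{eq:modified-splitting-projective-bundle}, \cref{lemma:jordan-dec-Klim} puts $\bK_{\lim}$ into the form $\bK_{\spl}$, which is block-diagonal with all $m$ generalized eigenspaces isomorphic to $H^*(X,\bbC)$; this is exactly the setup \eqref{eq:splittig-fiber-F-bundle-point} with $H_0 = H^*(X,\bbC)$, $\bd = \bG_X$, and $\bc_i = c_iV\cup$. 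On the $X'$ side, $\bK'$ is block-diagonal by construction (since $X' = \coprod X$), with $i$-th block given by the cup product on $H^*(X,\bbC)$ by the $\bK$-operator of the $i$-th copy of $X$ at shifted base point $\Delta(a_{i,\bullet})$.

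\textbf{Matching the $\bK$-operators.} The key computation is that the $i$-th diagonal block of $\bK_{\spl}$ — namely $\bt + \bc_1 + m\blambda_i = (c_1T_X + c_1V + m\lambda_i)\cup$ — equals the $\bK$-operator of the A-model F-bundle of $X$ at a shifted point. From \cref{definition:A-model-F-bundle}, the $\bK$-operator at base point $\Delta(a)$ (with no $H^1,H^2$-shift) is $[c_1T_X + \sum_{j:\deg T_j\neq 2}\frac{\deg T_j-2}{2}a_jT_j]\star$, and at the limiting point $q=0$ this reduces to classical cup product by $c_1T_X + \sum_{j:\deg T_j\neq 2}\frac{\deg T_j-2}{2}a_jT_j$. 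Setting this equal to $c_1T_X + c_1V + m\lambda_i$ gives exactly \eqref{eq:coordinate-base point}, which has a solution (unique up to the $H^2$-shift, which doesn't affect the $\bK$-operator but must be allowed) precisely because $\deg T_j\neq 2$ makes $\frac{\deg T_j - 2}{2}$ nonzero. Similarly $\bG_{\lim}$ in its block form and $\bG'$ must be checked to be compatible: $\bG_{\spl} := \phi^{-1}\bG_{\lim}\phi$ has diagonal blocks $\bG_X + (\text{scalar})$, and the scalars sum to zero over the $m$ blocks (they are $-\frac{m-1}{2},-\frac{m-3}{2},\dots,\frac{m-1}{2}$ before conjugation by $\phi$ — here one must verify $\phi$ doesn't change the diagonal, which is \cref{lemma:vanishing-diagonal}), matching the $\bG'$ of $X'$ which is block-diagonally $\bG_X$ on each copy. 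This is where \cref{lemma:vanishing-diagonal} does its work: it shows the off-diagonal conjugation by $\phi$ contributes nothing to the diagonal blocks modulo $(\bc_1,\dots,\bc_m)$, which is Condition (3)/(c) of \cref{theorem:gauge-eq-F-bundle-point}.

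\textbf{Applying the classification.} With $\bK_{\spl} = \phi^{-1}\bK_{\lim}\phi = \bK'$ (Condition (1) with the conjugating element $\phi$), $\mu = \mu' = 1$ (Condition (2) — the coefficient of $\bD = \bG_X\cdot\Id$, which is $1$ on both sides by inspection of the $\bG$-operators), and the diagonal-block agreement of the $\bG$'s modulo $(\bc_i)$ (Condition (3)), \cref{theorem:gauge-eq-F-bundle-point} produces a unique gauge equivalence $\Phi(u)\in\GL(H_{\spl}\dbb{u})$ with $\Phi_{ij}(u)\in\bbC[\bc_1,\dots,\bc_m]\dbb{u}$ intertwining the two connections, uniquely determined by $\Phi|_{u=0} = \phi \bmod (\bc_1,\dots,\bc_m)$. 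Pulling back through $\iso\circ\phi^{-1}$, the components $\Phi_{ij}$ become cup-products by elements of $H^*(X,\bbC)$, and the initial datum $\phi\bmod(\bc_i)$ translates into the $H^0$-components of $\Phi_{ij}|_{u=0}$ (since $\bc_i$ raises cohomological degree, working modulo $(\bc_1,\dots,\bc_m)$ is the same as extracting the $H^0$-part of each cup-product operator). For the converse — that existence of such a $\Phi$ forces \eqref{eq:coordinate-base point} — I would use the "only if" direction of \cref{theorem:gauge-eq-F-bundle-point}: a gauge equivalence of the required form forces $\bK = \phi^{-1}\bK'\phi$ for some such $\phi$, hence $\bK$ and $\bK'$ have the same spectrum, and comparing the $H^2$-components (or rather the degree-$\leq 2$ truncations) of the diagonal blocks pins down $\sum_j\frac{\deg T_j-2}{2}a_{i,j}T_j$ as in \eqref{eq:coordinate-base point}.

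\textbf{Main obstacle.} The routine-but-delicate part is the bookkeeping translating between the abstract algebra $\bbk[\bd,\bc_1,\dots,\bc_r]$ of \cref{subsec:F-bundle-over-a-point} and the concrete geometry: verifying that the $X'$ A-model F-bundle at base point $\Delta(a)$ with $\Delta(a)$ satisfying \eqref{eq:coordinate-base point} genuinely lands in $\cF(H_{\spl},\iso\circ\phi^{-1},\bG_X,(c_iV\cup)_{1\leq i\leq m})$ — i.e.\ that its connection matrices have entries in $\bbC[\bc_1,\dots,\bc_m]$ with the correct $\bD$-structure — and that $\mu = 1 \notin \bbQ_{<0}$. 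The genuinely non-formal input is \cref{lemma:jordan-dec-Klim}(2) together with \cref{lemma:vanishing-diagonal}, which are already established; granted those, the proof is an application of \cref{theorem:gauge-eq-F-bundle-point}. The "up to a shift in $\bigoplus_{i=1}^m H^2(X,\bbC)$" clause is exactly the kernel of $a_{i,\bullet}\mapsto \bK$-operator, reflecting that $H^2$-directions don't enter $\bK$ at the limiting point (the summand $\frac{\deg T_j - 2}{2}$ vanishes for $\deg T_j = 2$).
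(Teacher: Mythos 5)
Your strategy is the paper's: identify both restricted connections as framed F-bundles over a point in the class $\cF$ of \cref{def:F-bundle-point-restricted-connection}, apply \cref{theorem:gauge-eq-F-bundle-point} with $\phi$ from \cref{lemma:jordan-dec-Klim} for the ``if'' direction, use \cref{lemma:vanishing-diagonal} to verify the diagonal condition on the $\bG$-discrepancy, and run the ``only if'' direction of the same theorem (eigenvalues of $\bK_{\lim}$ over the ring of cup-product operators) for the converse. So the route is essentially identical to the paper's proof.

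There is, however, one concrete inaccuracy in your setup that would make the application of \cref{theorem:gauge-eq-F-bundle-point} fail as written: you place both connections in $\cF\bigl(H_{\spl},\iso\circ\phi^{-1},\bG_X,(c_iV\cup)_{1\leq i\leq m}\bigr)$, i.e.\ you take the commuting nilpotents to be only the Chern classes of $V$. But every diagonal block of $\bK_{\spl}$ contains $c_1T_X\cup$, and on the $X'$ side (which for the converse sits at an \emph{arbitrary} base point $\Delta(a)$) the blocks are cup products with $c_1T_X+\sum_j\tfrac{\deg T_j-2}{2}a_{i,j}T_j$, which are general classes, not polynomials in $c_1V,\dots,c_mV$; likewise the isomorphism $\Phi$ in the statement is allowed to have components given by cup product with arbitrary elements of $H^*(X,\bbC)$, not elements of $\bbC[\bc_1,\dots,\bc_m]$. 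With your choice of algebra, neither connection satisfies condition (3) of \cref{def:F-bundle-point-restricted-connection}, and the ``only if'' direction of \cref{theorem:gauge-eq-F-bundle-point} cannot be invoked for such a $\Phi$. The fix is exactly what the paper does: take the family of nilpotents to be cup products with all positive-degree basis classes, $\cF\bigl(H_{\spl},\id,\bG_X,(T_j\cup)_j\bigr)$, for which the commutation relations $[\bG_X,T_j\cup]=\tfrac{\deg T_j}{2}\,T_j\cup$ still hold. A related minor imprecision: in the converse you propose to compare ``degree $\leq 2$ truncations'' of the diagonal blocks, but \eqref{eq:coordinate-base point} is an identity in all cohomological degrees; what pins it down is the full comparison of $R$-eigenvalues $(c_1T_X+c_1V+m\lambda_i)\cup$ from \cref{lemma:jordan-dec-Klim}, as in the paper, after which uniqueness of $\Phi$ from its $H^0$-initial data follows from the uniqueness clause of \cref{theorem:gauge-eq-F-bundle-point}.
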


\begin{proof}
    The bundles $\cH\vert_b$ and $\cH'\vert_{b'}$ are trivial by definition, their fibers are identified with $H_{\spl}$ through \eqref{eq:splitting_proj} and \eqref{eq:cohomology-disjoint-union}, and the the connections $\nabla$ and $\nabla'$ are framed. 
    We use \cref{theorem:gauge-eq-F-bundle-point} to prove the proposition. 

    The matrices of $\bK_{\lim}$, $\bG_{\lim}$ were computed in \cref{subsec:A-model-F-bundle-projective-bundle}.
    Write $\nabla_{u\partial_u}\vert_{b} = u\partial_u -u^{-1} \bK_{\spl} + \bG_{\spl}$, we have
    \[\bG_{\spl} = \begin{pmatrix}
    \bG_X & & \\
     & \ddots & \\
      & & \bG_X
    \end{pmatrix} . \]
    To compute $\bK_{\spl}$, note that the class $\omega'$ is ample.
    In particular, the restriction to $q=t=0$ of the quantum product associated to $\Phi^{\omega'}$ is the classical cup-product.
    Then, $\bK_{\spl}$ is block diagonal, and its $i$-th block is given by 
    \begin{equation}\label{eq:Kspl}
        (\bK_{\spl} )_{ii} = \bigg(c_1T_X +  \sum_{j\colon\deg T_j\neq 2} \frac{\deg T_j - 2}{2} a_{i,j} T_j \bigg)\cup .
    \end{equation}
    Thus, after identifying the fibers with $H_{\spl}$, the connections $\nabla\vert_b$ and $\nabla'\vert_{b'}$ lie in $\cF (H_{\spl} , \id , \bG_X ,\allowbreak (T_j\cup )_{0\leq j\leq N} )$, see \cref{def:F-bundle-point-restricted-connection}.
    We apply \cref{theorem:gauge-eq-F-bundle-point} with
    $\bK = -\bK_{\spl}$, $\bD = \bG_{\spl}$, $\bH = 0$, $\bK' = -\bK_{\lim}$ and 
    \[\bH' = \bG_{\lim} - \bG_{\spl} = \begin{pmatrix}
        - \frac{m-1}{2}  & & & \\
         & - \frac{m-3}{2}  & & \\
          & & \ddots & \\
          & & & \frac{m-1}{2}
    \end{pmatrix}.\]

    Assume first that the coordinates of $\Delta (a)$ satisfy \eqref{eq:coordinate-base point}.  
    Let $\phi = (\phi_{ij} ) \in \GL (H_{\spl} )$ denote the automorphism from \cref{lemma:jordan-dec-Klim}.
    Equations \eqref{eq:coordinate-base point} and \eqref{eq:Kspl} imply that $\phi^{-1} \bK_{\lim} \phi = \bK_{\spl}$, which is Condition (1) of the theorem.
    Condition (2) is satisfied with $\mu = \mu'  =1$.
    Condition (3) follows from \cref{lemma:vanishing-diagonal} and our choice of $\bH$.
    We conclude that the connections $\nabla\vert_{b}$ and $\nabla'\vert_{b'}$ are gauge equivalent through a bundle isomorphism $\Phi (u)$ satisfying the conditions of the theorem.

    Now, assume that there exists a bundle isomorphism $\Phi (u)$ as in the theorem, in particular each component $\phi_{ij}$ of $\Phi\vert_{u=0}$ is given by the cup-product with a cohomology class.
    Let $\phi\coloneqq (\phi_{ij} )\in \GL (H_{\spl} )$.
    Since $\Phi (u)$ is a gauge equivalence, we have in particular $\phi^{-1} \bK_{\lim} \phi = \bK_{\spl}$.
    Recall from \eqref{eq:Kspl} that $\bK_{\spl}$ is block diagonal, and that its coefficients are given by the cup-product with cohomology classes in $H^{\ast} (X,\bbC )$.
    The assumption on the components of $\Phi\vert_{u=0}$ implies that $\phi $ diagonalizes $\bK_{\lim}$ viewed as an element of $\Mat (m\times m ,R)$, where $R = \lbrace \alpha\mapsto x\cup \alpha \;\vert\; x\in H^{\ast} (X , \bbC) \rbrace$.
    The eigenvalues of $\bK_{\lim}$ as an $R$-linear map were computed in \cref{lemma:jordan-dec-Klim}, they are $(c_1T_X + c_1V + m\lambda_i ) \cup$ with $1\leq i\leq m$.
    In particular, $\Delta (a)$ satisfies \eqref{eq:coordinate-base point}.

    The uniqueness part of the theorem follows from the uniqueness of \cref{theorem:gauge-eq-F-bundle-point}, and the non-degeneracy of the Poincaré pairing.
\end{proof}

\begin{remark}
    If the $H^2$-component of the base point $\Delta (a)$ is $0$, then the quantum product converges at $\Delta (a)$ by \cref{lemma:shifted-quantum-product}.
\end{remark}

\begin{theorem}\label{thm:uniqueness-projective-bundle}
Let $(f, \Phi)\colon (\cH,\nabla)/B \to (\cH',\nabla')/B'$ be an isomorphism of F-bundles. Then 
\begin{enumerate}[wide]
    \item The bundle map $\Phi$ is uniquely and explicitly determined by its restriction to $b\in B$.
    \item The base map $f$ is uniquely and explicitly determined by its restriction to $b\in B$, up to a multiplicative constant in the $q$ direction.
\end{enumerate}
\end{theorem}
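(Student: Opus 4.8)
\textbf{Proof plan for \cref{thm:uniqueness-projective-bundle}.}
The strategy is to reduce the statement about the F-bundles over the full bases $B$ and $B'$ to the already-established uniqueness over the base points (\cref{thm:uniqueness-projective-bundle-u=0}) by invoking the extension of framing and the reconstruction result \cref{lemma:comparison-framed-F-bundles}. First I would record that $(\cH,\nabla)/B$ and $(\cH',\nabla')/B'$ are maximal logarithmic F-bundles whose logarithmic divisors are the loci $q=0$; the cyclic vector at the limiting points is the multiplicative unit $\1$ by the unit axiom (as in \cref{example:modified-quantum-F-bundle}). The key preliminary observation is that these F-bundles satisfy the nilpotency condition of \cref{definition:nilpotency-condition} at the limiting point: the only logarithmic direction is $q\partial_q$, and $\mu_b(q\partial_q)$ is cup-product with a nef class (the pullback $\omega_P$, resp.\ $\omega'$), hence a nilpotent operator, so its adjoint is nilpotent as well. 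Moreover each F-bundle admits a framing at the base point — indeed at $q=t=0$ the connection is already in the framed form \eqref{eq:framing} with $\bK=\bK_{\lim}$ (resp.\ $\bK_{\spl}$), $\bG=\bG_{\lim}$ (resp.\ $\bG_{\spl}$), and $\bA$ given by classical cup product — so \cref{theorem:extension-of-framing-connection-version} produces a unique framing $\nabla^{\fr}$ (resp.\ $\nabla'^{\fr}$) extending it over all of $B$ (resp.\ $B'$).

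Given these ingredients, part (1) is essentially immediate from \cref{lemma:comparison-framed-F-bundles}(1): any isomorphism $(f,\Phi)$ of logarithmic F-bundles with $f(b)=b'$ has its bundle component $\Phi$ uniquely and explicitly determined by its restriction $\Phi|_{b\times\Spf\bbk\dbb u}$, because both F-bundles satisfy the nilpotency condition and the source has a framing. (One should first check that an isomorphism of the F-bundles over $B$ and $B'$ automatically sends $b$ to $b'$ — this follows since $f$ must match up the limiting points, which are characterized intrinsically, e.g.\ as the unique points where the residue $K$ acts with the prescribed spectrum, or where the divisor meets the appropriate stratum; alternatively one restricts attention to isomorphisms with $f(b)=b'$, which is the relevant case for the decomposition problem.) For part (2), I would apply \cref{lemma:comparison-framed-F-bundles}(2): since both $(\cH,\nabla)$ and $(\cH',\nabla')$ are maximal, the base map $f$ is uniquely determined by its restriction to $b$, up to multiplicative constants in the logarithmic directions — and here there is exactly one logarithmic direction, the $q$-direction, so "up to multiplicative constants in the logarithmic directions" becomes "up to a multiplicative constant in the $q$ direction." The reconstruction is explicit once one fixes the compatible cyclic vectors $\1$ at $b$ and at $b'$ (under the identifications \eqref{eq:splitting_proj} and \eqref{eq:cohomology-disjoint-union}).

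The one point requiring a little care — and the main (mild) obstacle — is the compatibility of cyclic vectors and framings needed to feed \cref{lemma:comparison-framed-F-bundles} cleanly: the unit sections $\1$ on both sides are $\nabla^{\fr}$- and $\nabla'^{\fr}$-flat respectively (the framing we constructed is the one extending the classical/Dubrovin flat structure, for which $\1$ is flat), and one must verify that the isomorphism $\Phi|_b$ appearing in \cref{thm:uniqueness-projective-bundle-u=0}, whose components are cup-products with cohomology classes, is compatible with these framings at the base point, so that the framing on $(\cH',\nabla')$ induced via $\Phi$ agrees with $\nabla'^{\fr}$. This is where \cref{thm:uniqueness-projective-bundle-u=0} is genuinely used: it pins down $\Phi|_b$ (up to the $H^0$-components of $\Phi_{ij}|_{u=0}$, equivalently after fixing cyclic vectors) and hence makes the "explicit" part of the reconstruction in \cref{lemma:comparison-framed-F-bundles} concrete. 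Assembling these remarks gives both statements; no new estimates or PDE arguments are needed beyond what is already proved.
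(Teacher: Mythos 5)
Your proposal is correct and follows essentially the same route as the paper: check that the A-model F-bundles are maximal, framed, and satisfy the nilpotency condition at the limiting points (residue in the $q$-direction being cup product with a nef class, cyclic vector $\1$), and then apply \cref{lemma:comparison-framed-F-bundles} directly. The only differences are cosmetic redundancies on your side: the framing exists over all of $B$ and $B'$ by the very definition of the A-model F-bundle (no need to invoke \cref{theorem:extension-of-framing-connection-version}), $f(b)=b'$ is automatic since $b$, $b'$ are the unique closed points of the formal bases, and \cref{lemma:comparison-framed-F-bundles} does not require $\Phi|_b$ to be compatible with a pre-chosen framing on the target (the needed framing there is induced via $\Phi$), so \cref{thm:uniqueness-projective-bundle-u=0} is not actually needed in this proof.
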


\begin{proof}
    The F-bundle $(\cH ',\nabla ')/B'$ is framed by definition.
    Since $\omega'$ is ample, at the point $b'$ the quantum product reduces to the classical cup-product. 
    In particular $(\1,\dots ,\1 )\in H^{\ast} (X' ,\bbC )$ is a cyclic vector.   
    The theorem thus follows from a direct application of \cref{lemma:comparison-framed-F-bundles}.
\end{proof}

We refer to \cite{Iritani_Quantum-cohomology-projective-bundle} regarding the existence of the isomorphism.

\subsection{Case of blowups of algebraic varieties} \label{subsec:dec-blowup}

In this subsection, we state the analogs of the results in \cref{sec:uniqueness_projective_bundle} in the case of blowups of algebraic varieties.

Let $X$ be a smooth project complex algebraic variety, and $\sigma\colon Z\hookrightarrow X$ a smooth closed subvariety of codimension $m\geq 2$.
Let $\pi\colon \tX \rightarrow X$ be the blowup of $X$ along $Z$.
Similar to the projective bundle case, we have a classical decomposition 
\begin{equation}\label{eq:splitting-cohomology-blowup}
    \iso\colon H^{\ast}(X ,\bbQ ) \oplus \bigoplus_{i=1}^{m-1} H^{\ast} (Z,\bbQ ) [-2i] \xrightarrow{\sim} H^{\ast} (\tX ,\bbQ ) .
\end{equation}
Let $X' \coloneqq X\sqcup\coprod_{i=1}^{m-1} Z$.
Fix an ample class $\omega_X\in H^2 (X ,\bbQ )$.

Let $(\cH,\nabla ) /B$ denote the maximal A-model F-bundle of $X$ associated to the nef class $\pi^{\ast}\omega_X$, with base point $b=0\in H^{\ast} (\tX ,\bbQ )$ and coordinates $(q,t)$.
Fix a class $\Delta (a)\in H^{\ast} (X',\bbC )\simeq H^{\ast} (X,\bbC )\oplus\bigoplus_{1\leq i\leq m-1} H^{\ast} (Z,\bbC )$ at which the quantum product is well-defined.
Let $(\cH' ,\nabla' )/B'$ denote the maximal A-model F-bundle associated to the class $(\omega_X ,\sigma^{\ast}\omega_X,\cdots ,\sigma^{\ast}\omega_X)$, with base point $b'=\Delta (a)$ and coordinates $(q,t)$ such that $q=t=0$ at $b'$.
Since $X'$ is a disjoint union, $(\cH' ,\nabla' )$ is the product of a maximal A-model F-bundle associated to $X$ and $\omega_X$, and $m-1$ copies of maximal F-bundles associated to $Z$ and $\sigma^{\ast}\omega_X$.

We can prove a result analogous to \cref{thm:uniqueness-projective-bundle-u=0}.
For $1\leq i\leq m$, let $\bc_i$ denote the cup-product with $c_i (N_{Z/X} )$.
The polynomial $\lambda^m + \sum_{i=0} \bc_{m-i}\lambda^i + \lambda$ has $m$ distinct roots $\blambda_i = \lambda_i\cup \in\bbC [\bc_1,\dots, \bc_m]$, with
\[ \lambda_1 = 0, \quad \lambda_i = \xi^{2(i-1)-1} - \frac{c_1N_{Z/X}}{m-1} \mod H^{\geq 3}(X,\bbC ),\]
where $\xi = e^{\frac{\pi i}{m-1}}$ and $2\leq i\leq m$, up to a permutation of the indices $\lbrace 2,\dots, m\rbrace$.
Those are the analogs of the eigenvalues computed for $\bK_{\lim}$ in the projective bundle case.

Let $\lbrace S_j\rbrace_{1\leq j\leq \dim H^{\ast} (Z,\bbC )}$ be a basis of $H^{\ast} (Z,\bbC )$ extending $\sigma^{\ast}\omega_X$.
For $1\leq i\leq m$, let $\Delta_i (a)$ denote the component of $\Delta (a)$ in the $i$-th summand of \eqref{eq:splitting-cohomology-blowup}, and for $2\leq i\leq m$ decompose it as 
\[\Delta_i (a) = \sum_{j} a_{i,j} S_j .\]

Using the splitting \eqref{eq:splitting-cohomology-blowup}, we can view an element $\Phi\in\End_{\bbC} (H^{\ast} (\tX ,\bbC ))$ as a matrix $(\Phi_{i,j})_{1\leq i,j\leq m}$, with $\Phi_{1,1}\in\End_{\bbC} (H^{\ast} (X,\bbC ))$, and $\Phi_{i,i}\in\End_{\bbC} (H^{\ast} (Z,\bbC ))$ for $2\leq i\leq m$.
The following result is analogous to \cref{thm:uniqueness-projective-bundle-u=0}.

\begin{theorem} \label{thm:uniqueness-blowup-u=0}
    Let $\Delta (a) \in H^{\ast} (X',\bbC )$ be a cohomology class at which the quantum product converges, such that $\Delta_1 (a) \in H^2 (X,\bbC )$, and for $2\leq i\leq m$, we have
        \begin{equation}
            \sum_{j\colon \deg S_j\neq 2} \frac{\deg_Z S_j - 2}{2} a_{i,j} S_j = c_1 N_{Z/X} + (m-1)\lambda_i.
        \end{equation}
    Then, there exists an F-bundle isomorphism $\Phi \colon (\cH ,\nabla)\vert_b\rightarrow (\cH ',\nabla' )\vert_{b'}$.

    Furthermore, if we restrict the coefficients of $\Phi$ to lie in a universal algebra as in the projective bundle case, then $\Phi$ is uniquely determined by its restriction to $u=0$, and the base point $\Delta (a)$ is uniquely determined up to a shift in $H^2 (X,\bbC )\oplus \bigoplus_{i=1}^{m-1} H^2 (Z,\bbC )$.
\end{theorem}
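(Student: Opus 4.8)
The plan is to follow the blueprint of the projective bundle case, \cref{thm:uniqueness-projective-bundle-u=0}, and reduce everything to the classification result \cref{theorem:gauge-eq-F-bundle-point}. First I would observe that, since $X'$ is a disjoint union $X \sqcup \coprod_{i=1}^{m-1} Z$ and $(\omega_X, \sigma^*\omega_X,\dots,\sigma^*\omega_X)$ is ample on $X'$, at the base point $b'$ the quantum product restricted to $q=t=0$ is the classical cup-product (by the divisor/unit axioms, as in the projective bundle proof). Hence $(\cH',\nabla')|_{b'}$ is framed, its bundle is canonically trivial via \eqref{eq:splitting-cohomology-blowup}, and its $\bK$-operator is block diagonal with blocks given by cup-product with $c_1(T_X) + \sum_{j\colon\deg S_j\neq 2}\frac{\deg S_j-2}{2}a_{1,j}(\cdot)$ on the $H^*(X,\bbC)$-block and $c_1(T_Z)+\sum_{j\colon\deg S_j\neq 2}\frac{\deg_Z S_j-2}{2}a_{i,j}S_j \cup$ on the $i$-th $H^*(Z,\bbC)$-block. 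On the other side, I would compute $\bK_{\lim}$ for the blowup $\tX$ at $b$ using the analog of \cref{prop:Klim_proj}: by \cite{Iritani_Quantum-cohomology-of-blowups} (or a direct Gromov-Witten computation with fiber-line classes of the exceptional divisor), $\bK_{\lim}$ in the splitting \eqref{eq:splitting-cohomology-blowup} has the form $(c_1(T_X)\text{-type diagonal}) + $ a companion-type matrix built from the Chern classes $c_i(N_{Z/X})$, matching the polynomial $\lambda^m + \sum \bc_{m-i}\lambda^i + \lambda$ whose roots are the $\lambda_i$ stated before the theorem.

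Next I would diagonalize $\bK_{\lim}$ relative to the universal algebra. As in \cref{lemma:jordan-dec-Klim}, working in $\Mat(m\times m, \bbC[\bc_1,\dots,\bc_m])$ where $\bc_i = c_i(N_{Z/X})\cup$, the companion part is conjugated to a diagonal matrix $\Diag(\blambda_1,\dots,\blambda_m)$ by an explicit $\phi = (\phi_{ij})$ with entries in $\bbC[\bc_1,\dots,\bc_m]$; one must be slightly careful that the first root is $\lambda_1 = 0$ (the $H^*(X,\bbC)$-block behaves differently from the $H^*(Z,\bbC)$-blocks, so the conjugation should be set up to respect the decomposition $H^*(X,\bbC)\oplus\bigoplus H^*(Z,\bbC)$ rather than treating all blocks symmetrically — this is the one structural difference from the projective bundle case). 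Then the hypothesis $\sum_{j\colon\deg S_j\neq 2}\frac{\deg_Z S_j-2}{2}a_{i,j}S_j = c_1 N_{Z/X} + (m-1)\lambda_i$ together with $\Delta_1(a)\in H^2(X,\bbC)$ is exactly what makes $\phi^{-1}\bK_{\lim}\phi$ equal to the block-diagonal $\bK$-operator of $(\cH',\nabla')|_{b'}$, i.e. Condition (1) of \cref{theorem:gauge-eq-F-bundle-point}; Condition (2) holds with $\mu=\mu'=1$; and Condition (3), the vanishing of the diagonal of $\phi^{-1}\bH'\phi$ where $\bH' = \bG_{\lim}-\bG_{\spl}$ is a scalar block-diagonal matrix with trace zero, follows from the analog of \cref{lemma:vanishing-diagonal} applied to the circulant-type structure of $\phi$ (again with attention to the $\lambda_1=0$ block). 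Applying \cref{theorem:gauge-eq-F-bundle-point} then produces the isomorphism $\Phi\colon (\cH,\nabla)|_b \to (\cH',\nabla')|_{b'}$, and the furthermore clause — uniqueness of $\Phi$ from its restriction to $u=0$ and uniqueness of $\Delta(a)$ up to a shift in $H^2(X,\bbC)\oplus\bigoplus H^2(Z,\bbC)$ — comes from the uniqueness statement in \cref{theorem:gauge-eq-F-bundle-point} combined with non-degeneracy of the Poincaré pairing, exactly as in the proof of \cref{thm:uniqueness-projective-bundle-u=0}.

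The main obstacle I expect is the computation of $\bK_{\lim}$ for the blowup: unlike the projective bundle case, where $\mathbb{P}(V)\to X$ is a smooth morphism with a clean fiber structure and the relevant fiber-class Gromov-Witten invariants reduce to the geometry of $P\times_X P$, here the exceptional divisor $E = \mathbb{P}(N_{Z/X})\to Z$ sits inside $\tX$ and one must control the genus-$0$ degree-zero-over-$X$ invariants $\langle c_1(T_{\tX}),\gamma_1,\gamma_2\rangle_{0,3}^\beta$ for curve classes $\beta$ contracted by $\pi$. These are multiples of lines in the fibers of $E\to Z$, and identifying the resulting operator with the companion matrix in $c_i(N_{Z/X})$ requires the projection-formula and excess-intersection bookkeeping analogous to \cref{prop:Klim_proj} but one dimension more delicate; alternatively, one can cite \cite{Iritani_Quantum-cohomology-of-blowups} for the shape of this operator and only verify the eigenvalue normalization. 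A secondary, more bookkeeping-level obstacle is setting up the universal algebra $\bbC[\bd,\bc_1,\dots,\bc_m]$ correctly so that the $H^*(X,\bbC)$-block (with $\lambda_1=0$) and the $H^*(Z,\bbC)$-blocks are treated on the same footing by \cref{theorem:gauge-eq-F-bundle-point}; this is where the phrase ``if we restrict the coefficients of $\Phi$ to lie in a universal algebra as in the projective bundle case'' does its work, and the verification that $\nabla|_b$ and $\nabla'|_{b'}$ both lie in the appropriate $\cF(H_\spl, \iso, \bd, (\bc_i))$ is routine once the algebra is fixed.
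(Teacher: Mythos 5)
Your proposal follows exactly the route the paper intends: the paper gives no separate written proof of \cref{thm:uniqueness-blowup-u=0}, stating it only as the analog of \cref{thm:uniqueness-projective-bundle-u=0}, and your plan (compute $\bK_{\lim}$ for the blowup from the fiber-line classes of the exceptional divisor, diagonalize over an algebra generated by the $\bc_i = c_i(N_{Z/X})\cup$, check conditions (1)--(3), and invoke \cref{theorem:gauge-eq-F-bundle-point} together with the extension-of-framing uniqueness) is the same argument in the same order.

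One caveat deserves more weight than the ``bookkeeping'' status you give it. \cref{theorem:gauge-eq-F-bundle-point} is stated under the hypothesis that all generalized eigenspaces of $\bK$ have the \emph{same} dimension, with a splitting $H \simeq H_0^{\oplus m}$ and all matrix entries lying in a commutative algebra $\bbk[\bc_1,\dots,\bc_r] \subset \End_{\bbk}(H_0)$. In the blowup setting the eigenspace for $\lambda_1 = 0$ is $H^{\ast}(X,\bbC)$ while the other blocks are copies of $H^{\ast}(Z,\bbC)$, so the hypothesis fails literally; moreover the off-diagonal blocks of $\bK_{\lim}$ and of any conjugating matrix $\phi$ are elements of $\Hom(H^{\ast}(X,\bbC), H^{\ast}(Z,\bbC))$ and $\Hom(H^{\ast}(Z,\bbC), H^{\ast}(X,\bbC))$, built from $\sigma^{\ast}$, $\sigma_{\ast}$ and cup products, not endomorphisms of a single $H_0$. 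So you cannot simply ``apply'' \cref{theorem:gauge-eq-F-bundle-point}: the universal algebra of \cref{def:F-bundle-point-restricted-connection} has to be replaced by a block-graded (non-commutative) algebra containing $\sigma^{\ast}$ and $\sigma_{\ast}$, and the recursion in \cref{lemma:gauge-eq-F-bundle-point} (simple eigenvalues over the base ring, kernel and image of $\ad_K$, the solvability of the diagonal equations involving $\Eu$) must be re-verified in that setting; this is precisely what the clause ``restrict the coefficients of $\Phi$ to lie in a universal algebra as in the projective bundle case'' is quietly absorbing. Relatedly, the analog of \cref{prop:Klim_proj} for the blowup is not a pure companion matrix in the $\bc_i$: the contracted-curve contributions mix the $H^{\ast}(X,\bbC)$-block with the exceptional-divisor blocks through $\sigma^{\ast}$ and $\sigma_{\ast}$, so either the citation of Iritani must be used for the full shape of the operator (not just the eigenvalue normalization), or the excess-intersection computation has to be carried out in detail before the diagonalization step of \cref{lemma:jordan-dec-Klim} and the trace argument of \cref{lemma:vanishing-diagonal} can be adapted.
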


A direct consequence of \cref{lemma:comparison-framed-F-bundles} is the following, which is analogous to \cref{thm:uniqueness-projective-bundle}.

\begin{theorem}\label{thm:uniqueness-blowup}
Let $(f, \Phi)\colon (\cH,\nabla)/B \to (\cH',\nabla')/B'$ be an isomorphism of F-bundles.
Then 
\begin{enumerate}[wide]
    \item The bundle map $\Phi$ is uniquely and explicitly determined by its restriction to $b\in B$.
    \item The base map $f$ is uniquely and explicitly determined by its restriction to $b\in B$, up to a multiplicative constant in the $q$ direction.
\end{enumerate}
\end{theorem}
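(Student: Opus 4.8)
The plan is to deduce \cref{thm:uniqueness-blowup} directly from the reconstruction result \cref{lemma:comparison-framed-F-bundles}, in exactly the way \cref{thm:uniqueness-projective-bundle} was deduced in the projective bundle case. So the work reduces to verifying the hypotheses of that proposition for the two logarithmic F-bundles in play: $(\cH,\nabla)/(B,D)$, the maximal A-model F-bundle of $\tX$ attached to the nef class $\pi^{\ast}\omega_X$ with $D=\{q=0\}$, and $(\cH',\nabla')/(B',D')$, the maximal A-model F-bundle of $X'$. Both are maximal by \cref{example:modified-quantum-F-bundle}, and $(\cH,\nabla)$ carries a framing because the connection in \cref{definition:A-model-F-bundle} is already written in the form \eqref{eq:framing}. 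What remains is the nilpotency condition (\cref{definition:nilpotency-condition}) at the base points $b$ and $b'$, i.e.\ nilpotency of $\ad\mu_b(q\partial_q)$ and of the analogous operator at $b'$.

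At $b'$ this is immediate: the class $(\omega_X,\sigma^{\ast}\omega_X,\dots,\sigma^{\ast}\omega_X)$ used to build $(\cH',\nabla')$ is ample on $X'$, so the quantum product degenerates to the classical cup product at $b'$, and the residue of $\nabla'_{q\partial_q}$ there is cup product by this ample class; being multiplication by a positive-degree element of the finite-dimensional ring $H^{\ast}(X',\bbC)$, it is nilpotent, and hence so is its adjoint. The same reasoning gives nilpotency at $b$ once one knows that $\mu_b(q\partial_q)$ is still the \emph{classical} cup product by $\pi^{\ast}\omega_X$, despite the limiting point of the blowup F-bundle being genuinely quantum. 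Here I would invoke the divisor axiom: a three-point invariant $\langle\pi^{\ast}\omega_X,\gamma_1,\gamma_2\rangle_{0,3}^{\beta}$ is proportional to $\beta\cdot\pi^{\ast}\omega_X$, which vanishes for precisely the curve classes $\beta$ that survive at $q=0$ (those with $\beta\cdot\pi^{\ast}\omega_X=0$, i.e.\ the exceptional fiber classes). Hence $\mu_b(q\partial_q)$ coincides with $\pi^{\ast}\omega_X\cup$ on $H^{\ast}(\tX,\bbC)$, which is nilpotent.

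This last identification is the one place where a (short) computation is needed rather than pure formalism, so I expect it to be the main, if minor, obstacle; it is considerably lighter than the computation of $\bK_{\lim}$ in \cref{prop:Klim_proj}, which it parallels, and it uses only the divisor axiom and the ampleness of $\omega_X$.

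With all hypotheses of \cref{lemma:comparison-framed-F-bundles} verified, the theorem follows: part (1) of that proposition yields that $\Phi$ is uniquely and explicitly determined by its restriction to $b$, and part (2), using maximality of both F-bundles, yields that $f$ is uniquely and explicitly determined by its restriction to $b$ up to multiplicative constants in the logarithmic directions — here there is a single such direction, namely $q$, matching the statement.
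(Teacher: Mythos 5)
Your proof is correct and follows essentially the same route as the paper, which deduces the theorem as a direct consequence of \cref{lemma:comparison-framed-F-bundles}, exactly as in the projective-bundle case \cref{thm:uniqueness-projective-bundle} (framing by definition, maximality, classical cup product at the limiting point, nilpotent residue). Your divisor-axiom verification that $\mu_b(q\partial_q)$ is the classical, hence nilpotent, cup product by $\pi^{\ast}\omega_X$ on $H^{\ast}(\tX,\bbC)$ is a hypothesis-check the paper leaves implicit, and it is carried out correctly.
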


We refer to \cite{Iritani_Quantum-cohomology-projective-bundle} regarding the existence of the isomorphism.

\bibliographystyle{plain}
\bibliography{dahema}

\end{document}